\documentclass[12pt, leqno]{article}

\usepackage{amsmath,amssymb,amsthm,amscd}
\usepackage[all]{xy}
\usepackage{color}

\usepackage[top=25truemm,bottom=25truemm,left=25truemm,right=25truemm]{geometry}

\makeatletter
    
    \@addtoreset{equation}{section}
  \makeatother
  
\theoremstyle{plain}
\newtheorem{thm}{Theorem}[section]
\newtheorem{prop}[thm]{Proposition}
\newtheorem{lem}[thm]{Lemma}
\newtheorem{cor}[thm]{Corollary}
\newtheorem{conj}[thm]{Conjecture}
\theoremstyle{definition}
\newtheorem{defn}[thm]{Definition}
\newtheorem{exa}[thm]{Example}

\renewcommand{\labelenumi}{(\roman{enumi})}

\newcommand{\FRAC}[2]{\leavevmode\kern.1em\raise.5ex\hbox{\the\scriptfont0 #1}\kern-.1em/\kern-.15em\lower.25ex\hbox{\the\scriptfont0 #2}}

\newcommand{\BQ}{{\mathbf{Q}}}

\newcommand{\BZ}{{\mathbf{Z}}}

\newcommand{\dvr}{\mathcal{O}}
\newcommand{\dvrx}{\mathcal{O}_{X}}
\newcommand{\et}{\mathrm{\acute{e}t}}
\DeclareMathOperator{\Hom}{Hom}

\newcommand{\hok}{H^{1}(K,\mathbf{Q}/\mathbf{Z})}

\newcommand{\sw}{\mathrm{sw}}
\newcommand{\ab}{\mathrm{ab}}
\newcommand{\mf}{\mathcal{F}}

\newcommand{\id}{\mathrm{id}}
\newcommand{\pr}{\mathrm{pr}}

\DeclareMathOperator{\shom}{\mathcal{H}om}

\newcommand{\rsw}{\mathrm{rsw}}

\DeclareMathOperator{\cform}{char}

\newcommand{\fil}{\mathrm{fil}'}
\newcommand{\fillog}{\mathrm{fil}}

\DeclareMathOperator{\ord}{ord}

\newcommand{\gr}{\mathrm{gr}'}
\newcommand{\grlog}{\mathrm{gr}}
\newcommand{\cotx}{T^{\ast}X}
\newcommand{\zers}{T^{\ast}_{X}X}
\newcommand{\nori}{T^{\ast}_{D_{i}}X}

\newcommand{\spf}{T^\ast_{x}X}

\newcommand{\Omegax}{\Omega^{1}_{X}}
\newcommand{\dlog}{d\log}
\DeclareMathOperator{\Supp}{Supp}
\DeclareMathOperator{\Char}{Char}

\DeclareMathOperator{\dimtot}{dimtot}

\newcommand{\II}{I\hspace{-.1em}I}
\newcommand{\mII}{\mathrm{\II}}
\newcommand{\mI}{\mathrm{I}}
\newcommand{\mT}{\mathrm{T}}
\newcommand{\mW}{\mathrm{W}}
\newcommand{\mR}{\mathrm{R}}

\newcommand{\mST}{\mathrm{ST}}
\newcommand{\dt}{\mathrm{dt}}
\DeclareMathOperator{\res}{res}

\newcommand{\atk}{\mathbf{A}_{k}^{2}}

\newcommand{\mg}{\mathcal{G}}

\newcommand{\mk}{\mathcal{K}}
\newcommand{\mh}{\mathcal{H}}

\newcommand{\otimesll}{\otimes_{\Lambda}}

\DeclareMathOperator{\Spec}{Spec}
\DeclareMathOperator{\Frac}{Frac}
\DeclareMathOperator{\Proj}{Proj}

%%%%%%%%%%%%%%%%%%%%%%%%%%%%%%%%%%%%%%%%%%%%%%%%%%%
\begin{document}
\title
{Characteristic cycle of a rank one sheaf
\\and ramification theory}
\author{YURI YATAGAWA}
\date{}
\maketitle

%%%%%%%%%%%%%%%%%%%%%%%%%%%%%%%%%%%%%%%%%%%%%%%%%%%%%%%%%%%%%%%%%%%%%%%
\begin{abstract}
We compute the characteristic cycle of a rank one sheaf on a smooth surface over a perfect field
of positive characteristic.
We construct a canonical lifting on the cotangent bundle of Kato's logarithmic characteristic cycle using ramification theory and prove the equality of the characteristic cycle and the canonical lifting.
As corollaries, we obtain a computation of the singular support in terms of ramification theory and the Milnor formula for the canonical lifting.
\end{abstract}

%%%%%%%%%%%%%%%%%%%%%%%%%%%%%%%%%%%%%%%%%%%%%%%%%%%%%%%%%%%%%%%%%%%%%%%
\section*{Introduction}
As an analogy between the wild ramification of $\ell$-adic sheaves on a smooth variety
in positive characteristic
and the irregular singularity of partial differential equations on a complex manifold
suggests,
the characteristic cycle of a constructible complex of 
\'{e}tale sheaves on a smooth variety over a perfect field is defined 
to be an algebraic cycle on the cotangent bundle of the variety.
The characteristic cycle is defined by Saito \cite{sa4}
using the singular support defined by Beilinson \cite{be} and
is characterized by the Milnor formula which computes the total dimension
of the space of vanishing cycles as an intersection number of the characteristic cycle.
Further the characteristic cycle satisfies the index formula (\cite[Theorem 7.13]{sa4})
which computes the Euler characteristic as the intersection number of the characteristic cycle
with the zero section of the cotangent bundle when the variety is projective.

We consider a computation of the singular support and the characteristic cycle of an \'{e}tale constructible sheaf.
Let $X$ be a smooth variety over a perfect field $k$ of characteristic $p>0$
and $D$ a divisor on $X$ with simple normal crossings.
Let $\mf$ be a smooth sheaf of $\Lambda$-modules on $U=X-D$, where $\Lambda$
is a finite field of characteristic $\neq p$.
If the zero extension $j_{!}\mf$ of $\mf$ by the open immersion $j\colon U\rightarrow X$
satisfies the strongly non-degenerate condition (\cite[Subsection 4.2]{sa4}),
then computations of the singular support $SS(j_{!}\mf)$ and the characteristic cycle $CC(j_{!}\mf)$ 
of $j_{!}\mf$ are given by Saito \cite{sa4} using ramification theory.
In a more general setting, computations of $SS(j_{!}\mf)$ and $CC(j_{!}\mf)$ remain to be discovered.

In this manuscript, we assume that $X$ is a smooth surface and that the sheaf $\mf$ is of rank one.
We do not assume any condition on the ramification of $j_{!}\mf$.
The invariants of ramification which we use to compute the singular support $SS(j_{!}\mf)$ and
the characteristic cycle $CC(j_{!}\mf)$ are obtained explicitly by computing Witt vectors defining 
the smooth sheaf $\mf$, their differentials, and the blow-ups at closed points.

For our computations of $SS(j_{!}\mf)$ and $CC(j_{!}\mf)$,
we use Kato's logarithmic characteristic cycle $\Char^{\log}(X,U,\chi)$ (\cite{ka2})
on the logarithmic cotangent bundle $T^{*}X(\log D)$ of $X$ with logarithmic poles along $D$.
Here the symbol $\chi$ denotes the character of $\pi_{1}^{\ab}(U)$ corresponding to $\mf$.
We first construct a canonical lifting $\Char^{K}(X,U,\chi)$ on the cotangent bundle $T^{*}X$ of $X$
of Kato's logarithmic characteristic cycle $\Char^{\log}(X,U,\chi)$ using ramification theory
((\ref{canlift}) and Theorem \ref{totalpullback} (ii)).
Then we prove the equality of the characteristic cycle and the canonical lifting:

\begin{thm}[Theorem \ref{thmmain}]
\label{thmintro}
Assume that $X$ is purely of dimension $2$.
Then we have
\begin{equation}
CC(j_{!}\mf)=\Char^{K}(X,U,\chi).  
\end{equation}
\end{thm}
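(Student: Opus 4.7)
The plan is to invoke Saito's characterization of the characteristic cycle: $CC(\jf)$ is the unique $\BZ$-linear combination of irreducible closed conical subsets of $T^{*}X$ supported in the singular support $SS(\jf)$ that satisfies the Milnor formula at every isolated characteristic point of every local test function. So the task splits into two: first, checking the support inclusion $SS(\jf)\subset|\charkf|$; second, verifying that the intersection numbers of $\charkf$ with the sections $df$ compute the total dimension of vanishing cycles of $f^{*}\jf$ at each closed point where the characteristic locus is isolated.

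For the support statement, away from $D$ both sides reduce to the zero section, and along the smooth part of $D$ the direction contributed to $\charkf$ is the one dictated by the refined Swan conductor, which is exactly the cotangent direction on which $\jf$ fails to be locally acyclic. This step uses the construction in (\ref{canlift}) together with Theorem \ref{totalpullback}(ii), combined with the general Beilinson bound on $SS(\jf)$ in terms of wild ramification along each boundary component. The only subtle locus is the finitely many closed points of $D$, and here the directions added to $\charkf$ are the ones produced explicitly by the Witt-vector computation in the construction.

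For the Milnor formula the strategy is to reduce to the strongly non-degenerate case treated in \cite{sa4}. Given a closed point $x\in X$ and a test function $f$ with isolated characteristic point at $x$, I would iteratively blow up the locus above $x$ to obtain a proper birational morphism $\pi\colon X'\to X$ such that the pulled-back pair $(X',U',\chi\circ\pi)$ is strongly non-degenerate on a neighborhood of $\pi^{-1}(x)$. Saito's theorem then gives $CC(\jpfp)=\Char^{K}(X',U',\chi\circ\pi)$ locally on $X'$. The canonical lifting has been designed precisely so that it is compatible under such blow-ups; this is exactly the content recorded in Theorem \ref{totalpullback}(ii). Combining the proper birational invariance of the Euler--Poincar\'{e} characteristic of vanishing cycles (through the projection formula applied to $df$ and $d(f\circ\pi)$) with this blow-up compatibility of $\charkf$ transports the Milnor formula from each point of $\pi^{-1}(x)$ on $X'$ back to the formula at $x$ on $X$.

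The principal obstacle is matching the two blow-up compatibilities. On the sheaf side, one needs to express the total dimension of vanishing cycles of $f^{*}\jf$ at $x$ in terms of those of $(f\circ\pi)^{*}\jpfp$ at the points of $\pi^{-1}(x)$, accounting for the exceptional contributions via an analysis of nearby cycles over the exceptional divisor. On the combinatorial side, one must show that $\charkf$ equals the appropriate pushforward of $\Char^{K}(X',U',\chi\circ\pi)$ after correcting for the conormal directions along the exceptional components; this is a calculation in Witt vectors, their differentials, and the ramification filtration under blow-up. The explicitness of the construction makes this feasible, but the term-by-term matching of these two compatibility identities — the geometric one for $CC$ and the ramification-theoretic one for $\charkf$ — is the heart of the argument, and is where the canonicity of the lifting must be used decisively.
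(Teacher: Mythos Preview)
Your approach diverges substantially from the paper's, and the difficulty you flag as the ``principal obstacle'' is a genuine gap rather than a bookkeeping step. To transport the Milnor formula from $X'$ back to $X$ you would need, on the sheaf side, that $-\dimtot\phi_{x}(\jf,f)$ decomposes as a sum of contributions over $\pi^{-1}(x)$, and, on the cycle side, that $\charkf$ is the proper pushforward of $\Char^{K}(X',U',\pi^{*}\chi)$. Neither is available: a pushforward formula for $CC$ under proper morphisms is not established in \cite{sa4}, and there is no shortcut in rank one on surfaces that does not essentially reprove the main theorem. Moreover $f\circ\pi$ is constant along the exceptional divisor, so its characteristic locus on $X'$ is typically not isolated and the Milnor formula on $X'$ does not apply to $f\circ\pi$ directly. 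Your invocation of Theorem~\ref{totalpullback}(ii) is also off target: that statement says $\charkf=\tau_{D}^{!}\Char^{\log}(X,U,\chi)$, relating the canonical lifting to Kato's logarithmic cycle via $\tau_{D}\colon T^{*}X\to T^{*}X(\log D)$; it says nothing about compatibility of $\charkf$ under blow-ups of $X$.

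The paper avoids the local Milnor formula entirely. From (\ref{saitoccro}) and (\ref{canlift}) the two cycles already agree outside finitely many closed points, so only the fiber multiplicities $u_{x}$ and $t_{x}$ remain. These are compared \emph{globally}: after compactifying, both cycles satisfy an index formula for $\chi(X,\jf)$ (Theorem~\ref{sindex} for $CC$, Corollary~\ref{ndegindex} for $\Char^{K}$), giving $\sum_{x}t_{x}=\sum_{x}u_{x}$ (Lemma~\ref{sumtxux}). Blow-ups are used only away from the point under consideration, to isolate a single bad point and to reduce to the clean case (Proposition~\ref{propmain}). The decisive new ingredient is the homotopy invariance of Proposition~\ref{prophi}: in the clean case $CC(\jf)$ depends only on $\charkf$, so $\chi$ may be replaced by an explicit Artin--Schreier--Witt character on $\mathbf{P}^{2}_{k}$ with the same local invariants at $x$ and whose other bad points fall under previously treated cases. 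The global identity $\sum t_{y}=\sum u_{y}$ on $\mathbf{P}^{2}_{k}$ then forces $t_{x}=u_{x}$.
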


\noindent
Since the canonical lifting is constructed using ramification theory,
Theorem \ref{thmintro} gives a computation of the characteristic cycle $CC(j_{!}\mf)$ in terms of ramification theory.  
Further, since the singular support $SS(j_{!}\mf)$ is equal to the support of the characteristic cycle $CC(j_{!}\mf)$
(Lemma \ref{suppCC}),
Theorem \ref{thmintro} gives a computation of the singular support $SS(j_{!}\mf)$ in terms of ramification theory.

We give an outline of our proof of Theorem \ref{thmintro}.
Since the strongly non-degenerate condition is satisfied when we restrict the sheaf $j_{!}\mf$ to 
outside a closed subset of codimension $\ge 2$,
Saito's computation of the characteristic cycle in the strongly non-degenerate case
gives a computation by using ramification theory of the irreducible components of the support
of the characteristic cycle $CC(j_{!}\mf)$
whose intersection with the zero section is of codimension $\le 1$ in the variety 
and together with their multiplicities in $CC(j_{!}\mf)$.
By using this computation,
we construct a canonical lifting $\Char^{K}(X,U,\chi)$ to be equal to $CC(j_{!}\mf)$ outside a finitely many closed points
of $D$.
Then it is sufficient to prove the equality $t_{x}=u_{x}$ of the multiplicities
$t_{x}$ and $u_{x}$ of the fiber at each closed point $x$ of $D$ in $\Char^{K}(X,U,\chi)$ and $CC(j_{!}\mf)$.
By using the fact that the characteristic cycle and the canonical lifting are defined \'{e}tale locally,
we reduce the proof to the case where $X$ is projective over an algebraically closed field
by taking a smooth compactification of $X$ if necessary.
Then the characteristic cycle $CC(j_{!}\mf)$ satisfies the index formula.
The canonical lifting also satisfies the index formula (Theorem \ref{logindexformula}) obtained 
by the index formula for the logarithmic characteristic cycle of a clean sheaf (\cite{ka2}, \cite[Corollary 3.8]{saj}) 
and the resolution of non-clean points (\cite[Theorem 4.1]{ka2}).
By using the index formulas and the resolution of non-clean points, we reduce the proof to
the case where the sheaf is clean.
We establish the homotopy invariance of the characteristic cycle of a clean rank one sheaf on a smooth surface 
(Proposition \ref{prophi}), which implies the canonical lifting determines the characteristic cycle.
By using the homotopy invariance, we reduce the proof to the case where $X=\mathbf{P}_{k}^{2}$ and
further to the case where $\mf$ is has a finite set $\Sigma$ 
of closed points of $D$ containing $x$ such that
$(t_{y},u_{y})=(t_{x},u_{x})$ for every $y\in \Sigma$ and that $t_{y}=u_{y}$ for
every closed point $y\notin \Sigma$ of $D$.
Then we obtain the equality $t_{x}=u_{x}$ by using the index formulas of the characteristic cycle and the canonical lifting.

We briefly describe the construction of this paper.
In Section \ref{sklcc}, we briefly recall the definition and properties of the characteristic cycle.
We recall Brylinski-Kato's logarithmic ramification theory and Matsuda's non-logarithmic variant in Subsection \ref{sslram}.
At the end of Subsection \ref{sslram}, we give a form and a partial description of the characteristic cycle of a rank one sheaf 
on a smooth surface using ramification theory, which is already known by Saito.
In Subsection \ref{sscrc}, we study the clean and non-degenerate conditions in detail.
We prove the equality of the zero extension and the direct image by the open immersion
of a clean rank one sheaf which is ramified along every irreducible component of the boundary
in Subsection \ref{sscdoi}.

The definition of Kato's logarithmic characteristic cycle is recalled in Subsection \ref{ssklcc}.
We prove the singular support of a clean rank one sheaf is contained in
the preimage of the support of the logarithmic characteristic cycle by the canonical morphism
from the cotangent bundle to the logarithmic cotangent bundle in Subsection \ref{sslogms}.
We construct a canonical lifting on the cotangent bundle of the logarithmic characteristic cycle
of a rank one sheaf on a smooth surface in Section \ref{scanlift}.
We prove the homotopy invariance of the characteristic cycle of a clean rank one sheaf 
on a smooth surface in Section \ref{shi}.

We state the main theorem Theorem \ref{thmmain}, which gives a computation of the characteristic cycle
of a rank one sheaf on a smooth surface using ramification theory, and 
prove a few corollaries including a computation
of the singular support of the sheaf in Subsection \ref{ssmthm}.
We give a proof of the main theorem in Subsection \ref{sspfsurf}.
We first give a format of the proof of the main theorem 
and then prove the main theorem following the format.

The author would like to express her sincere gratitude to Professor Takeshi Saito for leading her to the study of the characteristic cycle, a lot of discussions, 
a lot of advice for the manuscript, and his patient encouragement.
The author thanks Professor Shigeki Matsuda for answering a question on a comment on cleanliness in his paper \cite{ma}. 
Some parts of the study are done during her stays at IHES supported by the Program for Leading Graduate 
Schools, MEXT, Japan for three months in total in autumn in 2013 and 2014 and
a stay at the Freie Universit\"{a}t Berlin supported by JSPS KAKENHI Grant Number 15J03851 for three weeks in summer in 2015. 
The author would like to thank Professor Ahmed Abbes and Professor H\'{e}l\`{e}ne Esnault for their hospitalities.
This work is partially supported by the Program for Leading Graduate 
Schools, MEXT, Japan, JSPS KAKENHI Grant Number 15J03851, and the CRC 1085 {\it Higher Invariants} at the University of Regensburg.

%%%%%%%%%%%%%%%%%%%%%%%%%%%%%%%%%%%%%%%%%%%%%%%%%%%
\tableofcontents
%%%%%%%%%%%%%%%%%%%%%%%%%%%%%%%%%%%%%%%%%%%%%%%%%%%
%%%%%%%%%%%%%%%%%%%%%%%%%%%%%%%%%%%%%%%%%%%%%%%%%%%
\section{Characteristic cycle}
\label{sklcc}

We briefly recall the characteristic cycle defined in \cite{sa4}.
The characteristic cycle is defined for a constructible complex 
on a smooth scheme over a perfect field using the singular support defined by Beilinson \cite{be}.

Let $k$ be a perfect field of characteristic $p>0$
and $X$ a smooth scheme over $k$.
Let $\Lambda$ be a finite field of characteristic prime to $p$.
Let $T^{\ast}X=\Spec S^{\bullet}\Omega^{1\vee}_{X}$ be the cotangent bundle of $X$
and $T^{\ast}_{X}X$ the zero section of $T^{*}X$.
We say that a closed subset $C$ of a vector bundle over $X$ is \textit{conical} if $C$ is stable under the multiplication by scalars.
For a morphism $h\colon W\rightarrow X$ of smooth schemes over $k$, let $dh\colon T^{\ast}X
\times_{X}W \rightarrow T^{\ast}W$ be the morphism of vector bundles over $W$ 
yielded by $h$
and let $h^{\ast}C=C\times_{X}W$ be the pull-back of a closed conical subset $C$ of $T^{\ast}X$
to $T^{*}X\times_{X}W$.

\begin{defn}[{\cite[1.2]{be}}, {\cite[Definition 3.3, Definition 3.5]{sa4}}]
Let $C$ be a closed conical subset of $T^{\ast}X$.
\begin{enumerate}
\item We say that a morphism $h\colon W\rightarrow X$ of smooth schemes over $k$ is 
$C$-\textit{transversal} at a point $w\in W$ if the subset 
$(dh^{-1}(T^{\ast}_{W}W)\cap h^{\ast}C)\times_{W}w \subset T^{*}X\times_{X}w$ 
is contained in the zero section $T^{\ast}_{X}X\times_{X}w$.

We say that a morphism $h\colon W\rightarrow X$ of smooth schemes over $k$ is 
$C$-\textit{transversal} if the subset $dh^{-1}(T^{\ast}_{W}W)\cap h^{\ast}C\subset T^*X\times_{X}W$ 
is contained in the zero section $T^{\ast}_{X}X\times_{X}W$.

\item Let $h\colon W\rightarrow X$ be a $C$-transversal morphism of smooth schemes over $k$.
We define a closed conical subset $h^{\circ}C\subset T^{\ast}W$ to be the image of
the subset $h^*C\subset T^{*}X\times_{X}W$ by 
$dh\colon T^{\ast}X\times_{X}W\rightarrow T^{\ast}W$. 

\item We say that a morphism $f\colon X\rightarrow Y$ of smooth schemes
over $k$ is $C$-\textit{transversal} at a point $x\in X$ if the subset 
$df^{-1}(C)\times_{X}x\subset T^*Y\times_{Y}x$ is contained in the zero section $T^{\ast}_{Y}Y\times_{Y}x$.

We say that a morphism $f\colon X\rightarrow Y$ of smooth schemes over $k$ is 
$C$-\textit{transversal} if the subset $df^{-1}(C)\subset T^{*}Y\times_{Y}X$ 
is contained in the zero section $T^{\ast}_{Y}Y\times_{Y}X$.

\item We say that a pair $(h,f)$ of morphisms $h\colon W\rightarrow X$ and 
$f\colon W\rightarrow Y$ of smooth schemes over $k$ is $C$-\textit{transversal} 
if $h\colon W\rightarrow X$ is $C$-transversal and if $f\colon W\rightarrow Y$ is 
$h^{\circ}C$-transversal. 
\end{enumerate}
\end{defn}

We note that if a morphism $h\colon W\rightarrow X$ of smooth schemes over $k$ is
$C$-transversal for a closed conical subset $C\subset T^{*}X$, then
the restriction $dh|_{C} \colon C\rightarrow T^{*}W$ of $dh$ to $C$ is finite by \cite[Lemma 3.1]{sa4}.

We say that a complex $\mf$ of \'{e}tale sheaves of $\Lambda$-modules on $X$
is a \textit{constructible} complex of $\Lambda$-modules if the cohomology sheaves 
$\mathcal{H}^{q}(\mf)$ of $\mf$ are constructible for every $q$ 
and are equal to $0$ except for finitely many $q$.

\begin{defn}[{\cite[1.3]{be}}]
\label{defofsingsupp}
Let $\mf$ be a constructible complex of $\Lambda$-modules on $X$.
\begin{enumerate}
\item We say that $\mf$ is \textit{micro-supported} on a closed conical subset 
$C\subset T^{\ast}X$ if for every $C$-transversal pair $(h,f)$ of morphisms 
$h\colon W\rightarrow X$ and $f\colon W\rightarrow Y$ of smooth schemes over $k$
the morphism $f\colon W\rightarrow Y$ is locally acyclic relatively to $h^{\ast}\mf$.
\item The \textit{singular support} $SS(\mf)$ of $\mf$ is the smallest closed conical subset of $T^{\ast}X$ where $\mf$ is micro-supported.
\end{enumerate}
\end{defn}

By \cite[Theorem 1.3]{be}, the singular support $SS(\mf)$ exists for every constructible complex $\mf$ of $\Lambda$-modules on $X$.
Further, if $X$ is purely of dimension $d$, then $SS(\mf)$ is purely of dimension $d$.

\begin{lem}
\label{ssdisttri}
Let $\mf'\rightarrow \mf\rightarrow \mf''\rightarrow$ be a distinguished triangle of constructible complexes of $\Lambda$-modules on $X$.
Let $C$ and $C''$ be closed conical subsets of $T^{\ast}X$
and put $C'=C\cup C''$.
If $\mf$ and $\mf''$ are micro-supported on $C$ and $C''$ respectively,
then $\mf'$ is micro-supported on $C'$.
\end{lem}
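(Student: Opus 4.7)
The plan is to verify the condition in Definition \ref{defofsingsupp} (i) directly by taking an arbitrary $C'$-transversal pair $(h,f)$ and checking that $f$ is locally acyclic relative to $h^{\ast}\mf'$.

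First I would observe the following elementary monotonicity: if $C_{1}\subset C_{2}$ are closed conical subsets of $T^{\ast}X$, then every $C_{2}$-transversal morphism (resp.\ pair) is automatically $C_{1}$-transversal, since the transversality conditions in the definition are stated as containments $df^{-1}(C)\times_{X}x\subset T^{\ast}_{Y}Y\times_{Y}x$ etc., which become weaker when $C$ shrinks. Applying this to the inclusions $C\subset C'$ and $C''\subset C'$, any $C'$-transversal pair $(h,f)$ is in particular both $C$-transversal and $C''$-transversal.

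Consequently, by the hypothesis that $\mf$ is micro-supported on $C$ and $\mf''$ is micro-supported on $C''$, the morphism $f\colon W\to Y$ is locally acyclic relative to both $h^{\ast}\mf$ and $h^{\ast}\mf''$. Since pullback $h^{\ast}$ is exact (more precisely, it preserves distinguished triangles), applying $h^{\ast}$ to the given triangle $\mf'\to\mf\to\mf''\to$ yields a distinguished triangle
\begin{equation*}
h^{\ast}\mf'\longrightarrow h^{\ast}\mf\longrightarrow h^{\ast}\mf''\longrightarrow
\end{equation*}
of constructible complexes on $W$. I would then invoke the standard two-out-of-three property of local acyclicity: the class of constructible complexes on $W$ relative to which $f$ is locally acyclic is stable under taking cones (because local acyclicity is formulated in terms of vanishing cycles, and the vanishing cycle functor is triangulated). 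Hence local acyclicity of $f$ relative to $h^{\ast}\mf$ and $h^{\ast}\mf''$ forces local acyclicity relative to $h^{\ast}\mf'$, which is what we needed.

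There is no real obstacle: the proof is essentially formal, relying only on the monotonicity of transversality under shrinking the conical subset and on the triangulated nature of local acyclicity. The only point that requires a sentence of justification is this stability of local acyclicity under distinguished triangles, which I would cite rather than reprove.
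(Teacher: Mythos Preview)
Your proof is correct and follows essentially the same approach as the paper: reduce a $C'$-transversal pair to a $C$- and $C''$-transversal pair via monotonicity, then use the two-out-of-three property of local acyclicity on the pulled-back triangle. The only cosmetic difference is that the paper first invokes Beilinson's theorem \cite[Theorem 1.5]{be} to reduce to the case where $Y$ is a curve and then writes the two-out-of-three argument explicitly as a morphism of triangles between stalks and nearby cycle complexes, whereas you cite the triangulated nature of local acyclicity directly.
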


\begin{proof}
Let $(h,f)$ be a $C'$-transversal pair
of morphisms $h\colon W\rightarrow X$ and $f\colon W\rightarrow Y$ of smooth schemes over $k$.
By \cite[Theorem 1.5]{be}, we may assume that $Y$ is a smooth curve over $k$.
Let $w$ be a point on $W$ and $\bar{w}$ an algebraic geometric point lying above $w$.
We consider the morphism
\begin{equation}
\xymatrix{
h^{\ast}\mf'_{\bar{w}}\ar[d] \ar[r] &h^{\ast}\mf_{\bar{w}} \ar[r] \ar[d] & h^{\ast}\mf^{\prime \prime}_{\bar{w}} \ar[d] \ar[r] & \\
\varphi_{\bar{w}}(h^{\ast}\mf',f) \ar[r] &\varphi_{\bar{w}}(h^{\ast}\mf,f) \ar[r] & \varphi_{\bar{w}} (h^{\ast}\mf^{\prime \prime},f) \ar[r] & 
} \notag
\end{equation}
of distinguished triangles of complexes on $\bar{w}$.
Here $\varphi_{\bar{w}}$ denotes the stalk of nearby cycle complex at $\bar{w}$.
Since $(h,f)$ is $C$-transversal and $C''$-transversal, the middle and right vertical arrows are isomorphisms.
Hence the left vertical arrow is an isomorphism,
which deduces the assertion.
\end{proof}

\begin{defn}[{\cite[Definition 5.3.1]{sa4}}]
Let $C\subset T^{*}X$ be a closed conical subset.
Let $(h,f)$ be a pair of an \'{e}tale morphism 
$h\colon W\rightarrow X$ and a morphism $f\colon W\rightarrow Y$
to a smooth curve over $k$.
Let $w$ be a closed point of $W$.
We say that $w$ is at most an {\it isolated $C$-characteristic point} of $f$ if
there exists an open neighborhood $V\subset W$ of $w$
such that the pair of restrictions
$h|_{V-\{w\}}\colon V-\{w\} \rightarrow X$ and $f|_{V-\{w\}}\colon V-\{w\}\rightarrow Y$ 
of $h$ and $f$ to $V-\{w\}$ is $C$-transversal.
\end{defn}

\begin{thm}[Milnor formula, {\cite[Theorem 5.9, Theorem 5.18]{sa4}}]
\label{thmsmil}
Assume that $X$ is purely of dimension $d$.
Let $\mf$ be a constructible complex of $\Lambda$-modules on $X$.
Let $C=\bigcup_{a}C_{a}$ be a closed conical subset of $T^{*}X$ purely of dimension $d$
with irreducible components $\{C_{a}\}_{a}$ such that 
$\mf$ is micro-supported on $C$.
Then there exists a unique $\mathbf{Z}$-linear combination 
$CC(\mf)=\sum_{a}m_{a}[C_{a}]$ independent of $C$ such that
for every pair $(h,f)$ of an \'{e}tale morphism 
$h\colon W\rightarrow X$ and a morphism $f\colon W\rightarrow Y$
to a smooth curve $Y$ over $k$ and for every
at most isolated $C$-characteristic point $w\in W$ of $f$
we have 
\begin{equation}
-\dimtot \phi_{w}(h^{\ast}\mf,f)=(h^{\ast}CC (\mf), df)_{T^{\ast}W,w}.  \notag
\end{equation}
Here the left hand side denotes minus the total dimension of the space of vanishing cycles at $w$, and the right hand side denotes the intersection number supported on the fiber at $w$ of $h^{\ast}CC (\mf)$ with the section $df$ of $T^{\ast}W$ defined by the pull-back of a basis of $T^{\ast}Y$.
\end{thm}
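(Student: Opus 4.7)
The plan is to break the statement into three parts: uniqueness of the coefficients $m_a$ once $C$ is fixed, existence of coefficients satisfying the Milnor formula, and independence of the cycle on the choice of $C$. The backbone is a deformation-invariance property for the total dimension of vanishing cycles at isolated characteristic points, which follows from the micro-support hypothesis.

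For uniqueness, I would exhibit for each irreducible component $C_a$ of $C$ a test pair $(h, f)$ and a point $w \in W$ that isolates $C_a$ numerically. Since $C_a$ is conical and purely of dimension $d = \dim X$, at a generic point $\xi \in C_a$ the component is smooth and, after an \'etale base change, one can arrange that $C_a$ is locally the graph of a regular section $\omega_a$ of $T^{\ast}W$. I would pick $h \colon W \to X$ and $f \colon W \to \ak$ so that the section $df$ crosses $h^{\ast}C_a$ transversally at a single isolated $C$-characteristic point $w$ while avoiding the remaining components $h^{\ast}C_b$. The Milnor formula then forces $m_a = -\dimtot \phi_w(h^{\ast}\mf, f)$ divided by the local intersection multiplicity, which determines $m_a$ uniquely.

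For existence, I would \emph{define} the $m_a$ by this same formula applied to one such test pair, and verify that the resulting value is independent of the particular choice of test pair and that the Milnor formula holds for an arbitrary at most isolated $C$-characteristic point $w$ of an arbitrary pair $(h, f)$. Both verifications would proceed by a deformation argument: two candidate test pairs are joined by a one-parameter family whose generic member has only isolated $C$-characteristic points, and along such a family the total dimension of vanishing cycles is conserved. This conservation uses local acyclicity at all $C$-transversal points, which is the defining property of being micro-supported on $C$. For a general $(h, f, w)$, a small perturbation of $df$ near $w$ reduces the computation to that of generic test pairs together with additivity of contributions.

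For independence of $C$, I would apply the extraction on $C \cup C'$ for any two closed conical subsets $C, C'$ on which $\mf$ is micro-supported. Components shared by $C$ and $C'$ carry matching coefficients by uniqueness; components of $C'$ not contained in $C$ acquire coefficient zero, since any test pair isolating such a component is automatically $C$-transversal, forcing the vanishing cycle contribution to vanish by local acyclicity. The main obstacle is the deformation-invariance of total dimension of vanishing cycles in families, which relies on Beilinson's reduction \cite[Theorem 1.5]{be} to smooth curve targets and on the geometry of conical Lagrangian subsets of $T^{\ast}X$; once this flexibility is in place, the combinatorial extraction of the $m_a$ and the verification of the Milnor formula for general test pairs becomes routine.
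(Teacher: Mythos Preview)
The paper does not prove this theorem; it merely quotes it as \cite[Theorem 5.9, Theorem 5.18]{sa4} and uses it as a black box. There is therefore no ``paper's own proof'' to compare your proposal against.

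Your sketch is broadly in the spirit of Saito's original argument in \cite{sa4}: uniqueness via generic test functions that isolate a single component, existence via a definition-plus-deformation argument, and independence of $C$ by enlarging to $C\cup C'$. That said, what you have written is only an outline, and the hardest step---the constancy (conservation) of $\dimtot\phi_{w}$ along a one-parameter family of test functions with isolated characteristic points---is asserted rather than proved. In \cite{sa4} this step is not a formal consequence of local acyclicity alone; it requires a careful analysis (Saito's ``stability'' or continuity argument, Theorem 5.9) using properties of nearby and vanishing cycles over a curve and the structure of the family of sections $df_t$. Your reference to \cite[Theorem 1.5]{be} is not quite the right tool here: Beilinson's result reduces the \emph{definition} of micro-support to curve targets, but does not by itself give the numerical conservation you need. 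If you intend to supply a self-contained proof rather than a citation, that deformation step is where the real work lies.
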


\begin{defn}[{\cite[Definition 5.10]{sa4}}]
Let $\mf$ be a constructible complex of $\Lambda$-modules on $X$.
We call $CC(\mf)$ in Theorem \ref{thmsmil} the {\it characteristic cycle} of $\mf$.
\end{defn}

We note that $CC(\mf)$ is compatible with the pull-back by the projection
$X_{\bar{k}}\rightarrow X$ and that
the support of the characteristic cycle $CC(\mf)$ of $\mf$
is contained in the singular support $SS(\mf)$ of $\mf$.
For a cycle $A=\sum_{a}m_{a}[C_{a}]$ on a vector bundle over $X$,
the {\it support} of $A$ means the union of $C_{a}$ such that $m_{a}\neq 0$.
If $h\colon W\rightarrow X$ is an \'{e}tale morphism, then we have
\begin{equation}
h^{\ast}CC(\mf)=CC(h^{*}\mf) \notag
\end{equation}
by \cite[Lemma 5.11.2]{sa4}.

\begin{lem}
\label{suppCC}
Assume that $X$ is purely of dimension $d$.
Let $U$ be an open subscheme of $X$ and $j\colon U\rightarrow X$ the open immersion.
Let $\mf$ be a smooth sheaf of $\Lambda$-modules on $U$.
If $j$ is affine, then we have $(-1)^{d}CC(j_{!}\mf)\ge 0$ and the support of $CC(j_{!}\mf)$ is $SS(j_{!}\mf)$.
\end{lem}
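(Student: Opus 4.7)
The plan is to exploit the fact that, under the hypothesis that $j$ is affine, the shifted complex $j_{!}\mf[d]$ is a perverse sheaf on $X$, and then to invoke standard results on characteristic cycles of perverse sheaves. Since $\mf$ is a smooth sheaf on the smooth scheme $U$ of pure dimension $d$, the complex $\mf[d]$ is perverse on $U$. The functor $j_{!}$ is always left $t$-exact and, by Artin's affineness theorem, right $t$-exact for an affine open immersion, so $j_{!}\mf[d] = (j_{!}\mf)[d]$ is perverse on $X$.

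For the inequality $(-1)^{d}CC(j_{!}\mf)\ge 0$, I would apply Saito's effectivity theorem, which asserts that the characteristic cycle of a perverse sheaf on a smooth scheme of pure dimension $d$ is a non-negative (effective) cycle. Since $CC$ is additive on distinguished triangles and changes sign under a cohomological shift by $1$, one has $CC((j_{!}\mf)[d])=(-1)^{d}CC(j_{!}\mf)$, so the effectivity of $CC((j_{!}\mf)[d])$ translates immediately into the desired inequality.

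For the equality $\Supp CC(j_{!}\mf)=SS(j_{!}\mf)$, the inclusion $\Supp CC(j_{!}\mf)\subseteq SS(j_{!}\mf)$ has already been recalled in the text. Thanks to the effectivity just obtained, no cancellation among the multiplicities of $CC(j_{!}\mf)$ can occur, so it is enough to prove that every irreducible component $C_{a}$ of $SS(j_{!}\mf)$ has non-zero multiplicity in $CC(j_{!}\mf)$. I would pick a generic smooth point of $C_{a}$ lying over a point $x\in X$, construct an \'etale neighborhood $h\colon W\rightarrow X$ of $x$ together with a test morphism $f\colon W\rightarrow Y$ to a smooth curve so that the chosen point becomes an isolated $SS(j_{!}\mf)$-characteristic point $w$ of $f$ and so that $df$ meets $h^{\ast}C_{a}$ but no other irreducible component of $h^{\ast}SS(j_{!}\mf)$. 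Applying the Milnor formula of Theorem~\ref{thmsmil} then identifies the multiplicity $m_{a}$, up to a non-zero local intersection factor at $w$, with $-\dimtot\phi_{w}(h^{\ast}(j_{!}\mf),f)$.

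The main obstacle is to arrange this local construction so that the total dimension of vanishing cycles is guaranteed to be non-zero: this is the microlocal characterization of the singular support of a perverse sheaf, which forces the non-transversality of $(h,f)$ in the direction of $C_{a}$ to be detected by non-trivial vanishing cycles of the perverse sheaf $(j_{!}\mf)[d]$. Once this is in place, the two claims of the lemma follow simultaneously.
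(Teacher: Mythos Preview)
Your approach is correct, and the reduction to perversity of $j_{!}\mf[d]$ via Artin's theorem is exactly what the paper does. The difference lies in the second half. The paper simply invokes \cite[Proposition 5.14]{sa4}, which for a perverse sheaf $\mathcal{P}$ gives \emph{both} conclusions at once: $CC(\mathcal{P})\ge 0$ and $\Supp CC(\mathcal{P})=SS(\mathcal{P})$. After that, a shift by $d$ (using $CC(\mathcal{P}[n])=(-1)^{n}CC(\mathcal{P})$ and the obvious invariance of $SS$ under shift) finishes the proof in one line.

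You use only the effectivity half of that proposition and then attempt to re-derive the support equality by hand via the Milnor formula. Your sketch is essentially the \emph{content} of the proof of \cite[Proposition 5.14]{sa4}: one must show that for a perverse sheaf, every component of $SS$ is detected by non-trivial vanishing cycles at an isolated characteristic point of a suitably chosen test function. You correctly flag this as ``the main obstacle'' and appeal to the microlocal characterization of $SS$ for perverse sheaves---but that characterization \emph{is} Proposition~5.14 (or its Beilinson predecessor). So your route is not wrong, just circuitous: you are re-proving a packaged result that the paper cites directly. If you want a self-contained argument, you should carry out the construction of the test pair $(h,f)$ and the non-vanishing of $\phi_{w}$ in full; otherwise, citing \cite[Proposition 5.14]{sa4} for both statements is cleaner and is what the paper does.
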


\begin{proof}
Since $\mf$ is assumed to be locally constant, the complex $\mf[d]$ is a perverse sheaf 
by \cite[Examples 4.0]{bbd}.
Since $j$ is assumed to be affine, the sheaf $j_{!}\mf[d]$ is a perverse sheaf
by \cite[Corollary 4.1.10 (i)]{bbd}.
Hence we have $CC(j_{!}\mf[d])\ge 0$ and
the support of $CC(j_{!}\mf[d])$
is $SS(j_{!}\mf[d])$ by \cite[Proposition 5.14]{sa4}.
Thus the assertion holds by \cite[Lemma 5.13.1]{sa4}
and Lemma \ref{ssdisttri}.
\end{proof}

If $X$ is proper over an algebraically closed field, then
let $\chi(X,\mf)$ be the Euler characteristic $\sum_{q}(-1)^{q}\dim H^{q}(X,\mf)$
of a constructible complex $\mf$ of $\Lambda$-modules on $X$. 

\begin{thm}[Index formula, {\cite[Theorem 7.13]{sa4}}]
\label{sindex}
Let $\mf$ be a constructible complex of $\Lambda$-modules on $X$.
Assume that $X$ is projective over an algebraically closed field.
Then we have
\begin{equation}
\label{eqindex}
\chi(X,\mf)=(CC (\mf), T^{\ast}_{X}X)_{T^{\ast}X}, 
\end{equation}
where the right hand side denotes the intersection number of $CC(\mf)$ with $T^{*}_{X}X$.
\end{thm}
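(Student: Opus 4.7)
The plan is to deduce the global index formula from the Milnor formula (Theorem \ref{thmsmil}) by applying it at isolated $SS(\mf)$-characteristic points of a suitable Lefschetz-type morphism $f \colon X' \to \mathbf{P}^{1}$ (possibly defined only after a blow-up), and summing the local identities via the Grothendieck-Ogg-Shafarevich formula on $\mathbf{P}^{1}$ together with a rational-equivalence argument on $T^{\ast}X$.

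As a first step I would produce, after embedding $X$ in a projective space and choosing a sufficiently generic pencil of hyperplane sections, a proper birational modification $\pi \colon X' \to X$ together with a morphism $f \colon X' \to \mathbf{P}^{1}$ such that $f$ has only isolated $SS(\pi^{\ast}\mf)$-characteristic points. At each such characteristic point $w$, Theorem \ref{thmsmil} gives the local identity
\[
-\dimtot\, \phi_{w}(\pi^{\ast}\mf, f) = (\pi^{\ast}CC(\mf), df)_{T^{\ast}X', w},
\]
where $df$ denotes the section of $T^{\ast}X'$ defined by a local coordinate on $\mathbf{P}^{1}$.

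Summing the local Milnor identities over all characteristic points $w$, the right-hand side becomes the global intersection number $(\pi^{\ast}CC(\mf), df(X'))_{T^{\ast}X'}$. Inside the vector bundle $T^{\ast}X'$, the section $df$ and the zero section $T^{\ast}_{X'}X'$ are rationally equivalent via the family $t \cdot df$ parametrised by $t \in \mathbf{A}^{1}$; since $X'$ is proper, the intersection number is unchanged, so it equals $(\pi^{\ast}CC(\mf), T^{\ast}_{X'}X')_{T^{\ast}X'}$. A pushforward argument along $\pi$ then identifies this with $(CC(\mf), T^{\ast}_{X}X)_{T^{\ast}X}$, the right-hand side of (\ref{eqindex}). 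On the other side, the sum $\sum_{w} -\dimtot\, \phi_{w}(\pi^{\ast}\mf, f)$ is to be identified with $\chi(X, \mf)$: the Grothendieck-Ogg-Shafarevich formula applied to $Rf_{\ast}(\pi^{\ast}\mf)$ on $\mathbf{P}^{1}$ expresses $\chi(\mathbf{P}^{1}, Rf_{\ast}(\pi^{\ast}\mf))$ in terms of a generic fiber rank and local ramification data, which via the Leray spectral sequence and birational invariance is linked back to $\chi(X, \mf)$ and the vanishing-cycle sum.

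The main obstacle is the last identification on the Euler characteristic side: one must verify that for a sufficiently generic pencil, the GOS formula produces exactly the sum of local $\dimtot\, \phi_{w}(\pi^{\ast}\mf, f)$ without extra contributions from the generic fiber, from spurious ramification points of $Rf_{\ast}(\pi^{\ast}\mf)$ on $\mathbf{P}^{1}$, or from the exceptional divisor of $\pi$ and the base locus of the pencil. Controlling these contributions carefully — in particular handling wild ramification in positive characteristic and ensuring that the modification $\pi$ does not distort either side of the formula — will require the most delicate analysis, whereas the rational-equivalence argument on the intersection side is essentially formal once the Milnor formula is in hand.
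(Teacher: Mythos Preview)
The paper does not give its own proof of this theorem: it is simply quoted as \cite[Theorem 7.13]{sa4}, and all subsequent uses in the paper treat it as a black box. So there is no ``paper's proof'' to compare against; your proposal is a sketch of how one might prove the cited result itself.

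Your outline is essentially the strategy Saito uses in \cite{sa4}: choose a generic Lefschetz pencil, blow up the base locus to get $\pi\colon X'\to X$ and $f\colon X'\to\mathbf{P}^{1}$, apply the Milnor formula at the finitely many isolated $SS$-characteristic points of $f$, and match the two sides globally. The rational-equivalence step between $df$ and the zero section is exactly right and is the standard way to convert the sum of local intersection numbers into $(CC,\zers)_{T^{*}X'}$.

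The place where your sketch is genuinely incomplete is the one you flag yourself, but it is worth being more precise about what is at stake. The identification $\chi(X,\mf)=\chi(X',\pi^{*}\mf)$ is not automatic: $\pi$ is a blow-up along the base locus $\Delta$, and $\pi^{*}\mf$ differs from $R\pi^{!}\mf$ on the exceptional divisor, so one must either compute the discrepancy explicitly or arrange the pencil so that $\Delta$ avoids $SS(\mf)$ in a suitable sense. Likewise, your ``pushforward argument along $\pi$'' identifying $(\pi^{*}CC(\mf),T^{*}_{X'}X')$ with $(CC(\mf),\zers)$ presupposes a compatibility of $CC$ with $\pi^{*}$ that is not formal; in \cite{sa4} this is handled by choosing the pencil so that $\pi$ is properly $SS(\mf)$-transversal and then invoking the pull-back formula (the analogue of Theorem~\ref{thmpull}). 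Finally, on the GOS side, one needs that $Rf_{*}\pi^{*}\mf$ has locally constant cohomology on the open set of $\mathbf{P}^{1}$ over which $f$ has no characteristic points --- this is exactly the local acyclicity statement encoded in the definition of micro-support, and it is what makes the vanishing-cycle sum match the GOS defect. None of these points is insurmountable, but each requires the genericity of the pencil in a precise sense, and that is where the real work in \cite{sa4} lies.
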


\begin{defn}[{\cite[Definition 7.1]{sa4}}]
\label{defpb}
Assume that $X$ is purely of dimension $d$ and
let $C$ be a closed conical subset of $T^{\ast}X$ purely of dimension $d$.
\begin{enumerate}
\item We say that a morphism $h\colon W\rightarrow X$  
from a smooth scheme $W$ over $k$ purely of dimension $c$
is \textit{properly} $C$-transversal if $h$ is $C$-transversal and if 
$h^{\ast}C$ is of dimension $c$.
\item Let $h\colon W\rightarrow X$ be a properly $C$-transversal morphism
from a smooth scheme $W$ over $k$ purely of dimension $c$.
For a linear combination $A=\sum_{a}m_{a}[C_{a}]$ of irreducible components $\{C_{a}\}_{a}$
of $C$, we define $h^{!}A$ to be
the push forward of the cycle 
$(-1)^{d-c}\cdot\sum_{a}m_{a}[h^{*}C_{a}]$ on $T^{*}X\times_{X}W$
by $dh\colon T^{*}X\times_{X}W\rightarrow T^{*}W$ in the sense of the intersection theory.
\end{enumerate}
\end{defn}

\begin{thm}[{\cite[Theorem 7.6]{sa4}}]
\label{thmpull}
Assume that $X$ is purely of dimension $d$ and let 
$C\subset T^{*}X$ be a closed conical subset purely of dimension $d$.
Let $h\colon W\rightarrow X$ be a morphism from a smooth scheme $W$ over $k$ 
purely of dimension $c$.
Let $\mf$ be a constructible complex of $\Lambda$-modules on $X$ 
micro-supported on $C$.  
If $h$ is properly $C$-transversal,
then we have 
\begin{equation}
CC (h^{\ast}\mf)=h^{!}CC (\mf). \notag
\end{equation}
\end{thm}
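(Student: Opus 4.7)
The plan is to verify that $h^{!}CC(\mf)$ satisfies the Milnor-formula characterization of $CC(h^{\ast}\mf)$ from Theorem \ref{thmsmil}, so that uniqueness forces the desired equality. First I would show that $h^{\ast}\mf$ is micro-supported on the support of $h^{!}CC(\mf)$, namely the image $dh(C\times_{X}W)\subset T^{\ast}W$. The proper $C$-transversality hypothesis guarantees that this image is a closed conical subset of pure dimension $c$. Given any pair $(g,f)$ of an \'{e}tale morphism $g\colon W'\to W$ and a morphism $f\colon W'\to Y$ to a smooth curve that is transversal with respect to $dh(C\times_{X}W)$, one checks that the composite pair $(h\circ g, f)$ is $C$-transversal on $X$, so the local acyclicity of $\mf$ relative to the resulting test morphism on $X$ descends to $g^{\ast}h^{\ast}\mf = (h\circ g)^{\ast}\mf$.

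To pin down the multiplicities, I would factor $h = \pr_{X}\circ \Gamma_{h}$ through its graph and treat separately the smooth case and the closed immersion case. In the smooth case $h = \pr_{X}$, the cotangent exact sequence $0\to h^{\ast}T^{\ast}X\to T^{\ast}W\to T^{\ast}(W/X)\to 0$ presents $dh$ as a closed immersion of vector bundles over $W$; smooth base change identifies the stalks $\phi_{w}(h^{\ast}\mf, f)$ with $\phi_{h(w)}(\mf, f')$ for a local smooth lift $f'$ of $f$ through $h$, and both sides of the Milnor formula line up after a routine intersection-theoretic comparison using the above exact sequence.

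The essential and hardest case is when $h\colon W\hookrightarrow X$ is a closed immersion. Here proper $C$-transversality amounts to $C$ meeting the conormal bundle $T^{\ast}_{W}X$ only in the zero section together with $C\times_{X}W$ having the expected dimension $c$. For a test pair $(g,f)$ on $W$ with an isolated $dh(C\times_{X}W)$-characteristic point $w\in W'$, I would extend $g$ \'{e}tale-locally to a neighborhood of $h(g(w))$ in $X$ and choose a local extension $f'$ of $f$ that is smooth on this neighborhood. A deformation-to-the-normal-bundle argument then identifies $-\dimtot\phi_{w}(h^{\ast}\mf, f)$ with a sum of vanishing-cycle dimensions of $\mf$ along $f'$ at points in the degeneration specializing to $w$, while the projection formula for $dh\colon T^{\ast}X\times_{X}W\to T^{\ast}W$ matches $(h^{\ast}CC(\mf), df)_{T^{\ast}W', w}$ with the corresponding intersection number on $T^{\ast}X$ computed from $CC(\mf)$. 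The main technical point is reconciling this geometric intersection identity with the sign $(-1)^{d-c}$ in the definition of $h^{!}$, which reflects the codimension shift under Gysin restriction and under the perverse $t$-structure when passing from $X$ to $W$.
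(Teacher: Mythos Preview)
The paper does not prove this theorem: it is stated as a citation of \cite[Theorem 7.6]{sa4} and no proof is given in the present manuscript. There is nothing to compare your proposal against here.

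That said, your outline is broadly in the spirit of Saito's original argument in \cite{sa4}: the factorization through the graph into a smooth morphism and a closed immersion, and the use of deformation to the normal bundle for the closed-immersion step, are indeed the structural backbone of that proof. However, several of your steps are substantially more delicate than you indicate. For the micro-support claim, showing that a $h^{\circ}C$-transversal pair $(g,f)$ on $W$ yields a $C$-transversal pair $(h\circ g,f)$ on $X$ requires care (this is \cite[Lemma 3.9]{sa4} and is not automatic). In the smooth case, your proposed identification of $\phi_{w}(h^{\ast}\mf,f)$ with $\phi_{h(w)}(\mf,f')$ via ``a local smooth lift $f'$ of $f$ through $h$'' is not how the argument goes: a morphism $f\colon W\to Y$ to a curve need not factor through $X$, and the actual argument compares test functions on $W$ directly with test functions on $X$ via the Milnor formula on both sides. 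For the closed-immersion case, the deformation-to-the-normal-cone step is genuinely hard and occupies a large part of \cite{sa4}; invoking it as ``a deformation-to-the-normal-bundle argument then identifies\ldots'' elides the real content, including the specialization of characteristic cycles and the compatibility of vanishing cycles under specialization. Your sketch is a reasonable roadmap but not close to a proof.
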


Let $h\colon W\rightarrow X$ be a separated morphism of schemes of finite type over $k$.
Then the functor $Rh^{!}\colon D^{b}_{c}(X,\Lambda)\rightarrow D_{c}^{b}(W,\Lambda)$ 
is defined to be the 
right adjoint functor of $Rh_{!}\colon D^{b}_{c}(W,\Lambda)\rightarrow D^{b}_{c}(X,\Lambda)$.

For constructible complexes $\mf$ and $\mg$ of $\Lambda$-modules on $X$ and $W$
respectively, let $c'_{h,\mf,\mg} \colon\mf\otimes_{\Lambda}Rh_{!}\mg
\rightarrow Rh_{!}(h^{\ast}\mf\otimes_{\Lambda}\mg)$ be the isomorphism 
of projection formula.
For constructible complexes $\mf$ and $\mh$ of $\Lambda$-modules on $X$,
we define a morphism
\begin{equation}
c_{h,\mf,\mh}\colon h^{\ast}\mf\otimesll Rh^{!}\mh\rightarrow
Rh^{!}(\mf\otimesll \mh) \notag
\end{equation}
to be the adjunction
of the composition 
\begin{equation}
Rh_{!}(h^{\ast}\mf\otimesll Rh^{!}\mh)\xrightarrow{c'^{-1}_{h,\mf,Rh^{!}\mh}}
\mf\otimes_{\Lambda}Rh_{!}Rh^{!}\mh \xrightarrow{\id\otimes\varepsilon} \mf\otimesll \mh,
\notag
\end{equation}
where $\varepsilon\colon Rh_{!}Rh^{!}\rightarrow \id$ is the adjunction morphism.

\begin{defn}[{\cite[Definition 8.5]{sa4}}]
\label{defofftransversal}
Let $h\colon W\rightarrow X$ be a separated morphism of schemes of finite type over $k$ and
$\mf$ a constructible complex of $\Lambda$-modules on $X$.
We say that $h$ is $\mf$-\textit{transversal} if the morphism
$c_{h,\mf,\Lambda}\colon h^{\ast}\mf\otimes_{\Lambda}Rh^{!}\Lambda \rightarrow Rh^{!}\mf$ 
is an isomorphism.
\end{defn}

\begin{lem}
\label{lemcltr}
Let $U$ be an open subscheme of $X$ and $j\colon U\rightarrow X$ the open immersion.
Let $\mf$ be a smooth sheaf of $\Lambda$-modules on $U$.
Let $h\colon W\rightarrow X$ be a separated morphism of smooth schemes over $k$.
Let $j'\colon h^{*}U\rightarrow W$ be the base change of $j$ by $h$.
If the canonical morphisms $j_{!}\mf\rightarrow Rj_{\ast}\mf$ and $j'_{!}h^{\ast}\mf\rightarrow Rj'_{\ast}h^{\ast}\mf$ 
are isomorphisms,
then $h$ is $j_{!}\mf$-transversal.
\end{lem}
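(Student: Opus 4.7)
The approach is to verify that $c_{h,j_{!}\mf,\Lambda}\colon h^{\ast}j_{!}\mf \otimesll Rh^{!}\Lambda \to Rh^{!}(j_{!}\mf)$ is an isomorphism by pulling back separately along the open immersion $j'\colon h^{-1}U \to W$ and the complementary closed immersion $i'\colon W - h^{-1}U \to W$. Write $h'\colon h^{-1}U \to U$ for the base change of $h$ along $j$, and $i\colon X - U \to X$ for the closed immersion complementary to $j$.

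For the pullback by $j'^{\ast}$: the étale base change identities $j'^{\ast}h^{\ast} = h'^{\ast}j^{\ast}$ and $j'^{\ast}Rh^{!} = Rh'^{!}j^{\ast}$ (the latter from $Rj'^{!}Rh^{!} = Rh'^{!}Rj^{!}$ combined with $Rj^{!}=j^{\ast}$ and $Rj'^{!}=j'^{\ast}$ for the étale morphisms $j,j'$), together with $j^{\ast}j_{!}\mf = \mf$ and naturality of $c$ under base change, imply that $j'^{\ast}c_{h,j_{!}\mf,\Lambda}$ coincides with $c_{h',\mf,\Lambda}\colon h'^{\ast}\mf \otimesll Rh'^{!}\Lambda \to Rh'^{!}\mf$. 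The latter is an isomorphism because $\mf$ is locally constant of finite rank: étale-locally on $U$ one has $\mf \cong \Lambda^{n}$, reducing $c_{h',\mf,\Lambda}$ to a direct sum of copies of the tautological isomorphism $\Lambda \otimesll Rh'^{!}\Lambda \xrightarrow{\sim} Rh'^{!}\Lambda$.

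For the pullback by $i'^{\ast}$, the source $i'^{\ast}(h^{\ast}j_{!}\mf \otimesll Rh^{!}\Lambda)$ vanishes since $i^{\ast}j_{!}\mf = 0$. For the target, the first hypothesis $j_{!}\mf = Rj_{\ast}\mf$ together with the base change $Rh^{!}Rj_{\ast} \simeq Rj'_{\ast}Rh'^{!}$ (the right-adjoint version of $h^{\ast}Rj_{\ast} \simeq Rj'_{\ast}h'^{\ast}$, standard for étale $j$) yields $Rh^{!}(j_{!}\mf) \simeq Rj'_{\ast}Rh'^{!}\mf$. Applying the isomorphism $c_{h',\mf,\Lambda}$ established above and then the projection formula for $Rj'_{\ast}$ (valid since $\Lambda$ is a field, so every constructible complex has finite Tor-amplitude) gives
\[
Rj'_{\ast}Rh'^{!}\mf \simeq Rj'_{\ast}(h'^{\ast}\mf \otimesll j'^{\ast}Rh^{!}\Lambda) \simeq Rj'_{\ast}(h'^{\ast}\mf) \otimesll Rh^{!}\Lambda \simeq j'_{!}(h'^{\ast}\mf) \otimesll Rh^{!}\Lambda,
\]
where the final step uses the second hypothesis. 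Pulling back by $i'^{\ast}$ annihilates this since $i'^{\ast}j'_{!}(h'^{\ast}\mf) = 0$.

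Since $c_{h,j_{!}\mf,\Lambda}$ becomes an isomorphism after $j'^{\ast}$ and both its source and target vanish after $i'^{\ast}$, the map itself is an isomorphism, giving $j_{!}\mf$-transversality. The technical points to check are the base change $Rh^{!}Rj_{\ast} \simeq Rj'_{\ast}Rh'^{!}$, the projection formula for $Rj'_{\ast}$, and the compatibility $j'^{\ast}c_{h,j_{!}\mf,\Lambda} = c_{h',\mf,\Lambda}$; each is a formal consequence of adjunction and naturality, and none presents a substantive obstacle beyond a careful diagram chase through the definition of $c$.
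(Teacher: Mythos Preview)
Your overall strategy is correct and is essentially what underlies the paper's proof (the paper simply cites \cite[Lemma 8.6.2, Proposition 8.8.1]{sa4} for the argument you have written out). There is, however, one genuine gap.

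The projection formula step
\[
Rj'_{\ast}(h'^{\ast}\mf \otimesll j'^{\ast}Rh^{!}\Lambda) \simeq Rj'_{\ast}(h'^{\ast}\mf) \otimesll Rh^{!}\Lambda
\]
is \emph{not} justified by finite Tor-amplitude alone. The projection formula for $Rf_{\ast}$ (as opposed to $Rf_{!}$) requires the object being factored out to be locally constant (dualizable), not merely constructible or of finite Tor-dimension. A counterexample: take $j'\colon \mathbf{A}^{1}_{k}\setminus\{0\}\to\mathbf{A}^{1}_{k}$, $K=\Lambda$, and $L=i_{\ast}\Lambda$ for the inclusion $i$ of the origin; then $j'^{\ast}L=0$ so $Rj'_{\ast}(K\otimes j'^{\ast}L)=0$, whereas $Rj'_{\ast}K\otimes L$ has nonzero stalk at the origin.

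The fix is to use the one hypothesis you have not yet invoked: smoothness of $W$ and $X$ over $k$. This forces $Rh^{!}\Lambda\simeq\Lambda(c)[2c]$ for a locally constant integer-valued function $c$, hence $Rh^{!}\Lambda$ is invertible and the projection formula for $Rj'_{\ast}$ applies. This is exactly the observation the paper records before invoking \cite[Proposition 8.8.1]{sa4}. With this correction your argument goes through.
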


\begin{proof}
Since $\mf$ is a smooth sheaf on $U$,
the projection $h^{*}U\rightarrow U$ induced by $h$ is $\mf$-transversal
by \cite[Lemma 8.6.2]{sa4}.
Since $W$ and $X$ are smooth over $k$,
the object $Rh^{!}\Lambda$ is isomorphic to $\Lambda(c)[2c]$ for a locally constant function $c$.
Hence the assertion holds by \cite[Proposition 8.8.1]{sa4}.
\end{proof}

\begin{prop}[{cf.\ \cite[Proposition 8.13]{sa4}}]
\label{propftr}
Let $\mf$ be a constructible complex of $\Lambda$-modules on $X$
and $C$ a closed conical subset of $T^{\ast}X$.
Then the following are equivalent:
\begin{enumerate}
\item $\mf$ is micro-supported on $C$.
\item The support of $\mathcal{F}$ in $X$ is a subset of the base $C\cap \zers \subset X$ of $C$ and 
every separated $C$-transversal morphism $h\colon W\rightarrow X$ of
smooth schemes over $k$ is $\mf$-transversal.
\end{enumerate}
Especially, if $C\supset T^{\ast}_{X}X$, 
then $\mf$ is micro-supported on $C$ if and only if every separated $C$-transversal morphism 
$h\colon W\rightarrow X$ of smooth schemes over $k$ is $\mf$-transversal.
\end{prop}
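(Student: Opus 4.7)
The plan is to adapt the argument of \cite[Proposition 8.13]{sa4}, establishing each implication via the interplay between local acyclicity (which defines micro-support in Definition \ref{defofsingsupp}) and the adjunction morphism $c_{h,\mathcal{F},\Lambda}$ (which defines $\mathcal{F}$-transversality in Definition \ref{defofftransversal}).

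For (i) $\Rightarrow$ (ii), I first check that $\Supp(\mathcal{F}) \subset C \cap \zers$. Given a point $x \in X$ whose zero section image lies outside $C$, choose an étale neighborhood $V$ of $x$ sufficiently small that the pair consisting of $h = \id_V$ and a constant morphism $f : V \to \Spec k$ is $C$-transversal; micro-support of $\mathcal{F}$ on $C$ forces the nearby cycles of $\mathcal{F}|_V$ to vanish, whence $\mathcal{F}$ vanishes in a neighborhood of $x$. For the transversality part, let $h : W \to X$ be a separated $C$-transversal morphism of smooth schemes over $k$. Because $W$ and $X$ are smooth, $Rh^{!}\Lambda$ is étale-locally a shifted Tate twist, so checking that $c_{h,\mathcal{F},\Lambda}$ is an isomorphism reduces to checking a local acyclicity condition of the sort furnished by the definition of micro-support, as in \cite[Lemma 8.6]{sa4} and \cite[Proposition 8.8]{sa4}.

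For (ii) $\Rightarrow$ (i), let $(h, f)$ be a $C$-transversal pair with $h : W \to X$ and $f : W \to Y$. Using \cite[Theorem 1.5]{be} as in the proof of Lemma \ref{ssdisttri}, I may assume $Y$ is a smooth curve, and by working locally on $W$ I may assume $h$ is separated. Hypothesis (ii) then gives $\mathcal{F}$-transversality of $h$; combining this with the fact that $f$ is $h^{\circ}C$-transversal (hence $h^{\ast}\mathcal{F}$-transversal by a direct application of \cite[Lemma 8.6.2]{sa4} and Theorem \ref{thmpull}) yields the required local acyclicity of $f$ relative to $h^{\ast}\mathcal{F}$. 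The final assertion of the proposition is then immediate, because the condition $\Supp(\mathcal{F}) \subset C \cap \zers$ is automatic when $C \supset \zers$.

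The main obstacle is the first implication: translating the pointwise local acyclicity condition packaged in the definition of micro-support into the single global statement that $c_{h,\mathcal{F},\Lambda}$ is an isomorphism. This uses in an essential way that $\mathcal{F}$-transversality is preserved under étale base change and detected stalk-by-stalk, together with Saito's reformulation of $\mathcal{F}$-transversality in terms of local acyclicity for test morphisms to smooth curves; the two formulations then match exactly the class of test pairs appearing in the definition of micro-support.
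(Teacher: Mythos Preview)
Your approach is to re-prove \cite[Proposition 8.13]{sa4} from scratch; the paper instead reduces directly to that result. The paper's entire argument is: take affine open covers $X=\bigcup X_{i'}$ and $W\times_X X_{i'}=\bigcup W_{i'j'}$, observe that $C$-transversality of $h$ is equivalent to $C$-transversality of each composite $j_{i'}\circ h_{i'j'}\colon W_{i'j'}\to X_{i'}\to X$, note that these composites are separated (being composites of separated morphisms), and then invoke \cite[Proposition 8.13]{sa4}. This is a five-line reduction, not a reworking of the equivalence.

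Your sketch also has a concrete gap in (ii) $\Rightarrow$ (i). You write that $f$ being $h^{\circ}C$-transversal makes $f$ automatically $h^{\ast}\mathcal{F}$-transversal ``by a direct application of \cite[Lemma 8.6.2]{sa4} and Theorem \ref{thmpull}''. Neither reference does this: \cite[Lemma 8.6.2]{sa4}, as used in Lemma \ref{lemcltr} of this paper, is a statement about \emph{smooth} sheaves, not arbitrary constructible complexes $h^{\ast}\mathcal{F}$; and Theorem \ref{thmpull} computes $CC(h^{\ast}\mathcal{F})$ as $h^{!}CC(\mathcal{F})$ under proper $C$-transversality, which says nothing about the adjunction morphism $c_{f,h^{\ast}\mathcal{F},\Lambda}$. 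Worse, the implication ``$C$-transversal $\Rightarrow$ $\mathcal{F}$-transversal'' for a general constructible complex is precisely the nontrivial content of \cite[Proposition 8.13]{sa4}, so invoking it for $(f,h^{\ast}\mathcal{F},h^{\circ}C)$ is circular unless you already know $h^{\ast}\mathcal{F}$ is micro-supported on $h^{\circ}C$. The clean fix is the paper's: do the affine-cover reduction to separated morphisms and cite \cite[Proposition 8.13]{sa4} once.
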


\begin{proof}
Let $h\colon W\rightarrow X$ be a morphism of smooth schemes over $k$
and $X=\bigcup_{i'}X_{i'}$ an open affine covering of $X$.
Let $W\times_{X}X_{i'}=\bigcup_{j'}W_{i'j'}$ be an open affine covering of 
$W\times_{X}X_{i'}$ for $i'$.
Let $j_{i'}\colon X_{i'}\rightarrow X$ be the open immersion and 
let $h_{i'j'}\colon W_{i'j'}\rightarrow X_{i'}$ be the morphism induced by $h$.
Then the $C$-transversality of $h$ is equivalent to the $C$-transversality
of $j_{i'}\circ h_{i'j'}$ for all $i'$ and $j'$.
Since $j_{i'}$ and $h_{i'j'}$ are separated for every $i'$ and $j'$, 
the composition $j_{i'}\circ h_{i'j'}$ is separated for every $i'$ and $j'$.
Hence the assertion holds by \cite[Proposition 8.13]{sa4}.
\end{proof}

%%%%%%%%%%%%%%%%%%%%%%%%%%%%%%%%%%%%%%%%%%%%%%%%%%%
%%%%%%%%%%%%%%%%%%%%%%%%%%%%%%%%%%%%%%%%%%%%%%%%%%%
\section{Ramification theory}
\label{sram}
%%%%%%%%%%%%%%%%%%%%%%%%%%%%%%%%%%%%%%%%%%%%%%%%%%%
\subsection{Refined Swan conductor and characteristic form}
\label{sslram}
In this subsection, we briefly recall Brylinski-Kato's logarithmic ramification theory
and Matsuda's non-logarithmic variant of Brylinski-Kato's theory.
We refer to \cite{br}, \cite{ka1}, \cite{ka2}, \cite{ma}, and \cite{ya} for more detail.

Let $K$ be a complete discrete valuation field of characteristic $p>0$.
For $n\in \mathbf{Z}_{> 0}$, we regard $H^{1}(K,\mathbf{Z}/n\mathbf{Z})$ as a subgroup of 
$H^{1}(K,\mathbf{Q}/\mathbf{Z})=\varinjlim_{m}H^{1}(K,\mathbf{Z}/m\mathbf{Z})$.
Let $W_{s}(K)$ be the Witt ring of length $s$ for an integer $s\ge 0$.
Let $V\colon W_{s}(K)\rightarrow W_{s+1}(K)$ be the Verschiebung
and $F\colon W_{s}(K)\rightarrow W_{s}(K)$ the Frobenius.
We define 
\begin{equation}
\label{deltas}
\delta_{s}\colon W_{s}(K)\rightarrow H^{1}(K,\mathbf{Q}/\mathbf{Z})
\end{equation}
to be the composition
\begin{equation}
\label{compds}
\delta_{s}\colon
W_{s}(K)\rightarrow W_{s}(K)/(F-1)W_{s}(K)\xrightarrow{\simeq}
H^{1}(K,\mathbf{Z}/p^{s}\mathbf{Z})\rightarrow H^{1}(K,\mathbf{Q}/\mathbf{Z}), 
\end{equation}
where the middle arrow
is the isomorphism in the Artin-Schreier-Witt theory.

We first recall Brylinski-Kato's logarithmic ramification theory.
Let $\dvr_{K}$ be the valuation ring of $K$ and $\mathfrak{m}_{K}$ the maximal ideal of $\dvr_{K}$.
Let $\ord_{K}\colon K\rightarrow \mathbf{Z}\cup\{\infty\}$ be the normalized valuation
such that $\ord_{K}(\pi)=1$ for a uniformizer $\pi$ of $K$.

\begin{defn}[{\cite[Proposition 1]{br}, \cite[Definition (2.2), Corollary (2.5), Definition (3.1), Theorem (3.2) (1)]{ka1}}]
Let $s\ge 0$ be an integer.
\begin{enumerate}
\item Let $a=(a_{s-1},\ldots ,a_{0})$ be an element of $W_{s}(K)$.
We define the order $\ord_{K}(a)$ of $a$ to be $\min_{0\le i\le s-1}\{p^{i}\ord_{K}(a_{i})\}$.
\item We define an increasing filtration $\{\fillog_{n}W_{s}(K)\}_{n\in \mathbf{Z}}$ 
of $W_{s}(K)$ by
\begin{equation}
\label{lwsk}
\fillog_{n}W_{s}(K)=\{a\in W_{s}(K) \;|\;
\ord_{K}(a)\ge -n\}. 
\end{equation}

\item We define an increasing filtration $\{\fillog_{n}H^{1}(K,\BQ/\BZ)\}_{n\in \mathbf{Z}_{\ge 0}}$ 
of $H^{1}(K,\BQ/\BZ)$ by
\begin{equation}
\fillog_{n}H^{1}(K,\BQ/\BZ)=H^{1}(K,\BQ/\BZ)'
+\bigcup_{s\ge 1}\delta_{s}(\fillog_{n}W_{s}(K)),\notag 
\end{equation}
where $H^{1}(K,\BQ/\BZ)'$ denotes the prime to $p$-part of $H^{1}(K,\BQ/\BZ)$.

\item Let $\chi$ be an element of $H^{1}(K,\BQ/\BZ)$.
We define the \textit{Swan conductor} $\sw(\chi)$ of $\chi$ to be
the minimal integer $n\ge 0$ such that $\chi\in \fillog_{n}H^{1}(K,\BQ/\BZ)$.
\end{enumerate}
\end{defn}

For an integer $n\ge 1$ such that  $s'=\min\{\ord_{p}(n),s\}<s$,
by (\ref{lwsk}), we have
\begin{equation}
\label{lfilwskv}
\fillog_{n}W_{s}(K)=\fillog_{n-1}W_{s}(K)+V^{s-s'-1}\fillog_{n}W_{s'+1}(K).
\end{equation}

We put $\Omega_{K}^{1}=\Omega^{1}_{K/K^{p}}$ and 
$\Omega^{1}_{\dvr_{K}}=\Omega^{1}_{\dvr_{K}/\dvr_{K}^{p}}$.
We define an increasing filtration $\{\fillog_{n}\Omega^{1}_{K}\}_{n \in \mathbf{Z}_{\ge 0}}$ 
of $\Omega^{1}_{K}$ by
\begin{equation}
\fillog_{n}\Omega^{1}_{K}
=\{(\alpha d\pi/\pi+\beta)/\pi^{n} \;|\; \alpha \in \dvr_{K}, \beta\in \Omega^{1}_{\dvr_{K}}\}
=\mathfrak{m}_{K}^{-n}\Omega^{1}_{\dvr_{K}}(\log ), \notag
\end{equation}
where $\pi$ is a uniformizer of $K$.

Let $n\ge 1$ be an integer and put $\grlog_{n}=\fillog_{n}/\fillog_{n-1}$.
We consider the morphism
\begin{equation}
-F^{s-1}d\colon W_{s}(K)\rightarrow \Omega^{1}_{K};\; (a_{s-1},\ldots,a_{0})\mapsto -\sum_{i=0}^{s-1}a_{i}^{p^{i}-1}da_{i}. \notag
\end{equation}
Then $-F^{s-1}d$ induces the morphism 
$\varphi_{s}^{(n)}\colon \grlog_{n}W_{s}(K)\rightarrow \grlog_{n}\Omega^{1}_{K}$. 
By \cite[Remark 3.2.12]{ma} or \cite[Proposition 10.7]{as4},
there exists a unique injection 
\begin{equation}
\phi^{(n)}\colon \grlog_{n}H^{1}(K,\BQ/\BZ)\rightarrow \grlog_{n}\Omega^{1}_{K}\notag
\end{equation}
such that the following diagram is commutative for every $s\in \BZ_{\ge 0}$:
\begin{equation}
\label{diagrsw}
\xymatrix{
\grlog_{n}W_{s}(K) \ar[rd] \ar[rr]^-{\varphi_{s}^{(n)}} & & \grlog_{n}\Omega^{1}_{K}\\
& \grlog_{n}H^{1}(K,\BQ/\BZ) \ar[ru]_-{\phi^{(n)}}, &
}
\end{equation}
where the left slanting arrow is induced by $\delta_{s}$.

\begin{defn}[{\cite[(3.4.2)]{ka1}, \cite[Remark 3.2.12]{ma}, see also \cite[Proposition 10.7]{as4}}]
Let $\chi$ be an element of $H^{1}(K,\BQ/\BZ)$.
Assume $\sw(\chi)=n\ge 1$.
We define the \textit{refined Swan conductor} $\rsw(\chi)$
of $\chi$ to be the image of $\chi$ by the composition
\begin{equation}
\fillog_{n}H^{1}(K,\BQ/\BZ)\rightarrow \grlog_{n}H^{1}(K,\BQ/\BZ)
\xrightarrow{\phi^{(n)}}\grlog_{n}\Omega^{1}_{K}.\notag
\end{equation}
\end{defn}

We note that $\rsw(\chi)$ is not $0$.

\begin{lem}
\label{lemrsw}
Let $\chi$ be an element of $H^{1}(K,\mathbf{Q}/\mathbf{Z})$ and
$a$ an element of $W_{s}(K)$ whose image in $H^{1}(K,\mathbf{Q}/\mathbf{Z})$ is
the $p$-part of $\chi$.
We put $\ord_{K}(a)=-n$ and assume $n \ge 1$.
Then the following are equivalent:
\begin{enumerate}
\item $\sw(\chi)=n$.
\item $\rsw(\chi)=\varphi_{s}^{(n)}(\bar{a})$.
\item $\varphi_{s}^{(n)}(\bar{a})\neq 0$ in $\grlog_{n}\Omega_{K}^{1}$.
\end{enumerate}
\end{lem}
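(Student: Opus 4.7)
The plan is to reduce each equivalence to a direct application of the commutativity of diagram (\ref{diagrsw}) together with the injectivity of $\phi^{(n)}$. First I would observe that only the $p$-part of $\chi$ can contribute at positive filtration levels: since $H^{1}(K,\mathbf{Q}/\mathbf{Z})' \subseteq \fillog_{0}H^{1}(K,\mathbf{Q}/\mathbf{Z}) \subseteq \fillog_{n-1}H^{1}(K,\mathbf{Q}/\mathbf{Z})$ as $n\ge 1$, the element $\chi$ and its $p$-part $\chi_{p}:=\delta_{s}(a)$ have the same image in $\grlog_{n}H^{1}(K,\mathbf{Q}/\mathbf{Z})$ and satisfy $\sw(\chi)=\sw(\chi_{p})$. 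Next, the hypothesis $\ord_{K}(a)=-n$ combined with (\ref{lwsk}) places $a$ in $\fillog_{n}W_{s}(K)\setminus\fillog_{n-1}W_{s}(K)$, so $\bar{a}\neq 0$ in $\grlog_{n}W_{s}(K)$ and in particular $\sw(\chi)\le n$.

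With these preliminaries in place, I would apply the commutativity of (\ref{diagrsw}) to the class $\bar{a}$. Writing $\bar{\chi}_{p}$ for the image of $\chi_{p}$ in $\grlog_{n}H^{1}(K,\mathbf{Q}/\mathbf{Z})$, the diagram yields the key identity $\phi^{(n)}(\bar{\chi}_{p})=\varphi_{s}^{(n)}(\bar{a})$. From here the three equivalences follow formally. For (i)$\Leftrightarrow$(iii): the condition $\sw(\chi)=n$ is equivalent, in view of $\sw(\chi)\le n$, to $\bar{\chi}_{p}\neq 0$ in $\grlog_{n}H^{1}(K,\mathbf{Q}/\mathbf{Z})$, which by the injectivity of $\phi^{(n)}$ is equivalent to $\varphi_{s}^{(n)}(\bar{a})\neq 0$. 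For (i)$\Rightarrow$(ii): assuming $\sw(\chi)=n$, the definition of $\rsw$ computes $\rsw(\chi)$ as $\phi^{(n)}$ applied to the image of $\chi$ in $\grlog_{n}H^{1}(K,\mathbf{Q}/\mathbf{Z})$, which by the preceding paragraph equals $\varphi_{s}^{(n)}(\bar{a})$. Finally (ii)$\Rightarrow$(iii) is immediate from the remark that $\rsw(\chi)$ is nonzero.

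The whole argument is essentially formal given the deep input on which it rests---the existence and injectivity of $\phi^{(n)}$ and the commutativity of diagram (\ref{diagrsw})---so I do not anticipate any serious obstacle. The one point requiring care is verifying that the prime-to-$p$ part of $\chi$ contributes nothing at filtration level $n\ge 1$, which is precisely what legitimates replacing $\chi$ by $\chi_{p}=\delta_{s}(a)$ throughout the computation.
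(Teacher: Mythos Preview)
Your proposal is correct and follows essentially the same approach as the paper: both arguments cycle through the implications using the commutativity of diagram~(\ref{diagrsw}) and the fact that $\rsw(\chi)\neq 0$, with your version being slightly more explicit about why the prime-to-$p$ part can be ignored and invoking the injectivity of $\phi^{(n)}$ where the paper equivalently uses that $\phi^{(n)}(0)=0$ in its contrapositive argument for (iii)$\Rightarrow$(i).
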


\begin{proof}
The implication (i) $\Rightarrow$ (ii) is clear.
The implication (ii) $\Rightarrow$ (iii) follows from $\rsw(\chi)\neq 0$.
We prove (iii) $\Rightarrow$ (i).
Since the $p$-part of $\chi$ is the image of $a$ in $H^{1}(K,\mathbf{Q}/\mathbf{Z})$,
we have $\sw(\chi)\le n$.
Suppose $\sw(\chi)<n$.
Then the image of $a$ in $\grlog_{n}H^{1}(K,\mathbf{Q}/\mathbf{Z})$ is $0$.
By the commutativity of (\ref{diagrsw}), we have $\varphi_{s}^{(n)}(\bar{a})=0$,
which contradicts to the condition (iii).
Hence the assertion holds.
\end{proof}

Let $X$ be a smooth scheme over a perfect field $k$ of characteristic $p>0$.
Let $D$ be a divisor on $X$ with simple normal crossings and 
$\{D_{i}\}_{i\in I}$ the irreducible components of $D$.
Let $\dvr_{K_{i}}=\hat{\dvr}_{X,\mathfrak{p}_{i}}$ be the completion of the local ring 
$\dvr_{X,\mathfrak{p}_{i}}$ at the generic point $\mathfrak{p}_{i}$ of $D_{i}$ and
$K_{i}=\Frac \dvr_{K_{i}}$ the local field at $\mathfrak{p}_{i}$ for $i\in I$.
We put $U=X-D$. 
For an element $\chi$ of $H^{1}_{\et}(U,\BQ/\BZ)$, let
$\chi|_{K_{i}}\in H^{1}(K_{i},\mathbf{Q}/\mathbf{Z})$ be the image of $\chi$
by the canonical morphism $H^{1}_{\et}(U,\mathbf{Q}/\mathbf{Z})\rightarrow H^{1}(K_{i},\mathbf{Q}/\mathbf{Z})$ for $i\in I$. 

\begin{defn}[{\cite[(3.4.1)]{ka2}}]
\label{defswdiv}
Let $\chi$ be an element of $H^{1}_{\et}(U,\BQ/\BZ)$.
We define the \textit{Swan conductor divisor} $R_{\chi}$ of $\chi$ by 
\begin{equation}
R_{\chi}=\sum_{i\in I}\sw(\chi|_{K_{i}})D_{i}. \notag
\end{equation} 

Let $Z_{\chi}=\Supp(R_{\chi})$ be the support of $R_{\chi}$.
We write $I_{\mW,\chi}$ for
the index set of irreducible components of $Z_{\chi}$, which is a subset of the index set $I$,
and we put $I_{\mT,\chi}=I- I_{\mW,\chi}$.

For a point $x$ on $D$, we define a subset $I_{x}$ of the index set $I$ by 
$I_{x}= \{i\in I \; |\; x \in D_{i}\}$. 
For an element $\chi$ of $H^{1}_{\et}(U,\mathbf{Q}/\mathbf{Z})$ and a point $x$ on $D$,
we define subsets $I_{\mW,\chi,x}$ and $I_{T,\chi,x}$ of $I$ by
$I_{\mW,\chi,x}=I_{x}\cap I_{\mW,\chi}$ and $I_{\mT,\chi,x}=I_{x}\cap I_{\mT,\chi}$ respectively.
\end{defn}

For an element $\chi$ of $H^{1}_{\et}(U,\mathbf{Q}/\mathbf{Z})$, 
there exists a unique global section 
\begin{equation}
\rsw(\chi)\in \Gamma(Z_{\chi},\Omega^{1}_{X}(\log D)(R_{\chi})|_{Z_{\chi}}), 
\notag 
\end{equation}
such that the germ $\rsw(\chi)_{\mathfrak{p}_{i}}$ of $\rsw(\chi)$ at $\mathfrak{p}_{i}$
is $\rsw(\chi|_{K_{i}})$ for every $i\in I_{\mW,\chi}$
by \cite[(3.4.2)]{ka2}.
Here we note that if $\sw(\chi|_{K_{i}})=n_{i}\ge 1$ then 
$\Omega^{1}_{X}(\log D)(R_{\chi})_{\mathfrak{p}_{i}}=\grlog_{n_{i}}\Omega_{K_{i}}^{1}$.
We recall a construction of $\rsw(\chi)$ using sheaves of Witt vectors in Subsection \ref{sscrc}.
We call $\rsw(\chi)$ the \textit{refined Swan conductor} of $\chi$.

\begin{defn}[{\cite[(3.4.3), Definition 4.2, Lemma 4.3]{ka2}}]
\label{defofclean}
Let $\chi$ be an element of $H^{1}_{\et}(U,\BQ/\BZ)$.
\begin{enumerate}
\item We say that $(X,U,\chi)$ is \textit{clean} at $x\in X$ if 
one of the following conditions is satisfied:
\begin{enumerate}
\item $x\notin Z_{\chi}$.
\item $x\in Z_{\chi}$ and $\rsw(\chi)_{x}$ is a part of a basis of the free $\dvr_{Z_{\chi},x}$-module
$\Omega^{1}_{X}(\log D)(R_{\chi})|_{Z_{\chi},x}$.
\end{enumerate}

We say that $(X,U,\chi)$ is \textit{clean} if $(X,U,\chi)$ is clean at every point on $X$.

\item Let $x$ be a closed point of $D$ and $i$ an element of $I_{\mW,\chi,x}$.
We define $\ord(\chi;x,D_{i})$ to be the maximal integer $n\ge 0$ such that 
\begin{equation}
\rsw(\chi)|_{D_{i},x} \in \mathfrak{m}_{x}^{n} \Omega^{1}_{X}(\log D)(R_{\chi})|_{D_{i},x}, \notag
\end{equation}  
where $\mathfrak{m}_{x}$ is the maximal ideal at $x$.
\end{enumerate} 
\end{defn}

\begin{lem}
\label{lemclean}
Let $\chi$ be an element of $H^{1}_{\et}(U,\mathbf{Q}/\mathbf{Z})$. 
\begin{enumerate}
\item Let $x$ be a closed point of $Z_{\chi}$.
The following are equivalent:
\begin{enumerate}
\item $(X,U,\chi)$ is clean at $x$.
\item $\ord(\chi; x,D_{i})=0$ for some $i\in I_{\mW,\chi,x}$. 
\item $\ord(\chi; x,D_{i})=0$ for every $i\in I_{\mW,\chi,x}$.
\end{enumerate}
\item The set of points where $(X,U,\chi)$ is not clean is a closed subset of $X$ 
of codimension $\ge 2$.
\item $(X,U,\chi)$ is clean if and only if $(X,U,\chi)$ is clean at every closed point of $Z_{\chi}$.
\end{enumerate}
\end{lem}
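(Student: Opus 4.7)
The plan is to reduce all three parts to a single local observation at a closed point $x\in D$: the quotient maps $\mathcal{O}_{Z_{\chi},x}\twoheadrightarrow \mathcal{O}_{D_{i},x}$ induce the identity on residue fields, both being $k(x)$, and hence the reductions modulo $\mathfrak{m}_{x}$ of $\rsw(\chi)_{x}$ and of each $\rsw(\chi)|_{D_{i},x}$ (for $i\in I_{\mW,\chi,x}$) coincide as elements of the $k(x)$-vector space $\Omega^{1}_{X}(\log D)(R_{\chi})\otimes k(x)$.

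For part (i), Nakayama's lemma applied to the free $\mathcal{O}_{Z_{\chi},x}$-module $\Omega^{1}_{X}(\log D)(R_{\chi})|_{Z_{\chi},x}$ shows that (a) is equivalent to the condition $\rsw(\chi)_{x}\notin \mathfrak{m}_{x}\cdot \Omega^{1}_{X}(\log D)(R_{\chi})|_{Z_{\chi},x}$. By the definition of $\ord(\chi;x,D_{i})$, conditions (b) and (c) likewise translate to the same nonvanishing of $\rsw(\chi)|_{D_{i},x}$ modulo $\mathfrak{m}_{x}$ for some, respectively every, $i\in I_{\mW,\chi,x}$. The observation above shows that all three conditions are equivalent.

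For part (ii), the non-clean locus in $Z_{\chi}$ is the vanishing scheme of $\rsw(\chi)$, viewed as a global section of the locally free sheaf $\Omega^{1}_{X}(\log D)(R_{\chi})|_{Z_{\chi}}$, and is therefore closed in $Z_{\chi}$ and in $X$. By construction $\rsw(\chi)_{\mathfrak{p}_{i}}=\rsw(\chi|_{K_{i}})\neq 0$ for each $i\in I_{\mW,\chi}$, so the vanishing locus of $\rsw(\chi)|_{D_{i}}$ is a proper closed subset of $D_{i}$, of codimension at least one in $D_{i}$ and thus at least two in $X$; taking the union over $i\in I_{\mW,\chi}$ yields the bound.

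Part (iii) is then formal: the forward implication is immediate from the definition. For the converse, the non-clean locus is closed in $X$ by (ii) and contained in $Z_{\chi}$, since points of $X\setminus Z_{\chi}$ are clean by definition. Since $X$ is of finite type over $k$, any nonempty closed subset of $X$ contains a closed point of $X$; such a point would be a non-clean closed point of $Z_{\chi}$, contradicting the hypothesis. The argument presents no substantive obstacle beyond bookkeeping in local algebra; the only external input is the defining property $\rsw(\chi)_{\mathfrak{p}_{i}}=\rsw(\chi|_{K_{i}})$ needed in part (ii).
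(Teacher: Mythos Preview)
Your proof is correct and follows essentially the same approach as the paper: both reduce (i) to the observation that $\ord(\chi;x,D_{i})=0$ is equivalent to the nonvanishing of $\rsw(\chi)_{x}$ in $\Omega^{1}_{X}(\log D)(R_{\chi})\otimes_{\mathcal{O}_{X,x}}k(x)$ (independently of $i$), characterize the non-clean locus in (ii) as the vanishing locus of $\rsw(\chi)$ avoiding the generic points $\mathfrak{p}_{i}$, and deduce (iii) from the existence of closed points in nonempty closed subsets. Your explicit invocation of Nakayama and the remark that the surjections $\mathcal{O}_{Z_{\chi},x}\twoheadrightarrow \mathcal{O}_{D_{i},x}$ induce the identity on residue fields make the argument slightly more detailed than the paper's, but the content is the same.
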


\begin{proof}
(i) Let $i$ be an element of $I_{\mW,\chi,x}$.
Then the condition $\ord(\chi;x,D_{i})=0$ is equivalent to 
$\rsw(\chi)_{x}\neq 0$ in $\Omega^{1}_{X}(\log D)_{x}\otimes_{\dvr_{X,x}}k(x)$.
Hence both the conditions (b) and (c) are equivalent to the condition (a).

(ii) Let $Z'$ be the set of points on $X$ where $(X,U,\chi)$ is not clean.
Since $Z'$ is the set of points $x$ on $Z_{\chi}$ such that 
$\rsw(\chi)_{x}\in \mathfrak{m}_{x}\Omega^{1}_{X}(\log D)(R_{\chi})|_{Z_{\chi},x}$,
the set $Z'$ is a closed subset of $Z_{\chi}$.
Since $Z_{\chi}$ is a closed subset of $X$ and we have
$\rsw(\chi)_{\mathfrak{p}_{i}}=\rsw(\chi|_{K_{i}})\neq 0$ for every $i\in I_{\mW,\chi}$,
the set $Z'$ is a closed subset of $X$ of codimension $\ge 2$.

(iii) Since every non-empty closed subset of $Z_{\chi}$ has a closed point,
the assertion holds by (ii).
\end{proof}

Let
$\mathrm{res}_{i}\colon \Omega^{1}_{X}(\log D)|_{D_{i}}\rightarrow \dvr_{D_{i}}$
be the residue morphism for $i\in I$.
For an element $\chi$ of $H^{1}_{\et}(U,\mathbf{Q}/\mathbf{Z})$ and $i\in I_{\mW,\chi}$,
let $\xi_{i}(\chi) \colon \dvr_{X}(-R_{\chi})|_{D_{i}}\rightarrow \dvr_{D_{i}}$ be the composition
\begin{equation}
\label{xichi}
\xi_{i}(\chi)\colon \dvr_{X}(-R_{\chi})|_{D_{i}}\xrightarrow{\times \rsw(\chi)|_{D_{i}}} \Omega^{1}_{X}(\log D)|_{D_{i}}
\xrightarrow{\res_{i}} \dvr_{D_{i}}. 
\end{equation}

\begin{defn}[{\cite[Definition (7.4)]{ka1}}]
\label{defsclean}
Let $\chi$ be an element of $H^{1}_{\et}(U,\mathbf{Q}/\mathbf{Z})$.
We say that $(X,U,\chi)$ is \textit{strongly} clean at $x\in X$ if one of the following conditions is satisfied:
\begin{enumerate}
\item $x\notin Z_{\chi}$.
\item $x\in Z_{\chi}$ and the image by $\xi_{i}(\chi)$ (\ref{xichi}) in $k(x)$ is not $0$ for every $i\in I_{\mW,\chi,x}$.
\end{enumerate}
We say that $(X,U,\chi)$ is \textit{strongly} clean if $(X,U,\chi)$ is strongly clean at every point on $X$.
\end{defn}

We note that  
if $(X,U,\chi)$ is strongly clean at a point $x\in X$ then $(X,U,\chi)$ is clean at $x$
by Lemma \ref{lemclean} (i), since the condition (ii) in Definition \ref{defsclean} deduces
$\ord(\chi;x,D_{i})=0$ for every $i\in I_{\mW,\chi,x}$ if $x\in Z_{\chi}$.
\vspace{0.2cm}

We next recall Matsuda's non-logarithmic ramification theory.
Let $K$ be a complete discrete valuation field of characteristic $p>0$.
Let $\dvr_{K}$ be the valuation ring of $K$ and $\mathfrak{m}_{K}$ the maximal ideal of $\dvr_{K}$.
Let $F_{K}=\dvr_{K}/\mathfrak{m}_{K}$ be the residue field of $K$.

\begin{defn}[{cf.\ \cite[3.1, Definition 3.1.1, Definition 3.2.5]{ma}}]
Let $s\ge 0$ be an integer.
\begin{enumerate}
\item We define an 
increasing filtration $\{\fil_{m}W_{s}(K)\}_{m\in \mathbf{Z}_{\ge 1}}$ of $W_{s}(K)$ by
\begin{equation}
\label{nlfwsk}
\fil_{m}W_{s}(K)=\fillog_{m-1}W_{s}(K)+V^{s-s'}\fillog_{m}W_{s'}(K),  
\end{equation}
where $s'=\min\{\ord_{p}(m), s\}$.

\item We define an 
increasing filtration $\{\fil_{m}H^{1}(K,\BQ/\BZ)\}_{m\in \mathbf{Z}_{\ge 1}}$ of $H^{1}(K,\BQ/\BZ)$ by
\begin{equation}
\label{filph}
\fil_{m}H^{1}(K,\BQ/\BZ)=H^{1}(K,\BQ/\BZ)'
+\bigcup_{s\ge 1}\delta_{s}(\fil_{m}W_{s}(K)),
\end{equation}
where $H^{1}(K,\BQ/\BZ)'$ denotes the prime to $p$-part of $H^{1}(K,\BQ/\BZ)$.

\item Let $\chi$ be an element of $H^{1}(K,\BQ/\BZ)$.
We define the \textit{total dimension} $\dt(\chi)$
of $\chi$ to be the minimal integer $m\ge 1$ such that
$\chi\in \fil_{m}\hok$.
\end{enumerate}
\end{defn}

By (\ref{nlfwsk}), we have
\begin{equation}
\label{seqws}
\fillog_{m-1}W_{s}(K)\subset \fil_{m}W_{s}(K)\subset \fillog_{m}W_{s}(K).
\end{equation}
Further, since $\ord_{p}(1)=0$, we have
\begin{equation}
\label{flzfo}
\fillog_{0}W_{s}(K)=\fil_{1}W_{s}(K). 
\end{equation}
Hence we have
\begin{equation}
\label{seqfilh}
\fillog_{m-1}\hok\subset \fil_{m}\hok \subset \fillog_{m}\hok
\end{equation}
and 
\begin{equation}
\label{eqfilh}
\fillog_{0}\hok=\fil_{1}\hok. 
\end{equation}

For an element $\chi$ of $\fil_{m}H^{1}(K,\mathbf{Q}/\mathbf{Z})$ 
for an integer $m\in \mathbf{Z}_{\ge 1}$,
we have $\chi\in \fillog_{m-1}H^{1}(K,\mathbf{Q}/\mathbf{Z})$ or 
$\chi\in \fillog_{m}H^{1}(K,\mathbf{Q}/\mathbf{Z})
\setminus \fillog_{m-1}H^{1}(K,\mathbf{Q}/\mathbf{Z})$ by (\ref{seqfilh}).
Hence, for an element $\chi$ of $H^{1}(K,\mathbf{Q}/\mathbf{Z})$, 
we have $\dt(\chi)=\sw(\chi)+1$ or $\dt(\chi)=\sw(\chi)$.
For an element $\chi$ of $H^{1}(K,\mathbf{Q}/\mathbf{Z})$ such that $\sw(\chi)>0$,
we say that $\chi$ is of {\it type} $\mI$ if $\dt(\chi)=\sw(\chi)+1$ and
that $\chi$ is of {\it type} $\mII$ if $\dt(\chi)=\sw(\chi)$.

\begin{lem}
\label{lemgrqot}
Let $m\ge 1$ and $s\ge 0$ be integers.
We put $s'=\min\{\ord_{p}(m),s\}$.
\begin{enumerate}
\item Assume $s>s'$.
For $a\in K$, let $(\underline{a})_{s'}=(a_{s-1},\ldots,a_{0})$ be the element of 
$W_{s}(K)$ defined by $a_{i}=a$ if $i=s'$ and $a_{i}=0$ if $i\neq s'$.
Then the injection $K\rightarrow W_{s}(K)$ defined by $a\mapsto (\underline{a})_{s'}$ induces an isomorphism
\begin{equation}
\grlog_{m/p^{s'}}K\rightarrow\fillog_{m}W_{s}(K)/\fil_{m}W_{s}(K). \notag
\end{equation}
\item The morphism $V^{s-s'}\colon W_{s'}(K)\rightarrow W_{s}(K)$ induces an isomorphism
\begin{equation}
\grlog_{m}W_{s'}(K)\rightarrow\fil_{m}W_{s}(K)/\fillog_{m-1}W_{s}(K). \notag
\end{equation}
\end{enumerate}
\end{lem}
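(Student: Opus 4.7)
The plan is to treat (ii) as an immediate consequence of the definition (\ref{nlfwsk}) of $\fil_{m}W_{s}(K)$, and to prove (i) by isolating the contribution of the $s'$-th Witt coordinate.

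For (ii), by (\ref{nlfwsk}) we have $\fil_{m}W_{s}(K)=\fillog_{m-1}W_{s}(K)+V^{s-s'}\fillog_{m}W_{s'}(K)$, so $V^{s-s'}$ restricts to a surjection $\fillog_{m}W_{s'}(K)\twoheadrightarrow\fil_{m}W_{s}(K)/\fillog_{m-1}W_{s}(K)$. Verifying that this surjection factors through $\grlog_{m}W_{s'}(K)$ and has trivial kernel amounts to checking that $V^{s-s'}$ both respects and reflects the log filtration on Witt vectors, i.e.\ $V^{s-s'}(b)\in\fillog_{n}W_{s}(K)\iff b\in\fillog_{n}W_{s'}(K)$ for every $n\ge 0$. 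This is a direct computation with the order function $\ord_{K}$ on Witt vector components. Taking $n=m-1$ then identifies the kernel with $\fillog_{m-1}W_{s'}(K)$, yielding the desired isomorphism.

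For (i), the hypothesis $s>s'$ together with $s'=\min\{\ord_{p}(m),s\}$ forces $s'=\ord_{p}(m)$, so $m=p^{s'}m_{0}$ with $\gcd(m_{0},p)=1$ and $m_{0}=m/p^{s'}\in\mathbf{Z}_{>0}$. Computing $\ord_{K}((\underline{a})_{s'})=p^{s'}\ord_{K}(a)$, the map $K\to W_{s}(K)$ sends $\fillog_{m_{0}}K$ into $\fillog_{m}W_{s}(K)$. If $a\in\fillog_{m_{0}-1}K$, then $\ord_{K}((\underline{a})_{s'})\ge -p^{s'}(m_{0}-1)=-(m-p^{s'})\ge -(m-1)$, placing $(\underline{a})_{s'}\in\fillog_{m-1}W_{s}(K)\subset\fil_{m}W_{s}(K)$. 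Hence the map descends to a well-defined $\grlog_{m_{0}}K\to\fillog_{m}W_{s}(K)/\fil_{m}W_{s}(K)$.

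For surjectivity, given $c=(c_{s-1},\ldots,c_{0})\in\fillog_{m}W_{s}(K)$, I show $c\equiv(\underline{c_{s'}})_{s'}\pmod{\fil_{m}W_{s}(K)}$. Treating Witt addition componentwise modulo carries (whose $\ord_{K}$ is strictly smaller and is therefore absorbed by $\fillog_{m-1}W_{s}(K)$), contributions at positions $i<s'$ lie in $V^{s-s'}\fillog_{m}W_{s'}(K)$, while contributions at positions $i>s'$ already lie in $\fillog_{m-1}W_{s}(K)$: for $i>s'=\ord_{p}(m)$, $p^{i}\nmid m$ forces $\lceil -m/p^{i}\rceil=\lceil -(m-1)/p^{i}\rceil$, so the $\fillog_{m}$-constraint at those positions collapses to the $\fillog_{m-1}$-constraint. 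For injectivity, write $(\underline{a})_{s'}=c'+V^{s-s'}(b)$ with $c'\in\fillog_{m-1}W_{s}(K)$ and $V^{s-s'}(b)$ supported at positions distinct from $s'$; extracting the $s'$-th component pins down $\ord_{K}(a)\ge\lceil -(m-1)/p^{s'}\rceil=-(m_{0}-1)$. The main obstacle is the non-componentwise nature of Witt addition, but the Witt-carry polynomials involve divisions by $p$ that place them in strictly smaller pieces of the log filtration, harmlessly absorbed into $\fillog_{m-1}W_{s}(K)$.
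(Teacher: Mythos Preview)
Your treatment of (ii) is fine and matches the paper.  For (i), however, your justification of where the Witt carries land is wrong, and this is not a harmless slip.  Take $p=2$, $s=2$, $s'=1$, $m=2$, so $m_{0}=1$.  For $c=(c_{1},c_{0})\in\fillog_{2}W_{2}(K)$ with $\ord_{K}(c_{1})=-1$, one computes $c-(\underline{c_{1}})_{1}=(0,c_{0}+c_{1}^{2})$, and $\ord_{K}(c_{1}^{2})=-2$, so the carry term $(0,c_{1}^{2})$ is \emph{not} in $\fillog_{1}W_{2}(K)$.  The statement ``Witt-carry polynomials involve divisions by $p$ that place them in strictly smaller pieces of the log filtration'' is not meaningful in characteristic $p$: the universal addition polynomials have integer coefficients, and the carry from position $s'$ to position $s'-1$ is homogeneous of degree $p$, so its weighted order is exactly $-m$, not $-(m-1)$.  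What is true, and what you need, is that the carry lies in $V^{s-s'}\fillog_{m}W_{s'}(K)$, the \emph{other} summand of $\fil_{m}W_{s}(K)$ in (\ref{nlfwsk}).  Concretely: both $c$ and the element you subtract lie in $\fillog_{m}W_{s}(K)$, and once the components at positions $\ge s'$ agree, the difference lies in $V^{s-s'}W_{s'}(K)\cap\fillog_{m}W_{s}(K)=V^{s-s'}\fillog_{m}W_{s'}(K)\subset\fil_{m}W_{s}(K)$.

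The paper's one-line proof avoids this carry bookkeeping entirely by invoking (\ref{lfilwskv}): write $c=e+V^{s-s'-1}(d)$ with $e\in\fillog_{m-1}W_{s}(K)\subset\fil_{m}W_{s}(K)$ and $d\in\fillog_{m}W_{s'+1}(K)$.  Since $V^{s-s'-1}$ is a group homomorphism, no carry analysis is needed; one then observes $V^{s-s'-1}(d)-(\underline{d_{s'}})_{s'}\in V^{s-s'}W_{s'}(K)\cap\fillog_{m}W_{s}(K)\subset\fil_{m}W_{s}(K)$, giving surjectivity with representative $d_{s'}$ (not $c_{s'}$).  Your injectivity argument is correct as written, since above position $s'$ both $(\underline{a})_{s'}$ and $V^{s-s'}(b)$ vanish, so there is no carry into position $s'$.
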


\begin{proof}
The assertion (i) holds by (\ref{lfilwskv}) and (\ref{nlfwsk}), 
and the assertion (ii) holds by (\ref{nlfwsk}).
\end{proof}
 
We define an increasing filtration $\{\fil_{m}\Omega^{1}_{K}\}_{m \in \mathbf{Z}_{\ge 1}}$ of $\Omega^{1}_{K}$ by
\begin{equation}
\fil_{m}\Omega^{1}_{K}=(1/\pi^{m})\cdot
\Omega^{1}_{\dvr_{K}}=\mathfrak{m}_{K}^{-m}\Omega^{1}_{\dvr_{K}}, \notag 
\end{equation}
where $\pi$ is a uniformizer of $K$.
We put $\gr_{m}=\fil_{m}/\fil_{m-1}$ for $m\in \mathbf{Z}_{\ge 2}$.
By \cite[3.2]{ma} and \cite[Proposition 1.17 (i)]{ya}, there exists a unique morphism
\begin{equation}
\varphi_{s}^{\prime (m)}\colon \gr_{m}W_{s}(K)\rightarrow \gr_{m}\Omega^{1}_{K}\otimes_{F_{K}}F_{K}^{1/p} \notag
\end{equation}
for $m\ge 2$
such that 
\begin{equation}
\label{vphip}
\varphi_{s}^{\prime (m)}((a_{s-1},\ldots,a_{0}))=\begin{cases}
-da_{0}+\sqrt{\overline{\pi^{2}a_{0}}}d\pi/\pi^{2} & ((p,m)=(2,2)), \\
-\sum_{i=0}^{s-1}a_{i}^{p^{i}-1}da_{i} &((p,m)\neq (2,2))
\end{cases} 
\end{equation}
if $\pi$ is a uniformizer of $K$.
By \cite[Proposition 3.2.1, Proposition 3.2.3]{ma}
and \cite[Proposition 1.17 (ii)]{ya},
for $m\ge 2$,
there exists a unique injection 
\begin{equation}
\label{phipm}
\phi^{\prime (m)}\colon \gr_{m}H^{1}(K,\BQ/\BZ)\rightarrow 
\gr_{m}\Omega^{1}_{K}\otimes_{F_{K}} F_{K}^{1/p} 
\end{equation}
such that the following diagram is commutative for every $s\in \mathbf{Z}_{\ge 0}$:
\begin{equation}
\label{diagcform}
\xymatrix{
\gr_{m}W_{s}(K) \ar[rd] \ar[rr]^-{\varphi_{s}^{\prime (m)}} & & \gr_{m}\Omega^{1}_{K}\otimes_{F_{K}}
F_{K}^{1/p}\\
& \gr_{m}H^{1}(K,\BQ/\BZ)\ar[ru]_-{\phi^{\prime (m)}}, &
}
\end{equation}
where the left slanting arrow is induced by $\delta_{s}$.

If $(p,m)\neq(2,2)$, then we may replace 
$\gr_{m}\Omega^{1}_{K}\otimes_{F_{K}}F_{K}^{1/p}$
by $\gr_{m}\Omega^{1}_{K}$ in (\ref{phipm}) by (\ref{vphip}) and the commutativity of (\ref{diagcform}).
Further, even in the case where $(p,m)=(2,2)$, 
if we restrict $\gr_{2}\hok$ to $\grlog_{1}\hok\subset \gr_{2}\hok$,
then we may replace 
$\gr_{m}\Omega^{1}_{K}\otimes_{F_{K}}F_{K}^{1/p}$
by $\gr_{m}\Omega^{1}_{K}$ in (\ref{phipm}) by (\ref{vphip}) and the commutativity of (\ref{diagcform}),
since we may replace $\gr_{m}W_{s}(K)$ by $\grlog_{1}W_{s}(K)$ in (\ref{diagcform}) if $(p,m)=(2,2)$ and
if we restrict $\gr_{2}\hok$ to $\grlog_{1}\hok$.

\begin{defn}[{\cite[Definition 3.2.5]{ma}, \cite[Definition 1.19]{ya}}]
Let $\chi$ be an element of $H^{1}(K,\BQ/\BZ)$.
Assume $\dt(\chi)=m\ge 2$.
We define the \textit{characteristic form}
$\cform(\chi)$ of $\chi$ to be the image of $\chi$ by the composition
\begin{equation}
\fil_{m}\hok\rightarrow \gr_{m}\hok \xrightarrow{\phi^{\prime (m)}} \gr_{m}\Omega^{1}_{K}\otimes_{F_{K}}
F_{K}^{1/p}. \notag
\end{equation}
\end{defn}

We note that $\cform(\chi)$ is not $0$.
If $(p,\sw(\chi),\dt(\chi))\neq(2,2,2)$, then
we may regard $\cform(\chi)$ as an element of $\gr_{m}\Omega^{1}_{K}$.
For an element $\chi$ of $\hok$ such that $\sw(\chi)\ge 1$,
we say that $\chi$ is of {\it usual type} if $(p,\sw(\chi),\dt(\chi))\neq(2,2,2)$
and that $\chi$ is of {\it exceptional type} if $(p,\sw(\chi),\dt(\chi))=(2,2,2)$.

\begin{lem}
\label{lemcform}
Let $\chi$ be an element of $H^{1}(K,\mathbf{Q}/\mathbf{Z})$ and
$a$ an element of $W_{s}(K)$ whose image in $H^{1}(K,\mathbf{Q}/\mathbf{Z})$ is
the $p$-part of $\chi$.
Let $m\ge 2$ be an integer and assume $a\in \fil_{m}W_{s}(K)$.
Then the following are equivalent:
\begin{enumerate}
\item $\dt(\chi)=m$.
\item $\cform(\chi)=\varphi'^{(m)}_{s}(\bar{a})$.
\item $\varphi'^{(m)}_{s}(\bar{a})\neq 0$ in $\gr_{m}\Omega_{K}^{1}\otimes_{F_{K}}F_{K}^{1/p}$.
\end{enumerate}
\end{lem}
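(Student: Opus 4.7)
The plan is to imitate the proof of Lemma \ref{lemrsw} essentially verbatim, with the logarithmic filtration $\fillog$ replaced by the non-logarithmic filtration $\fil$, the refined Swan conductor $\rsw$ replaced by the characteristic form $\cform$, and the diagram (\ref{diagrsw}) replaced by the diagram (\ref{diagcform}). I would organize the argument as three implications (i) $\Rightarrow$ (ii) $\Rightarrow$ (iii) $\Rightarrow$ (i).

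First, I would treat the easy implications. For (i) $\Rightarrow$ (ii): by hypothesis $a\in \fil_{m}W_{s}(K)$ and the image of $\delta_{s}(a)$ is the $p$-part of $\chi$, so the image of $\bar a\in \gr_{m}W_{s}(K)$ in $\gr_{m}H^{1}(K,\BQ/\BZ)$ equals the class of $\chi$ modulo $\fil_{m-1}H^{1}(K,\BQ/\BZ)$; here I would remark that the prime-to-$p$ part of $\chi$ lies in $\fillog_{0}H^{1}(K,\BQ/\BZ)=\fil_{1}H^{1}(K,\BQ/\BZ)\subset \fil_{m-1}H^{1}(K,\BQ/\BZ)$ by (\ref{eqfilh}) and $m\ge 2$, so it does not contribute to the class in $\gr_{m}$. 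Then commutativity of (\ref{diagcform}) gives $\cform(\chi)=\phi'^{(m)}(\overline{\delta_{s}(a)})=\varphi'^{(m)}_{s}(\bar a)$. The implication (ii) $\Rightarrow$ (iii) is immediate from the remark in the text that $\cform(\chi)\neq 0$.

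For (iii) $\Rightarrow$ (i): since $a\in \fil_{m}W_{s}(K)$, we already have $\dt(\chi)\le m$. Suppose for contradiction that $\dt(\chi)\le m-1$, so $\chi\in \fil_{m-1}H^{1}(K,\BQ/\BZ)$. Then the class of $\delta_{s}(a)$ in $\gr_{m}H^{1}(K,\BQ/\BZ)$ is zero, because (as above) the prime-to-$p$ part contributes nothing to $\gr_{m}$ and the $p$-part coincides with the image of $\delta_{s}(a)$. By the commutativity of (\ref{diagcform}), $\varphi'^{(m)}_{s}(\bar a)=\phi'^{(m)}(0)=0$, contradicting (iii). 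This concludes the proof.

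I do not anticipate a genuine obstacle here; the one point requiring care is the appearance of the prime-to-$p$ summand in the definition (\ref{filph}) of $\fil_{m}H^{1}(K,\BQ/\BZ)$, which might look as if it could ruin the identification of the $\gr_{m}$-classes of $\chi$ and of $\delta_{s}(a)$. This is handled uniformly by (\ref{eqfilh}) together with the hypothesis $m\ge 2$, which confines the prime-to-$p$ part to $\fil_{1}\subset \fil_{m-1}$. No additional input from Matsuda's theory beyond the commutative diagram (\ref{diagcform}) and the non-vanishing of $\cform$ is needed.
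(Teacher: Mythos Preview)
Your proof is correct and follows exactly the approach the paper takes: the paper's proof simply says the assertion holds as in Lemma~\ref{lemrsw}, using $\cform(\chi)\neq 0$ and the commutativity of (\ref{diagcform}). Your additional remark about the prime-to-$p$ part lying in $\fil_{1}\subset\fil_{m-1}$ is a harmless clarification of a detail the paper leaves implicit.
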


\begin{proof}
The assertion holds similarly as the proof of Lemma \ref{lemrsw}
by $\cform(\chi)\neq 0$ and the commutativity of (\ref{diagcform}).
\end{proof}

\begin{lem}
\label{lemtone}
Let $\chi$ be an element of $\hok$ such that $\sw(\chi)=n\ge 1$.
We put $s'=\ord_{p}(n)$.
Let $a=(a_{s-1},\ldots,a_{0})$ be an element of $\fillog_{n}W_{s}(K)$
whose image by $\delta_{s}$ is the $p$-part of $\chi$.
If $\chi$ is of type $\mI$, then $s>s'$ and $\cform(\chi)$ depends only on $a_{s'}$.  
\end{lem}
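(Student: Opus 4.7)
My plan is to handle the two assertions in sequence.

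For the first ($s > s'$), I argue by contradiction. Suppose $s \le s' = \ord_p(n)$. Then $\min\{\ord_p(n), s\} = s$, so applying (\ref{nlfwsk}) with $m = n$ gives
\[
\fil_n W_s(K) = \fillog_{n-1} W_s(K) + V^{0}\fillog_n W_s(K) = \fillog_n W_s(K).
\]
Hence $a \in \fil_n W_s(K)$, so $\delta_s(a)$, the $p$-part of $\chi$, lies in $\fil_n \hok$. Since $n \ge 1$, the prime-to-$p$ part of $\chi$ lies in $H^{1}(K,\mathbf{Q}/\mathbf{Z})' \subset \fillog_0 \hok = \fil_1 \hok \subset \fil_n \hok$ by (\ref{eqfilh}) and the definition of $\fillog_0$. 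Therefore $\chi \in \fil_n \hok$, contradicting $\dt(\chi) = n + 1$.

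For the second (dependence on $a_{s'}$), by (\ref{seqws}) we have $a \in \fil_{n+1} W_s(K)$, so Lemma \ref{lemcform} yields $\cform(\chi) = \varphi_{s}^{\prime (n+1)}(\bar a)$. In the exceptional case $(p, n+1) = (2, 2)$, where $s' = 0$, the formula (\ref{vphip}) for $\varphi_{s}^{\prime (2)}$ already involves only $a_0 = a_{s'}$, so the claim is immediate. In the usual case, (\ref{vphip}) gives
\[
\varphi_{s}^{\prime (n+1)}(\bar a) \equiv -\sum_{i=0}^{s-1} a_i^{p^i - 1}\, da_i \pmod{\fil_n \Omega_K^1},
\]
and it suffices to show that every summand with $i \neq s'$ lies in $\fil_n \Omega_K^1$. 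Setting $v_i = \ord_K(a_i)$, the hypothesis $a \in \fillog_n W_s(K)$ yields $p^i v_i \ge -n$. Writing $a_i = u \pi^{v_i}$ with $u \in \dvr_K^\times$ and using that the term $v_i u \pi^{v_i - 1} d\pi$ in $da_i$ vanishes in characteristic $p$ when $p \mid v_i$, an elementary computation gives $\ord_K(a_i^{p^i - 1}\, da_i) \ge p^i v_i \ge -n$ when $p \mid v_i$, and $\ord_K(a_i^{p^i - 1}\, da_i) = p^i v_i - 1$ when $p \nmid v_i$. The first case always lies in $\fil_n \Omega_K^1$; the second survives in $\gr_{n+1} \Omega_K^1$ only when $p^i v_i = -n$, which combined with $p \nmid v_i$ forces $\ord_p(n) = i$, i.e., $i = s'$. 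Hence only the $i = s'$ summand contributes, proving the claim.

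The main obstacle is the order analysis in the second step, which requires a careful case split on whether $p$ divides $v_i$; the first step is a short unwinding of the definitions of $\fil$ and $\fillog$.
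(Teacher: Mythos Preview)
Your argument is correct. The first assertion is handled the same way as in the paper: if $s\le s'$ then $\fil_{n}W_{s}(K)=\fillog_{n}W_{s}(K)$ by (\ref{nlfwsk}), forcing $\chi\in\fil_{n}\hok$ and contradicting $\dt(\chi)=n+1$.

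For the second assertion your route differs from the paper's. The paper invokes Lemma~\ref{lemgrqot}\,(i): the class $\bar a$ of $a$ in $\fillog_{n}W_{s}(K)/\fil_{n}W_{s}(K)\subset\gr_{n+1}W_{s}(K)$ already depends only on $a_{s'}$, and then one simply applies $\varphi_{s}^{\prime(n+1)}$ and Lemma~\ref{lemcform}. You instead apply $\varphi_{s}^{\prime(n+1)}$ first via the explicit formula (\ref{vphip}) and then prove, by a valuation estimate on each summand $-a_{i}^{p^{i}-1}da_{i}$, that the $i\neq s'$ terms lie in $\fil_{n}\Omega^{1}_{K}$. The paper's argument is shorter and structural---it never touches differential forms beyond invoking the map---while yours is self-contained (it does not rely on Lemma~\ref{lemgrqot}) and makes transparent, at the level of $\Omega^{1}_{K}$, exactly why the $s'$-th coordinate is the only one that can contribute: the coefficient $v_{i}$ in front of $d\pi$ kills the potentially low-order term when $p\mid v_{i}$, and when $p\nmid v_{i}$ the equality $p^{i}v_{i}=-n$ forces $i=\ord_{p}(n)$. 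One small remark: the notation ``$\ord_{K}$'' for a differential form is not defined in the paper; it would be cleaner to phrase the estimate as membership in $\mathfrak{m}_{K}^{-n}\Omega^{1}_{\dvr_{K}}=\fil_{n}\Omega^{1}_{K}$.
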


\begin{proof}
Since $\dt(\chi)=n+1$, we have $a\notin \fil_{n}W_{s}(K)$.
Since $\fil_{n}W_{s}(K)=\fillog_{n}W_{s}(K)$ if $\ord_{p}(n)\ge s$
by (\ref{nlfwsk}), we have $s'=\ord_{p}(n)<s$.
By Lemma \ref{lemgrqot} (i), the image $\bar{a}$ of $a$ in $\fillog_{n}W_{s}(K)/\fil_{n}W_{s}(K)\subset \gr_{n+1}W_{s}(K)$ 
depends only on $a_{s'}$.
Since $\cform(\chi)$ is the image of $\bar{a}\in \gr_{n+1}W_{s}(K)$
in $\gr_{n+1}\Omega^{1}_{K}\otimes_{F_{K}}F_{K}^{1/p}$ by $\varphi'^{(n+1)}_{s}$ by Lemma \ref{lemcform}, 
the assertion holds.
\end{proof}

\begin{lem}
\label{lemwh}
Let $n>0$ be an integer and put $s'=\ord_{p}(n)$.
For an integer $s>s'$, the morphism
\begin{equation}
\label{morwh}
\fillog_{n}W_{s}(K)/\fil_{n}W_{s}(K)\rightarrow 
\fillog_{n}H^{1}(K,\mathbf{Q}/\mathbf{Z})/\fil_{n}H^{1}(K,\mathbf{Q}/\mathbf{Z})
\end{equation}
induced by $\delta_{s}$ is injective.
\end{lem}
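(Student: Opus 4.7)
The plan is to factor the map (\ref{morwh}) through the characteristic form map $\varphi^{\prime (n+1)}_s$ and reduce the injectivity to a direct calculation on the distinguished generators supplied by Lemma \ref{lemgrqot}(i). First, by (\ref{seqws}) with $m = n+1$ we have $\fil_n W_s(K) \subset \fillog_n W_s(K) \subset \fil_{n+1} W_s(K)$, so the inclusion descends to an injection $\fillog_n W_s(K)/\fil_n W_s(K) \hookrightarrow \gr_{n+1} W_s(K)$, and the analogous injection holds for $H^1(K, \mathbf{Q}/\mathbf{Z})$. Combining this with the commutativity of (\ref{diagcform}) for $m = n+1$ and the injectivity of $\phi^{\prime (n+1)}$ from (\ref{phipm}), the composition of (\ref{morwh}) with the target injection and with $\phi^{\prime (n+1)}$ equals the restriction of $\varphi^{\prime (n+1)}_s\colon \gr_{n+1} W_s(K) \to \gr_{n+1} \Omega^1_K \otimes_{F_K} F_K^{1/p}$ to $\fillog_n W_s(K)/\fil_n W_s(K)$. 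Since the two composition factors after (\ref{morwh}) are injective, it suffices to prove that this restriction is injective.

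By Lemma \ref{lemgrqot}(i), every nonzero class in $\fillog_n W_s(K)/\fil_n W_s(K)$ is represented by $(\underline{c})_{s'}$ for some $c \in K$ with $\ord_K(c) = -n/p^{s'}$. Writing $c = u \pi^{-n/p^{s'}}$ for a unit $u$ and a uniformizer $\pi$, in the generic case $(p, n+1) \neq (2,2)$ the formula (\ref{vphip}) yields
\[
\varphi^{\prime (n+1)}_s((\underline{c})_{s'}) = -c^{p^{s'}-1}\, dc = -u^{p^{s'}-1}\pi^{-n}\, du + (n/p^{s'})\, u^{p^{s'}} \pi^{-n-1}\, d\pi,
\]
whose first summand lies in $\fil_n \Omega^1_K$ while the second has nonzero class in $\gr_{n+1} \Omega^1_K$ since $u$ is a unit and $n/p^{s'}$ is coprime to $p$ by the definition $s' = \ord_p n$.

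In the exceptional case $(p, n+1) = (2,2)$, which forces $n = 1$, $p = 2$, $s' = 0$, the square-root correction term $\sqrt{\overline{\pi^2 c}}\, d\pi/\pi^2$ in (\ref{vphip}) vanishes because $\pi^2 c = u\pi \in \mathfrak{m}_K$ reduces to zero in $F_K$, and a direct computation then gives $\varphi^{\prime (2)}_s((\underline{c})_0) \equiv u\, d\pi/\pi^2$ in $\gr_2 \Omega^1_K$, again nonzero. Therefore in both cases $\varphi^{\prime (n+1)}_s$ sends every nonzero class to a nonzero element of $\gr_{n+1} \Omega^1_K \otimes_{F_K} F_K^{1/p}$, proving the injectivity. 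The main technical point is the exceptional case $(p, n+1) = (2, 2)$, where the additional square-root term in (\ref{vphip}) must be separately verified to vanish on our chosen generator; fortunately this holds because $\pi^2 c$ lies in $\mathfrak{m}_K$.
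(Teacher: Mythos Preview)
Your proof is correct, but it proceeds by a different route from the paper's. The paper computes the kernel of (\ref{morwh}) directly from the Artin--Schreier--Witt description: the kernel is the image of $\fil_n W_s(K)+(F-1)(W_s(K))\cap\fillog_n W_s(K)$, and invoking \cite[Lemma 1.7 (iii)]{ya} one has $(F-1)(W_s(K))\cap\fillog_n W_s(K)=(F-1)(\fillog_{[n/p]}W_s(K))$; a short valuation estimate on the $s'$-th component then shows that $F(\fillog_{[n/p]}W_s(K))$ dies in $\fillog_n W_s(K)/\fil_n W_s(K)$ via Lemma \ref{lemgrqot}(i).

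Your argument instead composes with the injections into $\gr_{n+1}$ and with the characteristic form $\phi'^{(n+1)}$, reducing to the verification that $\varphi_s'^{(n+1)}$ kills no nonzero class of the shape $(\underline{c})_{s'}$ with $\ord_K(c)=-n/p^{s'}$. This is a clean reduction and the computation (including the exceptional case $(p,n+1)=(2,2)$) is handled correctly. The trade-off is that you are invoking the injectivity of $\phi'^{(n+1)}$ in (\ref{phipm}), which is a substantially deeper input than anything used in the paper's argument; by contrast, the paper's proof stays entirely at the level of Witt vector filtrations and needs only the elementary identity $(F-1)(W_s(K))\cap\fillog_n W_s(K)=(F-1)(\fillog_{[n/p]}W_s(K))$. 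Both approaches ultimately hinge on Lemma \ref{lemgrqot}(i) to access the distinguished generators.
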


\begin{proof}
By (\ref{compds}) and (\ref{filph}),
the kernel of (\ref{morwh}) is the image of 
$\fil_{n}W_{s}(K)+(F-1)(W_{s}(K))\cap \fillog_{n}W_{s}(K)$ in 
$\fillog_{n}W_{s}(K)/\fil_{n}W_{s}(K)$.
Since $(F-1)(W_{s}(K))\cap \fillog_{n}W_{s}(K)=(F-1)(\fillog_{[n/p]}W_{s}(K))$ 
by \cite[Lemma 1.7 (iii)]{ya} and $[n/p]<n$,
we have $\fil_{n}W_{s}(K)+(F-1)(W_{s}(K))\cap \fillog_{n}W_{s}(K)=
\fil_{n}W_{s}(K)+F(\fillog_{[n/p]}W_{s}(K))$.
Let $a=(a_{s-1},\ldots,a_{0})$ be an element of $\fillog_{[n/p]}W_{s}(K)$.
Then we have $p^{s'}\ord_{K}(a_{s'})\ge -n/p$.
Since $\ord_{p}(n)=s'$ and $\ord_{K}(a_{s'})$ is an integer,
we have $\ord_{K}(a_{s'})>- n/p^{s'+1}$ and hence $\ord_{K}(a_{s'}^{p})>-n/p^{s'}$.
Since $F(a)=(a_{s-1}^{p},\ldots,a_{0}^{p})$,
the image of $\fil_{n}W_{s}(K)+F(\fillog_{[n/p]}W_{s}(K))$ in 
$\fillog_{n}W_{s}(K)/\fil_{n}W_{s}(K)$ is $0$
by Lemma \ref{lemgrqot} (i).
Hence the assertion holds.
\end{proof}

\begin{cor}
\label{alnl}
Let $\chi$ be an element of $H^{1}(K,\mathbf{Q}/\mathbf{Z})$.
We put $\sw(\chi)=n$ and $\dt(\chi)=m$.
Let $a$ be an element of $W_{s}(K)$ whose image in $H^{1}(K,\mathbf{Q}/\mathbf{Z})$
is the $p$-part of $\chi$.
If $a\in \fillog_{n}W_{s}(K)$, then we have $a\in \fil_{m}W_{s}(K)$.
\end{cor}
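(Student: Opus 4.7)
The plan is to reduce the statement to an application of Lemma \ref{lemwh} via a short case analysis. First I observe that by (\ref{seqfilh}) the hypothesis $\sw(\chi)=n$ forces $m=\dt(\chi)\in\{n,n+1\}$ (interpreting the case $n=0$ as $m=1$ by (\ref{eqfilh})). The case $m=n+1$ is immediate: the inclusion $\fillog_{n}W_{s}(K)\subset \fil_{n+1}W_{s}(K)$ is exactly the right half of (\ref{seqws}), so $a\in \fillog_{n}W_{s}(K)\subset \fil_{m}W_{s}(K)$. The case $n=0$, $m=1$ is also direct by (\ref{flzfo}). So the content lies in proving the claim when $n\ge 1$ and $m=n$ (i.e., $\chi$ is of type $\mathrm{II}$).

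In that remaining case I split once more according to the size of $s$ relative to $s'=\ord_{p}(n)$. If $s\le s'$, then the integer $\min\{\ord_{p}(n),s\}$ appearing in (\ref{nlfwsk}) is $s$, so $V^{s-s'}\fillog_{n}W_{s'}(K)=\fillog_{n}W_{s}(K)$ and therefore $\fil_{n}W_{s}(K)=\fillog_{n}W_{s}(K)$; the conclusion $a\in \fil_{n}W_{s}(K)$ is then automatic. If instead $s>s'$, I apply Lemma \ref{lemwh}, which asserts exactly the injectivity of the map $\fillog_{n}W_{s}(K)/\fil_{n}W_{s}(K)\to \fillog_{n}H^{1}(K,\mathbf{Q}/\mathbf{Z})/\fil_{n}H^{1}(K,\mathbf{Q}/\mathbf{Z})$ induced by $\delta_{s}$. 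So it suffices to show the image of $a$ on the right is zero.

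That last verification is where I have to be a bit careful. Writing $\chi=\chi'+\chi_{p}$ for the prime-to-$p$ and $p$-primary components, I have $\chi'\in H^{1}(K,\mathbf{Q}/\mathbf{Z})'\subset \fil_{1}H^{1}(K,\mathbf{Q}/\mathbf{Z})\subset \fil_{n}H^{1}(K,\mathbf{Q}/\mathbf{Z})$ by the definition (\ref{filph}), and $\chi\in \fil_{n}H^{1}(K,\mathbf{Q}/\mathbf{Z})$ by the hypothesis $\dt(\chi)=n$. Subtracting, the $p$-part $\chi_{p}$, which is the image of $a$ under $\delta_{s}$, also lies in $\fil_{n}H^{1}(K,\mathbf{Q}/\mathbf{Z})$. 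Hence $\delta_{s}(a)=0$ in $\fillog_{n}H^{1}(K,\mathbf{Q}/\mathbf{Z})/\fil_{n}H^{1}(K,\mathbf{Q}/\mathbf{Z})$, and Lemma \ref{lemwh} gives $a\in \fil_{n}W_{s}(K)$, finishing the proof. There is no serious obstacle here; the only subtle point is extracting the $p$-primary information from $\chi$, which is handled cleanly by the fact that the prime-to-$p$ part automatically sits in $\fil_{1}$.
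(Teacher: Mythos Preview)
Your proof is correct and follows essentially the same case analysis as the paper: treat $n=0$ via (\ref{flzfo}), type $\mathrm{I}$ via (\ref{seqws}), and type $\mathrm{II}$ by splitting on whether $s\le \ord_{p}(n)$ (where $\fil_{n}=\fillog_{n}$ by (\ref{nlfwsk})) or $s>\ord_{p}(n)$ (where Lemma \ref{lemwh} applies). Your extra paragraph making explicit that $\delta_{s}(a)=\chi_{p}\in \fil_{n}H^{1}(K,\mathbf{Q}/\mathbf{Z})$ is a welcome clarification the paper leaves implicit.
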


\begin{proof}
Suppose $n=0$. Then $m=1$ by (\ref{eqfilh})
and hence the assertion holds by (\ref{flzfo}).

Suppose $n>0$ and that $\chi$ is of type $\mI$. 
Then $m=n+1$ and hence the assertion holds by (\ref{seqws}).

Suppose $n>0$ and that $\chi$ is of type $\mII$.
Then $m=n>0$.
If $s=s'=\min\{\ord_{p}(m),s\}$, then we have $\fil_{n}W_{s}(K)=\fillog_{n}W_{s}(K)$ by (\ref{nlfwsk}).
Hence the assertion holds if $s=s'$.
If $s>s'$, then the assertion holds by Lemma \ref{lemwh}.
\end{proof}

\begin{cor}
\label{cortw}
Let $n\ge 1$ be an integer and put $s'=\ord_{p}(n)$ and $n'=p^{-s'}n$.
Let $s>s'$ be an integer and
$a=(a_{s-1},\ldots,a_{0})$ an element of $\fillog_{n}W_{s}(K)$.
Let $\chi$ be an element of $\hok$ whose $p$-part is the image of $a$ by $\delta_{s}$. 
Then the following are equivalent:
\begin{enumerate}
\item $(\sw(\chi),\dt(\chi))=(n,n+1)$.
\item $\ord_{K}(a_{s'})=n'$.
\end{enumerate}
\end{cor}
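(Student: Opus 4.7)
The plan is to assemble the corollary from Lemma \ref{lemgrqot}(i), Lemma \ref{lemwh}, and the comparison (\ref{seqfilh}) between the logarithmic and non-logarithmic filtrations, exploiting that the prime-to-$p$ part of $\chi$ lies in every step of either filtration and so contributes to neither $\sw$ nor $\dt$.

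First I would observe that $a \in \fillog_n \wsk$ forces $p^{s'} \ord_K(a_{s'}) \geq -n$, hence $\ord_K(a_{s'}) \geq -n'$; thus condition (ii) amounts to saying that the class of $a_{s'}$ in $\grlog_{n'} K$ is nonzero. Applying the isomorphism $\grlog_{n'} K \xrightarrow{\sim} \fillog_n \wsk / \fil_n \wsk$ of Lemma \ref{lemgrqot}(i), which sends $[a_{s'}]$ to $[a]$, this is equivalent to the statement $a \notin \fil_n \wsk$.

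Since $s > s'$, Lemma \ref{lemwh} supplies the injectivity of $\fillog_n \wsk / \fil_n \wsk \to \fillog_n \hok / \fil_n \hok$, so $a \notin \fil_n \wsk$ is equivalent to $\delta_s(a) \notin \fil_n \hok$ (the hypothesis $a \in \fillog_n \wsk$ places $\delta_s(a)$ in $\fillog_n \hok$, making the injectivity applicable). Because $\delta_s(a)$ is the $p$-part of $\chi$ and the complementary prime-to-$p$ part of $\chi$ lies in $H^{1}(K,\mathbf{Q}/\mathbf{Z})'$ and hence in every $\fil_m \hok$, this is in turn equivalent to $\chi \notin \fil_n \hok$.

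Finally, $a \in \fillog_n \wsk$ combined with (\ref{seqfilh}) gives $\chi \in \fillog_n \hok \subset \fil_{n+1} \hok$, bounding $\sw(\chi) \leq n$ and $\dt(\chi) \leq n+1$. Using also $\fillog_{n-1} \hok \subset \fil_n \hok$ from (\ref{seqfilh}), the condition $\chi \notin \fil_n \hok$ forces $\sw(\chi) = n$ and $\dt(\chi) = n+1$; conversely, $(\sw(\chi),\dt(\chi)) = (n,n+1)$ directly encodes $\chi \notin \fil_n \hok$, closing the loop. No step presents any real obstacle beyond careful bookkeeping with the two filtrations; the only mildly non-formal input is the identification in Lemma \ref{lemgrqot}(i) that the passage to $a_{s'}$ realizes the quotient, and this has already been established.
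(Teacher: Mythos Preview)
Your proof is correct and follows essentially the same route as the paper's: both reduce (ii) to the condition $a\notin\fil_n W_s(K)$ (you via Lemma~\ref{lemgrqot}(i), the paper by citing (\ref{lfilwskv}) and (\ref{nlfwsk}) directly, which is the same content), and then pass to $\chi\notin\fil_n\hok$ using the injectivity of Lemma~\ref{lemwh} together with (\ref{seqfilh}). The only point you leave slightly implicit---that the isomorphism of Lemma~\ref{lemgrqot}(i) carries $[a_{s'}]$ to $[a]$---follows from the identity $(a_{s-1},\dots,a_{s'+1},0)+(0,\dots,0,a_{s'})=(a_{s-1},\dots,a_{s'})$ in $W_{s-s'}(K)$ after truncation, and the paper is equally terse here.
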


\begin{proof}
We note that $a\notin \fil_{n}W_{s}(K)$ if and only if $\ord_{K}(a_{s'})=-n'$
by (\ref{lfilwskv}) and (\ref{nlfwsk}).
Hence it is sufficient to prove the equivalence of the condition (i) and $a\notin \fil_{n}W_{s}(K)$.
We consider the morphism (\ref{morwh}).
Since the morphism (\ref{morwh}) is injective by Lemma \ref{lemwh},
we have $a\notin \fil_{n}W_{s}(K)$ if and only if $(\sw(\chi),\dt(\chi))=(n,n+1)$ by (\ref{seqfilh}).
Hence the assertion holds.
\end{proof}

Let $X$ be a smooth scheme over a perfect field $k$ of characteristic $p>0$.
Let $D$ be a divisor on $X$ with simple normal crossings and 
$\{D_{i}\}_{i\in I}$ the irreducible components of $D$.
We put $U=X-D$.
Let $\dvr_{K_{i}}=\hat{\dvr}_{X,\mathfrak{p}_{i}}$ be the completion of the local ring 
$\dvr_{X,\mathfrak{p}_{i}}$ at the generic point $\mathfrak{p}_{i}$ of $D_{i}$ and
$K_{i}=\Frac \dvr_{K_{i}}$ the local field at $\mathfrak{p}_{i}$ for $i\in I$.
For an element $\chi$ of $H^{1}_{\et}(U,\BQ/\BZ)$, let
$\chi|_{K_{i}}\in H^{1}(K_{i},\mathbf{Q}/\mathbf{Z})$ be the image of $\chi$
by the canonical morphism $H^{1}_{\et}(U,\mathbf{Q}/\mathbf{Z})\rightarrow H^{1}(K_{i},\mathbf{Q}/\mathbf{Z})$ for $i\in I$.

\begin{defn}
\label{deftdd}
Let $\chi$ be an element of $H^{1}_{\et}(U,\mathbf{Q}/\mathbf{Z})$.
We define the \textit{total dimension divisor}
$R_{\chi}'$ of $\chi$ by
\begin{equation}
R_{\chi}'=\sum_{i\in I}\dt(\chi|_{K_{i}})D_{i}. \notag
\end{equation}
\end{defn}

For a scheme $S$ over $k$,
we consider the commutative diagram 
\begin{equation}
\xymatrix{
S^{1/p} \ar[d] \ar[r] & S \ar[r]^{F_{S}} \ar[d] & S \ar[d]\\
\Spec k \ar[r]_{F^{-1}_{k}} & \Spec k \ar[r]_{F_{k}}& \Spec k}
\notag
\end{equation}
where the left square is the base change of $S\rightarrow \Spec k$ by the inverse $F_{k}^{-1}$ of $F_{k}$.
Here the symbols $F_{S}$ and $F_{k}$ denote the absolute Frobenius morphisms of $S$ and $\Spec k$ respectively.
We define the \textit{radicial covering} $S^{1/p} \rightarrow S$ to be the composition of morphisms in the upper line.

By \cite[5.2]{ma} and \cite[Definition 1.42]{ya},
there exists a unique global section 
\begin{equation}
\cform(\chi)\in \Gamma(Z_{\chi}^{1/p},\Omega^{1}_{X}(R_{\chi}')\otimes_{\dvr_{X}}\dvr_{Z_{\chi}^{1/p}})
\notag
\end{equation}
such that 
the germ $\cform(\chi)_{\mathfrak{p}_{i}'}$ of $\cform(\chi)$ at the generic point $\mathfrak{p}_{i}'$
of $D_{i}^{1/p}$ is $\cform(\chi|_{K_{i}})$ for every $i\in I_{\mW,\chi}$.
Here we note that if $\dt(\chi|_{K_{i}})=m_{i}\ge 2$ then 
$\Omega^{1}_{X}(R_{\chi}')|_{Z_{\chi},\mathfrak{p}_{i}}=\gr_{m_{i}}\Omega^{1}_{K_{i}}$ and
$\Omega^{1}_{X}(R'_{\chi})_{\mathfrak{p}_{i}}\otimes_{\dvr_{X,\mathfrak{p}_{i}}}\dvr_{Z_{\chi}^{1/p},
\mathfrak{p}_{i}'}=\gr_{m_{i}}\Omega_{K_{i}}^{1}\otimes_{F_{K_{i}}}F_{K_{i}}^{1/p}$.
We recall a construction of $\cform(\chi)$ using sheaves of Witt vectors in Subsection \ref{sscrc}.
We call $\cform(\chi)$ the \textit{characteristic form} of $\chi$.

We note that if $\chi|_{K_{i}}$ is of usual type (e.g.\ $p\neq 2$ or $\chi|_{K_{i}}$ is of type $\mI$)
for every $i\in I_{\mW,\chi}$ 
then we may regard $\cform(\chi)$ as a global section of $\Omega^{1}_{X}(R_{\chi}')|_{Z_{\chi}}$.

\begin{defn}[cf.\ {\cite[Definition 2.17, Subsection 4.2]{sa1}}]
\label{defofnondeg}
Let $\chi$ be an element of $H^{1}_{\et}(U,\mathbf{Q}/\mathbf{Z})$.
\begin{enumerate}
\item We say that $(X,U,\chi)$ is {\it non-degenerate} at $x\in X$ if 
one of the following conditions is satisfied:
\begin{enumerate}
\item $x\notin Z_{\chi}$.
\item $x\in Z_{\chi}$ and $\cform(\chi)_{x'}$ is a part of a basis of the free
$\dvr_{Z^{1/p}_{\chi},x'}$-module $\Omega^{1}_{X}(R'_{\chi})\otimes_{\dvr_{X}}
\dvr_{Z_{\chi}^{1/p},x'}$,
where $x'$ is the unique point on $Z^{1/p}$ lying above $x$.
\end{enumerate}

We say that $(X,U,\chi)$ is {\it non-degenerate} if $(X,U,\chi)$ is non-degenerate at
every point on $X$.

\item Let $x$ be a closed point of $D$ and $i$ an element of $I_{\mW,\chi,x}$.
Let $x'$ be the unique closed point of $D_{i}^{1/p}$ lying above $x$.
Let $n'$ be the maximal integer such that
\begin{equation}
\cform(\chi)|_{D_{i}^{1/p},x'}\in \mathfrak{m}_{x'}^{n'}\Omega^{1}_{X}(R'_{\chi})_{x}\otimes_{\dvr_{X,x}}
\dvr_{D_{i}^{1/p},x'}, \notag
\end{equation}
where $\mathfrak{m}_{x'}$ is the maximal ideal at $x'$.
We define $\ord'(\chi;x,D_{i})$ by $\ord'(\chi;x,D_{i})=n'/2$.
\item We say that $(X,U,\chi)$ is {\it strongly} non-degenerate at $x\in X$ if the following conditions are satisfied:
\begin{enumerate}
\item $(X,U,\chi)$ is non-degenerate at $x$.
\item $I_{x}=I_{\mW,\chi,x}$ or $I_{x}=I_{\mT,\chi,x}$.
\end{enumerate}
We say that $(X,U,\chi)$ is {\it strongly} non-degenerate if $(X,U,\chi)$ is strongly non-degenerate at every point on $X$.
\end{enumerate}
\end{defn}

We note that if $\chi|_{K_{i}}$ is of usual type for every $i\in I_{\mW,\chi,x}$ then $(X,U,\chi)$ is 
non-degenerate at $x\in X$ if and only if one of the following conditions holds:
\begin{enumerate}
\item $x\notin Z_{\chi}$.
\item $x\in Z_{\chi}$ and $\cform(\chi)_{x}$ is a part of a basis of the free $\dvr_{Z_{\chi},x}$-module
$\Omega^{1}_{X}(R_{\chi}')|_{Z_{\chi},x}$. 
\end{enumerate}
If $\chi|_{K_{i}}$ is of usual type for $i\in I_{\mW,\chi,x}$, then
we may regard $\cform(\chi)|_{D_{i}^{1/p}}$ as a global section of $\Omega^{1}_{X}(R_{\chi}')|_{D_{i}}$.
In this case, the order $\ord'(\chi; x,D_{i})$ for a closed point $x$ of $D_{i}$ is
none other than the maximal integer $n$ such that
\begin{equation}
\cform(\chi)|_{D_{i}^{1/p},x'} \in \mathfrak{m}_{x}^{n}\Omega^{1}_{X}(R_{\chi}')|_{D_{i},x}, \notag
\end{equation}
where $x'$ is the unique closed point of $D_{i}^{1/p}$ lying above $x$ and $\mathfrak{m}_{x}$ is the maximal ideal at $x$.  

Let $\Lambda$ be a finite field of characteristic $\neq p$. 
If $\chi\in H^{1}_{\et}(U,\mathbf{Q}/\mathbf{Z})$ is the character $\pi_{1}^{ab}(U)\rightarrow \Lambda^{\times}$
associated to a smooth sheaf $\mf$ of $\Lambda$-modules of rank $1$ on $U$
and regarded as an element of $H^{1}_{\et}(U,\mathbf{Q}/\mathbf{Z})$ 
by an inclusion $\Lambda^{\times}\rightarrow \mathbf{Q}/\mathbf{Z}$, then
the definition of non-degeneration is the same as \cite[Definition 4.2]{sa1}
by \cite[Proposition 2.12 (ii)]{ya}.
The definition of strong non-degeneration is the same as that in \cite[Subsection 4.2]{sa4}
by \cite[Theorem 3.1]{ya}.

For $\chi\in H^{1}_{\et}(U,\mathbf{Q}/\mathbf{Z})$ and $x\in X$, 
we prove that $(X,U,\chi)$ is non-degenerate at $x$ if and only if $(X,U,\chi)$ is strongly non-degenerate at $x$
in Lamma \ref{lemint} (i) later.

\begin{lem}
\label{lemndg}
Let $\chi$ be an element of $H^{1}_{\et}(U,\mathbf{Q}/\mathbf{Z})$.
\begin{enumerate}
\item Let $x$ be a closed point of $Z_{\chi}$.
The following are equivalent:
\begin{enumerate}
\item $(X,U,\chi)$ is non-degenerate at $x$.
\item $\ord'(\chi;x,D_{i})=0$ for some $i\in I_{\mW,\chi,x}$. 
\item $\ord'(\chi;x,D_{i})=0$ for every $i\in I_{\mW,\chi,x}$.
\end{enumerate}
\item The set of points where $(X,U,\chi)$ is not non-degenerate is a closed subset of $X$ 
of codimension $\ge 2$.
\item $(X,U,\chi)$ is non-degenerate if and only if $(X,U,\chi)$ is non-degenerate at every closed point $x$ of $Z_{\chi}$.
\end{enumerate}
\end{lem}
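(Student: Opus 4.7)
The plan is to mirror the proof of Lemma \ref{lemclean}, replacing the logarithmic data $(\rsw(\chi),\Omega^{1}_{X}(\log D)(R_{\chi}),Z_{\chi})$ by the non-logarithmic data $(\cform(\chi),\Omega^{1}_{X}(R'_{\chi}),Z_{\chi}^{1/p})$, while keeping in mind the extra nuance that the characteristic form lives on the radicial covering $Z_{\chi}^{1/p}\to Z_{\chi}$.

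For part (i), fix a closed point $x$ of $Z_{\chi}$ and let $i\in I_{\mW,\chi,x}$. Writing $x'$ for the unique point of $D_{i}^{1/p}$ above $x$, the condition $\ord'(\chi;x,D_{i})=0$ unwinds, by the definition in Definition \ref{defofnondeg}(ii), to the statement that the image of $\cform(\chi)|_{D_{i}^{1/p},x'}$ in $\Omega^{1}_{X}(R'_{\chi})_{x}\otimes_{\dvr_{X,x}}k(x')$ is nonzero. Because $x'\to x$ is radicial, $k(x')=k(x)^{1/p}$ is a field and this nonvanishing is equivalent to $\cform(\chi)_{x'}$ being part of a basis of the free $\dvr_{Z_{\chi}^{1/p},x'}$-module $\Omega^{1}_{X}(R'_{\chi})\otimes_{\dvr_{X}}\dvr_{Z_{\chi}^{1/p},x'}$, i.e.\ to non-degeneracy at $x$. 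Since this condition neither refers to a choice of $i$ nor distinguishes among the various $i\in I_{\mW,\chi,x}$ (a nowhere vanishing section restricts to nowhere vanishing on each component), (a), (b), (c) are equivalent.

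For part (ii), let $Z'\subset X$ be the locus where $(X,U,\chi)$ fails to be non-degenerate. By (i) (extended to non-closed points by the same argument), $Z'$ is contained in $Z_{\chi}$ and is defined inside $Z_{\chi}^{1/p}$ by the vanishing of $\cform(\chi)$ modulo the maximal ideal; since $Z_{\chi}^{1/p}\to Z_{\chi}$ is a universal homeomorphism, the image of this closed set in $X$ is closed. It remains to rule out codimension $\le 1$ points. By construction $\cform(\chi)_{\mathfrak{p}_{i}'}=\cform(\chi|_{K_{i}})\neq 0$ for each $i\in I_{\mW,\chi}$, where $\mathfrak{p}_{i}'$ is the generic point of $D_{i}^{1/p}$; hence no generic point of $Z_{\chi}$ lies in $Z'$, and $Z'$ has codimension $\ge 2$ in $X$.

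Part (iii) is a purely topological consequence of (ii): any nonempty closed subset of the noetherian scheme $Z_{\chi}$ contains a closed point, so if $(X,U,\chi)$ is non-degenerate at every closed point of $Z_{\chi}$, the closed set $Z'\subset Z_{\chi}$ from (ii) must be empty. The only step requiring real care is the descent of the non-vanishing condition from $Z_{\chi}^{1/p}$ back to $X$ in part (i); the key observation that makes it routine is that the radicial morphism $Z_{\chi}^{1/p}\to Z_{\chi}$ induces on residue fields the purely inseparable extension $k(x)\subset k(x)^{1/p}$, so the map $\Omega^{1}_{X}(R'_{\chi})_{x}\otimes k(x)\to \Omega^{1}_{X}(R'_{\chi})_{x}\otimes k(x')$ is injective and the non-vanishing in the target is equivalent to the non-vanishing that characterizes being part of a basis.
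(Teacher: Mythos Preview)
Your proof is correct and follows exactly the approach the paper intends: the paper's proof consists of the single sentence ``The assertions hold similarly as the proof of Lemma \ref{lemclean},'' and your write-up is precisely that analogy spelled out, with the only new ingredient being the (straightforward) observation that the radicial covering $Z_{\chi}^{1/p}\to Z_{\chi}$ is a universal homeomorphism so that closedness and the non-vanishing condition descend. Nothing needs to be changed.
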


\begin{proof}
The assertions hold similarly as the proof of Lemma \ref{lemclean}.
\end{proof}

We give a form and a partial description of the characteristic cycle of a rank $1$ sheaf 
on a smooth surface using ramification theory.

Let $\mf$ be a smooth sheaf of $\Lambda$-modules of rank $1$ on $U$,
where $\Lambda$ is a finite field of characteristic $\neq p$.
Let $\chi\colon \pi_{1}^{\ab}(U)\rightarrow \Lambda^{\times}$ be the character corresponding to $\mf$.
We fix an inclusion $\psi\colon \Lambda^{\times}\rightarrow \mathbf{Q}/\mathbf{Z}$ 
and regard $\chi$ as an element of $H^{1}_{\et}(U,\mathbf{Q}/\mathbf{Z})$ by $\psi$.
Then the total dimension divisor $R_{\chi}'$ is equal to the total dimension divisor $DT(\mf)$ defined in \cite[Definition 3.5.2]{sa1} by \cite[Theorem 3.1]{ya}.

For an element $i$ of $I_{\mW,\chi}$ such that $\chi|_{K_{i}}$ is of usual type,
let $L_{i,\chi}'$ be the sub line bundle of $T^{\ast}X\times_{X}D_{i}$ 
defined by the unique $\dvr_{D_{i}}$-submodule of $\Omega^{1}_{X}|_{D_{i}}$
which is locally a direct summand of $\Omega^{1}_{X}|_{D_{i}}$ of rank $1$
containing $\dvr_{X}(-R_{\chi}')|_{D_{i}}\cdot \cform(\chi)|_{D_{i}^{1/p}}$, 
where we regard $\cform(\chi)|_{D_{i}^{1/p}}$ as a global section of $\Omega^{1}_{X}(R_{\chi}')|_{D_{i}}$.
For an element $i$ of $I_{\mW,\chi}$ such that $\chi|_{K_{i}}$ is of exceptional type,
let $L_{i,\chi}''$ be the sub line bundle of 
$T^{\ast}X\times_{X}D_{i}^{1/p}$ 
defined by the unique $\dvr_{D_{i}^{1/p}}$-submodule of 
$\Omega^{1}_{X}\otimes_{\dvr_{X}}\dvr_{D_{i}^{1/p}}$ which is locally a direct summand of 
$\Omega^{1}_{X}\otimes_{\dvr_{X}}\dvr_{D_{i}^{1/p}}$ of rank $1$ containing
$(\dvr_{X}(-R_{\chi}')\otimes_{\dvr_{X}}\dvr_{D_{i}^{1/p}})\cdot \cform(\chi)|_{D_{i}^{1/p}}$
and let $[L_{i,\chi}']$ be the push-forward of $[L_{i,\chi}'']$ by the canonical morphism
$T^{\ast}X \times_{X} D^{1/p}\rightarrow T^{\ast}X \times_{X}D \rightarrow T^{\ast}X$
in the sense of intersection theory.
For an element $i$ of $I_{\mT,\chi}$, 
let $L_{i,\chi}'\subset \cotx\times_{X}D_{i}$ be the conormal bundle $\nori$ of $D_{i}$ over $X$.

Let $j\colon U\rightarrow X$ be the open immersion and $|D|$ the set of closed points of $D$.
We note that $j$ is affine.
We put $r_{i}'=\dt(\chi|_{K_{i}})$ if $\chi|_{K_{i}}$ is of usual type or $i\in I_{\mT,\chi}$
and $r_{i}'=\dt(\chi|_{K_{i}})/2=1$ if $\chi|_{K_{i}}$ is of exceptional type for $i\in I$.
If $X$ is purely of dimension $2$, then the characteristic cycle $CC(j_{!}\mf)$ is of the form  
\begin{equation}
\label{saitoccro}
CC(j_{!}\mathcal{F})=[T^{\ast}_{X}X]+\sum_{i\in I}r_{i}'[L_{i,\chi}']+\sum_{x\in |D|}u_{x}[\spf]
\end{equation}
by \cite[Theorem 7.14]{sa4}, \cite[Corollary 2.13 (i), Theorem 3.1]{ya},
and Lemma \ref{lemndg} (ii).
Here, since $j$ is an affine open immersion and 
$SS(j_{!}\mf)$ has finitely many irreducible components, 
the coefficient $u_{x}$ is a non-negative integer for every $x\in |D|$ by Lemma \ref{suppCC}
and is equal to $0$ except for finitely many $x\in |D|$.
If $x$ is a closed point of $Z_{\chi}$ and if $(X,U,\chi)$ is strongly non-degenerate at $x$,
then $u_{x}=0$ by \cite[Theorem 7.14]{sa4}.
If $x\in |D|$ and if $x\notin Z_{\chi}$, then $u_{x}=\sharp(I_{x})-1$ by \cite[Theorem 7.14]{sa4}.

%%%%%%%%%%%%%%%%%%%%%%%%%%%%%%%%%%%%%%%%%%%%%%%%I%%%
\subsection{Cleanliness and non-degeneration}
\label{sscrc}

In this subsection, we study more about the clean and non-degenerate conditions.
We briefly recall the construction of the refined Swan conductor and 
the characteristic form using sheaves of Witt vectors in \cite[Subsection 1.3--1.4]{ya}.

Let $X$ be a smooth scheme over a perfect field $k$ of characteristic $p>0$.
Let $D$ be a divisor on $X$ with simple normal crossings and 
$\{D_{i}\}_{i\in I}$ the irreducible components of $D$.
We put $U=X-D$ and let $j\colon U\rightarrow X$ be the open immersion.
Let $\dvr_{K_{i}}=\hat{\dvr}_{X,\mathfrak{p}_{i}}$ be the completion of the local ring 
$\dvr_{X,\mathfrak{p}_{i}}$ at the generic point $\mathfrak{p}_{i}$ of $D_{i}$ and
$K_{i}=\Frac \dvr_{K_{i}}$ the local field at $\mathfrak{p}_{i}$ for $i\in I$.

Let $W_{s}(\dvr_{{U}_{\et}})$ be the \'{e}tale sheaf of Witt vectors of length $s\in \mathbf{Z}_{\ge 0}$ 
on $U$ and $W_{s}(\dvr_{U})$ the Zariski sheaf of Witt vectors of length $s$ on $U$.
Let $\epsilon\colon X_{\et}\rightarrow X_{\mathrm{Zar}}$ be the canonical mapping from the \'{e}tale
site of $X$ to the Zariski site of $X$.
By the exact sequence 
\begin{equation}
0\rightarrow W_{s}(\mathbf{F}_{p})\rightarrow W_{s}(\dvr_{U_{\et}})\xrightarrow{F-1}
W_{s}(\dvr_{U_{\et}})\rightarrow 0 \notag 
\end{equation}
of \'{e}tale sheaves on $U$ and $R^{1}(\epsilon\circ j)_{*}W_{s}(\dvr_{U_{\et}})=0$, 
we have an exact sequence
\begin{equation}
0\rightarrow j_{*}W_{s}(\mathbf{F}_{p})\rightarrow j_{*}W_{s}(\dvr_{U})\xrightarrow{F-1}
j_{*}W_{s}(\dvr_{U})\xrightarrow{\delta_{s}} R^{1}(\epsilon\circ j)_{*}\mathbf{Z}/p^{s}\mathbf{Z}
\rightarrow 0 \notag
\end{equation}
of Zariski sheaves on $X$.
We note that the canonical morphism
\begin{equation}
\label{hov}
H^{1}_{\et}(V\cap U,\mathbf{Z}/p^{s}\mathbf{Z})\rightarrow 
\Gamma(V,R^{1}(\epsilon\circ j)_{*}\mathbf{Z}/p^{s}\mathbf{Z}) 
\end{equation}
is an isomorphism for every open subset $V$ of $X$
by the spectral sequence $E^{q_{1},q_{2}}_{2}=H^{q_{1}}_{\mathrm{Zar}}
(V,R^{q_{2}}(\epsilon\circ j)_{*}\mathbf{Z}/p^{s}\mathbf{Z})\Rightarrow H^{q_{1}+q_{2}}_{\et}
(V\cap U,\mathbf{Z}/p^{s}\mathbf{Z})$ and the equalities $E_{2}^{1,0}=E_{2}^{2,0}=0$.

For a section $a\in \Gamma(U,W_{s}(\dvr_{U}))$, let $a|_{K_{i}}$ be the image of $a$ in
$W_{s}(K_{i})$.
For an element $\chi\in H^{1}_{\et}(U,\mathbf{Z}/p^{s}\mathbf{Z})$,
let $\chi|_{K_{i}}$ be the image of $\chi$ in $H^{1}(K_{i},\mathbf{Z}/p^{s}\mathbf{Z})$.

\begin{defn}[{\cite[Definition 1.25]{ya}}]
\label{deffilshf}
Let $R=\sum_{i\in I}n_{i}D_{i}$ be a linear combination with integral coefficients $n_{i}\in \mathbf{Z}_{\ge 0}$ for $i\in I$. 
Let $j_{i}\colon \Spec K_{i}\rightarrow X$ be the canonical morphism for $i\in I$.
\begin{enumerate}
\item We define a subsheaf $\fillog_{R}j_{*}W_{s}(\dvr_{U})$ of the sheaf $j_{*}W_{s}(\dvr_{U})$
to be the pull-back of the subsheaf $\bigoplus_{i\in I}j_{i *}\fillog_{n_{i}}W_{s}(K_{i})$ of
$\bigoplus_{i\in I}j_{i*}W_{s}(K_{i})$
by the canonical morphism $j_{*}W_{s}(\dvr_{U})\rightarrow \bigoplus_{i\in I}j_{i*}W_{s}(K_{i})$.
\item We define a subsheaf $\fillog_{R}R^{1}(\epsilon\circ j)_{*}\mathbf{Z}/p^{s}\mathbf{Z}$
of the sheaf $R^{1}(\epsilon\circ j)_{*}\mathbf{Z}/p^{s}\mathbf{Z}$ to be the image of the subsheaf
$\fillog_{R}j_{*}W_{s}(\dvr_{U})$ of $j_{*}W_{s}(\dvr_{U})$ 
by $\delta_{s}\colon j_{*}W_{s}(\dvr_{U})\rightarrow R^{1}(\epsilon
\circ j)_{*}\mathbf{Z}/p^{s}\mathbf{Z}$.
\item We define a subsheaf $\fillog_{R}j_{*}\Omega_{U}^{1}$ of the sheaf $j_{*}\Omega_{U}^{1}$
to be $\Omega^{1}_{X}(\log D)(R)$.
\end{enumerate}
\end{defn}

We note that the subsheaf $\fillog_{R}R^{1}(\epsilon\circ j)_{*}\mathbf{Z}/p^{s}\mathbf{Z}$ is equal to 
the pull-back of the subsheaf $\bigoplus_{i\in I}j_{i*}\fillog_{n_{i}}H^{1}(K_{i},\mathbf{Q}/\mathbf{Z})$
of $\bigoplus_{i\in I}j_{i*}H^{1}(K_{i},\mathbf{Q}/\mathbf{Z})$
by the canonical morphism $R^{1}(\epsilon \circ j)_{*}\mathbf{Z}/p^{s}\mathbf{Z}
\rightarrow \bigoplus_{i\in I}j_{i*}H^{1}(K_{i},\mathbf{Q}/\mathbf{Z})$
by \cite[Proposition 1.31 (i)]{ya}.
For a linear combination $R=\sum_{i\in I}n_{i}D_{i}$ with integral coefficients 
$n_{i}\in \mathbf{Z}_{\ge 0}$ for $i\in I$, 
we write $\fillog_{R}H^{1}_{\et}(U,\mathbf{Z}/p^{s}\mathbf{Z})\subset H^{1}_{\et}(U,\mathbf{Z}/p^{s}
\mathbf{Z})$ for $\Gamma(X,\fillog_{R}R^{1}(\epsilon\circ j)_{*}\mathbf{Z}/p^{s}\mathbf{Z})$
identified with a submodule of $H^{1}_{\et}(U,\mathbf{Z}/p^{s}\mathbf{Z})$ by (\ref{hov}).
Then $\fillog_{R}H^{1}_{\et}(U,\mathbf{Z}/p^{s}\mathbf{Z})$ consists of 
$\chi\in H^{1}_{\et}(U,\mathbf{Z}/p^{s}\mathbf{Z})\subset H^{1}_{\et}(U,\mathbf{Q}/\mathbf{Z})$ 
such that $\chi|_{K_{i}}\in \fillog_{n_{i}}H^{1}(K_{i},\mathbf{Q}/\mathbf{Z})$
for every $i\in I$.
Since $\fillog_{R}R^{1}(\epsilon \circ j)_{*}\mathbf{Z}/p^{s}\mathbf{Z}$ is the image of 
$\fillog_{R}j_{*}W_{s}(\dvr_{U})$, there is a global section of
$\fillog_{R}j_{*}W_{s}(\dvr_{U})$ whose image in $\Gamma(X,R^{1}(\epsilon \circ j)_{*}
\mathbf{Z}/p^{s}\mathbf{Z})\simeq H^{1}_{\et}(U,\mathbf{Z}/p^{s}\mathbf{Z})$ 
is $\chi$ for $\chi\in \fillog_{R}H^{1}_{\et}(U,\mathbf{Z}/p^{s}\mathbf{Z})$ if we shrink $X$ if necessary.

We consider the morphism
\begin{equation}
\label{fsdsh}
-F^{s-1}d\colon j_{*}W_{s}(\dvr_{U})\rightarrow j_{*}\Omega^{1}_{U}; \
(a_{s-1},\ldots,a_{0})\mapsto -\sum_{i=0}^{s-1}a_{i}^{p^{i}-1}da_{i}.
\end{equation}
For a linear combination $R=\sum_{i\in I}n_{i}D_{i}$ with integral coefficients 
$n_{i}\in \mathbf{Z}_{\ge 0}$ for $i\in I$, the morphism $-F^{s-1}d$ induces the morphism 
$\fillog_{R}j_{*}W_{s}(\dvr_{U})\rightarrow \fillog_{R}j_{*}\Omega^{1}_{U}$.
For linear combinations $R=\sum_{i\in I}n_{i}D_{i}$ and $R'=\sum_{i\in I}n_{i}'D_{i}$
with integral coefficients $n_{i},n_{i}'\in \mathbf{Z}_{\ge 0}$ such that $n_{i}\ge n_{i}'$ for $i\in I$,
we put $\grlog_{R/R'}=\fillog_{R}/\fillog_{R'}$ and
the morphism $-F^{s-1}d$ induces the morphism
\begin{equation}
\varphi^{(R/R')}\colon \grlog_{R/R'}j_{*}W_{s}(\dvr_{U})\rightarrow \grlog_{R/R'}j_{*}\Omega^{1}_{U}.
\notag
\end{equation}
By \cite[Proposition 1.31 (ii)]{ya}, if $n_{i}-1\le n_{i}'\le n_{i}$ for every $i\in I$,
then there exists a unique injection $\phi_{s}^{(R/R')}\colon \grlog_{R/R'}R^{1}(\epsilon\circ j)_{*}
\mathbf{Z}/p^{s}\mathbf{Z}\rightarrow \grlog_{R/R'}j_{*}\Omega_{U}^{1}$ such that the 
following diagram is commutative:
\begin{equation}
\xymatrix{
\grlog_{R/R'}j_{*}W_{s}(\dvr_{U}) \ar[rr]^-{\varphi_{s}^{(R/R')}} \ar[dr]
&& \grlog_{R/R'}j_{*}\Omega_{U}^{1} \\
& \grlog_{R/R'}R^{1}(\epsilon\circ j)_{*}\mathbf{Z}/p^{s}\mathbf{Z}.
\ar[ur]_-{\phi_{s}^{(R/R')}} & 
} \notag
\end{equation}
For an element $\chi\in H^{1}_{\et}(U,\mathbf{Q}/\mathbf{Z})$,
take $s\in \mathbf{Z}_{\ge 0}$ such that the $p$-part of $\chi$ is of order $p^{s}$.
Then the refined Swan conductor $\rsw(\chi)\in \Gamma(Z_{\chi},\Omega^{1}_{X}(\log D)(R_{\chi})
|_{Z_{\chi}})$ is the image of the $p$-part of $\chi$ by the composition
\begin{align}
\fillog_{R_{\chi}}H^{1}_{\et}(U,\mathbf{Z}/p^{s}\mathbf{Z})=
\Gamma &(X,\fillog_{R_{\chi}}R^{1}(\epsilon\circ j)_{*}\mathbf{Z}/p^{s}\mathbf{Z})
\rightarrow \Gamma(X,\grlog_{R_{\chi}/(R_{\chi}-Z_{\chi})}R^{1}(\epsilon\circ j)_{*}\mathbf{Z}/p^{s}\mathbf{Z}) \notag \\
&\xrightarrow{\phi_{s}^{(R_{\chi}/(R_{\chi}-Z_{\chi}))}(X)}\Gamma(X,\grlog_{R_{\chi}/(R_{\chi}-Z_{\chi})}
j_{*}\Omega^{1}_{U})=\Gamma(Z_{\chi}, \Omega^{1}_{X}(\log D)(R_{\chi})|_{Z_{\chi}}). \notag
\end{align}

\begin{defn}[{\cite[Definition 1.34]{ya}}]
Let $R=\sum_{i\in I}m_{i}D_{i}$ be a linear combination with integral coefficients $m_{i}\in \mathbf{Z}_{\ge 1}$ for $i\in I$. 
Let $j_{i}\colon \Spec K_{i}\rightarrow X$ be the canonical morphism for $i\in I$.
\begin{enumerate}
\item We define a subsheaf $\fil_{R}j_{*}W_{s}(\dvr_{U})$ of the sheaf $j_{*}W_{s}(\dvr_{U})$
to be the pull-back of the subsheaf $\bigoplus_{i\in I}j_{i *}\fil_{m_{i}}W_{s}(K_{i})$
of $\bigoplus_{i\in I}j_{i*}W_{s}(K_{i})$
by the canonical morphism $j_{*}W_{s}(\dvr_{U})\rightarrow \bigoplus_{i\in I}j_{i*}W_{s}(K_{i})$.
\item We define a subsheaf $\fil_{R}R^{1}(\epsilon\circ j)_{*}\mathbf{Z}/p^{s}\mathbf{Z}$
of the sheaf $R^{1}(\epsilon\circ j)_{*}\mathbf{Z}/p^{s}\mathbf{Z}$ to be the image of the subsheaf
$\fil_{R}j_{*}W_{s}(\dvr_{U})$ of $j_{*}W_{s}(\dvr_{U})$ 
by $\delta_{s}\colon j_{*}W_{s}(\dvr_{U})\rightarrow R^{1}(\epsilon
\circ j)_{*}\mathbf{Z}/p^{s}\mathbf{Z}$.
\item We define a subsheaf $\fil_{R}j_{*}\Omega_{U}^{1}$ of the sheaf $j_{*}\Omega_{U}^{1}$
to be $\Omega^{1}_{X}(R)$.
\end{enumerate}
\end{defn}

We note that the subsheaf $\fil_{R}R^{1}(\epsilon\circ j)_{*}\mathbf{Z}/p^{s}\mathbf{Z}$ is equal to the 
pull-back of the subsheaf $\bigoplus_{i\in I}j_{i*}\fil_{m_{i}}H^{1}(K_{i},\mathbf{Q}/\mathbf{Z})$
of $\bigoplus_{i \in I}j_{i*}H^{1}(K_{i},\mathbf{Q}/\mathbf{Z})$
by the canonical morphism $R^{1}(\epsilon \circ j)_{*}\mathbf{Z}/p^{s}\mathbf{Z}
\rightarrow \bigoplus_{i\in I}j_{i*}H^{1}(K_{i},\mathbf{Q}/\mathbf{Z})$
by \cite[Proposition 1.40 (i)]{ya}.
For a linear combination $R=\sum_{i\in I}m_{i}D_{i}$ with integral coefficients $m_{i}\in \mathbf{Z}_{\ge 1}$ for $i\in I$,
we write $\fil_{R}H^{1}_{\et}(U,\mathbf{Z}/p^{s}\mathbf{Z})\subset H^{1}_{\et}(U,\mathbf{Z}/p^{s}\mathbf{Z})$ for $\Gamma(X,\fil_{R}R^{1}(\epsilon\circ j)_{*}\mathbf{Z}/p^{s}\mathbf{Z})$
identified with a submodule of $H^{1}_{\et}(U,\mathbf{Z}/p^{s}\mathbf{Z})$ by (\ref{hov}).
Then $\fil_{R}H^{1}_{\et}(U,\mathbf{Z}/p^{s}\mathbf{Z})$ consists of 
$\chi\in H^{1}_{\et}(U,\mathbf{Z}/p^{s}\mathbf{Z})\subset H^{1}_{\et}(U,\mathbf{Q}/\mathbf{Z})$
such that $\chi|_{K_{i}}\in \fil_{m_{i}}H^{1}(K_{i},\mathbf{Q}/\mathbf{Z})$ for every $i\in I$.

For linear combinations $R=\sum_{i\in I}m_{i}D_{i}$ and $R'=\sum_{i\in I}m_{i}'D_{i}$
with integral coefficients $m_{i},m_{i}'\in \mathbf{Z}_{\ge 1}$ such that $m_{i}\ge m_{i}'$ for $i\in I$,
we put $\gr_{R/R'}=\fil_{R}/\fil_{R'}$.
If $m_{i}-1\le m_{i}'\le m_{i}$ for every $i\in I$, then we put $D^{(R/R')}=R-R'$ and 
there exists a unique morphism 
\begin{equation}
\varphi'^{(R/R')}\colon \gr_{R/R'}j_{*}W_{s}(\dvr_{U})\rightarrow \gr_{R/R'}j_{*}\Omega^{1}_{U}\otimes_{\dvr_{D^{(R/R')}}}\dvr_{{D^{(R/R')}}^{1/p}},
\notag
\end{equation}
such that if $t_{i}$ is a local equation of $D_{i}$ for $i\in I$ then locally 
\begin{equation}
\label{varphip}
\varphi'^{(R/R')}_{s}((a_{s-1},\ldots,a_{0}))=
\begin{cases}
-\sum_{i=0}^{s-1}a_{i}^{p^{i}-1}da_{i}+\sum_{i\colon (m_{i},m_{i}')=(2,1)}\sqrt{\overline{t_{i}^{2}a_{0}}}
dt_{i}/t_{i}^{2} & (p=2), \\
-\sum_{i=0}^{s-1}a_{i}^{p^{i}-1}da_{i} & (p\neq 2)
\end{cases} 
\end{equation}
by \cite[Subsection 1.4]{ya}.
Further, by \cite[Proposition 1.40 (ii)]{ya}, 
there exists a unique injection $\phi_{s}'^{(R/R')}\colon \gr_{R/R'}R^{1}(\epsilon\circ j)_{*}
\mathbf{Z}/p^{s}\mathbf{Z}\rightarrow \gr_{R/R'}j_{*}\Omega_{U}^{1}\otimes_{\dvr_{D^{(R/R')}}}
\dvr_{{D^{(R/R')}}^{1/p}}$ such that the 
following diagram is commutative:
\begin{equation}
\xymatrix{
\gr_{R/R'}j_{*}W_{s}(\dvr_{U}) \ar[rr]^-{\varphi_{s}'^{(R/R')}} \ar[dr]
&& \gr_{R/R'}j_{*}\Omega_{U}^{1}\otimes_{\dvr_{D^{(R/R')}}}\dvr_{{D^{(R/R')}}^{1/p}} \\
& \gr_{R/R'}R^{1}(\epsilon\circ j)_{*}\mathbf{Z}/p^{s}\mathbf{Z}.
\ar[ur]_-{\phi_{s}'^{(R/R')}} & 
} \notag
\end{equation}
For an element $\chi\in H^{1}_{\et}(U,\mathbf{Q}/\mathbf{Z})$,
take $s\in \mathbf{Z}_{\ge 0}$ such that the $p$-part of $\chi$ is of order $p^{s}$.
Then the characteristic form $\cform(\chi)\in \Gamma(Z_{\chi}^{1/p},\Omega^{1}_{X}(R'_{\chi})
\otimes_{\dvr_{X}}\dvr_{Z_{\chi}^{1/p}})$ is the image of the $p$-part of $\chi$ by the composition
\begin{align}
\fil_{R_{\chi}'}H^{1}_{\et}(U,&\mathbf{Z}/p^{s}\mathbf{Z})=
\Gamma (X,\fil_{R'_{\chi}}R^{1}(\epsilon\circ j)_{*}\mathbf{Z}/p^{s}\mathbf{Z})
\rightarrow \Gamma(X,\gr_{R'_{\chi}/(R'_{\chi}-Z_{\chi})}R^{1}(\epsilon\circ j)_{*}\mathbf{Z}/p^{s}\mathbf{Z}) \notag \\
&\xrightarrow{\phi_{s}^{(R'_{\chi}/(R'_{\chi}-Z_{\chi}))}(X)}\Gamma(X,\gr_{R'_{\chi}/(R'_{\chi}-Z_{\chi})}
j_{*}\Omega^{1}_{U}\otimes_{\dvr_{Z_{\chi}}}Z_{\chi}^{1/p})=\Gamma(Z_{\chi}^{1/p}, \Omega^{1}_{X}(R'_{\chi})\otimes_{\dvr_{X}}\dvr_{Z_{\chi}^{1/p}}). \notag
\end{align}

\begin{lem}
\label{alnlsch}
Let $a\in \Gamma(X,j_{*}W_{s}(\dvr_{U}))$ be a global section
and let $\chi$ be an element of $H^{1}_{\et}(U,\mathbf{Q}/\mathbf{Z})$
whose $p$-part is the image of $a$ by $\delta_{s}$.
If $a\in \Gamma(X,\fillog_{R_{\chi}}j_{*}W_{s}(\dvr_{U}))$,
then we have $a\in \Gamma(X,\fil_{R_{\chi}'}j_{*}W_{s}(\dvr_{U}))$.
\end{lem}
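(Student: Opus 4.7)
The plan is to reduce this global statement to the local statement already established in Corollary \ref{alnl}, using the fact that both filtrations $\fillog_{R_{\chi}}j_{\ast}W_{s}(\dvr_{U})$ and $\fil_{R_{\chi}'}j_{\ast}W_{s}(\dvr_{U})$ are defined as pull-backs of the corresponding local filtrations at the generic points $\mathfrak{p}_{i}$ of the components $D_{i}$.

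Concretely, I would proceed as follows. First, unwind the hypothesis using Definition \ref{deffilshf}: the assumption $a\in \Gamma(X,\fillog_{R_{\chi}}j_{\ast}W_{s}(\dvr_{U}))$ means that for every $i\in I$ the restriction $a|_{K_{i}}\in W_{s}(K_{i})$ lies in $\fillog_{n_{i}}W_{s}(K_{i})$, where $n_{i}=\sw(\chi|_{K_{i}})$ is the coefficient of $D_{i}$ in $R_{\chi}$ (with the convention that $n_{i}=0$ for $i\in I_{\mT,\chi}$, in which case the condition is automatic). Second, apply Corollary \ref{alnl} to the local character $\chi|_{K_{i}}$ and to the Witt vector $a|_{K_{i}}$: since $a|_{K_{i}}\in \fillog_{n_{i}}W_{s}(K_{i})$ and $n_{i}=\sw(\chi|_{K_{i}})$, the corollary yields $a|_{K_{i}}\in \fil_{m_{i}}W_{s}(K_{i})$, where $m_{i}=\dt(\chi|_{K_{i}})$ is the coefficient of $D_{i}$ in $R_{\chi}'$. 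Third, by the definition of $\fil_{R_{\chi}'}j_{\ast}W_{s}(\dvr_{U})$ as a pull-back, the simultaneous validity of $a|_{K_{i}}\in \fil_{m_{i}}W_{s}(K_{i})$ for every $i\in I$ is exactly the condition that $a\in \Gamma(X,\fil_{R_{\chi}'}j_{\ast}W_{s}(\dvr_{U}))$.

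There is really no obstacle of note: the proof is a direct transport of Corollary \ref{alnl} along the compatibility of the sheaf filtrations with their stalks at the generic points $\mathfrak{p}_{i}$. The only mild subtlety is bookkeeping, namely checking that the numerical input to Corollary \ref{alnl} at each $i$ matches the definitions of $R_{\chi}$ and $R_{\chi}'$ componentwise, and that the case $i\in I_{\mT,\chi}$ (where $\sw(\chi|_{K_{i}})=0$ and $\dt(\chi|_{K_{i}})=1$) is handled correctly; this is immediate from the identities $\fillog_{0}W_{s}(K)=\fil_{1}W_{s}(K)$ recorded in (\ref{flzfo}), so no separate argument is needed.
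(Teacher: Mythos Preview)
Your proof is correct and follows essentially the same route as the paper: reduce to each generic point $\mathfrak{p}_{i}$ via the pull-back definition of the sheaf filtrations, and then invoke Corollary~\ref{alnl} componentwise. The paper's argument is just a terser version of what you wrote.
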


\begin{proof}
We put $R_{\chi}=\sum_{i\in I}n_{i}D_{i}$ and $R_{\chi}'=\sum_{i\in I}m_{i}D_{i}$.
Then we have $a|_{K_{i}}\in \fillog_{n_{i}}W_{s}(K_{i})$ for every $i\in I$.
By Corollary \ref{alnl}, we have $a|_{K_{i}}\in \fil_{m_{i}}W_{s}(K_{i})$ for every $i\in I$.
Hence the assertion holds.
\end{proof}

We study a relation between the refined Swan conductor $\rsw(\chi)$ and
the characteristic form $\cform(\chi)$ for $\chi\in H^{1}_{\et}(U,\mathbf{Q}/\mathbf{Z})$.
For an element $\chi$ of $H^{1}_{\et}(U,\mathbf{Q}/\mathbf{Z})$,
let $I_{\mI,\chi}$ and $I_{\mII,\chi}$ be the subsets of $I$ consisting of $i\in I$
such that $\chi|_{K_{i}}$ is of type $\mI$ and of type $\mII$ respectively.
We note $I_{\mW,\chi}=I_{\mI,\chi}\sqcup I_{\mII,\chi}$ 
(Definition \ref{defswdiv}).
We put $D_{\mT,\chi}=\bigcup_{i\in I_{\mT,\chi}}D_{i}$ (loc.\ cit.).
Then we have $D=Z_{\chi}\cup D_{T,\chi}$.
By Lemma \ref{alnlsch}, both the refined Swan conductor $\rsw(\chi)$ and the characteristic form 
$\cform(\chi)$ are locally defined by the same section of $\fillog_{R_{\chi}}j_{*}W_{s}(\dvr_{U})$.

Suppose $D=Z_{\chi}$.
If $I_{\mI,\chi}=\emptyset$, then we have $R_{\chi}'=R_{\chi}$.
In this case, the image of $\cform(\chi)$ by the canonical morphism
$\Omega_{X}^{1}(R_{\chi}')\otimes_{\dvr_{X}}\dvr_{Z_{\chi}^{1/p}}\rightarrow \Omega_{X}^{1}(\log D)(R_{\chi})\otimes_{\dvr_{X}}\dvr_{Z_{\chi}^{1/p}}$ is in $\Omega_{X}^{1}(\log D)(R_{\chi})|_{Z_{\chi}}$
and is the refined Swan conductor $\rsw(\chi)$.
If $I_{\mI,\chi}\neq \emptyset$, then we have the canonical morphism
$\dvr_{Z_{\chi}}\cdot \rsw(\chi)\rightarrow \Omega^{1}_{X}(R_{\chi}')|_{Z_{\chi}}$
and the image $-F^{s-1}da$ in $\Omega^{1}_{X}(R_{\chi}')|_{Z_{\chi}}$ 
of a section $a$ of $\fillog_{R_{\chi}}j_{*}W_{s}(\dvr_{U})$ whose image in $H^{1}_{\et}(U,\mathbf{Q}/\mathbf{Z})$
is locally the $p$-part of $\chi$
is locally the image of $\rsw(\chi)$ by this morphism.

If $D\neq Z_{\chi}$, then the $p$-part of $\chi$ is the image in 
$H^{1}_{\et}(U,\mathbf{Q}/\mathbf{Z})$ of an element 
$\chi'\in H^{1}_{\et}(V,\mathbf{Q}/\mathbf{Z})$ where $V=X-Z_{\chi}$,
and we have $Z_{\chi'}=Z_{\chi}$ and $R_{\chi}'=R'_{\chi'}+D_{\mT,\chi}$. 
The refined Swan conductor $\rsw(\chi)$ is the image of $\rsw(\chi')$ by
$\Omega^{1}_{X}(\log Z_{\chi})(R_{\chi})|_{Z_{\chi}}\rightarrow \Omega^{1}_{X}(\log D)(R_{\chi})|_{Z_{\chi}}$ 
and the characteristic form $\cform(\chi)$ is the image of $\cform(\chi')$
by $\Omega^{1}_{X}(R'_{\chi'})|_{Z_{\chi'}^{1/p}}\rightarrow 
\Omega^{1}_{X}(R'_{\chi})|_{Z_{\chi}^{1/p}}$.

\begin{lem}
\label{lemint}
Let $\chi$ be an element of $H^{1}_{\et}(U,\mathbf{Q}/\mathbf{Z})$.
\begin{enumerate}
\item Let $i$ be an element of $I_{\mT,\chi}$.
Then the restriction $\cform(\chi)|_{D_{i}^{1/p}\cap Z_{\chi}^{1/p}}$ is $0$.
Consequently $(X,U,\chi)$ is not non-degenerate at any point on $D_{\mT,\chi}\cap Z_{\chi}$.
\item Let $i$ be an element of $I_{\mW,\chi}$.
Then the following are equivalent:
\begin{enumerate}
\item $i\in I_{\mII,\chi}$.
\item The morphism $\xi_{i}(\chi)\colon \dvr_{X}(-R_{\chi})|_{D_{i}}\rightarrow \dvr_{D_{i}}$ (\ref{xichi}) is $0$.
\item $[L_{i,\chi}']\neq [T^{*}_{D_{i}}X]$.
\end{enumerate}
\item Let $i$ and $i'$ be two different elements of $I_{\mI,\chi}$.
Then the restriction $\cform(\chi)|_{D_{i}^{1/p}\cap D_{i'}^{1/p}}$ is $0$.
\end{enumerate}
\end{lem}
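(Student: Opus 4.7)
The plan is to prove each of (i), (ii), (iii) by explicit local computation with a Witt-vector representative of the $p$-part of $\chi$. After shrinking $X$ if necessary, I would fix a section $a = (a_{s-1}, \ldots, a_0)$ of $\fillog_{R_\chi} j_\ast W_s(\dvr_U)$ whose image under $\delta_s$ is the $p$-part of $\chi$; by the commutative diagrams of Subsection \ref{sscrc} and Lemma \ref{alnlsch}, both $\rsw(\chi)$ and $\cform(\chi)$ are then represented by the $1$-form $-\sum_{j=0}^{s-1} a_j^{p^j-1}\, da_j$ in the appropriate graded quotients.

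For (i), I would use the description of $\cform(\chi)$ in the case $D \neq Z_\chi$ recalled immediately before the lemma: the $p$-part of $\chi$ is restricted from some $\chi' \in H^1_{\et}(V, \mathbf{Q}/\mathbf{Z})$ on $V = X - Z_\chi$ satisfying $R'_\chi = R'_{\chi'} + D_{\mT,\chi}$, and $\cform(\chi)$ is the image of $\cform(\chi')$ under the canonical morphism $\Omega^1_X(R'_{\chi'})|_{Z_\chi^{1/p}} \to \Omega^1_X(R'_\chi)|_{Z_\chi^{1/p}}$. This morphism is, locally along a component $D_i$ of $D_{\mT,\chi}$, multiplication by a local equation $t_i$ of $D_i$, so $\cform(\chi)$ vanishes identically on $D_i^{1/p}$ and hence on $D_i^{1/p} \cap Z_\chi^{1/p}$. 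The consequence about non-degeneration follows from Definition \ref{defofnondeg}(i), since a zero germ cannot be part of a basis of a free module.

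For (ii), I would compute $\xi_i(\chi)$ via the residue formula. Write $n = n_i$, $s' = \ord_p(n)$, and let $t_i$ be a local equation of $D_i$. By Corollary \ref{cortw}, $\chi|_{K_i}$ is of type $\mI$ precisely when $s > s'$ and $a$ can be chosen with $\ord_{K_i}(a_{s'}) = -n/p^{s'}$. Writing then $a_{s'} = u\, t_i^{-n/p^{s'}}$ with $u$ a unit, a direct differentiation gives
\begin{equation}
-a_{s'}^{p^{s'}-1}\, d a_{s'} = -u^{p^{s'}-1}\, t_i^{-n}\, du + (n/p^{s'})\, u^{p^{s'}}\, t_i^{-n-1}\, dt_i, \notag
\end{equation}
and the coefficient of $d\log t_i$, after multiplying by $t_i^{n}$, is $(n/p^{s'})\, u^{p^{s'}}$, whose restriction to $D_i$ is nonzero because $n/p^{s'}$ is coprime to $p$ and $u$ is a unit; the remaining summands in $-F^{s-1}d(a)$ only contribute lower-order pieces. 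Hence $\xi_i(\chi) \ne 0$. For $\chi|_{K_i}$ of type $\mII$, one may take $a$ in $\fillog_{n-1} W_s(K_i) + V^{s-s'}\fillog_n W_{s'}(K_i)$, so every leading contribution comes from a position $j < s'$ with $a_j = v\, t_i^{-n/p^j}$; the analogous calculation produces a $d\log t_i$-coefficient proportional to $n/p^j$, which is divisible by $p$ and therefore vanishes in characteristic $p$. This proves (a) $\Leftrightarrow$ (b). For (c), the same computation passed through $\gr_{R'_\chi}$ shows that in type $\mI$ the leading term of $\cform(\chi)|_{D_i}$ is $(n/p^{s'})\, u^{p^{s'}}\, dt_i/t_i^{n+1}$, a multiple of $dt_i$, whence $L'_{i,\chi} = T^\ast_{D_i} X$, while in type $\mII$ the leading term is a tangential form $-v^{p^j-1}\, dv/t_i^{n}$, whence $L'_{i,\chi} \ne T^\ast_{D_i} X$.

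For (iii), I would perform an analogous local computation near a point of $D_i \cap D_{i'}$ in local coordinates $t_i, t_{i'}, \ldots$. Since $i, i' \in I_{\mI,\chi}$, both restrictions are of usual type and I may regard $\cform(\chi)$ as a section of $\Omega^1_X(R'_\chi)|_{Z_\chi}$. Writing each Witt coordinate (after subtracting lower-order terms) as a monomial $a_j = u_j\, t_i^{-c_{i,j}} t_{i'}^{-c_{i',j}}$ with $p^j c_{i,j} \le n_i$ and $p^j c_{i',j} \le n_{i'}$, I would expand $-a_j^{p^j-1}\, da_j$ and multiply the three resulting pieces by $t_i^{n_i+1} t_{i'}^{n_{i'}+1}$ (the natural denominator of $\Omega^1_X(R'_\chi)$ near the intersection): the $du_j$-piece acquires the factor $t_i^{n_i+1-p^j c_{i,j}} t_{i'}^{n_{i'}+1-p^j c_{i',j}}$, both exponents $\ge 1$; the $dt_i$-piece acquires $t_i^{n_i-p^j c_{i,j}} t_{i'}^{n_{i'}+1-p^j c_{i',j}}$, with $t_{i'}$-exponent $\ge 1$; and the $dt_{i'}$-piece has $t_i$-exponent $\ge 1$. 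Hence every piece vanishes on $D_i \cap D_{i'}$, so $\cform(\chi)|_{D_i^{1/p}\cap D_{i'}^{1/p}} = 0$. The main obstacle here is the bookkeeping: one must verify that no issue arises when the leading Witt positions $s'_i = \ord_p(n_i)$ and $s'_{i'} = \ord_p(n_{i'})$ differ, and that the reduction to monomial Witt coordinates does not drop relevant contributions; the inequalities $p^j c_{i,j} \le n_i$ and $p^j c_{i',j} \le n_{i'}$, together with the strictness at non-leading positions, handle both subtleties cleanly.
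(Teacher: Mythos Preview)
Your argument for (i) is exactly the paper's: $\cform(\chi)$ factors through $\Omega^1_X(R'_\chi - D_{\mT,\chi})|_{Z_\chi^{1/p}}$, hence vanishes along each $D_i^{1/p}$ with $i\in I_{\mT,\chi}$.

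For (ii) and (iii) your approach is correct in substance but differs from the paper's. The paper avoids all Witt-vector bookkeeping by using the structural relationship between $\rsw(\chi)$ and $\cform(\chi)$ through the canonical maps between $\Omega^1_X$ and $\Omega^1_X(\log D)$. For (ii), after reducing to $D=D_i$: in type~$\mII$ one has $R'_\chi=R_\chi$ and $\rsw(\chi)$ is the image of $\cform(\chi)$ under $\Omega^1_X(R'_\chi)\otimes\dvr_{D^{1/p}}\to\Omega^1_X(\log D)(R_\chi)\otimes\dvr_{D^{1/p}}$, which kills the $\dlog t_i$–component, so $\xi_i(\chi)=0$; conversely in usual type the same form computes $\cform(\chi)$ in $\Omega^1_X(R'_\chi)|_{D_i}$ with $R'_\chi=R_\chi$ equivalent to type~$\mII$, and $\cform(\chi)\neq 0$ finishes (a)$\Leftrightarrow$(b). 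For (a)$\Leftrightarrow$(c) the paper observes that in type~$\mI$ the map $\Omega^1_X(\log D)|_{D_i}\to\Omega^1_X(D_i)|_{D_i}$ has image $\mathcal{C}_{D_i/X}(D_i)$, forcing $L'_{i,\chi}=T^*_{D_i}X$, while in type~$\mII$ the kernel of $\Omega^1_X|_{D_i^{1/p}}\to\Omega^1_X(\log D)|_{D_i^{1/p}}$ is $\mathcal{C}_{D_i/X}\otimes\dvr_{D_i^{1/p}}$ and $\rsw(\chi)\neq 0$ forces $\cform(\chi)\notin\mathcal{C}_{D_i/X}(R'_\chi)\otimes\dvr_{D_i^{1/p}}$. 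For (iii) the paper simply notes that $\cform(\chi)|_{D_I}$ is the image of $\rsw(\chi)|_{D_I}$ under $\Omega^1_X(\log D)(R_\chi)|_{D_I}\to\Omega^1_X(R_\chi+D)|_{D_I}$, and this map is identically zero when $\sharp I\ge 2$.

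Your explicit computations recover all of this, but with two caveats. First, in (ii)(c) for type~$\mII$ you do not treat the exceptional case $(p,\sw,\dt)=(2,2,2)$: there $\cform(\chi)$ has an extra $\sqrt{\overline{t_i^2 a_0}}\,dt_i/t_i^2$ term from (\ref{vphip}), so your claim that the leading term is purely tangential is not literally true; you still get $[L'_{i,\chi}]\neq[T^*_{D_i}X]$, but the reason is that the $dv$-part survives (it is $\rsw(\chi)\neq 0$), not that the $dt_i$-part vanishes. The paper's kernel/image argument handles this uniformly. Second, your proof of (iii) is correct once you write $a_j=u_j\,t_i^{-c_{i,j}}t_{i'}^{-c_{i',j}}$ with $u_j\in\dvr_{X,x}$ and the exponents chosen so that $p^jc_{i,j}\le n_i$, $p^jc_{i',j}\le n_{i'}$; but the paper's observation that the restriction map itself is zero replaces the entire computation by one line and makes the ``bookkeeping obstacle'' you flag disappear.
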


\begin{proof}
We may assume that $\chi$ is of order $p^{s}$ for some integer $s\ge 0$.

(i) Since $D\neq Z_{\chi}$, the characteristic form $\cform(\chi)$ is the image of a global section
by $\Omega^{1}_{X}(R'_{\chi}-D_{T,\chi})|_{Z_{\chi}^{1/p}}\rightarrow 
\Omega^{1}_{X}(R'_{\chi})|_{Z_{\chi}^{1/p}}$.
Hence the assertion holds. 

(ii) We may assume $D=D_{i}$.
Then the condition (b) is equivalent to that the image of $\dvr_{X}(-R_{\chi})|_{D}$ by the 
multiplication by $\rsw(\chi)|_{D}$ is contained in the image 
of $\Omega^{1}_{X}|_{D}$ in $\Omega^{1}_{X}(\log D)|_{D}$.
Namely the condition (b) is equivalent to that
$(-F^{s-1}da)|_{D}$ is locally contained in $\Omega^{1}_{X}(R_{\chi})|_{D}$
for a section $a$ of $\fillog_{R_{\chi}}j_{*}W_{s}(\dvr_{U})$ whose image in 
$H^{1}_{\et}(U,\mathbf{Q}/\mathbf{Z})$ is locally $\chi$.

Suppose that $\chi|_{K_{i}}$ is of exceptional type.
Then we have $i\in I_{\mII,\chi}$ and we note $R'_{\chi}=R_{\chi}$.
Since $\rsw(\chi)$ is the image of $\cform(\chi)$ by the canonical morphism
$\Omega^{1}_{X}(R'_{\chi})\otimes_{\dvr_{X}}\dvr_{D^{1/p}}\rightarrow \Omega^{1}_{X}(\log D)(R_{\chi})\otimes_{\dvr_{X}}\dvr_{D^{1/p}}$,
the image of $\xi_{i}(\chi)$ is $0$.

Suppose that $\chi|_{K_{i}}$ is of usual type.
Then $(-F^{s-1}da)|_{D}$ is a section of $\Omega^{1}_{X}(R_{\chi}')|_{D}$ by Lemma \ref{alnlsch}
and is locally the characteristic form $\cform(\chi)|_{D}$ for a section $a$ of $\fillog_{R_{\chi}}j_{*}W_{s}(\dvr_{U})$ 
whose image in $H^{1}_{\et}(U,\mathbf{Q}/\mathbf{Z})$ is locally $\chi$.
Since $R_{\chi}=R_{\chi}'$ is equivalent to the condition (a),
the equivalence of (a) and (b) holds by $\cform(\chi)|_{D}\neq 0$.

We prove the equivalence of (a) and (c).
If $i\in I_{\mI,\chi}$, then $\chi|_{K_{i}}$ is of usual type and $\cform(\chi)$ is the image of
$\rsw(\chi)$ by $\Omega^{1}_{X}(\log D)(R_{\chi})|_{D}\rightarrow 
\Omega^{1}_{X}(R_{\chi}+D)|_{D}$.
Since the image of the morphism $\Omega^{1}_{X}(\log D)|_{D}\rightarrow 
\Omega^{1}_{X}(D)|_{D}$ is $\mathcal{C}_{D/X}(D)$, 
where $\mathcal{C}_{D/X}$ denotes the conormal sheaf of $D$ over $X$,
we have $[L_{i,\chi}']=[T^{*}_{D_{i}}X]$ if $i\in I_{\mI,\chi}$.
Hence the implication (c) $\Rightarrow$ (a) holds.
If $i\in I_{\mII,\chi}$, then $R_{\chi}'=R_{\chi}$ and
the image of  $\cform(\chi)$ by the morphism $\Omega^{1}_{X}(R_{\chi}')\otimes_{\dvr_{X}}
\dvr_{D^{1/p}}\rightarrow 
\Omega^{1}_{X}(\log D)(R_{\chi})\otimes_{\dvr_{X}}\dvr_{D^{1/p}}$ is 
$\rsw(\chi)\neq 0$.
Since the kernel of the morphism $\Omega^{1}_{X}\otimes_{\dvr_{X}}\dvr_{D^{1/p}}\rightarrow 
\Omega^{1}_{X}(\log D)\otimes_{\dvr_{X}}\dvr_{D^{1/p}}$ is $\mathcal{C}_{D/X}\otimes_{\dvr_{X}}\dvr_{D^{1/p}}$,
the implication (a) $\Rightarrow$ (c) holds.

(iii) We may assume $D=D_{i}\cup D_{i'}$ and we put $D_{I}=D_{i}\cap D_{i'}$.
Since $\cform(\chi)|_{D_{I}}$ is the image of $\rsw(\chi)|_{D_{I}}$ by the morphism
$\Omega^{1}_{X}(\log D)(R_{\chi})|_{D_{I}}\rightarrow \Omega^{1}_{X}(R_{\chi}+D)|_{D_{I}}$
which is the $0$-mapping, the assertion holds.
\end{proof}

We note that $(X,U,\chi)$ is non-degenerate if and only if $(X,U,\chi)$ is strongly non-degenerate
for $\chi\in H^{1}_{\et}(U,\mathbf{Q}/\mathbf{Z})$ by Lemma \ref{lemint} (i).

We study the relation between $\rsw(\chi)$ and $\cform(\chi)$ at a closed point of $Z_{\chi}$.
For an element $\chi$ of $H^{1}_{\et}(U,\mathbf{Q}/\mathbf{Z})$
and a point $x$ of $D$,
we put $I_{\mI,\chi,x}=I_{x}\cap I_{\mI,\chi}$ and $I_{\mII,\chi,x}=I_{x}\cap I_{\mII,\chi}$. 
If $x$ is a closed point of $Z_{\chi}$ with a local coordinate system $(t_{1},\ldots, t_{d})$ in $X$
such that $t_{i}$ is a local equation of $D_{i}$ for $i\in I_{x}=\{1,\ldots,r\}$ and if we put 
\begin{align}
\label{rswchix}
\rsw(\chi)_{x}&=(\sum_{i=1}^{r}\alpha_{i}\dlog t_{i}+\sum_{i=r+1}^{d}\beta_{i}dt_{i})/
\prod_{i=1}^{r}t_{i}^{n_{i}}, \\
\cform(\chi)_{x'}&=(\sum_{i=1}^{r}\alpha_{i}'dt_{i}+\sum_{i=r+1}^{d}\beta_{i}'dt_{i})/
\prod_{i=1}^{r}t_{i}^{m_{i}}, \notag
\end{align}
where $R_{\chi}=\sum_{i=1}^{r}n_{i}D_{i}$, $R_{\chi}'=\sum_{i=1}^{r}m_{i}D_{i}$,
and $x'$ is the unique closed point of $Z_{\chi}^{1/p}$ lying above $x$,
then we have 
\begin{equation}
\label{relloc}
\begin{cases}
\alpha_{i}'=\alpha_{i}\prod_{i'\in I_{x}-(\{i\}\cup I_{\mII,\chi,x})}t_{i'} &
(i\in I_{\mI,\chi,x}), \\
\alpha_{i}\in t_{i}\cdot \dvr_{Z_{\chi},x}, \alpha_{i}'\in \prod_{i'\in I_{x}-I_{\mII,\chi,x}}t_{i'}
\cdot\dvr_{Z_{\chi}^{1/p},x'}
& (i\in I_{\mII,\chi,x}\cup I_{\mT,\chi,x}), \\ 
\beta_{i}'=\beta_{i}\prod_{i'\in I_{x}-I_{\mII,\chi,x}}t_{i'} &  (i=r+1,\ldots ,d).
\end{cases} 
\end{equation}

\begin{lem}
\label{lemordp}
Let $\chi$ be an element of $H^{1}_{\et}(U,\mathbf{Q}/\mathbf{Z})$ and
$x$ a closed point of $Z_{\chi}$.
Let $i$ be an element of $I_{\mI,\chi,x}$.
Let $n$ be the maximal integer $m$ such that
the image of $\xi_{i}(\chi)_{x}\colon \dvr_{X}(R_{\chi})|_{D_{i},x}\rightarrow \dvr_{D_{i},x}$ (\ref{xichi}) is
contained in $\mathfrak{m}_{x}^{m}\dvr_{D_{i},x}$, where $\mathfrak{m}_{x}$ is the maximal ideal at $x$.
Then we have $\ord'(\chi;x,D_{i})=n+\sharp(I_{\mI,\chi,x}\cup I_{\mT,\chi,x})-1$.
\end{lem}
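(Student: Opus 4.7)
The plan is to compute both sides directly from the local expressions (\ref{rswchix}) and the relations (\ref{relloc}). Without loss of generality, I would assume $i=1$ and take a local coordinate system $(t_1,\ldots,t_d)$ at $x$ with $t_j$ a local equation of $D_j$ for $j\in I_x=\{1,\ldots,r\}$. Since $\chi|_{K_1}$ is of type $\mI$, hence of usual type, the alternative description of $\ord'$ for usual type given right after Definition \ref{defofnondeg} identifies $\ord'(\chi;x,D_1)$ with the largest integer $n'$ such that $\cform(\chi)|_{D_1,x}\in\mathfrak{m}_x^{n'}\Omega^1_X(R_\chi')|_{D_1,x}$, where $\mathfrak{m}_x$ denotes the maximal ideal of $\dvr_{D_1,x}$.

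First I would show that the restriction of $\cform(\chi)_{x'}$ to $D_1$, expressed in the $\dvr_{D_1,x}$-basis of $\Omega^1_X(R_\chi')|_{D_1,x}$ induced from $dt_1,\ldots,dt_d$, has vanishing $dt_j$-coefficient for every $j\neq 1$. This is a case analysis on (\ref{relloc}): for $j\in I_{\mI,\chi,x}\setminus\{1\}$ the product defining $\alpha_j'$ contains $t_1$ because $1\in I_x-(\{j\}\cup I_{\mII,\chi,x})$; for $j\in I_{\mII,\chi,x}\cup I_{\mT,\chi,x}$ the containment $\alpha_j'\in\prod_{i'\in I_x-I_{\mII,\chi,x}}t_{i'}\cdot\dvr_{Z_\chi^{1/p},x'}$ supplies the factor $t_1$ since $1\in I_{\mI,\chi,x}\subset I_x-I_{\mII,\chi,x}$; and the analogous factor appears in $\beta_j'$ for $j>r$. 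This reduces $\ord'(\chi;x,D_1)$ to the order of $\alpha_1'\bmod t_1$ in $\dvr_{D_1,x}$.

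Next I would factor $\alpha_1'=\alpha_1\prod_{i'\in(I_{\mI,\chi,x}\setminus\{1\})\cup I_{\mT,\chi,x}}t_{i'}$ using the first case of (\ref{relloc}). Each $t_{i'}$ in the product is a regular parameter of $\dvr_{D_1,x}$, and the index set has $\sharp(I_{\mI,\chi,x}\cup I_{\mT,\chi,x})-1$ elements since $I_{\mI,\chi,x}$ and $I_{\mT,\chi,x}$ are disjoint and $1\in I_{\mI,\chi,x}$. Finally, unwinding the definition of $\xi_1(\chi)$ from (\ref{xichi}) on the generator $\prod_jt_j^{n_j}$ of $\dvr_X(-R_\chi)_x$ shows that the image of $\xi_1(\chi)_x$ is the principal ideal of $\dvr_{D_1,x}$ generated by $\alpha_1\bmod t_1$, whence $n$ equals the order of $\alpha_1\bmod t_1$. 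Adding the two contributions gives the desired formula $\ord'(\chi;x,D_1)=n+\sharp(I_{\mI,\chi,x}\cup I_{\mT,\chi,x})-1$.

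The main obstacle is the combinatorial bookkeeping in (\ref{relloc}), which has three clauses corresponding to the three subsets of $I_x$. The underlying observation — that $1\in I_{\mI,\chi,x}$ automatically places $1$ outside $I_{\mII,\chi,x}$ and outside $\{j\}$ for any $j\neq 1$, so that the relevant product in each case contains $t_1$ — is elementary, but must be traced through all three clauses to justify that only the $dt_1$-coefficient of $\cform(\chi)|_{D_1,x}$ survives and that the surviving coefficient factors precisely as claimed.
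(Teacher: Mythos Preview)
Your proof is correct and follows essentially the same approach as the paper: both compute $\cform(\chi)|_{D_i,x}$ from (\ref{relloc}), observe that only the $dt_i$-coefficient survives modulo $t_i$, identify $\ord'(\chi;x,D_i)$ with the order of $\alpha_i'$ in $\dvr_{D_i,x}$, and then use the factorization $\alpha_i'=\alpha_i\prod_{i'\in I_x-(\{i\}\cup I_{\mII,\chi,x})}t_{i'}$ together with the identification of $n$ as the order of $\alpha_i$ in $\dvr_{D_i,x}$. The paper's proof compresses the vanishing-of-other-coefficients step into a single reference to (\ref{relloc}), whereas you spell out the case analysis explicitly; the content is the same.
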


\begin{proof}
Let the notation be as in (\ref{rswchix}).
Since $\chi|_{K_{i}}$ is of usual type, we may regard $\cform(\chi)|_{D_{i}^{1/p},x'}$ as an element of
$\Omega^{1}_{X}(R_{\chi}')|_{D_{i},x}$.
By (\ref{relloc}), we have $\cform(\chi)|_{D_{i}^{1/p},x'}=\alpha_{i}'\dlog t_{i}/\prod_{i'=1}^{r}t_{i'}^{m_{i'}}$.
Hence the order $\ord'(\chi;x,D_{i})$ is the maximal integer $m'$
such that $\alpha_{i}'\in \mathfrak{m}_{x}^{m'}\dvr_{D_{i},x}$.
Since $n$ is the maximal integer $m$ such that $\alpha_{i}\in \mathfrak{m}_{x}^{m}\dvr_{D_{i},x}$,
the assertion holds by (\ref{relloc}).
\end{proof}

\begin{lem}
\label{itsn}
Let $\chi$ be an element of $H^{1}_{\et}(U,\mathbf{Q}/\mathbf{Z})$
and $x$ a closed point of $Z_{\chi}$.
Assume that $(X,U,\chi)$ is clean at $x$
and let $d$ be the dimension of $\dvr_{X,x}$.
\begin{enumerate}
\item $\sharp(I_{x})<d$ or $I_{\mI,\chi,x}\neq \emptyset$.
\item $(X,U,\chi)$ is non-degenerate at $x$ if and only if one of the following conditions holds:
\begin{enumerate}
\item $I_{x}= I_{\mII,\chi,x}$.
\item $I_{x}=I_{\mW,\chi,x}$, $\sharp(I_{\mI,\chi,x})=1$, and 
the image of $\xi_{i}(\chi)$ in $k(x)$ is not $0$ for $i\in I_{\mI,\chi,x}$.
\end{enumerate} 
\item If $\sharp(I_{\mW,\chi,x})=d$ and if $\sharp(I_{\mI,\chi,x})=1$,
then $(X,U,\chi)$ is non-degenerate at $x$.  
\item If $\sharp(I_{x})=d$ and if $\sharp(I_{\mW,\chi,x})=1$,
then the image of $\xi_{i}(\chi)$ in $k(x)$ is not $0$ for $i\in I_{\mW,\chi,x}$.
\end{enumerate}
\end{lem}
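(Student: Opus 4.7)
The plan is to translate cleanliness and non-degeneracy at $x$ into explicit conditions on the coefficients appearing in the local expressions (\ref{rswchix}) for $\rsw(\chi)_x$ and $\cform(\chi)_{x'}$, and then combine these using the relations (\ref{relloc}). Fix a local coordinate system $(t_{1},\dots,t_{d})$ at $x$ with $t_{i}$ a local equation of $D_{i}$ for $i\in I_{x}=\{1,\dots,r\}$. Cleanliness at $x$ (Definition \ref{defofclean}) means that the coefficient tuple $(\alpha_{i},\beta_{j})$ in (\ref{rswchix}) contains an entry which is a unit in $\dvr_{Z_{\chi},x}$, while non-degeneracy at $x$ (Definition \ref{defofnondeg}) is the analogous condition on $(\alpha_{i}',\beta_{j}')$ in $\dvr_{Z_{\chi}^{1/p},x'}$. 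By (\ref{relloc}), for $i\in I_{\mII,\chi,x}\cup I_{\mT,\chi,x}$ the coefficient $\alpha_{i}$ lies in $t_{i}\cdot\dvr_{Z_{\chi},x}$, hence in the maximal ideal; the only candidates for unit coefficients in $\rsw(\chi)_{x}$ are therefore $\{\alpha_{i}\}_{i\in I_{\mI,\chi,x}}$ together with $\{\beta_{j}\}_{j\notin I_{x}}$.

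Assertion (i) is then immediate by contraposition: if $\sharp(I_{x})=d$ then no $\beta_{j}$ terms appear in (\ref{rswchix}), so cleanliness forces $I_{\mI,\chi,x}\neq\emptyset$. For (ii), I would similarly use (\ref{relloc}) to write each $\alpha_{i}'$ with $i\in I_{\mI,\chi,x}$ as $\alpha_{i}$ times the monomial $\prod_{i'\in I_{x}-(\{i\}\cup I_{\mII,\chi,x})}t_{i'}$, each $\beta_{j}'$ as $\beta_{j}\prod_{i'\in I_{x}-I_{\mII,\chi,x}}t_{i'}$, and to note that the remaining $\alpha_{i}'$ lie in $\prod_{i'\in I_{x}-I_{\mII,\chi,x}}t_{i'}\cdot\dvr_{Z_{\chi}^{1/p},x'}$. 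Since a nonempty product of local equations $t_{i'}$ at $x$ is a non-unit, $\beta_{j}'$ can be a unit only when $I_{x}=I_{\mII,\chi,x}$ (case (a)), and $\alpha_{i}'$ can be a unit only when $I_{x}=\{i\}\cup I_{\mII,\chi,x}$, i.e.\ $I_{x}=I_{\mW,\chi,x}$ with $\sharp(I_{\mI,\chi,x})=1$ (case (b)). Combining with cleanliness yields (ii): in case (a) some $\beta_{j}=\beta_{j}'$ is automatically a unit; in case (b) the unique $\alpha_{i}$ with $i\in I_{\mI,\chi,x}$ must be the unit, and the definition (\ref{xichi}) of $\xi_{i}(\chi)$ identifies this with the nonvanishing of the image of $\xi_{i}(\chi)$ in $k(x)$.

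Assertions (iii) and (iv) are short deductions from (i) and (ii). For (iii), since $\sharp(I_{x})\le d$ always, the hypothesis $\sharp(I_{\mW,\chi,x})=d$ forces $I_{x}=I_{\mW,\chi,x}$ and $I_{\mT,\chi,x}=\emptyset$; with no $\beta$-terms and $\sharp(I_{\mI,\chi,x})=1$, cleanliness selects the unique $\alpha_{i}$ with $i\in I_{\mI,\chi,x}$ as the unit, so case (b) of (ii) applies. For (iv), $\sharp(I_{x})=d$ again removes the $\beta$-terms, so cleanliness forces $I_{\mI,\chi,x}\neq\emptyset$; combined with $\sharp(I_{\mW,\chi,x})=1$ this gives $I_{\mW,\chi,x}=I_{\mI,\chi,x}=\{i\}$ with $\alpha_{i}$ a unit, which is exactly the nonvanishing of $\xi_{i}(\chi)(x)$.

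The main obstacle is purely bookkeeping: one has to keep careful track of which of the three subtypes $I_{\mI,\chi,x}$, $I_{\mII,\chi,x}$, $I_{\mT,\chi,x}$ a given index belongs to, and how this interacts with the monomial factors inserted by (\ref{relloc}). Once this is systematized, every statement follows from the single observation that a product of local parameters at $x$ is a unit in $\dvr_{Z_{\chi}^{1/p},x'}$ precisely when the product is empty.
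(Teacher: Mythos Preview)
Your proof is correct and follows essentially the same route as the paper: both arguments reduce cleanliness and non-degeneracy to the vanishing or non-vanishing of the coefficients $(\alpha_i,\beta_j)$ and $(\alpha_i',\beta_j')$ in the local expansions (\ref{rswchix}), and then compare these via the relations (\ref{relloc}). The only organizational difference is that the paper first invokes Lemma~\ref{lemint}~(i),(iii) to eliminate the configurations $I_{\mT,\chi,x}\neq\emptyset$ and $\sharp(I_{\mI,\chi,x})\ge 2$ at the outset of (ii), whereas you handle these cases directly through the monomial factors $\prod t_{i'}$ in (\ref{relloc}); the underlying computation is the same.
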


\begin{proof}
(i) Suppose $\sharp(I_{x})=d$ and $I_{\mI,\chi,x}=\emptyset$.
Then, by (\ref{relloc}),
we have $\rsw(\chi)_{x}=0$ in $\Omega^{1}_{X}(\log D)(R_{\chi})_{x}\otimes_{\dvr_{X,x}}k(x)$,
which contradicts to the assumption that $(X,U,\chi)$ is clean at $x$.
Hence the assertion holds.

(ii) By Lemma \ref{lemint} (i) and (iii), 
we may assume $I_{x}=I_{\mW,\chi,x}$ and $\sharp(I_{\mI,\chi,x})\le 1$.

If $\sharp(I_{\mI,\chi,x})=0$, then $R_{\chi}'=R_{\chi}$ in a neighborhood of $x$ and the image of 
$\rsw(\chi)_{x}$ in $\Omega_{X}^{1}(\log D)(R_{\chi})_{x}\otimes_{\dvr_{X,x}}k(x)$ 
is the image of $\cform(\chi)_{x'}$ by
$\Omega^{1}_{X}(R'_{\chi})_{x}\otimes_{\dvr_{X,x}}k(x')\rightarrow \Omega^{1}_{X}(\log D)(R_{\chi})_{x}\otimes_{\dvr_{X,x}}k(x')$, where $x'$ is the unique point on $Z_{\chi}^{1/p}$ lying above $x$.
Since $(X,U,\chi)$ is clean at $x$, we have $\rsw(\chi)_{x}\neq 0$ in 
$\Omega_{X}^{1}(\log D)(R_{\chi})_{x}\otimes_{\dvr_{X,x}}k(x)$.
Hence we have $\cform(\chi)_{x'}\neq 0$
in $\Omega^{1}_{X}(R_{\chi})_{x}\otimes_{\dvr_{X,x}}k(x')$, which implies that
$(X,U,\chi)$ is non-degenerate at $x$.

Suppose $\sharp(I_{\mI,\chi,x})=1$ and let the notation be as in (\ref{rswchix}).
Then we have $\cform(\chi)|_{D_{i}^{1/p},x'}=\alpha_{i}dt_{i}/\prod_{i=1}^{r}t_{i}^{m_{i}}
\in \Omega_{X}^{1}(R_{\chi}')|_{D_{i},x}$ for $i\in I_{\mI,\chi,x}$ by (\ref{relloc}).
Hence $(X,U,\chi)$ is non-degenerate at $x$ if and only if the image of $\alpha_{i}$
for $i\in I_{\mI,\chi,x}$ in $k(x)$ is not $0$.
Since the last condition is equivalent to the condition (b), the assertion holds.

(iii) Let the notation be as in (\ref{rswchix}) and let $i$ be the element of $I_{\mI,\chi,x}$. 
By (\ref{relloc}), we have $\rsw(\chi)|_{D_{I_{x}},x}=\alpha_{i}\dlog t_{i}/\prod_{i=1}^{r}t_{i}^{n_{i}}$,
where $D_{I_{x}}=\bigcap_{i\in I_{x}}D_{i}$.
Hence the image of $\alpha_{i}$ in $k(x)$ is not $0$ by the cleanliness of $(X,U,\chi)$ at $x$.
Thus the assertion holds by (ii).

(iv) By (\ref{relloc}) and the cleanliness of $(X,U,\chi)$ at $x$,
the assertion holds.
\end{proof}

\begin{cor}
\label{corintdeg}
Let $\chi$ be an element of $H^{1}_{\et}(U,\mathbf{Q}/\mathbf{Z})$
and $x$ a closed point of $X$.
Assume that $\dvr_{X,x}$ is of dimension $2$ and
that $(X,U,\chi)$ is clean but not non-degenerate at $x$. 
Then $I_{\mW,\chi,x}=I_{\mI,\chi,x}$.
\end{cor}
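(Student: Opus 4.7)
The plan is to argue by contradiction: assuming $I_{\mII,\chi,x}\neq \emptyset$, I will deduce that $(X,U,\chi)$ is in fact non-degenerate at $x$, contradicting the hypothesis. Since $(X,U,\chi)$ is automatically non-degenerate at points outside $Z_\chi$, the hypothesis forces $x\in Z_\chi$, so $I_{\mW,\chi,x}\neq\emptyset$. The assumption $\dim \mathcal{O}_{X,x}=2$ gives $\sharp(I_x)\le 2$, so it remains to handle the cases $\sharp(I_x)=1$ and $\sharp(I_x)=2$.

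The case $\sharp(I_x)=1$ is immediate: if $I_x=\{i\}$ with $i\in I_{\mII,\chi,x}$, then $I_x=I_{\mII,\chi,x}$, so condition (a) of Lemma \ref{itsn}(ii) holds and $(X,U,\chi)$ is non-degenerate at $x$, a contradiction.

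The case $\sharp(I_x)=2$ is the main case. Because $\sharp(I_x)=d=2$ and $(X,U,\chi)$ is clean at $x$, Lemma \ref{itsn}(i) yields $I_{\mathrm{I},\chi,x}\neq\emptyset$; combined with the assumption $I_{\mII,\chi,x}\neq\emptyset$, the only admissible configuration is $I_x=\{i_1,i_2\}$ with $I_{\mathrm{I},\chi,x}=\{i_1\}$ and $I_{\mII,\chi,x}=\{i_2\}$. In particular $I_x=I_{\mW,\chi,x}$ and $\sharp(I_{\mathrm{I},\chi,x})=1$, so to trigger condition (b) of Lemma \ref{itsn}(ii) it suffices to show that the image of $\xi_{i_1}(\chi)$ in $k(x)$ is non-zero. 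Choosing local coordinates $(t_1,t_2)$ with $t_j$ a local equation of $D_{i_j}$ and writing $\rsw(\chi)_x=(\alpha_1\dlog t_1+\alpha_2\dlog t_2)/(t_1^{n_1}t_2^{n_2})$ as in (\ref{rswchix}), the relation (\ref{relloc}) applied to $i_2\in I_{\mII,\chi,x}$ forces $\alpha_2\in t_2\mathcal{O}_{Z_\chi,x}$, so the image $\bar\alpha_2$ of $\alpha_2$ in $k(x)$ vanishes. Cleanliness at $x$ means that $\rsw(\chi)_x$ is part of a basis of the free $\mathcal{O}_{Z_\chi,x}$-module $\Omega^1_X(\log D)(R_\chi)|_{Z_\chi,x}$, equivalently that its image in the fibre at $x$ is non-zero; combined with $\bar\alpha_2=0$ this forces $\bar\alpha_1\neq 0$ in $k(x)$. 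A direct computation using the definition of $\xi_{i_1}(\chi)$ in (\ref{xichi}) and the local generator $t_1^{n_1}t_2^{n_2}$ of $\mathcal{O}_X(-R_\chi)|_{D_{i_1}}$ identifies $\xi_{i_1}(\chi)$ with $\alpha_1|_{D_{i_1}}$, so its image in $k(x)$ equals $\bar\alpha_1\neq 0$; thus condition (b) of Lemma \ref{itsn}(ii) is satisfied and we reach the desired contradiction.

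The main (though modest) technical step is the last one: identifying $\xi_{i_1}(\chi)\bmod\mathfrak{m}_x$ with the coefficient $\bar\alpha_1$ via the local formula for $\rsw(\chi)$, and combining the vanishing $\bar\alpha_2=0$ supplied by (\ref{relloc}) with the fibre-wise interpretation of cleanliness. Every other sub-case in the analysis (e.g.\ both elements of $I_x$ of type $\mII$, or one of type $\mII$ and one in $I_{\mT,\chi,x}$) is eliminated directly by Lemma \ref{itsn}(i), while the one-dimensional-boundary case is killed by condition (a) of Lemma \ref{itsn}(ii).
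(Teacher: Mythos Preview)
Your proof is correct and follows essentially the same route as the paper: both reduce to a case analysis governed by Lemma~\ref{itsn}, using part (i) to force $I_{\mI,\chi,x}\neq\emptyset$ when $\sharp(I_x)=2$ and part (ii) to obtain non-degeneracy in the remaining configurations. The only difference is packaging: in the case $\sharp(I_x)=2$ with one index of each type, the paper invokes Lemma~\ref{itsn}(iii) directly, whereas you unpack its proof inline by computing $\xi_{i_1}(\chi)\bmod\mathfrak{m}_x$ from (\ref{rswchix}) and (\ref{relloc}); this is exactly how (iii) is proved, so your argument is a faithful expansion rather than an alternative.
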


\begin{proof}
Since $(X,U,\chi)$ is assumed to be not non-degenerate at $x$,
we have $x\in Z_{\chi}$.
Since $(X,U,\chi)$ is assumed to be clean at $x$ and 
$\dvr_{X,x}$ is assumed to be of dimension $2$, the assertion holds 
by Lemma \ref{itsn} (i) if $I_{x}\neq I_{\mW,\chi,x}$, 
by Lemma \ref{itsn} (ii) if $\sharp(I_{x})=\sharp(I_{\mW,\chi,x})=1$,
and by Lemma \ref{itsn} (i) and (iii) if $\sharp(I_{\mW,\chi,x})=2$.
\end{proof}
For the complement $V=X-E$ of a divisor $E$ on $X$  
contained in $D$ and the image $\chi\in H^{1}_{\et}(U,\mathbf{Q}/\mathbf{Z})$ of 
an element $\chi'\in H^{1}_{\et}(V,\mathbf{Q}/\mathbf{Z})$,
the refined Swan conductor $\rsw(\chi)$ and the characteristic form $\cform(\chi)$ of $\chi$ 
are the images of those $\rsw(\chi')$ and $\cform(\chi')$ of $\chi'$ by the canonical morphisms
$\Gamma(Z_{\chi'},\Omega^{1}_{X}(\log E)(R_{\chi'})|_{Z_{\chi'}})\rightarrow 
\Gamma(Z_{\chi},\Omega^{1}_{X}(\log D)(R_{\chi})|_{Z_{\chi}})$ and
$\Gamma(Z_{\chi'}^{1/p},\Omega^{1}_{X}(R_{\chi'}')\otimes_{\dvr_{X}}\dvr_{Z_{\chi'}^{1/p}})\rightarrow 
\Gamma(Z_{\chi}^{1/p},\Omega^{1}_{X}(R_{\chi}')\otimes_{\dvr_{X}}\dvr_{Z_{\chi}^{1/p}})$ respectively.
Here we note $R_{\chi}=R_{\chi'}$, $Z_{\chi}=Z_{\chi'}$, and $R_{\chi}'=R_{\chi'}'+D_{\mT,\chi}$.

\begin{lem}
\label{lemclndeg}
Let $E$ be a divisor on $X$ contained in $D$.
We put $V=X-E$.
Let $\chi'$ be an element of $H^{1}_{\et}(V,\mathbf{Q}/\mathbf{Z})$ and 
$\chi\in H^{1}_{\et}(U,\mathbf{Q}/\mathbf{Z})$ the image of $\chi'$.
Let $x$ be a point on $X$ and assume that $(X,U,\chi)$ is clean at $x$.
\begin{enumerate}
\item $(X,V,\chi')$ is clean at $x$.
Consequently, if $(X,U,\chi)$ is clean, then $(X,V,\chi')$ is clean.
\item Assume that $x$ is a closed point of $X$ and that $\sharp(I_{x})=\dim \dvr_{X,x}=2$.
Let $i$ be an element of $I_{x}$ and assume $E=D_{i}$.
Then $(X,V,\chi')$ is non-degenerate at $x$.
\end{enumerate}
\end{lem}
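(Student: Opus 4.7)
The plan is to work in local coordinates at $x$ and exploit the relationship between the refined Swan conductors of $\chi$ and $\chi'$ recorded in the passage immediately preceding the statement: $Z_{\chi}=Z_{\chi'}$, $R_{\chi}=R_{\chi'}$, and $\rsw(\chi)$ is the image of $\rsw(\chi')$ under the canonical inclusion $\Omega^{1}_{X}(\log E)(R_{\chi'})|_{Z_{\chi'}}\hookrightarrow \Omega^{1}_{X}(\log D)(R_{\chi})|_{Z_{\chi}}$. Writing $I_{E}\subset I$ for the index set of irreducible components of $E$, note that for $i'\in I\setminus I_{E}$ the generic point of $D_{i'}$ lies in $V$, so $\chi'|_{K_{i'}}=\chi|_{K_{i'}}$ is unramified; in particular $I_{\mW,\chi}\subset I_{E}$ and $I_{\mW,\chi',x}=I_{\mW,\chi,x}\cap I_{E}$.

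For (i), fix a coordinate system $(t_{1},\ldots,t_{d})$ at $x$ with $t_{i'}$ cutting out $D_{i'}$ for $i'\in I_{x}$. The local expression of $\rsw(\chi')_{x}$ uses the basis of $\Omega^{1}_{X}(\log E)$ consisting of $\dlog t_{i'}$ for $i'\in I_{x}\cap I_{E}$, $dt_{i'}$ for $i'\in I_{x}\setminus I_{E}$, and $dt_{i'}$ for $i'\notin I_{x}$. Under the inclusion into $\Omega^{1}_{X}(\log D)$, each $dt_{i'}$ with $i'\in I_{x}\setminus I_{E}$ rewrites as $t_{i'}\dlog t_{i'}$, whose coefficient automatically lies in $\mathfrak{m}_{x}$. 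Hence, in the expression of $\rsw(\chi)_{x}$ in the basis of $\Omega^{1}_{X}(\log D)(R_{\chi})|_{Z_{\chi},x}$, the only coefficients that survive reduction modulo $\mathfrak{m}_{x}$ are those of $\dlog t_{i'}$ for $i'\in I_{x}\cap I_{E}$ and of $dt_{i'}$ for $i'\notin I_{x}$; these are exactly the coefficients that also appear in $\rsw(\chi')_{x}$. By Lemma \ref{lemclean} (i), cleanliness of $(X,U,\chi)$ at $x$ forces this reduction of $\rsw(\chi)_{x}$ to be nonzero, so the reduction of $\rsw(\chi')_{x}$ is nonzero as well, giving cleanliness of $(X,V,\chi')$ at $x$. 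The global statement follows from Lemma \ref{lemclean} (iii).

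For (ii), write $I_{x}=\{i,j\}$. Since $j\notin I_{E}=\{i\}$, we have $j\in I_{\mT,\chi}$, so $I_{\mW,\chi,x}\subset\{i\}$. If $i\notin I_{\mW,\chi}$, then $I_{\mW,\chi,x}=\emptyset$ and $x\notin Z_{\chi}=Z_{\chi'}$, so $(X,V,\chi')$ is non-degenerate at $x$ trivially. Otherwise $i\in I_{\mW,\chi}$ and $\sharp(I_{\mW,\chi,x})=1$ with $\sharp(I_{x})=2=d$, so Lemma \ref{itsn} (iv) applied to $(X,U,\chi)$ yields that $\xi_{i}(\chi)$ has nonzero image in $k(x)$. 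Since $\res_{i}$ extracts only the coefficient of $\dlog t_{i}$ and this coefficient is unchanged by the inclusion $\Omega^{1}_{X}(\log D_{i})(R_{\chi'})\hookrightarrow \Omega^{1}_{X}(\log D)(R_{\chi})$, we deduce $\xi_{i}(\chi')(x)\neq 0$. Now $(X,V,\chi')$ is clean at $x$ by (i), with $I_{x}\cap I_{E}=I_{\mW,\chi',x}=\{i\}$; Lemma \ref{itsn} (ii) applied to $(X,V,\chi')$ yields non-degeneracy, because condition (a) holds if $i\in I_{\mII,\chi'}$ and condition (b) holds if $i\in I_{\mI,\chi'}$ (using $\sharp(I_{\mI,\chi',x})=1$ and $\xi_{i}(\chi')(x)\neq 0$).

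The main bookkeeping point is the compatibility of the local expressions of $\rsw(\chi)$ and $\rsw(\chi')$: the $\dlog t_{i'}$-coefficients for $i'\in I_{x}\setminus I_{E}$ in $\rsw(\chi)$ carry an extra factor $t_{i'}$ and thus vanish modulo $\mathfrak{m}_{x}$, so cleanliness (a fiberwise condition) transfers without obstruction. Once this is settled, the promotion from cleanliness to non-degeneracy in part (ii) reduces to a direct application of the structural Lemma \ref{itsn}, since the divisor $E=D_{i}$ effectively eliminates all components of $I_{x}$ except $i$ from the index set of the boundary for $(X,V,\chi')$.
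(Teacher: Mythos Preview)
Your proof is correct and follows essentially the same approach as the paper's: for (i) you verify in coordinates that the nonvanishing of $\rsw(\chi)_x$ in the fiber forces the nonvanishing of $\rsw(\chi')_x$, which the paper states more tersely as the trivial observation that $\rsw(\chi)_x$ is the image of $\rsw(\chi')_x$ under the canonical map $\Omega^{1}_{X}(\log E)(R_{\chi'})|_{Z_{\chi'},x}\otimes k(x)\to\Omega^{1}_{X}(\log D)(R_{\chi})|_{Z_{\chi},x}\otimes k(x)$; for (ii) both arguments use Lemma~\ref{itsn} (iv) on $(X,U,\chi)$ to get $\xi_i(\chi)(x)\neq 0$, transfer this to $\xi_i(\chi')(x)\neq 0$ via compatibility of residues, and conclude by Lemma~\ref{itsn} (ii). The only cosmetic difference is that the paper reduces to type~$\mathrm{I}$ at the outset of (ii) via Lemma~\ref{itsn} (ii), whereas you split into types $\mathrm{I}$ and $\mathrm{II}$ at the end.
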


\begin{proof}
We note $R_{\chi}=R_{\chi'}$ and $Z_{\chi}=Z_{\chi'}$.
If $x\notin Z_{\chi'}$, then $(X,V,\chi')$ is clean and non-degenerate at $x$.
Hence we may assume $x\in Z_{\chi'}=Z_{\chi}$.
Since $(X,U,\chi)$ is assumed to be clean at $x$, we have $\rsw(\chi)_{x}\neq 0$ in $\Omega_{X}^{1}(\log D)(R_{\chi})_{x}
\otimes_{\dvr_{X,x}}k(x)$.

(i) Since $\rsw(\chi)_{x}$ is the image of $\rsw(\chi')_{x}$
by the canonical morphism
$\Omega^{1}_{X}(\log E)(R_{\chi'})|_{Z_{\chi'},x}\rightarrow
\Omega^{1}_{X}(\log D)(R_{\chi})|_{Z_{\chi},x}$,
we have $\rsw(\chi')_{x}\neq 0$ in $\Omega_{X}^{1}(\log E)(R_{\chi'})_{x}\otimes_{\dvr_{X,x}}k(x)$.
Hence the assertion holds.

(ii) By Lemma \ref{itsn} (ii), we may assume that $\chi'|_{K_{i}}$ is of type $\mI$.
Since $\rsw(\chi)_{x}$ is the image of $\rsw(\chi')_{x}$
by the canonical morphism
$\Omega^{1}_{X}(\log D_{i})(R_{\chi'})|_{Z_{\chi'},x}\rightarrow
\Omega^{1}_{X}(\log D)(R_{\chi})|_{Z_{\chi},x}$,
the image of $\xi_{i}(\chi')$ in $k(x)$ is equal to the image of $\xi_{i}(\chi)$ in $k(x)$.
Since the image of $\xi_{i}(\chi)$ in $k(x)$ is not $0$ by Lemma \ref{itsn} (iv),
the assertion holds by Lemma \ref{itsn} (ii).
\end{proof}

We give a sufficient condition for $(X,U,\chi)$ to be clean when $D=E\cup X'$ for
a smooth divisor $X'$ on $X$ in Lemma \ref{stablogtr} (ii) later.
\vspace{0.2cm}

In the rest of this subsection, we consider conditions on a Witt vector locally defining 
the $p$-part of $\chi\in H^{1}_{\et}(U,\mathbf{Q}/\mathbf{Z})$ for $(X,U,\chi)$ to be clean.
Let $\chi$ be an element of $H^{1}_{\et}(U,\mathbf{Q}/\mathbf{Z})$.
Let $x$ be a closed point of $Z_{\chi}$ and $A=\dvr_{X,x}$ the local ring at $x$.
We assume $I=I_{x}$.
Let $t_{i}\in A$ be a local equation of $D_{i}$ for $i\in I$.
We put $I=\{1,\ldots,r\}$ and $I_{\mW,\chi}=\{1,\ldots,r'\}=I_{\mW,\chi,x}$, where $1\le r'\le r\le \dim A$.
We put $n_{i}=\sw(\chi|_{K_{i}})$ and $s'_{i}=\ord_{p}(n_{i})$ for $i\in I_{\mW,\chi}$.
We assume $s'_{1}\le \cdots \le s'_{r'}$.
For an integer $s>0$ such that $s>s'_{1}$ and that the order of the $p$-part of $\chi$ is $\le p^{s}$,
we regard the $p$-part of $\chi$ as an element of $H^{1}_{\et}(U,\mathbf{Z}/p^{s}\mathbf{Z})$.

\begin{lem}
\label{stabord}
Let $i$ be an element of $I_{\mI,\chi}$.
Let $a=(a_{s-1}, \ldots ,a_{0})$ be an element of the stalk of $\fillog_{R_{\chi}}j_{*}W_{s}(\dvr_{U})$
at $x$
whose image in $H^{1}_{\et}(U,\mathbf{Z}/p^{s}\mathbf{Z})$ is locally the $p$-part of $\chi$
(then $s>s_{i}'$ by Lemma \ref{lemtone}).
We put $a_{s'_{i}}=a_{s'_{i}}'/\prod_{i'\in I_{\mW,\chi}}t_{i'}^{n_{i'}'}$, where
$a_{s'_{i}}'\in A$
and $n_{i'}'$ is the maximal integer $m'_{i'}$ such that $-m_{i'}'p^{s'_{i}}\ge -n_{i'}$ 
for $i'\in I_{\mW,\chi}$
(then $a_{s_{i}'}'\notin \mathfrak{p}_{i}$ by Lemma \ref{lemtone}).
Let $n$ be the maximal integer $m$ such that $a_{s'_{i}}'\in \mathfrak{m}^{m}_{x}A/\mathfrak{p}_{i}$.
Then we have $\ord'(\chi;x,D_{i})=np^{s'_{i}}+\sharp(I_{\mT,\chi})+\sum_{i'\in I_{\mW,\chi}\setminus\{i\}}(\dt(\chi|_{K_{i'}})-p^{s'_{i}}n_{i'}')$.
\end{lem}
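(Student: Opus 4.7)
The plan is to compute $\cform(\chi)|_{D_i,x}$ explicitly and read off its $\mathfrak{m}_x$-adic valuation. Since $i\in I_{\mI,\chi}$ the restriction $\chi|_{K_i}$ is of type $\mI$ and hence of usual type, so by the remark after Definition \ref{defofnondeg} the value $\cform(\chi)|_{D_i^{1/p},x'}$ descends to $\Omega^1_X(R_\chi')|_{D_i,x}$ and $\ord'(\chi;x,D_i)$ equals its $\mathfrak{m}_x$-adic valuation there. By Lemma \ref{alnlsch}, $a\in\fil_{R_\chi'}j_{*}W_s(\dvr_U)$, so $\cform(\chi)$ is locally represented by $-F^{s-1}d(a)=-\sum_{j}a_j^{p^j-1}da_j$, regarded as an element of $\Omega^1_X(R_\chi')$ modulo $\fil_{R_\chi'-Z_\chi}$.

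Next I reduce to the single component $a_{s_i'}$. By Lemma \ref{lemtone} applied to $\chi|_{K_i}$, the difference $\omega_{\mathrm{diff}}:=-F^{s-1}d(a)+a_{s_i'}^{p^{s_i'}-1}da_{s_i'}=-\sum_{j\ne s_i'}a_j^{p^j-1}da_j$ has pole at most $n_i=m_i-1$ along $D_i$ at the generic point $\mathfrak{p}_i$. Since $\Omega^1_X(R_\chi')/\Omega^1_X(R_\chi'-D_i)\cong\Omega^1_X(R_\chi')|_{D_i}$ is locally free on the smooth irreducible divisor $D_i$, a global section of this quotient that vanishes at $\mathfrak{p}_i$ vanishes on all of $D_i$; hence $\omega_{\mathrm{diff}}\in\Omega^1_X(R_\chi'-D_i)$ after shrinking $X$ near $x$. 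Since the restriction $\Omega^1_X(R_\chi')\to\Omega^1_X(R_\chi')|_{D_i}$ kills $\Omega^1_X(R_\chi'-D_i)$, this yields $\cform(\chi)|_{D_i,x}=\overline{-a_{s_i'}^{p^{s_i'}-1}da_{s_i'}}$ in $\Omega^1_X(R_\chi')|_{D_i,x}$.

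Now I expand explicitly. Writing $a_{s_i'}=\alpha u$ with $\alpha=a_{s_i'}'$ and $u=\prod_{i'\in I_{\mW,\chi}}t_{i'}^{-n_{i'}'}$, the product rule gives $da_{s_i'}=u\bigl(d\alpha-\alpha\sum_{i'\in I_{\mW,\chi}}n_{i'}'\,dt_{i'}/t_{i'}\bigr)$. Setting $e_{i'}=m_{i'}-n_{i'}'p^{s_i'}$, so that $e_i=1$ (since $n_i'p^{s_i'}=n_i=m_i-1$) and $e_{i''}=m_{i''}=1$ for $i''\in I_{\mT,\chi}$, and multiplying numerator and denominator by $\prod_{i'\in I}t_{i'}^{m_{i'}}$ expresses $-a_{s_i'}^{p^{s_i'}-1}da_{s_i'}$ as $\omega/\prod_{i'\in I}t_{i'}^{m_{i'}}$ with $\omega\in\Omega^1_X$. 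Each coefficient of $dt_k$ in $\omega$ with $k\ne i$ carries a factor of $t_i$ (from $t_i^{e_i}=t_i$ inside $\prod_{i'\in I_{\mW,\chi}\setminus\{k\}}t_{i'}^{e_{i'}}$ when $k\in I_{\mW,\chi}\setminus\{i\}$, and from the overall $\prod_{i'\in I_{\mW,\chi}}t_{i'}^{e_{i'}}$ otherwise) and hence vanishes modulo $\mathfrak{p}_i$, while the surviving coefficient of $dt_i$ reduces modulo $\mathfrak{p}_i$ to $n_i'\,\alpha^{p^{s_i'}}\prod_{i'\in I_{\mW,\chi}\setminus\{i\}}t_{i'}^{e_{i'}}\prod_{i''\in I_{\mT,\chi}}t_{i''}$.

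Finally I compute its $\mathfrak{m}_x$-adic valuation in $A/\mathfrak{p}_i$. Since $n_i'=n_i/p^{s_i'}$ is coprime to $p$ it is a nonzero element of $k$, contributing $0$; $\alpha^{p^{s_i'}}$ contributes $np^{s_i'}$ by the definition of $n$; each $t_{i'}$ with $i'\in I_{\mW,\chi}\setminus\{i\}$ contributes $e_{i'}=\dt(\chi|_{K_{i'}})-n_{i'}'p^{s_i'}$; and each $t_{i''}$ with $i''\in I_{\mT,\chi}$ contributes $1$. Summing these yields the claimed formula. The main obstacle is the reduction in the second paragraph: lifting Lemma \ref{lemtone} from a statement at $K_i$ to one valid in a Zariski neighborhood of $x$ relies on the torsion-freeness of the locally free sheaf $\Omega^1_X(R_\chi')|_{D_i}$ on the irreducible divisor $D_i$.
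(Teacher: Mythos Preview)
Your approach is sound and yields the same formula as the paper, and the core computation (reduce via Lemma~\ref{lemtone} to the single Witt component $a_{s_i'}$, expand $-a_{s_i'}^{p^{s_i'}-1}da_{s_i'}$ explicitly, read off the coefficient of $dt_i$ modulo $\mathfrak{p}_i$) is the same. The packaging differs: the paper carries out the expansion in the logarithmic module $\Omega^1_X(\log D)(R_\chi)|_{D_i}$, extracts the coefficient of $d\log t_i$ (i.e.\ the image of $\xi_i(\chi)$), and then invokes Lemma~\ref{lemordp} to translate that valuation into $\ord'(\chi;x,D_i)$. You instead work directly in $\Omega^1_X(R_\chi')|_{D_i}$ and read off $\ord'$ in one step, which is arguably cleaner.

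There is one step you assert without justification, and it is exactly where the paper's logarithmic detour pays off. You claim $\omega\in\Omega^1_X$ (equivalently $-a_{s_i'}^{p^{s_i'}-1}da_{s_i'}\in\Omega^1_X(R_\chi')_x$, hence also $\omega_{\mathrm{diff}}\in\Omega^1_X(R_\chi')_x$). For $i'\in I_{\mI,\chi,x}\cup I_{\mT,\chi,x}$ this is clear since $e_{i'}\ge 1$, but for $i'\in I_{\mII,\chi,x}$ with $s_i'\le s_{i'}'$ one has $e_{i'}=m_{i'}-n_{i'}'p^{s_i'}=0$, and the naive pole bound along $D_{i'}$ is $n_{i'}+1>m_{i'}$. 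The claim is nonetheless true: if $s_i'<s_{i'}'$ then $p\mid n_{i'}'$ and the $dt_{i'}/t_{i'}$–term vanishes in characteristic $p$; if $s_i'=s_{i'}'$ then Corollary~\ref{cortw} applied to $\chi|_{K_{i'}}$ (which is of type~$\mII$) forces $\ord_{K_{i'}}(a_{s_i'})>-n_{i'}'$, i.e.\ $t_{i'}\mid a_{s_i'}'$, and the factor $(a_{s_i'}')^{p^{s_i'}}$ absorbs the pole. You should add a sentence covering this. The paper's route through $\Omega^1_X(\log D)(R_\chi)$ avoids the issue entirely, since each term $a_j^{p^j-1}da_j$ automatically lies in $\fillog_{R_\chi}j_*\Omega^1_U$.
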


\begin{proof}
By Lemma \ref{lemtone}, the germ $\cform(\chi)|_{D_{i}^{1/p},x'}$ depends only on $a_{s'_{i}}$,
where $x'$ is the unique closed point of $D_{i}^{1/p}$ lying above $x$.
Hence we may assume $a_{i'}=0$ if $i'\neq s'_{i}$.
Then we have
\begin{equation}
\rsw(\chi)|_{D_{i},x}=({a_{s'_{i}}'}^{p^{s'_{i}}-1}\prod_{i'\in I_{\mW,\chi}}t_{i'}^{n_{i'}-p^{s_{i}'}n_{i'}'})\cdot
\sum_{i''\in I_{\mW,\chi}}(n_{i''}'a_{s'_{i}}' \dlog t_{i''}-da'_{s'_{i}})/
\prod_{h\in I_{\mW,\chi}}t_{h}^{n_{h}}.
\notag
\end{equation}
Hence $n'p^{s_{i}'}+\sum_{i'\in I_{\mW,\chi}}n_{i'}-p^{s_{i}'}n_{i'}'$ is the maximal integer $m$ such that
the image of $\xi_{i}(\chi)_{x}\colon \dvr_{X}(R_{\chi})|_{D_{i},x}\rightarrow \dvr_{D_{i},x}$ is
contained in $\mathfrak{m}_{x}^{m}\dvr_{D_{i},x}$.
Since $\dt(\chi|_{K_{i'}})=n_{i'}+1$ for $i'\in I_{\mI,\chi}$, $\dt(\chi|_{K_{i'}})=n_{i'}$ for $i'\in I_{\mII,\chi}$,
and $n_{i}=p^{s_{i}'}n_{i}'$,
the assertion holds by Lemma \ref{lemordp}.
\end{proof}

\begin{lem}
\label{lemwitt}
Let $a=(a_{s-1}, \ldots ,a_{0})$ be an element of 
the stalk of $\fillog_{R_{\chi}}j_{*}W_{s}(\dvr_{U})$ at $x$
which locally defines the $p$-part of $\chi$.
We put $d=\dim A$ and $a_{h}=a_{h}'/\prod_{i'=1}^{r'}t_{i'}^{p^{-h}n_{i'}}$,
where $a_{h}'\in A$ for $0\le h\le s'_{1}$.
\begin{enumerate}
\item For $i\in I_{\mW,\chi}$, the following are equivalent:
\begin{enumerate}
\item $s'_{i}=s'_{1}$ and $a_{s'_{1}}'$ is invertible in $A$.
\item The image of $\xi_{i}(\chi)\colon \dvr_{X}(-R_{\chi})|_{D_{i}}\rightarrow \dvr_{D_{i}}$ (\ref{xichi}) in $k(x)$ is not $0$.
\end{enumerate}
\item If $a'_{s_{1}'}$ is invertible in $A$,
then $(X,U,\chi)$ is clean at $x$.
If $r=d$, then the converse is true.
\item Assume $r<d$ and $a_{h}'\in \mathfrak{m}_{x}$ for $0\le h\le s'_{1}-1$.
Then $(X,U,\chi)$ is clean at $x$ if and only if 
one of the following conditions holds:
\begin{enumerate}
\item $a_{s_{1}'}'$ is invertible in $A$.
\item $a_{s_{1}'}'\in \mathfrak{m}_{x}$ and $(t_{1},\ldots ,t_{r}, a_{0}')$ is a part of a coordinate system of $A$.
\end{enumerate}
\item Assume that $(X,U,\chi)$ is clean at $x$.
Then $(X,U,\chi)$ is non-degenerate at $x$ if and only if
one of the following conditions holds:
\begin{enumerate}
\item $I=I_{\mT,\chi}$.
\item $I=I_{\mII,\chi}$.
\item $I=I_{\mW,\chi}$ and $\sharp(I_{\mI,\chi})=1$. 
For $i\in I_{\mI,\chi}$, we have $s'_{i}=s'_{1}$ and $a_{s'_{1}}'$ is invertible in $A$.
\end{enumerate} 
\end{enumerate} 
\end{lem}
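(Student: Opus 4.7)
The plan is to unpack the Witt vector description of $\rsw(\chi)$ recalled in Subsection~\ref{sscrc} and to read off everything from an explicit formula in the given coordinate system. Setting $u_h:=a_h'$ and $v_h:=\prod_{i'=1}^{r'}t_{i'}^{p^{-h}n_{i'}}$ for $0\le h\le s_1'$ so that $a_h=u_h/v_h$, a direct application of the Leibniz rule together with $dv_h/v_h=\sum_{i'=1}^{r'}p^{-h}n_{i'}\dlog t_{i'}$ and $v_h^{p^h}=\prod_{i'=1}^{r'}t_{i'}^{n_{i'}}$ yields
\[
-F^{s-1}da\;=\;\frac{1}{\prod_{i'=1}^{r'}t_{i'}^{n_{i'}}}\sum_{h=0}^{s_1'}\Bigl(u_h^{p^h}\sum_{i'=1}^{r'}p^{-h}n_{i'}\dlog t_{i'}-u_h^{p^h-1}du_h\Bigr)\;+\;\eta,
\]
where $\eta$ gathers the contributions from $h>s_1'$. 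For such $h$, the bound $p^h\ord_{K_1}(a_h)\ge -n_1$ combined with $p^h\nmid n_1$ (as $s_1'<h$) forces the corresponding Witt vector component to lie in $\fillog_{R_\chi-D_1}j_*W_s(\dvr_U)$, whence $\eta\in\Omega^1_X(\log D)(R_\chi-D_1)$ and its image in the fiber at $x$ is killed by $t_1\in\mathfrak{m}_x$.

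For (i), the plan is to restrict the $\dlog t_i$ coefficient of the main sum to $D_i$: the $du_h$ contributions reduce to $t_i\partial u_h/\partial t_i$ and vanish on $D_i$, leaving $\sum_{h\le s_1'}p^{-h}n_i\cdot u_h^{p^h}|_{D_i}$. In characteristic $p$, $p^{-h}n_i$ is a unit precisely when $h=s_i'$, and this index lies in $[0,s_1']$ if and only if $s_i'=s_1'$. Thus, modulo $\mathfrak{m}_x$, the $\dlog t_i$ coefficient is a nonzero scalar multiple of $(a_{s_1'}')^{p^{s_1'}}$ when $s_i'=s_1'$ and vanishes otherwise; the case $h=s_i'>s_1'$ falls into $\eta$ and is killed. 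The description of $\xi_i(\chi)$ in terms of this coefficient then gives (a)$\Leftrightarrow$(b).

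For (ii) and (iii), I would use that cleanness at $x$ is the nonvanishing of $\rsw(\chi)_x$ in $\Omega^1_X(\log D)(R_\chi)_x\otimes k(x)$ by Lemma~\ref{lemclean}(i). If $a_{s_1'}'$ is a unit then (i) already produces a unit $\dlog t_1$-coefficient, giving cleanness. When $r=d$ the fiber has only $\dlog$-basis vectors, so cleanness forces some $\dlog t_i$-coefficient to be a unit and (i) then forces $a_{s_1'}'$ to be invertible, completing (ii). For (iii) with $r<d$ and $a_h'\in\mathfrak{m}_x$ for $h\le s_1'-1$, if $a_{s_1'}'$ is a unit we are in case (a); otherwise every $\dlog t_i$-coefficient lies in $\mathfrak{m}_x$ by (i). For $j>r$ the $dt_j$-coefficient picks up only $-\sum_h u_h^{p^h-1}\partial u_h/\partial t_j$; the terms with $h\ge 1$ lie in $\mathfrak{m}_x$ because $u_h^{p^h-1}\in\mathfrak{m}_x^{p^h-1}\subset\mathfrak{m}_x$, whereas $h=0$ contributes exactly $-\partial a_0'/\partial t_j$ since $u_0^{p^0-1}=1$. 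Hence modulo $\mathfrak{m}_x$ the $dt_j$-coefficient is $-\partial a_0'/\partial t_j$, which is a unit for some $j>r$ if and only if $(t_1,\ldots,t_r,a_0')$ is part of a system of parameters of $A$; this is condition (b).

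Finally, (iv) will follow by combining (i) with Lemma~\ref{itsn}(ii): given $x\in Z_\chi$, condition (a) $I=I_{\mT,\chi}$ is vacuous, (b) $I=I_{\mII,\chi}$ matches the first alternative of Lemma~\ref{itsn}(ii), and (c) matches the second once (i) is used to rewrite the nonvanishing of the image of $\xi_i(\chi)$ in $k(x)$. The main obstacle will be the justification that $h>s_1'$ implies membership in $\fillog_{R_\chi-D_1}j_*W_s(\dvr_U)$, which uses essentially that $s_1'=\min_{i'}s_{i'}'$; once this is in hand, all four statements reduce to tracking the ``leading term'' $(a_{s_1'}')^{p^{s_1'}}$ and the transversal first-order term $da_0'$.
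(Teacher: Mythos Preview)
Your proof is correct and follows essentially the same approach as the paper: both unpack the explicit formula for $-F^{s-1}da$ in the coordinate system $(t_1,\dots,t_d)$, isolate the contributions from $a_h$ with $h\le s_1'$, and read off cleanliness and non-degeneracy by tracking which coefficients survive modulo $\mathfrak m_x$. The paper dispatches the terms with $h>s_1'$ by invoking the identity $\fillog_n W_s(K)=\fillog_{n-1}W_s(K)+V^{s-s'-1}\fillog_n W_{s'+1}(K)$ to see that $a\equiv(0,\dots,0,a_{s_1'},\dots,a_0)$ in $\grlog_{n_1}W_s(K_1)$, whereas you argue directly via valuations that those terms land in $\Omega^1_X(\log D)(R_\chi-D_1)$; these are equivalent. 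One small imprecision: in (ii) and (iii) you say ``by (i)'' to conclude every $\dlog t_i$-coefficient lies in $\mathfrak m_x$, but (i) is stated only for $i\in I_{\mW,\chi}$; for $i\in I_{\mT,\chi}$ the $\dlog t_i$-coefficient comes solely from $du_h$, hence from $t_i\,\partial u_h/\partial t_i\in\mathfrak m_x$, so the conclusion still holds---just make this explicit.
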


\begin{proof}
Since we have $a=(0,\ldots , 0,a_{s'_{1}},a_{s'_{1}-1},\ldots,a_{0})$ in $\grlog_{n_{1}}W_{s}(K_{1})$ by (\ref{lfilwskv}),
the germ $\rsw(\chi)|_{D_{1},x}$ depends only on $a_{h}$ for $0\le h\le s'_{1}$. 
Hence the image of $\rsw(\chi)_{x}$ in $\Omega^{1}_{X}(\log D)(R_{\chi})_{x}\otimes_{\dvr_{X,x}}k(x)$ 
depends only on $a_{h}$ for $0\le h\le s'_{1}$.
For $0\le h\le s'_{1}$, we have
\begin{equation}
\label{daj}
-a_{h}^{p^{h}-1}da_{h}={a_{h}'}^{p^{h}-1}(\sum_{i'\in I_{\mW,\chi,x}}
p^{-h}n_{i'}a_{h}'\dlog t_{i'}-da_{h}')/
\prod_{i'\in I_{\mW,\chi,x}}t_{i'}^{n_{i'}}.   
\end{equation}
Since $p^{-h}n_{i'}\in p\mathbf{Z}$ for $0\le h\le s'_{1}-1$ and $i'\in I_{\mW,\chi}$,
we have $p^{-h}n_{i'}a_{h}'^{p^{h}}=0$ for $0\le h\le s_{1}'-1$ and $i'\in I_{\mW,\chi}$.

(i) Since $p^{-h}n_{i}\in p\mathbf{Z}$ for $0\le h\le s'_{1}-1$, 
the condition (b) is equivalent to that $p^{-s'_{1}}n_{i}a_{s'_{1}}'^{p^{s'_{1}}}$ is invertible in $A$
by (\ref{daj}).
Hence the assertion holds.

(ii) If $a_{s_{1}'}'$ is invertible in $A$, then the image of $\xi_{1}(\chi)$ in $k(x)$ is not $0$ by (i).
Hence $\ord(\chi,x,D_{1})=0$. 
Thus the first assertion holds.

Conversely, suppose $r=d$ and that $(X,U,\chi)$ is clean at $x$.
Since $a_{h}'\in A$ for $0\le h\le s_{1}'$,
we have $da_{h}'=0$ in $\Omega^{1}_{X}(\log D)_{x}\otimes_{\dvr_{X,x}}k(x)$
for $0\le h\le s_{1}'$ by $r=d$.
Since $p^{-h}n_{i'}\in p\mathbf{Z}$ for $0\le h\le s'_{1}-1$ and $i'\in I_{\mW,\chi}$,
we have $a_{s_{1}'}'^{p^{s_{1}'}} \neq 0$ in $k(x)$
by the cleanliness of $(X,U,\chi)$ at $x$.
Hence the second assertion holds.

(iii) By (ii), it is sufficient to prove the equivalence between the condition (b) and
the cleanliness of $(X,U,\chi)$ at $x$ in the case  where $a_{s_{1}'}'\in \mathfrak{m}_{x}$.
Suppose $a_{s_{1}'}'\in \mathfrak{m}_{x}$.
By the assumption $a_{h}'\in \mathfrak{m}_{x}$ for $1\le h\le s'_{1}-1$ and 
$a_{s_{1}'}'\in \mathfrak{m}_{x}$, we have
$a_{h}'^{p^{h}-1}\in \mathfrak{m}_{x}$ for $1\le h\le s'_{1}$.
Hence, by (\ref{daj}), the cleanliness of $(X,U,\chi)$ at $x$ is equivalent to
$\sum_{i'\in I_{\mW,\chi}}n_{i'}a_{0}'\dlog t_{i'}-da_{0}' \neq 0$ in $\Omega^{1}_{X}(\log D)_{x}\otimes_{\dvr_{X,x}}k(x)$.
By the assumption $a_{0}'\in \mathfrak{m}_{x}$, the last condition is equivalent to 
$da_{0}'\neq 0$ in $\Omega_{X}^{1}(\log D)_{x}\otimes_{\dvr_{X,x}}k(x)$.
Since $da_{0}'$ in $\Omega_{X}^{1}(\log D)_{x}\otimes_{\dvr_{X,x}}k(x)$ is the image of $da_{0}'$ 
by the composition of the canonical morphism
$\Omega^{1}_{X,x}\otimes_{\dvr_{X,x}}k(x)\rightarrow \Omega_{D_{I},x}^{1}\otimes_{\dvr_{D_{I},x}}k(x)$
and the canonical injection
$\Omega_{D_{I},x}^{1}\otimes_{\dvr_{D_{I},x}}k(x)\rightarrow\Omega_{X}^{1}(\log D)_{x}\otimes_{\dvr_{X,x}}k(x)$,
the last condition is equivalent to $da_{0}'\neq 0$ in $\Omega^{1}_{D_{I},x}\otimes_{\dvr_{D_{I},x}}k(x)$,
which is equivalent to the condition (b) by the assumption $a_{0}'\in \mathfrak{m}_{x}$.

(iv) The assertion holds by (i) and Lemma \ref{itsn} (ii).
\end{proof}

\begin{cor}
\label{corwitt}
Let the notation be as in Lemma \ref{lemwitt}.
Assume $r=d$ and that $(X,U,\chi)$ is clean at $x$.
Let $i$ be an element of $I_{\mW,\chi,x}$.
Then the following are equivalent:
\begin{enumerate}
\item $s_{i}'=s_{1}'$.
\item The image of $\xi_{i}(\chi)$ in $k(x)$ is not $0$.
\end{enumerate}
\end{cor}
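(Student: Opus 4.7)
The plan is to deduce this essentially immediately from parts (i) and (ii) of Lemma \ref{lemwitt}. Under the standing hypotheses of the corollary, namely $r=d$ and $(X,U,\chi)$ clean at $x$, part (ii) of Lemma \ref{lemwitt} asserts that the component $a'_{s'_1}\in A$ is a unit. This removes the invertibility condition from the equivalence in part (i) of the same lemma.

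More precisely, I would first fix a Witt vector $a=(a_{s-1},\ldots,a_0)$ in the stalk of $\fillog_{R_\chi}j_\ast W_s(\dvr_U)$ at $x$ locally defining the $p$-part of $\chi$, and write $a_h = a'_h/\prod_{i'=1}^{r'} t_{i'}^{p^{-h}n_{i'}}$ as in the setup preceding Lemma \ref{lemwitt}. Applying Lemma \ref{lemwitt} (ii) under the hypothesis $r=d$, the cleanliness of $(X,U,\chi)$ at $x$ gives that $a'_{s'_1}\in A^\times$.

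Next, for the given $i\in I_{\mW,\chi,x}$, I apply Lemma \ref{lemwitt} (i), which states that the image of $\xi_i(\chi)$ in $k(x)$ is nonzero if and only if both $s'_i=s'_1$ holds and $a'_{s'_1}$ is invertible in $A$. Since the invertibility of $a'_{s'_1}$ is already guaranteed by the previous step, this equivalence collapses to: the image of $\xi_i(\chi)$ in $k(x)$ is nonzero if and only if $s'_i=s'_1$, which is precisely the asserted equivalence.

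There is no real obstacle here: the corollary is a direct specialization of Lemma \ref{lemwitt} to the case $r=d$ where cleanliness automatically forces $a'_{s'_1}$ to be a unit, so combining (i) and (ii) of that lemma yields the claim with no additional computation required.
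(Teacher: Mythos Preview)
Your proof is correct and matches the paper's own argument essentially verbatim: apply Lemma~\ref{lemwitt}(ii) under $r=d$ and cleanliness to get that $a'_{s'_1}$ is a unit, then invoke Lemma~\ref{lemwitt}(i) to collapse its equivalence to the desired one.
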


\begin{proof}
By Lemma \ref{lemwitt} (ii) and the assumption $r=d$ and that $(X,U,\chi)$ is clean at $x$,
the element $a_{s_{1}'}'$ is invertible in $A$.
Hence the assertion holds by Lemma \ref{lemwitt} (i).
\end{proof}

%%%%%%%%%%%%%%%%%%%%%%%%%%%%%%%%%%%%%%%%%%%%%%%%%%%
\subsection{Cleanliness and the direct image by the open immersion}
\label{sscdoi}
Let the notation be as in Subsection \ref{sscrc}.
We briefly recall Abbes-Saito's logarithmic ramification theory (\cite{as5}, \cite[2.2]{icm2}).
Let $R=\sum_{i\in I}n_{i}D_{i}$ be a linear combination with integral coefficients $n_{i}\ge 0$ for $i\in I$. 
Let $(X\ast_{k}X)_{i}$ denote the complement of the proper
transform of $(D_{i}\times_{k}X)\cup (X\times_{k}D_{i})$ in the blow-up of $X\times_{k}X$
along $D_{i}\times_{k}D_{i}$ for $i\in I$.
We define the $\log$-product $X\ast_{k}X$ to be the fiber product of 
$(X\ast_{k}X)_{i}$ for all $i\in I$ over $X\times_{k}X$.  
We note that if $X=\Spec A$ is affine and if $D_{i}=(t_{i}=0)$ for some $t_{i}\in A$ 
for $i\in I=\{1,\ldots ,r\}$
then we have $X*_{k}X=\Spec A\otimes_{k}A[((1\otimes t_{1})/(t_{1}\otimes 1))^{\pm 1},
\ldots, ((1\otimes t_{r})/(t_{r}\otimes 1))^{\pm 1}]$.
The diagonal $\delta\colon X\rightarrow X\times_{k}X$ is naturally lifted to a closed immersion
$\tilde{\delta}\colon X\rightarrow X\ast_{k}X$, which is called the {\it log diagonal}.
We consider the cartesian diagram
\begin{equation}
\label{logdiag}
\xymatrix{\tilde{X}\ar[r] \ar[d] & X\ast_{k}X \ar[d] \\
X \ar[r]_-{\delta} & X\times_{k}X.}
\end{equation}
Then, by \cite[Lemma 2.1]{tsu},
the fiber product $\tilde{X}$ in (\ref{logdiag}) is the union of 
$(\mathbf{G}_{m})^{\sharp(I')}$-bundles 
$\{D_{I'}\times_{k}
\mathbf{G}_{m}^{\sharp(I')}\}_{I' \subset I}$,
where $D_{I'}=\bigcap_{i\in I'}D_{i}$.
We note that if $X=\Spec A$ is affine and if $D_{i}=(t_{i}=0)$ for some $t_{i}\in A$ for $i\in I$
then the images $U_{i}$ of $(1\otimes t_{i})/(t_{i}\otimes 1)\in\Gamma(X*_{k}X,\dvr_{X*_{k}X})$
in $\Gamma(D_{I'}\times_{k}\mathbf{G}_{m}^{\sharp(I')},\dvr_{D_{I'}\times_{k}\mathbf{G}_{m}^{\sharp(I')}})$ for $i\in I'$ form a coordinate system of
$D_{I'}\times_{k}\mathbf{G}_{m}^{\sharp(I')}$.
We define $(X\ast_{k}X)^{(R)}$ to be the complement of the proper transform
of $(X\ast_{k}X)\times_{X}R$ in the blow-up of $X\ast_{k}X$ along the image
of $R$ by the log diagonal $\tilde{\delta}$. 
The open immersion $U\times_{k}U \rightarrow X\times_{k}X$ is naturally lifted to an open immersion
$\tilde{j}\colon U\times_{k}U\rightarrow X\ast_{k}X$ and further to an open immersion
$j^{(R)}\colon U\times_{k}U \rightarrow (X\ast_{k}X)^{(R)}$.
If $R=0$, then we have $(X\ast_{k}X)^{(R)}=X\ast_{k} X$.

Let $p_{i}\colon (X\ast_{k}X)^{(R)} \rightarrow X$ 
be the composition of the canonical morphism $(X*_{k}X)^{(R)}\rightarrow X\times_{k}X$
and the $i$-th projection $X\times_{k}X\rightarrow X$ for $i=1,2$.
By \cite[Lemma 4.6]{as5}, the morphism $p_{i}\colon (X\ast_{k}X)^{(R)} \rightarrow X$ is smooth for $i=1,2$.
Let $E^{(R)}$ denote the base change by $R\rightarrow X$ of $(X\ast_{k}X)^{(R)}$ regarded as
an $X$-scheme by $p_{i}\colon (X\ast_{k}X)^{(R)}\rightarrow X$, 
which is independent of the choice of $i$.
Then $E^{(R)}$ is a vector bundle over $R$ canonically isomorphic to $\mathbf{V}(\Omega_{X}^{1}
(\log D)\otimes_{\dvr_{X}}\dvr_{X}(R))\times_{X}R$ by \cite[8.13]{as5}.

Let $\mf$ be a smooth sheaf of $\Lambda$-modules of rank $1$ on $U$ 
and let $\chi\colon \pi_{1}^{\ab}(U)\rightarrow \Lambda^{\times}$ 
be the character corresponding to $\mf$.
We fix an inclusion $\Lambda^{\times}\rightarrow \mathbf{Q}/\mathbf{Z}$ and regard $\chi$ as an
element of $H^{1}_{\et}(U,\mathbf{Q}/\mathbf{Z})$.
Let $\mathcal{H}$ be the smooth sheaf 
$\mathcal{H}om(\mathrm{pr}_{2}^{\ast}\mf, \mathrm{pr}_{1}^{\ast}\mf)$
on $U\times_{k}U$, where $\mathrm{pr}_{i}\colon U\times_{k}U\rightarrow U$
is the $i$-th projection for $i=1,2$.
If $Z_{\chi}$ is not empty, then $j_{\ast}^{(R_{\chi})}\mathcal{H}$ is a smooth sheaf of $\Lambda$-modules of rank $1$ by \cite[Proposition 4.2.2.1]{as3} and
the restriction $j_{\ast}^{(R_{\chi})}\mathcal{H}|_{E^{(R_{\chi})}}$ of 
$j_{\ast}^{(R_{\chi})}\mathcal{H}$ to $E^{(R_{\chi})}$ is defined by the Artin-Schreier equation
$(F-1)(t)=-\rsw(\chi)$ by \cite[Proposition 4.2.2.2]{as3}. 

Let $I_{\mR,\chi}$ be the subset of the index set $I$ 
consisting of $i\in I$ such that $\chi|_{K_{i}}$ is ramified.
We put $I_{\mST,\chi}=I_{\mR,\chi}\cap I_{\mT,\chi}$.
We say that $\mf$ is \textit{totally wildly ramified} along $D$ if $I=I_{\mW,\chi}$.
We say that $\mf$ is \textit{strictly ramified} along $D$ if $I=I_{\mR,\chi}$ and
we say that $\mf$ is \textit{strictly tamely ramified} along $D$ if $I=I_{\mST,\chi}$.
We put $D_{\mR,\chi}=\bigcup_{i\in I_{\mR,\chi}}D_{i}$ and 
$D_{\mST,\chi}=\bigcup_{i\in I_{\mST,\chi}}D_{i}$.

\begin{lem}
\label{lemtame}
Assume that $\mf$ is tamely ramified along $D$.
\begin{enumerate}
\item $(X*_{k}X)^{(R_{\chi})}=X*_{k}X$, 
and $j^{(R_{\chi})}_{*}\mh=\tilde{j}_{*}\mh$ is a smooth sheaf
of $\Lambda$-modules of rank $1$.
\item Let $\bar{x}$ be a geometric point on $D_{\mST}$.
Let $x\in X$ be the image of $\bar{x}$ and put $I'=I_{\mST}\cap I_{x}$.
Then $(\tilde{j}_{*}\mh)|_{\bar{x}\times_{k}\mathbf{G}_{m}^{\sharp(I')}}$ is not constant.
\end{enumerate}
\end{lem}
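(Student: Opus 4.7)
The plan is as follows. For (i), tameness of $\mf$ along $D$ means $\sw(\chi|_{K_i})=0$ for every $i\in I_{\mR,\chi}$, and of course for $i\notin I_{\mR,\chi}$ as well, so $R_\chi=0$. By the very definition of $(X\ast_k X)^{(R)}$ as the complement of the proper transform of $(X\ast_k X)\times_X R$ in the blow-up of $X\ast_k X$ along $\tilde\delta(R)$, taking $R=0$ gives immediately $(X\ast_k X)^{(R_\chi)}=X\ast_k X$ and $j^{(R_\chi)}=\tilde j$. To show that $\tilde j_\ast\mh$ is smooth of rank one on $X\ast_k X$, I would work étale-locally: choose coordinates $(t_1,\ldots,t_d)$ on $X$ with $D_i=(t_i=0)$ for $i\in I$, so that $X\ast_k X=\Spec A\otimes_k A[U_1^{\pm 1},\ldots, U_r^{\pm 1}]$ where $U_i=(1\otimes t_i)/(t_i\otimes 1)$, and the log diagonal is cut out by $U_i=1$. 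Since $\chi|_{K_i}$ is tame, its restriction to the inertia at $\mathfrak p_i$ factors through the Kummer quotient $\mu_{N_i}$ with $N_i$ prime to $p$, and the monodromy of $\mh=\shom(\pr_2^\ast\mf,\pr_1^\ast\mf)$ around the generic point of each stratum of $\tilde X$ is controlled by this Kummer character applied to $U_i$. This extends $\mh$ across the log diagonal strata to a smooth rank one sheaf, which is $\tilde j_\ast\mh$.

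For (ii), recall that $\tilde X$ decomposes as the union of $D_{I'}\times_k \mathbf{G}_m^{\sharp(I')}$ with coordinates on the $\mathbf{G}_m$-factors given by the $U_i$ for $i\in I'$. Under the local identification from part (i), the restriction of $\tilde j_\ast\mh$ to $\bar x\times_k\mathbf{G}_m^{\sharp(I')}$ is isomorphic to the external tensor product over $i\in I'=I_{\mST,\chi}\cap I_x$ of the Kummer sheaves $\ml_i$ on the $i$-th factor $\mathbf{G}_m$ corresponding to the tame character $\chi|_{K_i}$. For $i\in I_{\mST,\chi}$ the character $\chi|_{K_i}$ is nontrivial and tame by definition, so each $\ml_i$ is a nonconstant smooth sheaf of rank one on $\mathbf{G}_m$. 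Hence the external product is not constant on $\mathbf{G}_m^{\sharp(I')}$, proving the claim.

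The main obstacle is the precise identification, in part (i), of the local monodromy of $\mh$ around the log diagonal strata of $X\ast_k X$ with the Kummer characters $\chi|_{K_i}$ applied to the coordinates $U_i$; this requires carefully tracing the action of tame inertia at $\mathfrak p_i$ through the log blow-up construction of Abbes-Saito, but once this local description is established both extension across $\tilde X$ in (i) and the nonconstancy statement in (ii) follow at once by Kummer theory on $\mathbf{G}_m$.
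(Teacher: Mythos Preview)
Your proposal is correct and follows essentially the same route as the paper. The paper makes the key identification more explicit: after passing to an \'etale neighborhood of $x$ one may assume $\chi$ is defined by a single Kummer equation $t^{n}=t_{1}^{n_{1}}\cdots t_{r'}^{n_{r'}}$ with $n$ prime to $p$, and then $\mh$ is visibly defined on all of $X\ast_{k}X$ by $t^{n}=U_{1}^{n_{1}}\cdots U_{r'}^{n_{r'}}$, which immediately gives both the smoothness in (i) and the nonconstancy on the torus fiber in (ii). This is exactly the ``local monodromy/Kummer sheaf'' identification you single out as the main obstacle, made concrete.
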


\begin{proof}
Since $\mf$ is assumed to be tamely ramified along $D$, we have $R_{\chi}=0$.
Hence the first assertion in (i) holds.

Suppose that $\mf$ is unramified along $D$.
Then $\mf=\mf'|_{U}$ for a smooth sheaf $\mf'$ of $\Lambda$-modules of rank $1$ on $X$.
We put $\mh'=\shom(p_{2}^{*}\mf',p_{1}^{*}\mf')$.
Then $\mh'$ is a smooth sheaf of $\Lambda$-modules of rank $1$ on $X*_{k}X$ 
and we have $\mh=\tilde{j}^{*}\mh'$.
Since $\mh$ is a smooth sheaf on $U\times_{k}U\subset X*_{k}X$
and $X*_{k}X$ is smooth over $k$, the sheaf $\tilde{j}_{*}\mh$ is a smooth sheaf on $X*_{k}X$.
Hence the second assertion in (i) holds if $\mf$ is unramified along $D$.

Suppose that $\mf$ is not unramified along $D$.
Then we have $I_{\mST}\neq \emptyset$ and $D_{\mST}\neq 0$.
Since the assertions are \'{e}tale local, we may assume that $X=\Spec A$ is affine and
$D_{i}=(t_{i}=0)$ for some $t_{i}\in A$ for $i\in I=\{1,\ldots, r\}$.
Let $\bar{x}$ be a geometric point on $D_{\mST}$ 
and $x\in X$ the image of $\bar{x}$.
Since the assertions are \'{e}tale local,
we may assume $I=I_{x}$ and we
put $I'=I_{\mST}=\{1,\ldots, r'\}$, where $r'\le r$.
By taking an \'{e}tale covering of $X$ if necessary,
we may assume that $\chi$ is defined by $t^{n}=t_{1}^{n_{1}}\cdots t_{r'}^{n_{r'}}$
for an integer $n>1$ prime to $p$ and for
$(n_{i})_{i}\in (\mathbf{Z}_{> 0})^{r'}$ such that the greatest common divisor of $n$ and $(n_{i})_{i}$
is $1$. 
Since $X*_{k}X=\Spec A\otimes_{k}A[((1\otimes t_{1})/(t_{1}\otimes 1))^{\pm 1},
\ldots, ((1\otimes t_{r})/(t_{r}\otimes 1))^{\pm 1}]$ and
the smooth sheaf $\mh$ is defined by 
$t^{n}=((1\otimes t_{1})/(t_{1}\otimes 1))^{n_{1}}\cdots ((1\otimes t_{r'})/(t_{r'}\otimes 1))^{n_{r'}}$,
the second assertion in (i) holds.
Let $\iota\colon \bar{x}\times_{k}\mathbf{G}_{m}^{\sharp(I')}\rightarrow X*_{k}X$
be the morphism induced by $\tilde{X}\rightarrow X*_{k}X$.
Since the images $(U_{i})_{i\in I'}$ of $((1\otimes t_{i})/(t_{i}\otimes 1))_{i\in I'}$
by $\iota^{*}\colon \Gamma(X*_{k}X, \dvr_{X*_{k}X})\rightarrow \Gamma(X*_{k}X,\iota_{*}\dvr_{\bar{x}\times_{k}\mathbf{G}_{m}^{\sharp(I_{x})}})$
form a coordinate system of $\bar{x}\times_{k}\mathbf{G}_{m}^{\sharp(I')}$,
the assertion (ii) holds.
\end{proof}

\begin{lem}
\label{nonconsdr}
Let $\bar{x}$ be a geometric point on $D_{\mR}$ and
let $P^{(R_{\chi})}_{\bar{x}}$ be the fiber product of $(X\ast_{k}X)^{(R_{\chi})}$ and $\bar{x}$
over $X\times_{k}X$, where $\bar{x}$ is regarded as an $X\times_{k}X$-scheme
by the diagonal $X\rightarrow X\times_{k}X$.
Let $x\in X$ be the image of $\bar{x}$.
\begin{enumerate}
\item If $x\notin Z_{\chi}$, 
then $j^{(R_{\chi})}_{\ast}\mathcal{H}|_{P^{(R_{\chi})}_{\bar{x}}}$ is not constant.
\item If $x\in Z_{\chi}$ and if $(X,U,\chi)$ is clean at $x$, 
then $j^{(R_{\chi})}_{\ast}\mathcal{H}|_{P^{(R_{\chi})}_{\bar{x}}}$ is not constant.
\end{enumerate}
\end{lem}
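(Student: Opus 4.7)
The plan is to treat the two cases separately, detecting non-constancy on different sub-loci of $P^{(R_\chi)}_{\bar x}$: in case (i) on a piece $\bar x\times_k\mathbf{G}_m^{\sharp(I')}$ of the log-diagonal fiber, and in case (ii) on the fiber $E^{(R_\chi)}|_{\bar x}$ of the exceptional divisor, where the explicit Artin-Schreier description of $j^{(R_\chi)}_\ast\mh$ is available.

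For (i), the hypothesis $x\notin Z_\chi$ forces $I_{\mW,\chi,x}=\emptyset$, so after replacing $X$ with the open subscheme $X-\bigcup_{i\notin I_x}D_i$ I may assume that $\mf$ is tamely ramified along $D$ and hence $R_\chi=0$. Then $(X\ast_kX)^{(R_\chi)}=X\ast_kX$ and $j^{(R_\chi)}_\ast\mh=\tilde j_\ast\mh$ by Lemma \ref{lemtame} (i), and $P^{(R_\chi)}_{\bar x}$ is identified with the fiber of $\tilde X\to X$ at $\bar x$, which by the decomposition recalled after (\ref{logdiag}) contains $\bar x\times_k\mathbf{G}_m^{\sharp(I')}$ for $I'=I_{\mST,\chi}\cap I_x$. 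Since $\bar x\in D_{\mR,\chi}$ forces some $i\in I_{\mR,\chi}\cap I_x$, and $x\notin Z_\chi$ forces any such $i$ to lie in $I_{\mST,\chi}$, the set $I'$ is nonempty, and Lemma \ref{lemtame} (ii) yields the required non-constancy.

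For (ii), the two projections $p_1,p_2\colon (X\ast_kX)^{(R_\chi)}\to X$ coincide on the exceptional divisor $E^{(R_\chi)}$, because $E^{(R_\chi)}$ lies over the image of the log diagonal $\tilde\delta(R_\chi)$ on which both $p_1$ and $p_2$ equal the inclusion $R_\chi\hookrightarrow X$. Hence the fiber $E^{(R_\chi)}|_{\bar x}$ above $\bar x\in R_\chi$ is contained in $P^{(R_\chi)}_{\bar x}$. Under the isomorphism $E^{(R_\chi)}\simeq\mathbf{V}(\Omega^1_X(\log D)\otimes_{\dvr_X}\dvr_X(R_\chi))\times_X R_\chi$ of \cite[8.13]{as5}, the section $\rsw(\chi)$ defines a regular function on $E^{(R_\chi)}$ over $Z_\chi$ whose restriction to $E^{(R_\chi)}|_{\bar x}$ is a linear form on the affine space attached to $\Omega^1_X(\log D)(R_\chi)|_{\bar x}$. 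Cleanliness of $(X,U,\chi)$ at $x$ is precisely the statement that $\rsw(\chi)_x\notin\mathfrak{m}_x\cdot\Omega^1_X(\log D)(R_\chi)_x$, so this linear form is nonzero. By \cite[Proposition 4.2.2.2]{as3}, $j^{(R_\chi)}_\ast\mh|_{E^{(R_\chi)}}$ is the Artin-Schreier sheaf associated to $(F-1)(t)=-\rsw(\chi)$; its pullback along a nonzero linear form is a non-trivial Artin-Schreier cover, and the non-constancy follows.

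The main step requiring care is the containment $E^{(R_\chi)}|_{\bar x}\subset P^{(R_\chi)}_{\bar x}$ in case (ii), which hinges on the coincidence of $p_1$ and $p_2$ on the exceptional divisor; once this geometric fact is in place, each case reduces to verifying non-triviality of an explicit cover—a tame character restricted to a torus in (i), an Artin-Schreier cover pulled back along a nonzero linear form in (ii).
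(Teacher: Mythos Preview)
Your proof is correct and follows essentially the same strategy as the paper's. The only minor difference is that in case (ii) the paper observes the stronger fact that $P^{(R_\chi)}_{\bar x}$ actually \emph{equals} $E^{(R_\chi)}_{\bar x}$ (since $p_1^{-1}(R_\chi)=p_2^{-1}(R_\chi)=E^{(R_\chi)}$ as subschemes and both projections restrict to the structure map on $E^{(R_\chi)}$), but your containment already suffices for the argument.
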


\begin{proof}
(i) Since $\mf$ is tamely ramified along $D$ in a neighborhood of $x\notin Z_{\chi}$
and the assertion is \'{e}tale local,
we may assume $(X*_{k}X)^{(R_{\chi})}=X*_{k}X$ and $j^{(R_{\chi})}=\tilde{j}$ 
by Lemma \ref{lemtame} (i).
We put $I'=I_{\mST}\cap I_{x}$ and
we consider the commutative diagram
\begin{equation}
\xymatrix{
D_{I'}\times_{k}\mathbf{G}_{m}^{\sharp(I')} \ar[r] \ar[d] & X\ast_{k}X \ar[d] \\
D \ar[r]_{\delta} &X\times_{k}X.
}
\notag
\end{equation}
Since $P_{\bar{x}}^{(R_{\chi})}$ contains $\bar{x}\times_{k}\mathbf{G}_{m}^{\sharp(I')}$,
the assertion holds by Lemma \ref{lemtame} (ii).

(ii) For $i=1,2$, we consider the cartesian diagram
\begin{equation}
\xymatrix{
E^{(R_{\chi})} \ar[d] \ar[r] & (X\ast_{k}X)^{(R_{\chi})} \ar[d]^{p_{i}} \\
R_{\chi} \ar[r] & X.
} \notag
\end{equation}
Since $E^{(R_{\chi})}$ is independent of the choice of $i$,
the fiber product $P_{\bar{x}}^{(R_{\chi})}$ is 
$E_{\bar{x}}^{(R_{\chi})}=\bar{x}\times_{R_{\chi}}E^{(R_{\chi})}$.
Since the restriction $j^{(R_{\chi})}_{\ast}\mathcal{H}|_{E_{x}^{(R_{\chi})}}$ is defined 
by the Artin-Schreier equation $(F-1)(t)=-\rsw(\chi)$ by \cite[Proposition 4.2.2.2]{as3},
the cleanliness of $(X,U,\chi)$ implies that the restriction 
$j_{\ast}^{(R_{\chi})}\mathcal{H}|_{E^{(R_{\chi})}_{\bar{x}}}$ is not constant.
Hence the assertion holds.
\end{proof}

\begin{prop}
\label{propcliso}
Assume that $(X,U,\chi)$ is clean.
Then we have $(Rj_{\ast}\mf)|_{D_{\mR}}=0$.
Especially, if $\mf$ is strictly ramified along $D$,
then the canonical morphism $j_{!}\mf \rightarrow Rj_{\ast}\mf$ is an isomorphism.
\end{prop}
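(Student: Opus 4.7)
The plan is to establish the first assertion by proving $(Rj_{\ast}\mf)_{\bar{x}}=0$ for every geometric point $\bar{x}$ on $D_{\mR}$; the second assertion then follows immediately from the distinguished triangle
\begin{equation*}
j_{!}\mf\rightarrow Rj_{\ast}\mf\rightarrow i_{\ast}i^{\ast}Rj_{\ast}\mf\rightarrow
\end{equation*}
where $i\colon D\rightarrow X$ denotes the closed immersion, since strict ramification means $D=D_{\mR}$ and hence $i^{\ast}Rj_{\ast}\mf=0$ by the first part.

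For the local vanishing, the strategy is to exploit the log product formalism. Since $(X,U,\chi)$ is clean, $j^{(R_{\chi})}_{\ast}\mh$ is a smooth sheaf of rank $1$ on $(X\ast_{k}X)^{(R_{\chi})}$ whose restriction to $E^{(R_{\chi})}$ is defined by the Artin--Schreier equation $(F-1)(t)=-\rsw(\chi)$. Consider the smooth projection $p_{2}\colon (X\ast_{k}X)^{(R_{\chi})}\rightarrow X$ with fiber $P^{(R_{\chi})}_{\bar{x}}$ over $\bar{x}$. Using the identification $\mh=\pr_{1}^{\ast}\mf\otimes \pr_{2}^{\ast}\mf^{-1}$ together with smooth base change along $p_{2}$ (after passing to strict henselizations at $\bar{x}$), the stalk $(Rj_{\ast}\mf)_{\bar{x}}$ can be expressed, up to the invertible twist $\mf_{\bar{x}}$, as the cohomology $R\Gamma(P^{(R_{\chi})}_{\bar{x}},j^{(R_{\chi})}_{\ast}\mh|_{P^{(R_{\chi})}_{\bar{x}}})$.

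By Lemma \ref{nonconsdr}, this restriction is non-constant; more concretely, the proof of that lemma exhibits an explicit subscheme of $P^{(R_{\chi})}_{\bar{x}}$ on which the sheaf is either a non-trivial Kummer sheaf on $\bar{x}\times_{k}\mathbf{G}_{m}^{\sharp(I')}$ in the tame case or the non-trivial Artin--Schreier sheaf on $E^{(R_{\chi})}_{\bar{x}}$ cut out by $\rsw(\chi)$ in the wild case. Standard vanishing for non-trivial rank $1$ sheaves on tori and on affine vector bundles, combined with a Leray spectral sequence decomposing $P^{(R_{\chi})}_{\bar{x}}$ into its tame and wild directions, then forces $R\Gamma(P^{(R_{\chi})}_{\bar{x}},j^{(R_{\chi})}_{\ast}\mh|_{P^{(R_{\chi})}_{\bar{x}}})=0$, and hence $(Rj_{\ast}\mf)_{\bar{x}}=0$.

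The main obstacle will be the base-change identification in the second paragraph: one must carefully track the passage from $U\times_{k}U$ through the blow-up defining $(X\ast_{k}X)^{(R_{\chi})}$ and verify that the smooth extension of $\mh$ to $j^{(R_{\chi})}_{\ast}\mh$ is compatible with pulling back to the fiber $P^{(R_{\chi})}_{\bar{x}}$ across the exceptional locus. Once this compatibility is established, the vanishing reduces to well-known rank $1$ computations on tori and affine spaces, where the cleanliness hypothesis, encoded through the non-vanishing of $\rsw(\chi)$, supplies exactly the non-triviality needed.
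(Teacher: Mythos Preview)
Your argument has a genuine gap in the second paragraph. The scheme $P^{(R_{\chi})}_{\bar{x}}$ is \emph{not} the fiber of $p_{2}$ over $\bar{x}$: by definition it is the fiber product of $(X\ast_{k}X)^{(R_{\chi})}$ with $\bar{x}$ over $X\times_{k}X$ via the diagonal, i.e.\ the locus where \emph{both} $p_{1}$ and $p_{2}$ hit $\bar{x}$. The actual fiber $p_{2}^{-1}(\bar{x})$ is a modification of $X_{\bar{k}}$, not the torus or affine space you describe. Moreover, smooth base change along $p_{2}$ only tells you that $p_{2}^{*}Rj_{*}\mf\simeq Rj^{(R_{\chi})}_{*}\pr_{2}^{*}\mf$; it cannot produce an identification of the \emph{stalk} $(Rj_{*}\mf)_{\bar{x}}$ with a global cohomology group $R\Gamma(P^{(R_{\chi})}_{\bar{x}},-)$. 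There is no proper pushforward here, and no mechanism by which the stalk becomes a derived global section on $P^{(R_{\chi})}_{\bar{x}}$. Your proposed identification is therefore unjustified, and the subsequent vanishing computation, while correct for the sheaves on $\mathbf{G}_{m}^{\sharp(I')}$ and $E^{(R_{\chi})}_{\bar{x}}$ in isolation, does not compute what you claim.

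The paper's argument uses the same ingredients but in a different and sharper way. From the evaluation isomorphism $\mathcal{H}\otimes\pr_{2}^{*}\mf\simeq\pr_{1}^{*}\mf$ and smooth base change for both $p_{1}$ and $p_{2}$, one obtains on $(X\ast_{k}X)^{(R_{\chi})}$ an isomorphism
\[
j^{(R_{\chi})}_{*}\mathcal{H}\otimes_{\Lambda}p_{2}^{*}Rj_{*}\mf\;\xrightarrow{\ \sim\ }\;p_{1}^{*}Rj_{*}\mf.
\]
Restricting to $P^{(R_{\chi})}_{\bar{x}}$, both $p_{1}^{*}Rj_{*}\mf$ and $p_{2}^{*}Rj_{*}\mf$ become the \emph{constant} complex $\pi_{\bar{x}}^{*}(Rj_{*}\mf)_{\bar{x}}$, since both $p_{i}$ factor through $\bar{x}$ there. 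So one has a non-constant rank $1$ local system tensored with a constant complex, isomorphic to that same constant complex. This forces the constant complex to vanish, hence $(Rj_{*}\mf)_{\bar{x}}=0$. No cohomology computation on $P^{(R_{\chi})}_{\bar{x}}$ is needed; only Lemma~\ref{nonconsdr} and the observation that $L\otimes C\simeq C$ with $L$ non-constant invertible and $C$ constant implies $C=0$.
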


\begin{proof}
We consider the cartesian diagram
\begin{equation}
\xymatrix{
U\times_{k}U \ar[d]_-{\mathrm{pr}_{i}} \ar[r]^-{j^{(R_{\chi})}} & (X\ast_{k}X)^{(R_{\chi})} \ar[d]^-{p_{i}}\\
U \ar[r]_-{j} & X
}
\notag
\end{equation}  
for $i=1,2$ (\cite[2.3]{icm2}).
Since $\mf$ is of rank $1$, the evaluation morphism
\begin{equation}
\label{evmorph}
\mathcal{H}\otimes_{\Lambda}\mathrm{pr}_{2}^{\ast}\mf\rightarrow \mathrm{pr}_{1}^{\ast}\mf
\end{equation}
is an isomorphism.
By applying $Rj^{(R_{\chi})}_{\ast}$ to (\ref{evmorph}),
we have an isomorphism 
\begin{equation}
\label{rjiso}
Rj^{(R_{\chi})}_{\ast}(\mathcal{H}\otimes_{\Lambda}\mathrm{pr}_{2}^{\ast}\mf) 
\rightarrow Rj^{(R_{\chi})}_{\ast}\mathrm{pr}_{1}^{\ast}\mf. 
\end{equation}
We consider the canonical morphism 
\begin{equation}
\label{jrhf}
j^{(R_{\chi})}_{\ast}\mathcal{H}\otimes_{\Lambda}Rj^{(R_{\chi})}_{\ast}\mathrm{pr}_{2}^{\ast}\mf
\rightarrow Rj^{(R_{\chi})}_{\ast}(\mathcal{H}\otimes_{\Lambda} \mathrm{pr}_{2}^{\ast}\mf). 
\end{equation}
Since $j^{(R_{\chi})}_{\ast}\mathcal{H}$ and $\mh$ are locally constant
by Lemma \ref{lemtame} (i) and \cite[Proposition 4.2.2.1]{as3},
the canonical morphism (\ref{jrhf}) is an isomorphism.
Hence the composition 
$j^{(R_{\chi})}_{\ast}\mathcal{H}\otimes_{\Lambda}Rj^{(R_{\chi})}_{\ast}\mathrm{pr}_{2}^{\ast}\mf
\rightarrow Rj^{(R_{\chi})}_{\ast}\mathrm{pr}_{1}^{\ast}\mf$ of (\ref{jrhf}) and the isomorphism
(\ref{rjiso}) is an isomorphism.
By the smooth base change theorem, the base change morphism 
$p_{i}^{*}Rj_{*}\mf \rightarrow Rj_{*}^{(R_{\chi})}\pr_{i}^{*}\mf$ is an isomorphism for $i=1,2$.
Hence we have an isomorphism
\begin{equation}
\label{jhptisopo}
j^{(R_{\chi})}_{\ast}\mathcal{H}\otimes_{\Lambda}p_{2}^{\ast}Rj_{\ast}\mf
\rightarrow p_{1}^{\ast}Rj_{\ast}\mf.
\end{equation}

Let $\bar{x}$ be a geometric point on $D_{\mR}$.
Let $P_{\bar{x}}^{(R_{\chi})}$ be as in Lemma \ref{nonconsdr} and 
$\pi_{\bar{x}}\colon P_{\bar{x}}^{(R_{\chi})}\rightarrow \bar{x}$ be the projection.
By restricting (\ref{jhptisopo}) to $P^{(R_{\chi})}_{\bar{x}}$,
we have an isomorphism 
\begin{equation}
\label{mappx}
(j^{(R_{\chi})}_{\ast}\mathcal{H})|_{P^{(R_{\chi})}_{\bar{x}}}\otimes_{\Lambda}(p_{2}^{\ast}Rj_{\ast}\mf)|_{P^{(R_{\chi})}_{\bar{x}}}
\rightarrow (p_{1}^{\ast}Rj_{\ast}\mf)|_{P^{(R_{\chi})}_{\bar{x}}}.
\end{equation}
Since we have
$(p_{i}^{\ast}Rj_{\ast}\mf)|_{P^{(R_{\chi})}_{\bar{x}}}= \pi_{\bar{x}}^{\ast}(Rj_{\ast}\mf)_{\bar{x}}$ for $i=1,2$,
the complex $(p_{1}^{\ast}Rj_{\ast}\mf)|_{P^{(R_{\chi})}_{\bar{x}}}=
(p_{2}^{\ast}Rj_{\ast}\mf)|_{P^{(R_{\chi})}_{\bar{x}}}$ 
has constant cohomology sheaves.
Since $(j_{*}^{(R_{\chi})}\mh)|_{P_{\bar{x}}^{(R_{\chi})}}$ is not constant by
Lemma \ref{nonconsdr} and the morphism (\ref{mappx}) is an isomorphism, 
we have $(p_{i}^{\ast}Rj_{\ast}\mf)|_{P^{(R_{\chi})}_{\bar{x}}}=0$ for $i=1,2$.
Since $(p_{1}^{\ast}Rj_{\ast}\mf)|_{P^{(R_{\chi})}_{\bar{x}}}=\pi_{\bar{x}}^{*}(Rj_{*}\mf)_{\bar{x}}$
for $i=1,2$, we have $\pi_{\bar{x}}^{*}(Rj_{*}\mf)_{\bar{x}}=0$ and hence
$(Rj_{*}\mf)_{\bar{x}}=0$.
Thus the first assertion holds.
Since $D_{\mR}=D$ in the case where $\mf$ is strictly ramified along $D$,
the last assertion holds.
\end{proof}

%%%%%%%%%%%%%%%%%%%%%%%%%%%%%%%%%%%%%%%%%%%%%%%%%%%
%%%%%%%%%%%%%%%%%%%%%%%%%%%%%%%%%%%%%%%%%%%%%%%%%%%
\section{Logarithmic theory of characteristic cycle}
\label{slc}
%%%%%%%%%%%%%%%%%%%%%%%%%%%%%%%%%%%%%%%%%%%%%%%%%%%
\subsection{Logarithmic characteristic cycle of a rank $1$ sheaf}
\label{ssklcc}
We briefly recall Kato's logarithmic characteristic cycle defined in \cite{ka2}.

Let $X$ be a smooth scheme purely of dimension $d$ over a perfect field $k$ of characteristic $p>0$.
Let $D$ be a divisor on $X$ with simple normal crossings and $\{D_{i}\}_{i\in I}$ the irreducible components of $D$.
Let $\dvr_{K_{i}}=\hat{\dvr}_{X,\mathfrak{p}_{i}}$ be the completion of the local ring at 
the generic point $\mathfrak{p}_{i}$ of $D_{i}$
and $K_{i}=\Frac \dvr_{K_{i}}$ the local field at $\mathfrak{p}_{i}$ for $i\in I$.
We put $U=X-D$ and let $j\colon U\rightarrow X$ be the open immersion.
Let $\mf$ be a smooth sheaf of $\Lambda$-modules of rank $1$ on $U$ 
and let $\chi\colon \pi_{1}^{\ab}(U)\rightarrow \Lambda^{\times}$ 
be the character corresponding to $\mf$.
We fix an inclusion $\psi\colon \Lambda^{\times}\rightarrow \mathbf{Q}/\mathbf{Z}$ 
and regard $\chi$ as an element of $H^{1}_{\et}(U,\mathbf{Q}/\mathbf{Z})$ by $\psi$.

Let $T^{*}X(\log D)=\Spec S^{\bullet}\Omega^{1}_{X}(\log D)^{\vee}$ be the logarithmic cotangent
bundle with logarithmic poles along $D$ and
$T^{*}_{X}X(\log D)$ the zero section of $T^{*}X(\log D)$.
Let $Z_{d}(T^{\ast}X(\log D))$ be the free abelian group of $d$-cycles on $T^{\ast}X(\log D)$.
For $i\in I_{\mW,\chi}$,
let $L_{i,\chi}$ be the sub line bundle of $T^{*}X(\log D)\times_{X}D_{i}$ defined by the unique
$\dvr_{D_{i}}$-submodule of $\Omega^{1}_{X}(\log D)|_{D_{i}}$ which is 
locally a direct summand of $\Omega_{X}^{1}(\log D)|_{D_{i}}$ of rank $1$
containing $\dvr_{X}(-R_{\chi})|_{D_{i}}\cdot \rsw(\chi)|_{D_{i}}$.
We note that if $(X,U,\chi)$ is clean then $L_{i,\chi}$ is defined by 
$\dvr_{X}(-R_{\chi})|_{D_{i}}\cdot \rsw(\chi)|_{D_{i}}$.

\begin{defn}[{\cite[(3.4.4)]{ka2}}]
\label{deflogcc}
Assume that $(X,U,\chi)$ is clean. 
We define a \textit{logarithmic characteristic cycle} $\Char^{\log}(X,U,\chi)\in Z_{d}(T^{\ast}X(\log D))$ 
of $(X,U,\chi)$ by
\begin{equation}
\label{charlogform}
\Char^{\log}(X,U,\chi)=(-1)^{d}([\zers(\log D)]+\sum_{i\in I_{\mW,\chi}}\sw(\chi|_{K_{i}})[L_{i,\chi}]). 
\end{equation}
We write $SS^{\log}(X,U,\chi)$ for the support of $\Char^{\log}(X,U,\chi)$.
\end{defn}

\begin{thm}[Index formula, {\cite[Corollary 3.8]{saj}}]
\label{logcleanindexformula}
Assume that $(X,U,\chi)$ is clean.
If $X$ is proper over an algebraically closed field, then we have
\begin{equation}
\chi(X,j_{!}\mf)=(\Char^{\log}(X,U,\chi),\zers(\log D))_{T^{\ast}X(\log D)}, \notag
\end{equation}
where the right-hand side is the intersection number of $\Char^{\log}(X,U,\chi)$ with the zero section
$T^{*}_{X}X(\log D)$ of $T^{*}X(\log D)$.
\end{thm}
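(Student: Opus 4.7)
The plan is to deduce this logarithmic index formula from Saito's non-logarithmic index formula (Theorem \ref{sindex}) by comparing the two characteristic cycles through the canonical morphism $\pi\colon T^{\ast}X\to T^{\ast}X(\log D)$ induced by the inclusion $\Omega^{1}_{X}\hookrightarrow\Omega^{1}_{X}(\log D)$. Since both formulas compute $\chi(X,j_{!}\mf)$, it is enough to prove that the intersection numbers $(CC(j_{!}\mf),T^{\ast}_{X}X)_{T^{\ast}X}$ and $(\Char^{\log}(X,U,\chi),T^{\ast}_{X}X(\log D))_{T^{\ast}X(\log D)}$ agree.

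To carry this out, I would match the components of $CC(j_{!}\mf)$ given by (\ref{saitoccro}) against those of $\Char^{\log}(X,U,\chi)$ given by (\ref{charlogform}). The two zero-section self-intersections differ by $(-1)^{d}(c_{d}(\Omega^{1}_{X}(\log D))-c_{d}(\Omega^{1}_{X}))$, which by the residue exact sequence $0\to\Omega^{1}_{X}\to\Omega^{1}_{X}(\log D)\to\bigoplus_{i\in I}\dvr_{D_{i}}\to 0$ decomposes into contributions supported on the components $D_{i}$. These contributions should be matched on the non-logarithmic side by the conormal bundle components $[T^{\ast}_{D_{i}}X]$ for $i\in I_{\mT,\chi}$, which do not appear in $\Char^{\log}(X,U,\chi)$. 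Similarly, for $i\in I_{\mW,\chi}$, the line bundle $L'_{i,\chi}\subset T^{\ast}X\times_{X}D_{i}$ is the image of $L_{i,\chi}\subset T^{\ast}X(\log D)\times_{X}D_{i}$ under $\pi$, and the discrepancy between the non-logarithmic multiplicity $r'_{i}$ and the Swan conductor $\sw(\chi|_{K_{i}})$ (which differ by $0$ or $1$ according to whether $\chi|_{K_{i}}$ is of type $\mI$ or $\mII$) should exactly match the Chern class correction arising from the log pole along $D_{i}$ when transferring a cycle on $T^{\ast}X$ across $\pi$ to $T^{\ast}X(\log D)$.

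The main obstacle is to handle the isolated-point contributions $u_{x}[T^{\ast}_{x}X]$ in (\ref{saitoccro}), which need not vanish under the cleanliness hypothesis alone: Corollary \ref{corintdeg} shows that clean but non-non-degenerate points can exist, and at such points the coefficient $u_{x}$ may be positive. One has to prove that the sum $\sum_{x}u_{x}$, measured by the intersection with the non-logarithmic zero section, is exactly cancelled by the residual Chern class discrepancy left over from the comparison of log and non-log cycles at the closed points of $D$. The natural strategy, as in \cite[Corollary 3.8]{saj}, is to reduce the verification to a local intersection-theoretic computation at each closed point of $D$, exploiting cleanliness to pin down the local structure of $\rsw(\chi)$ and hence of $L_{i,\chi}$ and $L'_{i,\chi}$, and then to globalize by summing over $D$.
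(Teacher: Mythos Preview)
Your proposal has a genuine gap, and it is structural rather than technical.

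First, note that the paper does not prove this theorem at all: it is quoted directly from \cite[Corollary 3.8]{saj}, where it is established by entirely different methods (the Lefschetz--Verdier trace formula and Kato's theory), predating the definition of $CC(j_{!}\mf)$ in \cite{sa4}. So there is no proof in the present paper to compare against; the statement is an \emph{input}, not an output.

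Second, and more seriously, your route is circular within the logical architecture of this paper. You want to deduce the logarithmic index formula from Theorem~\ref{sindex} by comparing $(CC(j_{!}\mf),T^{\ast}_{X}X)$ with $(\Char^{\log}(X,U,\chi),T^{\ast}_{X}X(\log D))$. To do this you must control the unknown coefficients $u_{x}$ in (\ref{saitoccro}). You acknowledge this obstacle in your final paragraph but do not resolve it: the ``local intersection-theoretic computation'' you gesture at is precisely the content of Theorem~\ref{thmmain}, whose proof (all of Subsection~\ref{sspfsurf}) relies on Corollary~\ref{ndegindex}, which in turn relies on Theorem~\ref{logindexformula}, which is deduced from the very statement you are trying to prove. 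In other words, the paper uses Theorem~\ref{logcleanindexformula} to compute $u_{x}$, not the other way around.

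Finally, even setting circularity aside, your argument is restricted to $d=2$ because (\ref{saitoccro}) is only available on surfaces, whereas Theorem~\ref{logcleanindexformula} is stated (and needed) for arbitrary $d$. Any genuine proof along your lines would require an independent determination of $CC(j_{!}\mf)$ in terms of ramification data, which for rank one sheaves in higher dimension is not available in this paper.
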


If $d=2$, then, without any assumption on the cleanliness of $(X,U,\chi)$, a logarithmic characteristic cycle 
$\Char^{\log}(X,U,\chi)$ is defined as follows: 
Let 
\begin{equation}
\label{seqcl}
f: X^{\prime}=X_{s}\rightarrow X_{s-1}\rightarrow \cdots \rightarrow X_{0}=X 
\end{equation}
be the composition of blow-ups $\{X_{i+1}\rightarrow X_{i}\}_{0\le i\le s-1}$ at closed points 
$\{x_{i}\in X_{i}\}_{0\le i\le s-1}$ lying above $D$ such that 
$(X^{\prime},f^{-1}(U),f^{\ast}\chi)$ is clean (\cite[Theorem 4.1]{ka2}).
We consider the algebraic correspondence
\begin{equation}
T^{*}X(\log D)\xleftarrow{\pr_{1}} T^{*}X(\log D)\times_{X}X' \xrightarrow{df^{D}} T^{*}X'(\log f^{*}D).  \notag
\end{equation}
We put $C'=SS^{\log}(X',f^{*}U,f^{*}\chi)\subset T^{*}X'(\log f^{*}D)$
and $C=\pr_{1}({df^{D}}^{-1}(C'))\subset T^{*}X(\log D)$.
Then $df^{D}$ and $\pr_{1}$ define the Gysin homomorphism 
(\cite[6.6]{ful}) and the proper push-forward 
\begin{equation}
\label{chcomp}
CH_{2}(C')\xrightarrow{{df^{D}}^{!}} 
CH_{2}({df^{D}}^{-1}(C'))
\xrightarrow{\pr_{1*}} CH_{2}(C). 
\end{equation}
Since $f$ is a composition of blow-ups at closed points, every irreducible components
of $C$ is of dimension $\le 2$.
Hence the composition (\ref{chcomp}) defines a morphism
$Z_{2}(C')\rightarrow Z_{2}(C)$ of free abelian groups.
We define the logarithmic characteristic cycle $\Char^{\log}(X,U,\chi)$ to be the image of
$\Char^{\log}(X',f^{*}U,f^{*}\chi)$ by the morphism $Z_{2}(C')\rightarrow Z_{2}(C)$.
Then the logarithmic characteristic cycle $\Char^{\log}(X,U,\chi)$ is of the form
\begin{equation}
\label{logcharcycle}
\Char^{\log}(X,U,\chi)=[\zers(\log D)]+\sum_{i\in I}\sw(\chi|_{K_{i}})[L_{i,\chi}]+
\sum_{x\in |D|}s_{x}[T^{\ast}_{x}X(\log D)],
\end{equation}
where $|D|$ is the set of closed points of $D$ and $T^{\ast}_{x}X(\log D)\subset T^{\ast}X(\log D)$ is the fiber at $x\in |D|$.
We write $SS^{\log}(X,U,\chi)$ for the support of $\Char^{\log}(X,U,\chi)$.

We note that this definition is the same as that in \cite[Remark 5.8]{ka2}
and that the definition of $\Char^{\log}(X,U,\chi)$ is independent of the choice of successive blow-ups
by \cite[Remark 5.7]{ka2}.
In particular, if $(X,U,\chi)$ is clean, then the multiplicities of fibers
at closed points on $D$ in $\Char^{\log}(X,U,\chi)$ are $0$.
We refer to \cite[Remark 5.7]{ka2} for
an algebraic computation of the multiplicities of fibers at closed points of $D$ in $\Char^{\log}(X,U,\chi)$.
We note that the triple $(X,U,\chi)$ is not always clean even if the multiplicities of fibers at closed points on $D$ are $0$.

\begin{exa}
Let $X$ be the affine plane $\atk =\Spec k[t_{1},t_{2}]$ and $D$
the irreducible divisor $(t_{1}=0)$.
Let $x\in X$ be the origin.
Let $\mf$ be a smooth sheaf of $\Lambda$-modules of rank $1$ on $U=X-D$ defined by the 
Artin-Schreier equation $t^{p}-t=(t_{2}/t_{1})^{n}$ where $n\in \mathbf{Z}_{\ge 2}$ is prime to $p$.
Then $(X,U,\chi)$ is not clean at $x$.
However we have $s_{x}=0$ by calculating a few blow-ups at non-clean points lying over $x$
following \cite[Remark 5.7]{ka2}. 
\end{exa}

By Theorem \ref{logcleanindexformula},
we obtain the following index formula.

\begin{thm}[Index formula]
\label{logindexformula}
Assume $d=2$.
If $X$ is proper over an algebraically closed field, then we have
\begin{equation}
\chi(X,j_{!}\mf)=(\Char^{\log}(X,U,\chi),\zers(\log D))_{T^{\ast}X(\log D)}. \notag
\end{equation}
\end{thm}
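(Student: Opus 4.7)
The plan is to deduce Theorem \ref{logindexformula} from the clean case (Theorem \ref{logcleanindexformula}) via Kato's resolution of non-clean points. By \cite[Theorem 4.1]{ka2}, choose a composition $f\colon X'\to X$ of blow-ups at closed points lying above $D$, as in (\ref{seqcl}), such that $(X',f^{-1}(U),f^{*}\chi)$ is clean. Such blow-ups preserve smoothness, projectivity over the algebraically closed base field, and the simple normal crossings property of the total transform of $D$. Let $j'\colon f^{-1}(U)\to X'$ denote the open immersion and write $\mf'=f^{*}\mf$.

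First I would establish the equality of Euler characteristics $\chi(X,j_{!}\mf)=\chi(X',j'_{!}\mf')$. Since $f$ restricts to an isomorphism on $f^{-1}(U)$, we have $f^{*}(j_{!}\mf)=j'_{!}\mf'$. At a geometric point $\bar{x}$ over $x\in U$, the fiber $f^{-1}(\bar{x})$ is a single point, so proper base change gives $(Rf_{*}j'_{!}\mf')_{\bar{x}}=(j_{!}\mf)_{\bar{x}}$. At a geometric point $\bar{x}$ over $x\in D$, the fiber $f^{-1}(\bar{x})$ is contained in $f^{-1}(D)$, on which $j'_{!}\mf'$ vanishes, so $(Rf_{*}j'_{!}\mf')_{\bar{x}}=R\Gamma(f^{-1}(\bar{x}),0)=0=(j_{!}\mf)_{\bar{x}}$. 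Hence $Rf_{*}j'_{!}\mf'=j_{!}\mf$, and the properness of $f$ gives the desired equality. Applying Theorem \ref{logcleanindexformula} on $X'$ then yields
\begin{equation}
\chi(X,j_{!}\mf)=(\Char^{\log}(X',f^{-1}(U),f^{*}\chi),T^{\ast}_{X'}X'(\log f^{*}D))_{T^{\ast}X'(\log f^{*}D)}. \notag
\end{equation}

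It remains to identify this with $(\Char^{\log}(X,U,\chi),T^{\ast}_{X}X(\log D))_{T^{\ast}X(\log D)}$. By the definition $\Char^{\log}(X,U,\chi)=\pr_{1*}(df^{D})^{!}\Char^{\log}(X',f^{-1}(U),f^{*}\chi)$ in (\ref{chcomp}) and the projection formula for the proper morphism $\pr_{1}\colon T^{\ast}X(\log D)\times_{X}X'\to T^{\ast}X(\log D)$, the left side equals $((df^{D})^{!}\Char^{\log}(X',\ldots),\pr_{1}^{*}T^{\ast}_{X}X(\log D))$, where $\pr_{1}^{*}T^{\ast}_{X}X(\log D)$ is the zero section of the vector bundle $T^{\ast}X(\log D)\times_{X}X'\to X'$. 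Since $df^{D}$ carries this zero section to the zero section of $T^{\ast}X'(\log f^{*}D)$, the functoriality of the refined Gysin pullback applied to the factorization of the zero section $X'\to T^{\ast}X'(\log f^{*}D)$ through $df^{D}$ yields
\begin{equation}
((df^{D})^{!}\Char^{\log}(X',\ldots),\pr_{1}^{*}T^{\ast}_{X}X(\log D))=(\Char^{\log}(X',\ldots),T^{\ast}_{X'}X'(\log f^{*}D))_{T^{\ast}X'(\log f^{*}D)}, \notag
\end{equation}
combining with the previous display to finish the proof.

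The main obstacle is this last intersection-theoretic identification: although $df^{D}$ need not be a fiberwise isomorphism over the exceptional locus of $f$ (for instance when a blown-up point lies on a single component of $D$, the determinant of $df^{D}$ vanishes along the new exceptional divisor), it is an l.c.i.\ morphism between smooth schemes of the same dimension and it takes zero section to zero section, which is precisely what is needed for Fulton's refined Gysin formalism to match the intersection numbers on the two sides. The other ingredients—resolution of non-clean points, the proper base-change computation for $Rf_{*}j'_{!}\mf'$, and the clean index formula—are invoked as known results.
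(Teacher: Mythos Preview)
Your proposal is correct and follows exactly the route the paper intends: the paper's entire proof is the sentence ``By Theorem \ref{logcleanindexformula}, we obtain the following index formula,'' relying on the definition of $\Char^{\log}(X,U,\chi)$ in the non-clean case via the resolution (\ref{seqcl}) and the correspondence (\ref{chcomp}). You have simply written out the details---the Euler-characteristic invariance under $f$ via $Rf_{*}j'_{!}\mf'=j_{!}\mf$, and the intersection-number identity via the projection formula and functoriality of the Gysin map $\sigma_{2}^{!}=(df^{D}\circ\sigma_{1})^{!}=\sigma_{1}^{!}(df^{D})^{!}$ for the zero sections---that the paper leaves implicit; one minor slip is that you say ``projectivity'' where only properness is assumed and needed.
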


%%%%%%%%%%%%%%%%%%%%%%%%%%%%%%%%%%%%%%%%%%%%%%%%%%%
\subsection{Log micro support of a rank $1$ sheaf}
\label{sslogms}
For divisors $E\subset D$ on $X$ with simple normal crossings,
let $\tau_{E/D}\colon T^{*}X(\log E)\rightarrow T^{*}X(\log D)$ be the canonical morphism
of vector bundles over $X$.
For an $X$-scheme $W$, we write $\tau_{E/D, W}\colon T^{*}X(\log E)\times_{X}W\rightarrow
T^{*}X(\log D)\times_{X}W$ for the base change of $\tau_{E/D}$ by the structure morphism $W\rightarrow X$.
In particular, if $E=\emptyset$, then 
we simply write $\tau_{D}$ and $\tau_{D,W}$ for $\tau_{\emptyset/D}$ and 
$\tau_{\emptyset/D, W}$ respectively.

We keep the notation in Subsection \ref{ssklcc}.
In this subsection, we prove that $j_{!}\mf$ is micro-supported on 
$\tau_{D}^{-1}(SS^{\log}(X, U, \chi))$ in the case where $(X, U, \chi)$ is clean 
(Theorem \ref{proplogct}).

We introduce the log transversality of a morphism of smooth schemes over $k$.
The log transversality implies the cleanliness of the pull-back of a clean character 
(Proposition \ref{cleaneq} (iii)). 
Let $C_{D}=\bigcup_{I'\subset I}T^{\ast}_{D_{I'}}X\subset T^{\ast}X$
be the union of the conormal bundles of
$D_{I'}=\bigcap_{i\in I'}D_{i}$ over $X$ for all subsets $I'\subset I$.
We note that $C_{D}$ is a closed conical subset of $T^{*}X$.
By \cite[Lemma 3.4.5]{sa4}, 
a morphism $h\colon W\rightarrow X$ of smooth schemes over $k$ is 
$C_{D}$-transversal if and only if
$h^{\ast}D$ is a divisor on $W$ with simple normal crossings 
and $h^{\ast}D_{i}$ is a smooth divisor on $W$ for every $i\in I$.
For a $C_{D}$-transversal morphism $h\colon W\rightarrow X$ of smooth schemes over $k$,
let
\begin{equation}
dh^{D}\colon T^{\ast}X(\log D)\times_{X}W\rightarrow T^{\ast}W(\log h^{\ast}D) \notag
\end{equation}
be the morphism yielded by $h$.

\begin{defn}
\label{deflogtr}
Let $C$ be a closed conical subset of $T^{\ast}X(\log D)$.
\begin{enumerate}
\item We say that a morphism $h\colon W\rightarrow X$ of smooth schemes over $k$
is {\it $\log$-$D$-$C$-transversal} at a point $w\in W$ if $h$ is $C_{D}$-transversal at $w$ and 
if the subset $({dh^{D}}^{ -1}(T^{\ast}_{W}W(\log h^{\ast}D))\cap h^{*}C)\times_{W}w\subset 
T^{\ast}X(\log D)\times_{X}w$ is contained in the zero section $T^{\ast}_{X}X(\log D)\times_{X}w$.

We say that a morphism $h\colon W\rightarrow X$ of smooth schemes over $k$
is {\it $\log$-$D$-$C$-transversal} if $h$ is $C_{D}$-transversal and 
if the subset ${dh^{D}}^{ -1}(T^{\ast}_{W}W(\log h^{\ast}D))\cap h^{*}C\subset T^{\ast}X(\log D)\times_{X}W$
is contained in the zero section $T^{\ast}_{X}X(\log D)\times_{X}W$.

\item Let $h\colon W\rightarrow X$ be a $\log$-$D$-$C$-transversal morphism 
of smooth schemes over $k$. 
We define a closed conical subset
$h^{\circ}C\subset T^{\ast}W(\log h^{\ast}D)$
to be the image of the subset $h^{\ast}C\subset T^{*}X(\log D)\times_{X}W$ by 
$dh^{D}\colon T^{*}X(\log D)\times_{X}W\rightarrow T^{*}W(\log h^{*}D)$.
\end{enumerate}
\end{defn}

We note that the $\log$-$D$-$C$-transversality of $h$ is equivalent to the 
$\log$-$D$-$C$-transversality at all points on $W$.
We also note that if the morphism $h\colon W\rightarrow X$ is 
$\log$-$D$-$C$-transversal, then
the restriction $dh^{D}\colon h^{*}C\rightarrow T^{*}W(\log h^{*}D)$ is finite
by \cite[Lemma 3.1]{sa4}.

\begin{lem}
\label{pullzers}
The inverse image $\tau_{D}^{-1}(T^{*}_{X}X(\log D))$ is $C_{D}$
and we have $\tau_{D}^{!}([T^{*}_{X}X(\log D)])=\sum_{I'\subset I}[T^{*}_{D_{I'}}X]$,
where $\tau_{D}^{!}\colon CH_{d}(T^{*}_{X}X(\log D))\rightarrow CH_{d}(C_{D})$
is the Gysin homomorphism for the l.c.i.\ morphism $\tau_{D}\colon T^{*}X\rightarrow T^{*}X(\log D)$ (\cite[6.6]{ful}).
\end{lem}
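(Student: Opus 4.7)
The plan is a local computation. Choose an affine open $\Spec A\subset X$ with coordinates $(t_{1},\ldots,t_{d})$ such that the components of $D$ meeting this open are $D_{i}=(t_{i}=0)$ for $i\in \{1,\ldots,r\}$. Then $\Omega^{1}_{X}$ and $\Omega^{1}_{X}(\log D)$ are trivialized by the bases $(dt_{1},\ldots,dt_{d})$ and $(dt_{1}/t_{1},\ldots,dt_{r}/t_{r},dt_{r+1},\ldots,dt_{d})$, giving fiber coordinates $\xi_{1},\ldots,\xi_{d}$ on $T^{*}X$ and $\eta_{1},\ldots,\eta_{d}$ on $T^{*}X(\log D)$. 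Since the inclusion $\Omega^{1}_{X}\hookrightarrow\Omega^{1}_{X}(\log D)$ sends $dt_{i}\mapsto t_{i}\cdot(dt_{i}/t_{i})$ for $i\le r$, dualization shows that $\tau_{D}$ is the morphism given by $\tau_{D}^{*}\eta_{i}=t_{i}\xi_{i}$ for $i\le r$ and $\tau_{D}^{*}\eta_{i}=\xi_{i}$ for $i>r$.

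The zero section $T^{*}_{X}X(\log D)$ is defined by the regular sequence $(\eta_{1},\ldots,\eta_{d})$, so its scheme-theoretic preimage under $\tau_{D}$ is $V(t_{1}\xi_{1},\ldots,t_{r}\xi_{r},\xi_{r+1},\ldots,\xi_{d})\subset T^{*}X$. A point lies here if and only if $\xi_{i}=0$ for $i>r$ and, for each $i\le r$, either $t_{i}=0$ or $\xi_{i}=0$. Indexing by the subset $I'\subset \{1,\ldots,r\}$ of indices with $t_{i}=0$, each branch is exactly the conormal bundle $T^{*}_{D_{I'}}X$. Globalizing, this gives the set-theoretic identity $\tau_{D}^{-1}(T^{*}_{X}X(\log D))=\bigcup_{I'\subset I}T^{*}_{D_{I'}}X=C_{D}$, with the convention that $T^{*}_{D_{I'}}X=\emptyset$ whenever $D_{I'}=\emptyset$.

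For the cycle-theoretic identity, the key point is that each component $T^{*}_{D_{I'}}X$ has dimension $(d-|I'|)+|I'|=d$, which matches the expected dimension of the refined pullback ($\tau_{D}$ being of relative dimension $0$ and $T^{*}_{X}X(\log D)$ of codimension $d$). Hence the pulled-back sequence $(t_{1}\xi_{1},\ldots,t_{r}\xi_{r},\xi_{r+1},\ldots,\xi_{d})$ cuts out a subscheme of the expected codimension $d$ in the Cohen--Macaulay polynomial ring $A[\xi_{1},\ldots,\xi_{d}]$ and is therefore a regular sequence. The associated Koszul complex then resolves the structure sheaf of the preimage, so all higher $\mathrm{Tor}$'s along $\tau_{D}$ vanish. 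By Fulton's definition of the Gysin map for l.c.i.\ morphisms, $\tau_{D}^{!}[T^{*}_{X}X(\log D)]$ coincides with the fundamental class of the scheme-theoretic preimage.

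Finally, one reads off the multiplicity of each component. At the generic point of $T^{*}_{D_{I'}}X$, the elements $t_{i}$ for $i\le r$, $i\notin I'$ and $\xi_{i}$ for $i\in I'$ are units, so the relations $t_{i}\xi_{i}=0$ force $\xi_{i}=0$ for $i\le r$, $i\notin I'$, and $t_{i}=0$ for $i\in I'$; together with $\xi_{i}=0$ for $i>r$, the localized quotient becomes the function field of $T^{*}_{D_{I'}}X$. Thus each component appears with multiplicity $1$, yielding the stated identity $\tau_{D}^{!}[T^{*}_{X}X(\log D)]=\sum_{I'\subset I}[T^{*}_{D_{I'}}X]$. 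The principal technical obstacle is the verification that the pulled-back sequence is regular, which rests on the Cohen--Macaulay dimension count above.
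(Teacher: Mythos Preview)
Your proof is correct and follows essentially the same approach as the paper: a local computation in coordinates showing that the defining ideal of the zero section pulls back to $(t_{1}\xi_{1},\ldots,t_{r}\xi_{r},\xi_{r+1},\ldots,\xi_{d})$, from which both assertions are read off. The paper's proof is terser---it simply writes down the pulled-back ideal and says ``Hence the assertions hold''---whereas you supply the justification for the cycle identity via the Cohen--Macaulay/regular-sequence argument and an explicit multiplicity check at each generic point; this extra detail is correct and is exactly what the paper leaves implicit.
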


\begin{proof}
Let $x$ be a closed point of $D$.
Since the assertion is local, it is sufficient to prove the assertion in a neighborhood of $x$.
We may assume $x\in D_{I}$.
Let $\mathcal{I}$ and $\mathcal{J}$ be the defining ideal sheaves of 
$\zers(\log D)\subset T^{\ast}X(\log D)$ and
$\tau_{D}^{-1}(T^{*}_{X}X(\log D))\subset T^{*}X$ respectively.
We put $I=\{1,\ldots,r\}$ and let $(t_{1}, \ldots ,t_{d})$ be a local coordinate system at $x$ such that
$t_{i}$ is a local equation of $D_{i}$ for $i\in I$.
Then $\Omega^{1}_{X}(\log D)_{x}$ and 
$\Omega^{1}_{X,x}$ 
are free $\dvr_{X,x}$-modules 
and have base $(\dlog t_{1} ,\ldots ,\dlog t_{r}, dt_{r+1}, \ldots ,dt_{d})$ 
and $(dt_{1}, \ldots, dt_{d})$ respectively.
Let the dual base of the base be denoted by
$(\partial/(\partial \log t_{1}), \ldots ,\partial/(\partial \log t_{r}), \partial/\partial t_{r+1}, 
\ldots ,\partial/\partial t_{d})$ 
and $(\partial/\partial t_{1}, 
\ldots ,\partial/\partial t_{d})$ respectively.
Since we have
\begin{equation}
\mathcal{I}_{x}=(\partial/(\partial \log t_{1}), \ldots ,\partial/(\partial \log t_{r}), \partial/\partial t_{r+1}, \ldots ,\partial/\partial t_{d}),
\notag
\end{equation}
we have
\begin{equation}
\mathcal{J}_{x}=
(t_{1}\partial/\partial t_{1},
\ldots ,t_{r}\partial/\partial t_{r}, \partial/\partial t_{r+1}, \ldots ,\partial/\partial t_{d}). \notag
\end{equation}
Hence the assertions hold.
\end{proof}

\begin{lem}
\label{lemcart}
\begin{enumerate}
\item Let $A$ be a commutative ring.
Let $M$, $M'$, $N$, and $N'$ be finitely generated projective $A$-modules.
If a commutative diagram
\begin{equation}
\label{seqfgpm}
\xymatrix{N' & N \ar[l] \\ M' \ar[u] & M \ar[l] \ar[u]}
\end{equation}
of $A$-modules is cocartesian, then the commutative diagram
\begin{equation}
\xymatrix{
S^{\bullet}N' & S^{\bullet}N \ar[l] \\
S^{\bullet}M' \ar[u] & S^{\bullet}M \ar[u] \ar[l]}
\notag
\end{equation}
of $A$-algebras is cocartesian.
\item Let $h\colon W\rightarrow X$ be a $C_{D}$-transversal morphism
of smooth schemes over $k$.
Then the commutative diagram
\begin{equation}
\label{cartdh}
\xymatrix{
T^{*}X\times_{X}W \ar[d]_-{dh} \ar[r]^-{\tau_{D,W}} & T^{*}X(\log D)\times_{X}W \ar[d]^-{dh^{D}} \\
T^{*}W \ar[r]_-{\tau_{h^{*}D}} & T^{*}W(\log h^{*}D).
} 
\end{equation}
is cartesian.
\end{enumerate}
\end{lem}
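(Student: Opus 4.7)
For (i), I would invoke the adjunction $\Hom_{A\text{-Alg}}(S^{\bullet}M,B)=\Hom_{A\text{-Mod}}(M,B)$ valid for any $A$-algebra $B$ and any $A$-module $M$. This exhibits $S^{\bullet}$ as left adjoint to the forgetful functor from $A$-algebras to $A$-modules, and any left adjoint preserves colimits, in particular pushouts; so $S^{\bullet}$ sends the cocartesian square (\ref{seqfgpm}) of $A$-modules to a cocartesian square of $A$-algebras. The finite-generation and projectivity hypotheses are not needed for (i) itself, but will be used when invoking (i) inside (ii).

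For (ii), my plan is to transport the cartesian property of the square of vector bundles to a condition on cotangent modules, which I will then verify by comparing residue exact sequences. Each of the four vector bundles appearing in (\ref{cartdh}) is of the form $\Spec_{W}S^{\bullet}(M^{\vee})$ for a locally free $\dvr_{W}$-module $M$ of finite rank; the four choices are $h^{*}\Omega^{1}_{X}$, $h^{*}\Omega^{1}_{X}(\log D)$, $\Omega^{1}_{W}$, and $\Omega^{1}_{W}(\log h^{*}D)$ respectively. A cartesian square of affine $W$-schemes is equivalent to a cocartesian square of their coordinate $\dvr_{W}$-algebras, so by (i), applied locally on $W$ with $A=\dvr_{W}$, the cartesian property of (\ref{cartdh}) is equivalent to the cocartesian property of the dualized square of $\dvr_{W}$-modules. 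Since all four modules are locally free of finite rank, dualizing is exact and contravariant and turns cocartesian into cartesian; it therefore suffices to show that
\begin{equation}
\xymatrix{
h^{*}\Omega^{1}_{X} \ar[d] \ar[r] & h^{*}\Omega^{1}_{X}(\log D) \ar[d] \\
\Omega^{1}_{W} \ar[r] & \Omega^{1}_{W}(\log h^{*}D)
}
\notag
\end{equation}
is cartesian.

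To verify this last claim, I would compare residue sequences. The canonical short exact sequence $0\to \Omega^{1}_{X}\to \Omega^{1}_{X}(\log D)\to \bigoplus_{i\in I}\dvr_{D_{i}}\to 0$ pulls back along $h$ to a right-exact sequence on $W$. By Saito's characterization \cite[Lemma 3.4.5]{sa4}, the $C_{D}$-transversality of $h$ allows one to pick local coordinates at each point of $W$ in which every $h^{*}t_{i}$ is a unit times a member of a regular system of parameters. This makes each $h^{*}t_{i}$ a non-zero-divisor in $\dvr_{W}$, so the pulled-back sequence is short exact; simultaneously it identifies the scheme-theoretic preimage $D_{i}\times_{X}W$ with the smooth divisor $h^{*}D_{i}$ with no multiplicity, so the natural map $h^{*}\dvr_{D_{i}}\to \dvr_{h^{*}D_{i}}$ is an isomorphism. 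Comparing the pulled-back sequence with the residue sequence of $(W,h^{*}D)$ then yields a ladder of short exact sequences whose right vertical arrow is an isomorphism, and a routine diagram chase forces the left square to be cartesian. The main technical point is precisely this local verification that $C_{D}$-transversality rules out higher multiplicities in each $h^{*}D_{i}$, which is what makes the two residue quotients coincide.
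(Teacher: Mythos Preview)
Your proposal is correct and follows essentially the same route as the paper. For (i) you invoke ``left adjoints preserve colimits'' where the paper unfolds the same adjunction explicitly via $\Hom$-sets; for (ii) both arguments reduce to the residue ladder, use \cite[Lemma 3.4.5]{sa4} to get exactness of the top row and the isomorphism $h^{*}\dvr_{D_{i}}\cong\dvr_{h^{*}D_{i}}$, conclude the left square of cotangent modules is cartesian, dualize, and apply (i). One phrasing to tighten: you write that dualizing ``turns cocartesian into cartesian'' and then deduce it suffices to show the $\Omega$-square is cartesian, but the implication you actually need is the reverse; this is fine because on finitely generated projectives $(-)^{\vee}$ is a contravariant self-equivalence, hence interchanges cartesian and cocartesian squares in both directions---just say that.
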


\begin{proof}
(i) We prove that the commutative diagram
\begin{equation}
\xymatrix{
\Hom_{A{\text -alg}}(S^{\bullet}N', B)\ar[d] \ar[r] & \Hom_{A{\text -alg}}(S^{\bullet}N, B) \ar[d] \\
\Hom_{A{\text -alg}}(S^{\bullet}M', B) \ar[r] & \Hom_{A{\text -alg}}(S^{\bullet}M,B)
} \notag
\end{equation}
is cartesian for every $A$-algebra $B$,
which deduces the assertion.
By the universality of symmetric algebra,
it is sufficient to prove that the commutative diagram
\begin{equation}
\xymatrix{
\Hom_{A{\text -mod}}(N', B)\ar[d] \ar[r] & \Hom_{A{\text -mod}}(N, B) \ar[d] \\
\Hom_{A{\text -mod}}(M', B) \ar[r] & \Hom_{A{\text -mod}}(M,B)
} \notag
\end{equation}
is cartesian for every $A$-algebra $B$ regarded as an $A$-module
and this holds by the assumption that the diagram (\ref{seqfgpm}) is cocartesian.

(ii) We consider the commutative diagram
\begin{equation}
\xymatrix{
0\ar[r] & h^{*}\Omega_{X}^{1} \ar[r] \ar[d]_-{dh} & h^{*}\Omega_{X}^{1}(\log D) \ar[r] \ar[d]^-{dh^{D}} 
& \bigoplus_{i\in I}h^{*}\dvr_{D_{i}} \ar[r] \ar[d] & 0 \\
0 \ar[r] & \Omega^{1}_{W} \ar[r] & \Omega^{1}_{W}(\log h^{*}D) \ar[r] & 
\bigoplus_{i\in I}\dvr_{h^{*}D_{i}} \ar[r] & 0.} \notag
\end{equation}
Since $h$ is assumed to be $C_{D}$-transversal, the horizontal lines are exact and
the right vertical arrow is an isomorphism by \cite[Lemma 3.4.5]{sa4}.
Hence the left square is cartesian.
Since the dual of the left square is cocartesian,
the assertion holds by (i).
\end{proof}

\begin{prop}
\label{logtrctr}
Let $h\colon W\rightarrow X$ be a morphism of smooth schemes over $k$ and
let $C\subset T^{\ast}X(\log D)$ be a closed conical subset.
Then the following are equivalent:
\begin{enumerate}
\item $h$ is $\log$-$D$-$C$-transversal. 
\item $h$ is $C_{D}\cup \tau_{D}^{-1}(C)$-transversal.
\end{enumerate}
\end{prop}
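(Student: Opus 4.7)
The plan is to extract the $C_D$-transversality condition common to both (i) and (ii), and then apply Lemma \ref{lemcart}(ii) to translate the remaining log condition of (i) into the non-log condition of (ii) via the cartesian diagram (\ref{cartdh}).

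First I would observe that each of (i) and (ii) implies that $h$ is $C_D$-transversal: for (i) this is explicit in Definition \ref{deflogtr}, and for (ii) it follows by restricting the transversality condition to the subset $C_D \subset C_D\cup \tau_D^{-1}(C)$, since $dh^{-1}(T^{\ast}_WW)\cap h^{\ast}C_D$ is a subset of $dh^{-1}(T^{\ast}_WW)\cap h^{\ast}(C_D\cup \tau_D^{-1}(C))$. Hence I may assume $h$ is $C_D$-transversal, and it suffices to prove that under this assumption the two remaining containments
\begin{equation}
{dh^{D}}^{-1}(T^{\ast}_{W}W(\log h^{\ast}D))\cap h^{\ast}C \subset T^{\ast}_{X}X(\log D)\times_{X}W \notag
\end{equation}
and
\begin{equation}
dh^{-1}(T^{\ast}_{W}W)\cap h^{\ast}\tau_{D}^{-1}(C) \subset T^{\ast}_{X}X\times_{X}W \notag
\end{equation}
are equivalent.

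Under $C_D$-transversality, Lemma \ref{lemcart}(ii) tells us the diagram (\ref{cartdh}) is cartesian. So points of $T^{\ast}X\times_{X}W$ are identified with pairs $(\eta,v)\in (T^{\ast}X(\log D)\times_{X}W)\times T^{\ast}W$ satisfying $dh^{D}(\eta)=\tau_{h^{\ast}D}(v)$, with $\tau_{D,W}$ and $dh$ being the two projections. Using the trivial identity $h^{\ast}\tau_{D}^{-1}(C)=\tau_{D,W}^{-1}(h^{\ast}C)$, I would then check that a pair $(\eta,v)$ lies in $dh^{-1}(T^{\ast}_{W}W)\cap h^{\ast}\tau_{D}^{-1}(C)$ precisely when $v=0$ and $\eta\in {dh^{D}}^{-1}(T^{\ast}_{W}W(\log h^{\ast}D))\cap h^{\ast}C$. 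Thus $\tau_{D,W}$ restricts to a bijection between the two intersections, carrying zero section to zero section (for $(\eta,v)=0$ precisely when $\eta=0$), so one containment holds if and only if the other does.

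There is no substantial obstacle once Lemma \ref{lemcart}(ii) is available, and the argument is essentially bookkeeping on the cartesian square. The only subtlety I would be careful with is recording that $C_D$-transversality is built into (i) by definition and is implied by (ii) in the decomposition step; after that, the cartesian identification of the two kernels via $\tau_{D,W}$ is immediate.
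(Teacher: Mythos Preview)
Your proof is correct and follows essentially the same approach as the paper: both reduce to the $C_D$-transversal case and then use the cartesian square of Lemma~\ref{lemcart}(ii) to match up $dh^{-1}(T^{\ast}_{W}W)\cap h^{\ast}\tau_{D}^{-1}(C)$ with ${dh^{D}}^{-1}(T^{\ast}_{W}W(\log h^{\ast}D))\cap h^{\ast}C$ via $\tau_{D,W}$, carrying zero section to zero section. The paper phrases the matching via an abstract diagram chase (the restricted square is cartesian and the bottom map is an isomorphism), while you unpack it as an explicit description of points by pairs $(\eta,v)$, but the content is the same.
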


\begin{proof}
We may assume that $h$ is $C_{D}$-transversal and it suffices to prove that
$h$ is $\log$-$D$-$C$-transversal if and only if $h$ is $\tau_{D}^{-1}(C)$-transversal.
Since the image of $T^{*}_{W}W$ by $\tau_{h^{*}D}$ is $T^{*}_{W}W(\log D)$,
the commutative diagram
\begin{equation}
\xymatrix{
dh^{-1}(T^{*}_{W}W)\ar[r]^-{\tau_{D,W}} \ar[d]_-{dh} & dh^{D-1}(T^{*}_{W}W(\log h^{*}D))
\ar[d]^-{dh^{D}} \\
T^{*}_{W}W \ar[r]_-{\tau_{h^{*}D}} & T^{*}_{W}W(\log h^{*}D)
} \notag
\end{equation}
is cartesian by Lemma \ref{lemcart} (ii).
Since the lower horizontal arrow is an isomorphism, the upper horizontal arrow is an isomorphism.
We consider the isomorphism
\begin{equation}
dh^{-1}(T^{*}_{W}W)\cap h^{*}\tau_{D}^{-1}(C) \rightarrow 
{dh^{D}}^{-1}(T^{*}_{W}W(\log h^{*}D))\cap h^{*}C \notag
\end{equation}
induced by the upper horizontal arrow.
Since $T^{*}_{X}X\times_{X}W$ is isomorphic to $T^{*}_{X}X(\log D)\times_{X}W$ by $\tau_{D,W}$,
the $\log$-$D$-$C$-transversality of $h$ is equivalent to
the $\tau_{D}^{-1}(C)$-transversality of $h$.
Hence the assertion holds.
\end{proof}

\begin{lem}
\label{logtransl}
Let $h\colon W\rightarrow X$ be a morphism of smooth schemes over $k$ and
assume that $h$ is $C_{D}$-transversal.
Let $Z$ be a closed subscheme of $X$
and $L\subset T^{\ast}X(\log D)\times_{X}Z$ a sub line bundle.
Then the following are equivalent:
\begin{enumerate}
\item $h$ is $\log$-$D$-$L$-transversal.
\item $dh^{D}(h^{*}L)$ is a sub line bundle of $T^{\ast}W(\log h^{\ast}D)\times_{W} h^{\ast}Z$.
\end{enumerate}
\end{lem}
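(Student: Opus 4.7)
The plan is to reduce both conditions to the same statement: fiberwise injectivity of the restriction $\phi := dh^{D}|_{h^{*}L}$ of $dh^{D}$ to the line bundle $h^{*}L$. Since $L$ is a sub line bundle of $T^{*}X(\log D)\times_{X}Z$, the pullback $h^{*}L$ is a line bundle on $h^{*}Z = Z\times_{X}W$, naturally viewed inside $T^{*}X(\log D)\times_{X}W$. The morphism $dh^{D}\colon T^{*}X(\log D)\times_{X}W\to T^{*}W(\log h^{*}D)$ restricts to a morphism $\phi\colon h^{*}L\to T^{*}W(\log h^{*}D)\times_{W}h^{*}Z$ of vector bundles over $h^{*}Z$.

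For (i), the preimage ${dh^{D}}^{-1}(T^{*}_{W}W(\log h^{*}D))$ is by definition the locus where $dh^{D}$ vanishes. Its intersection with $h^{*}L$, in the fiber at a point $w\in h^{*}Z$, is precisely the kernel of $\phi_{w}$. The requirement in the definition of $\log$-$D$-$L$-transversality that this intersection lie in the zero section $T^{*}_{X}X(\log D)\times_{X}w$ amounts to $\ker(\phi_{w})=0$ for every $w$, i.e., to fiberwise injectivity of $\phi$.

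For (ii), the image sheaf $\phi(h^{*}L)$ is a coherent subsheaf of the target vector bundle whose fiber has dimension at most $1$ (since $h^{*}L$ has rank $1$). It is a sub line bundle precisely when this fiber dimension is constantly $1$; by rank--nullity this is equivalent to $\ker(\phi_{w})=0$ for every $w$. Indeed, if $\phi$ is fiberwise injective, then Nakayama's lemma forces $\ker\phi = 0$ as a coherent sheaf, so $\phi(h^{*}L)\cong h^{*}L$ is locally free of rank $1$, and the locally constant rank of $\phi$ guarantees that the cokernel is also locally free, making $\phi(h^{*}L)$ a subbundle. Conversely, if $\phi(h^{*}L)$ is a sub line bundle, then comparing ranks fiberwise yields $\ker(\phi_{w})=0$.

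Therefore both (i) and (ii) are equivalent to fiberwise injectivity of $\phi$, and the lemma follows. I do not anticipate a genuine obstacle: the proof is essentially an unpacking of definitions. The only mild point worth care is the standard fact that the image of a morphism of vector bundles is a subbundle precisely when the rank is locally constant, which specializes cleanly to fiberwise injectivity in our setting because the source is a line bundle.
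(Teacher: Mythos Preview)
Your proof is correct and follows essentially the same approach as the paper: both reduce (i) and (ii) to the condition that ${dh^{D}}^{-1}(T^{\ast}_{W}W(\log h^{\ast}D))\cap h^{\ast}L$ lies in the zero section, which is precisely your fiberwise injectivity of $\phi$. The paper's proof is a one-liner asserting this common equivalence, while you have helpfully spelled out the vector-bundle argument (Nakayama, constant rank) underlying the equivalence with (ii).
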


\begin{proof}
Since $L$ is a sub line bundle of $T^{\ast}X(\log D)\times_{X}Z$, 
both the conditions (i) and (ii) are equivalent to
${dh^{D}}^{-1}(T^{\ast}_{W}W(\log h^{\ast}D))\cap h^{\ast}L
\subset T^{\ast}_{X}X(\log D)\times_{X}W$.
\end{proof}

For an element $\chi$ of $H^{1}_{\et}(U,\mathbf{Q}/\mathbf{Z})$ and
a $C_{D}$-transversal morphism $h\colon W\rightarrow X$ of smooth schemes over $k$,
let $dh^{D}\colon (h^{*}(\Omega^{1}_{X}(\log D)(R_{\chi}))|_{h^{*}Z_{\chi}}
\rightarrow \Omega^{1}_{W}(\log h^{*}D)(h^{*}R_{\chi})|_{h^{*}Z_{\chi}}$
be the morphism yielded by $h$ by abuse of notation.
If $(X,U,\chi)$ is clean, then let $C_{\chi}$ be the union of $L_{i,\chi}$ for $i\in I$.
Namely $C_{\chi}$ is the
sub line bundle of $T^{*}X(\log D)\times_{X}Z_{\chi}$ 
defined by the invertible subsheaf
$\dvr_{X}(-R_{\chi})|_{Z_{\chi}}\cdot \rsw(\chi)$
of $\Omega^{1}_{X}(\log D)|_{Z_{\chi}}$.

\begin{prop}
\label{cleaneq}
Let $\chi$ be an element of $H^{1}_{\et}(U,\BQ/\BZ)$ and
assume that $(X,U,\chi)$ is clean.  
Let $h\colon W\rightarrow X$ be a $C_{D}$-transversal 
morphism of smooth schemes over $k$.
\begin{enumerate}
\item Let $w$ be a point on $h^{*}Z_{\chi}$.
Then the following are equivalent:
\begin{enumerate}
\item $h$ is $\log$-$D$-$C_{\chi}$-transversal at $w$.
\item $dh^{D}(h^{*}\rsw(\chi))_{w}\notin 
\mathfrak{m}_{w}\Omega^{1}_{W}(\log h^{\ast}D)(h^{*}R_{\chi})|_{h^{\ast}Z_{\chi},w}$.
\end{enumerate}
\item The following are equivalent:
\begin{enumerate}
\item $h$ is $\log$-$D$-$C_{\chi}$-transversal at every generic point of $h^{*}Z_{\chi}$.
\item $R_{h^{*}\chi}=h^{*}R_{\chi}$ and
$\rsw(h^{\ast}\chi)=dh^{D}(h^{*}\rsw(\chi))$.
\end{enumerate}
\item The following are equivalent:
\begin{enumerate}
\item $h$ is $\log$-$D$-$C_{\chi}$-transversal at every point on $h^{*}Z_{\chi}$.
\item $R_{h^{*}\chi}=h^{*}R_{\chi}$, $\rsw(h^{\ast}\chi)=dh^{D}(h^{*}\rsw(\chi))$,
and the pull-back $(W,h^{\ast}U,h^{\ast}\chi)$ of $(X,U,\chi)$ by $h$ is clean.
\item $dh^{D}(h^{*}C_{\chi})$ is the sub line bundle $C_{h^{*}\chi}$ of $T^{\ast}W(\log h^{\ast}D)\times_{W} h^{\ast}Z_{\chi}$.
\item $h$ is $\log$-$D$-$SS^{\log}(X,U,\chi)$-transversal. 
\end{enumerate} 
If one of the equivalent conditions holds
and if $X$ and $W$ are purely of dimension $d$ and $c$ respectively, 
then we have
\begin{equation}
\label{hcss}
h^{\circ}SS^{\log}(X,U,\chi)=
SS^{\log}(W,h^{\ast}U,h^{\ast}\chi) 
\end{equation}
and 
\begin{equation}
\label{heclog}
h^{!}\Char^{\log}(X,U,\chi)=\Char^{\log}(W,h^{*}U,h^{*}\chi). 
\end{equation}
Here $h^{!}\Char^{\log}(X,U,\chi)$ is the push-forward of $(-1)^{d-c}h^{*}\Char^{\log}(X,U,\chi)$
by $dh^{D}$ in the sense of intersection theory.
\end{enumerate}
\end{prop}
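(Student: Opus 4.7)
The plan is to argue pointwise via the local Witt-vector description of the refined Swan conductor from Subsection \ref{sscrc} and then globalize using the cleanliness hypothesis. Fix locally, in a neighborhood of each point of $h^{*}Z_{\chi}$, a section $a \in \fillog_{R_{\chi}}j_{*}W_{s}(\dvr_{U})$ whose image under $\delta_{s}$ is the $p$-part of $\chi$, so that $\rsw(\chi)$ is locally represented by $-F^{s-1}da$ under the identification $\grlog_{R_{\chi}/(R_{\chi}-Z_{\chi})}j_{*}\Omega^{1}_{U}=\Omega^{1}_{X}(\log D)(R_{\chi})|_{Z_{\chi}}$. Part (i) is then immediate from Lemma \ref{logtransl}: by cleanliness of $(X,U,\chi)$, the sub line bundle $C_{\chi}$ is locally generated by $\rsw(\chi)$, and log-$D$-$C_{\chi}$-transversality at $w$ reduces to the condition that $dh^{D}(h^{*}\rsw(\chi))_{w}$ generates a rank one direct summand of $\Omega^{1}_{W}(\log h^{*}D)(h^{*}R_{\chi})|_{h^{*}Z_{\chi},w}$, which by Nakayama's lemma is precisely (b).

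For (ii), the essential observation is that the morphism $-F^{s-1}d$ of (\ref{fsdsh}) is compatible with pull-back: writing $j'\colon h^{*}U\to W$, the pull-back $h^{*}a$ lies in $\fillog_{h^{*}R_{\chi}}j'_{*}W_{s}(\dvr_{h^{*}U})$, defines the $p$-part of $h^{*}\chi$, and satisfies $-F^{s-1}d(h^{*}a)=dh^{D}(h^{*}(-F^{s-1}da))$ by functoriality of Frobenius and differentials. By (i), log-$D$-$C_{\chi}$-transversality at a generic point $\mathfrak{p}'$ of $h^{*}Z_{\chi}$ translates to the non-vanishing of $dh^{D}(h^{*}\rsw(\chi))_{\mathfrak{p}'}$ in $\grlog_{n}\Omega^{1}_{K_{\mathfrak{p}'}}$, where $n$ is the multiplicity of $\mathfrak{p}'$ in $h^{*}R_{\chi}$ (equivalently, the Swan conductor at the component of $D$ lying below $\mathfrak{p}'$). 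Applying Lemma \ref{lemrsw} to the image of $h^{*}a$ then forces both $\sw(h^{*}\chi|_{K_{\mathfrak{p}'}})=n$ and $\rsw(h^{*}\chi)_{\mathfrak{p}'}=dh^{D}(h^{*}\rsw(\chi))_{\mathfrak{p}'}$; ranging over all generic points of $h^{*}Z_{\chi}$ yields (b). The converse is direct, since $\rsw(h^{*}\chi)$ is never zero on the support of the Swan conductor divisor.

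For (iii), the equivalence (a)$\Leftrightarrow$(b) follows by combining (i) and (ii) with Lemma \ref{lemclean} (i): cleanliness of $(W,h^{*}U,h^{*}\chi)$ is precisely the non-vanishing of $\rsw(h^{*}\chi)$ modulo the maximal ideal at each closed point of $h^{*}Z_{\chi}$, which under the equality $\rsw(h^{*}\chi)=dh^{D}(h^{*}\rsw(\chi))$ is exactly (i) at that point. The equivalence (b)$\Leftrightarrow$(c) is because, under cleanliness of the pull-back, $C_{h^{*}\chi}$ is by definition generated by $\rsw(h^{*}\chi)$, so (c) says that $dh^{D}(h^{*}\rsw(\chi))$ generates the same sub line bundle as $\rsw(h^{*}\chi)$, which is equivalent to the identification of generators in (b). For (a)$\Leftrightarrow$(d), write $SS^{\log}(X,U,\chi)=T^{*}_{X}X(\log D)\cup C_{\chi}$ via (\ref{charlogform}); log-$D$-transversality with $T^{*}_{X}X(\log D)$ is automatic from $C_{D}$-transversality, since $\tau_{D}^{-1}(T^{*}_{X}X(\log D))=C_{D}$ by Lemma \ref{pullzers} and Proposition \ref{logtrctr}.

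Finally, (\ref{hcss}) combines $dh^{D}(T^{*}_{X}X(\log D)\times_{X}W)=T^{*}_{W}W(\log h^{*}D)$ with $dh^{D}(h^{*}C_{\chi})=C_{h^{*}\chi}$ from (c). For (\ref{heclog}), inserting (\ref{charlogform}) and decomposing each $h^{*}D_{i}$ into its smooth irreducible components $D^{W}_{i'}$ (all with multiplicity one, by $C_{D}$-transversality), the cycle-theoretic push-forward of $(-1)^{d-c}[L_{i,\chi}\times_{D_{i}}h^{*}D_{i}]$ under $dh^{D}$ equals $\sum_{i'}[L_{i',h^{*}\chi}]$, each summand carrying multiplicity $\sw(\chi|_{K_{i}})=\sw(h^{*}\chi|_{K_{i'}})$ (the equality from $R_{h^{*}\chi}=h^{*}R_{\chi}$), which matches $\Char^{\log}(W,h^{*}U,h^{*}\chi)$ term by term once the signs are combined. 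The main obstacle is the compatibility of $-F^{s-1}d$ and the refined Swan conductor with pull-back in (ii); once this functoriality is established the remaining assertions are direct consequences of the definitions, Lemma \ref{lemclean}, and standard intersection-theoretic manipulations.
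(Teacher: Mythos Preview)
Your proposal is correct and follows essentially the same route as the paper: part (i) via Lemma \ref{logtransl}, part (ii) via pulling back a Witt vector representative and applying Lemma \ref{lemrsw}, and part (iii) via Lemma \ref{pullzers} and Proposition \ref{logtrctr} for the zero-section piece of $SS^{\log}$. The only organizational difference is that the paper deduces (a)$\Leftrightarrow$(c) directly from Lemma \ref{logtransl} (combined with (b) to identify the resulting line bundle), whereas you route through (b)$\Leftrightarrow$(c); and the paper dispatches (\ref{hcss}) and (\ref{heclog}) in a single sentence ``by (b)'' while you spell out the term-by-term matching, but the substance is the same.
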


\begin{proof}
(i) Since the condition (b) is equivalent to that $dh^{D}(h^{*}C_{\chi})\times_{W}w$ is a sub line bundle of
$T^{*}W(\log h^{*}D)\times_{W}w$, the assertion holds by Lemma \ref{logtransl}. 

(ii) We prove (a) $\Rightarrow$ (b).
Let $\mathfrak{q}_{i}$ be a generic point of $h^{*}D_{i}$ 
and $K_{i}'=\Frac\hat{\dvr}_{W,\mathfrak{q}_{i}}$ the local field at $\mathfrak{q}_{i}$
for $i\in I_{\mW,\chi}$.
Let $a$ be a section of $\fillog_{R_{\chi}}j_{*}W_{s}(\dvr_{U})$ whose image in
$H^{1}_{\et}(U,\mathbf{Q}/\mathbf{Z})$ is locally the $p$-part of $\chi$.
We put $a'=h^{*}a$.
We note that $a'$ is a section of $\fillog_{h^{*}R_{\chi}}j'_{*}W_{s}(\dvr_{h^{*}U})$
whose image in $H^{1}_{\et}(h^{*}U,\mathbf{Q}/\mathbf{Z})$ is locally the $p$-part of $h^{*}\chi$,
where $j'\colon h^{*}U\rightarrow W$ is the base change of $j$ by $h$.
By (i), we have $dh^{D}(h^{*}\rsw(\chi))_{\mathfrak{q_{i}}}\notin \mathfrak{m}_{\mathfrak{q_{i}}}\Omega^{1}_{W}(\log h^{*}D)(h^{*}R_{\chi})|_{h^{*}Z_{\chi},\mathfrak{q}_{i}}$ for every $i\in I_{\mW,\chi}$.
Since $dh^{D}(h^{*}\rsw(\chi))_{\mathfrak{q_{i}}}$ is the image of $a'$ for $i\in I_{\mW,\chi}$, 
the condition (b) holds
by Lemma \ref{lemrsw}.

Conversely, the implication (b) $\Rightarrow$ (a) holds by (i),
since $\rsw(h^{*}\chi)_{\mathfrak{q}_{i}}\neq 0$ in
$\Omega_{W}^{1}(\log h^{*}D)(h^{*}R_{\chi})|_{h^{*}Z_{\chi},\mathfrak{q}_{i}}$ for every $i\in I$.

(iii) We may assume $R_{h^{*}\chi}=h^{*}R_{\chi}$ and $\rsw(h^{*}\chi)=dh^{D}(h^{*}\rsw(\chi))$ 
to prove the equivalence of (a) and (b) by (ii).
Then the conditions (a) and (b) are equivalent by (i).
The equivalence of (a) and (c) holds by Lemma \ref{logtransl}.
Since $h$ is $\log$-$D$-$T^{*}_{X}X(\log D)$-transversal if (and only if) $h$ is $C_{D}$-transversal
by Lemma \ref{pullzers} and Proposition \ref{logtrctr}, the condition (d) is equivalent to that
$h$ is $\log$-$D$-$C_{\chi}$-transversal.
Since $C_{\chi}$ is a sub line bundle of $T^{*}X(\log D)\times_{X}Z_{\chi}$,
the last condition is equivalent to the condition (a).
Hence the equivalence of (a) and (d) holds.

Suppose that one of the equivalent conditions (a)--(d) holds.
Then the equalities (\ref{hcss}) and (\ref{heclog}) hold by (b).
\end{proof}

For a proper morphism $f\colon X'\rightarrow X$ of smooth schemes over $k$
and a closed conical subset $C'\subset T^{*}X'$, 
we define a closed conical subset 
$f_{\circ}C' \subset T^{\ast}X$ to be the image by the projection $T^{\ast}X\times_{X}X'\rightarrow T^{\ast}X$ of the inverse image of $C'$ by
$df\colon T^{\ast}X\times_{X}X'\rightarrow T^{\ast}X'$
(\cite[1.2]{be}, \cite[Definition 3.7]{sa4}).

For a $C_{D}$-transversal proper morphism $f\colon X'\rightarrow X$ of smooth schemes over $k$
and a closed conical subset 
$C'\subset T^{*}X'(\log f^{*}D)$, we define a closed conical subset $f_{\circ}C'\subset T^{*}X(\log D)$
to be the image by the projection $T^{*}X(\log D)\times_{X}X'\rightarrow T^{*}X(\log D)$ of
the inverse image of $C'$ by $df^{D}\colon T^{*}X(\log D)\times_{X}X'\rightarrow T^{*}X'(\log f^{*}D)$. 

\begin{lem}
\label{stablogtr}
Let $E$ be a divisor on $X$ with simple normal crossings and
$X'$ a smooth divisor on $X$ not contained in $E$.
Assume $D=X'\cup E$.
We put $V=X-E$ and let $i\colon X'\rightarrow X$ be the closed immersion.
\begin{enumerate}
\item Let $C\subset T^{*}X(\log E)$ be a closed conical subset.
Assume that $i$ is $\log$-$E$-$C$-transversal.
Then we have 
\begin{equation}
\label{eqtauinv}
\tau_{E/D}^{-1}(\tau_{E/D}(C))=C\cup i_{\circ}i^{\circ}C.
\end{equation}
\item Let $\chi'$ be an element of $H^{1}_{\et}(V,\BQ/\BZ)$ and 
$\chi\in H^{1}_{\et}(U,\mathbf{Q}/\mathbf{Z})$ the image of $\chi'$.
Then the following are equivalent:
\begin{enumerate}
\item $(X,U,\chi)$ is clean.
\item $(X,V,\chi')$ is clean and $i$ is $\log$-$E$-$SS^{\log}(X,V,\chi')$-transversal. 
\end{enumerate}
If one of the equivalent conditions holds, 
then we have
\begin{equation}
\label{ssvssu}
i^{\circ}SS^{\log}(X,V,\chi')=SS^{\log}(X^{\prime},i^{*}V, i^{\ast}\chi')
\end{equation}
and 
\begin{equation}
\label{tedinv}
\tau_{E/D}^{-1}(SS^{\log}(X,U,\chi))=SS^{\log}(X,V,\chi')\cup i_{\circ}SS^{\log}(X',i^{*}V,i^{*}\chi').
\end{equation}
\end{enumerate}
\end{lem}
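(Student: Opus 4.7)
For part (i), the plan is a direct local calculation. At any point $x\in X'$, choose a regular system of parameters $(t_{0},t_{1},\ldots,t_{d})$ of $\dvr_{X,x}$ with $t_{0}$ a local equation of $X'$ and a subset of $(t_{1},\ldots,t_{d})$ defining the components of $E$ through $x$. Then $\Omega^{1}_{X}(\log E)_{x}$ has basis $dt_{0}$ together with the differentials and log-differentials of the remaining parameters, and $\Omega^{1}_{X}(\log D)_{x}$ has the same basis except with $dt_{0}$ replaced by $\dlog t_{0}$. In the induced fiber coordinates, $\tau_{E/D}$ multiplies the first coordinate by $t_{0}$ and is the identity on the others, so over $X'$ it collapses the fiber line dual to $dt_{0}$; this is exactly the kernel of the restriction $di^{E}$. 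Away from $X'$ both sides of (\ref{eqtauinv}) coincide with $C$, while over a point of $X'$ a direct comparison shows that the preimage is $C|_{X'}$ enlarged by the collapsed line, which is exactly $i_{\circ}i^{\circ}C$ over $X'$; moreover $C|_{X'}$ is trivially contained in this set. The hypothesis that $i$ is $\log$-$E$-$C$-transversal is used only to ensure that $i$ is $C_{E}$-transversal so that $i^{\circ}C$ is defined.

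For part (ii), I first prove (a) $\iff$ (b). Since $SS^{\log}(X,V,\chi') = T^{*}_{X}X(\log E) \cup C_{\chi'}$, the $\log$-$E$-$SS^{\log}$-transversality of $i$ decomposes into $C_{E}$-transversality of $i$ (automatic because $D$ has simple normal crossings) together with $\log$-$E$-$C_{\chi'}$-transversality, so (b) amounts to $(X,V,\chi')$ being clean and $i$ being $\log$-$E$-$C_{\chi'}$-transversal. By the construction of refined Swan conductors recalled in Subsection \ref{sscrc}, $\rsw(\chi)$ is the image of $\rsw(\chi')$ under the canonical morphism $\Omega^{1}_{X}(\log E)(R_{\chi'})|_{Z_{\chi'}}\rightarrow \Omega^{1}_{X}(\log D)(R_{\chi})|_{Z_{\chi}}$; in the local coordinates of part (i) at a closed point $x\in X'\cap Z_{\chi'}$ this morphism multiplies the $dt_{0}$-coefficient by $t_{0}$, so the residue class of $\rsw(\chi)_{x}$ differs from that of $\rsw(\chi')_{x}$ precisely by dropping the $dt_{0}$-component. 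Hence $(X,U,\chi)$ is clean at $x$ if and only if the remaining components of $\rsw(\chi')_{x}$ do not all vanish, which by Proposition \ref{cleaneq} (i) is exactly $\log$-$E$-$C_{\chi'}$-transversality of $i$ at $x$, and this in particular forces $\rsw(\chi')_{x}\neq 0$, i.e.\ $(X,V,\chi')$ clean at $x$. Away from $X'$ the two cleanliness conditions coincide locally, which completes the equivalence.

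Assuming the equivalent conditions, equality (\ref{ssvssu}) is immediate from Proposition \ref{cleaneq} (iii) applied to $i$ and $(X,V,\chi')$. For (\ref{tedinv}), apply part (i) with $C=SS^{\log}(X,V,\chi')$ to obtain $\tau_{E/D}^{-1}(\tau_{E/D}(C))=C\cup i_{\circ}i^{\circ}C$; using (\ref{ssvssu}) to rewrite $i^{\circ}C$ as $SS^{\log}(X',i^{*}V,i^{*}\chi')$, it remains to identify $\tau_{E/D}(SS^{\log}(X,V,\chi'))$ with $SS^{\log}(X,U,\chi)$. This is verified componentwise using the local description: $\tau_{E/D}$ sends the zero section $T^{*}_{X}X(\log E)$ onto $T^{*}_{X}X(\log D)$, and for each wild component $D_{j}$ it sends $L_{j,\chi'}$ onto $L_{j,\chi}$. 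The main obstacle is this last identification along $D_{j}\cap X'$, where $\tau_{E/D}$ has nontrivial kernel; here cleanliness of $(X,U,\chi)$ ensures that $L_{j,\chi}$ is the unique sub line bundle containing $\rsw(\chi)|_{D_{j}}$, matching the image of $L_{j,\chi'}$.
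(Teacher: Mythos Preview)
Your argument is correct and follows essentially the same route as the paper. In part (i) the paper packages your local calculation as the factorization $\tau_{E/D,X'}=(\text{canonical injection})\circ di^{E}$, from which $(di^{E})^{-1}(di^{E}(i^{*}C))$ drops out immediately; your coordinate version is the same computation. In part (ii) the paper likewise reduces to Proposition~\ref{cleaneq} and the relation between $\rsw(\chi)$ and $\rsw(\chi')$, and proves $SS^{\log}(X,U,\chi)=\tau_{E/D}(SS^{\log}(X,V,\chi'))$ just as you do.

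One small inaccuracy worth fixing: in part (i) you say the $\log$-$E$-$C$-transversality hypothesis ``is used only to ensure that $i$ is $C_{E}$-transversal so that $i^{\circ}C$ is defined.'' But $C_{E}$-transversality of $i$ is automatic from $D=X'\cup E$ having simple normal crossings; what the hypothesis actually buys is the full condition in Definition~\ref{deflogtr}~(ii), needed so that $i^{\circ}C$ is a \emph{closed} conical subset (via the finiteness in the remark after that definition). The set-theoretic identity you prove holds regardless, but for the right-hand side of (\ref{eqtauinv}) to be a closed conical subset one needs the full hypothesis.
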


\begin{proof}
(i) The assertion holds over $X-X'$,
since $\tau_{E/D}$ is an isomorphism outside $X'$.
We prove the assertion on $X'$.
We consider the commutative diagram
\begin{equation}
\xymatrix{
T^{*}X(\log E)\times_{X}X' \ar[rr]^-{\tau_{E/D, X'}} \ar[dr]_-{di^{E}} & & T^{*}X(\log D)\times_{X}X' \\
& T^{*}X'(\log i^{*}E), \ar[ur] }
\notag
\end{equation}
where the right slanting arrow is the canonical injection.
By the injectivity of the right slanting arrow, the left hand side of (\ref{eqtauinv}) is 
${di^{E}}^{-1}(di^{E}(i^{*}C))$ over $X'$.
Since ${di^{E}}^{-1}(di^{E}(i^{*}C))$ is $i_{\circ}i^{\circ}C$ over $X'$, the assertion holds.

(ii) By Lemma \ref{lemclndeg} (i), we may assume that $(X,V,\chi')$ is clean.
By Proposition \ref{cleaneq} (iii), 
the condition (b) is equivalent to that $di^{E}(i^{*}C_{\chi'})$ is a sub line bundle of $T^{*}X'(\log i^{*}E)
\times_{X'}i^{*}Z_{\chi'}$.
Since $Z_{\chi}=Z_{\chi'}$ and $\tau_{E/D,X'}$ is the composition 
of $di^{E}$ and the canonical injection $T^{*}X'(\log i^{*}E)\rightarrow T^{*}X(\log D)\times_{X}X'$,
the last condition is equivalent to that
$\tau_{E/D,X'}(i^{*}C_{\chi'})$ is a sub line bundle of $T^{*}X(\log D)\times_{X}i^{*}Z_{\chi}$.
Since $R_{\chi}=R_{\chi'}$ and $\rsw(\chi)$ is the image of $\rsw(\chi')$ in 
$\Omega^{1}_{X}(\log D)(R_{\chi})|_{Z_{\chi}}$,
the last condition is equivalent to that 
$\dvr_{X}(-R_{\chi})|_{i^{*}Z_{\chi}}\cdot \rsw(\chi)|_{i^{*}Z_{\chi}}$ defines a sub line bundle of 
$T^{*}X(\log D)\times_{X}i^{*}Z_{\chi}$.
Since the last condition is equivalent to the cleanliness of $(X,U,\chi)$ at every point on $X'$
and since $\rsw(\chi)=\rsw(\chi')$ outside $X'$, the assertion holds by the assumption
that $(X,V,\chi')$ is clean.

Suppose that one of the equivalent conditions (a) and (b) holds.
Then the equality (\ref{ssvssu}) holds by (b) and Proposition \ref{cleaneq} (iii).
Since $(X,U,\chi)$ and $(X,V,\chi')$ are clean, $R_{\chi}=R_{\chi'}$, 
and $\rsw(\chi)$ is the image of $\rsw(\chi')$ in $\Omega_{X}^{1}(\log D)(R_{\chi})|_{Z_{\chi}}$,
we have $SS^{\log}(X,U,\chi)=\tau_{E/D}(SS^{\log}(X,V,\chi'))$.
Hence the equality (\ref{tedinv}) holds by (\ref{ssvssu}) and (i) with $C=SS^{\log}(X,V,\chi')$.
\end{proof}

\begin{thm}
\label{proplogct}
If $(X,U,\chi)$ is clean,
then $j_{!}\mf$ is micro-supported on $\tau_{D}^{-1}(SS^{\log}(X,U,\chi))$.
\end{thm}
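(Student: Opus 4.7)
The plan is to apply Proposition \ref{propftr}: since $\tau_D^{-1}(SS^{\log}(X,U,\chi))$ contains the zero section $T^*_X X$ (which lies in $C_D=\tau_D^{-1}(T^*_X X(\log D))$ by Lemma \ref{pullzers}), it suffices to show that every separated $\tau_D^{-1}(SS^{\log}(X,U,\chi))$-transversal morphism $h\colon W\to X$ of smooth $k$-schemes is $j_!\mf$-transversal. For such an $h$, Lemma \ref{pullzers} already forces $h$ to be $C_D$-transversal; Proposition \ref{logtrctr} then upgrades this to $\log$-$D$-$SS^{\log}(X,U,\chi)$-transversality, and Proposition \ref{cleaneq}~(iii) yields that the pulled-back triple $(W, h^*U, h^*\chi)$ is clean, with $j'\colon h^*U\to W$ the base change of $j$.

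To deduce $j_!\mf$-transversality I invoke Lemma \ref{lemcltr}, which requires the canonical morphisms $j_!\mf\to Rj_*\mf$ and $j'_!h^*\mf\to Rj'_*h^*\mf$ to be isomorphisms. When $\mf$ is strictly ramified along $D$, Proposition \ref{propcliso} supplies both (strict ramification passes to $h^*\mf$ along $h^*D$ because a $C_D$-transversal pullback preserves the set of ramified components), so Lemma \ref{lemcltr} closes the case immediately. In general, I plan to reduce to the strictly ramified case by a dévissage along the unramified components: split $I=I_R\sqcup I_{NR}$, let $D_R=\bigcup_{i\in I_R}D_i$, $V=X-D_R$, and let $\mf_V$ be the (necessarily unique) smooth extension of $\mf$ to $V$ with character $\chi_V$. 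By Lemma \ref{lemclndeg}~(i), $(X,V,\chi_V)$ is clean, and $\chi_V$ is strictly ramified along $D_R$ by construction. Writing $j=j_V\circ k$ with $k\colon U\to V$, the open-closed triangle on $V$ applied to $\mf_V$ and pushed forward by $j_V$ gives the distinguished triangle
\begin{equation*}
j_!\mf\to (j_V)_!\mf_V\to (j_V)_!\bigl(i_{V,*}(\mf_V|_{V-U})\bigr)\to
\end{equation*}
on $X$, where $i_V\colon V-U\to V$ is the closed immersion.

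By Lemma \ref{ssdisttri} it then remains to micro-support the latter two terms inside $\tau_D^{-1}(SS^{\log}(X,U,\chi))$. The strictly ramified case applied to $(X,V,\chi_V)$ handles $(j_V)_!\mf_V$, after noting the containment $\tau_{D_R}^{-1}(SS^{\log}(X,V,\chi_V))\subset \tau_D^{-1}(SS^{\log}(X,U,\chi))$ via the factorization $\tau_D=\tau_{D_R/D}\circ\tau_{D_R}$ together with $\tau_{D_R/D}(SS^{\log}(X,V,\chi_V))\subset SS^{\log}(X,U,\chi)$. The remaining term is supported on $\overline{V-U}=D_{NR}$ and equals $\iota_{NR,*}\bigl((j^{NR})_!(\mf_V|_{V-U})\bigr)$, where $\iota_{NR}\colon D_{NR}\to X$ is the closed immersion and $j^{NR}\colon V-U\to D_{NR}$ the induced open immersion. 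I plan to handle it by induction on $|I_{NR}|$ — or equivalently, by Lemma \ref{stablogtr} applied to each smooth component $D_i$ ($i\in I_{NR}$) with its SNC boundary $D_i\cap D_R$ — using that conormal bundles of $D_i$ and of its strata are contained in $C_D\subset\tau_D^{-1}(SS^{\log}(X,U,\chi))$ by Lemma \ref{pullzers}. The main obstacle will be precisely this final dévissage: tracking the micro-support of a zero-extension to the reducible scheme $D_{NR}$ under the closed immersion $\iota_{NR}$, and verifying the inductive hypothesis carefully so that the resulting conormal contributions land in $\tau_D^{-1}(SS^{\log}(X,U,\chi))$ rather than a strictly larger set.
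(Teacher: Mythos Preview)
Your overall strategy matches the paper's: reduce via Proposition~\ref{propftr} to checking $j_!\mf$-transversality, invoke Proposition~\ref{propcliso} together with Lemma~\ref{lemcltr} for the base case, and d\'evissage via Lemma~\ref{ssdisttri} along the components across which $\mf$ extends. There are, however, two places where the paper's execution is tighter than yours, and the second one is where your writeup is genuinely incomplete.

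First, the paper does not take ``strictly ramified'' as the base case. It begins by reducing to the case where $\chi$ has $p$-power order (using \cite[Theorem 0.1]{sy} and Lemma~\ref{suppCC} to see that $SS(j_!\mf)$ is unchanged), so that $I_{\mT,\chi}$ consists of \emph{unramified} components and the base case becomes ``totally wildly ramified.'' Preservation under $h$ is then immediate from Proposition~\ref{cleaneq}~(iii), which gives $R_{h^*\chi}=h^*R_\chi$. Your parenthetical that a $C_D$-transversal pullback preserves ramified components is in fact true for the tame part as well (the Kummer residue along $D_i$ is unchanged because $C_D$-transversality forces ramification index $1$ along every component of $h^*D_i$), but you would have to supply that argument; the $p$-power reduction avoids it entirely.

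Second, and more seriously, the paper peels off the unramified components \emph{one at a time}. Fixing a single $r\in I_{\mT,\chi}$, one sets $E=\bigcup_{i\ne r}D_i$, $V=X-E$, extends $\mf$ to $\mf'$ on $V$, and uses the triangle $j_!\mf\to j_{2!}\mf'\to i_{1*}i_1^*j_{2!}\mf'\to$ with $i_1\colon D_r\hookrightarrow X$. The gain is that $D_r$ is \emph{smooth}: Lemma~\ref{stablogtr}~(ii) shows that $i_1$ is $\log$-$E$-$SS^{\log}(X,V,\chi')$-transversal and that $(D_r,i_1^*V,i_1^*\chi')$ is clean with $i_1^\circ SS^{\log}(X,V,\chi')=SS^{\log}(D_r,i_1^*V,i_1^*\chi')$, so the induction hypothesis (on $\sharp I_{\mT,\chi}$) applies directly on $D_r$, and \cite[Lemma~2.2~(ii)]{be} handles $i_{1*}$. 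One then checks, again via Lemma~\ref{stablogtr}~(ii), that the resulting union is exactly $\tau_D^{-1}(SS^{\log}(X,U,\chi))$. Your all-at-once d\'evissage instead lands you on the \emph{reducible} scheme $D_{NR}$, and your proposed fix ``by induction on $|I_{NR}|$'' is not well-posed as stated: you have already removed all of $I_{NR}$, so there is nothing left to induct on at the level of $X$, while $D_{NR}$ itself is not smooth and so is not covered by the theorem. Reorganize the induction so that it removes a single component at each step; then both the open piece and the closed piece live on smooth schemes with strictly fewer tame components, and the argument closes.
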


\begin{proof}
It is sufficient to prove $SS(j_{!}\mf)\subset \tau_{D}^{-1}(SS^{\log}(X,U,\chi))$. 
We first reduce the proof to the case where $\chi$ is of order a power of $p$.
Since $j$ is affine, the singular support $SS(j_{!}\mf)$ of $j_{!}\mf$ is stable
under the replacement of $\chi$ by the $p$-part of $\chi$ 
by \cite[Theorem 0.1]{sy} and Lemma \ref{suppCC}.
Since $SS^{\log}(X,U,\chi)$ is stable under the same replacement, we may assume
that $\chi$ is of order a power of $p$.
Then $\mf$ is unramified along $D_{\mT,\chi}$,
where $D_{\mT,\chi}=\bigcup_{i\in I_{\mT,\chi}}D_{i}$ and $I_{\mT,\chi}$ is as in Definition \ref{defswdiv}. 

Let $h\colon W\rightarrow X$ be a separated $\tau_{D}^{-1}(SS^{\log}(X,U,\chi))$-transversal morphism
of smooth schemes over $k$.
Since $C_{D}\subset \tau_{D}^{-1}(SS^{\log}(X,U,\chi))$ by Lemma \ref{pullzers},
the morphism $h$ is $\log$-$D$-$SS^{\log}(X,U,\chi)$-transversal
by Proposition \ref{logtrctr}.
Hence we have $R_{h^{\ast}\chi}=h^{\ast}R_{\chi}$ and $(W,h^{\ast}U,h^{\ast}\chi)$ is clean
by Proposition \ref{cleaneq} (iii).
We prove the assertion by the induction on $\sharp (I_{\mT,\chi})$.

If $\sharp(I_{\mT,\chi})=0$, then $\mf$ is totally wildly ramified along $D$
and hence the canonical morphism $j_{!}\mf \rightarrow Rj_{*}\mf$ is an isomorphism
by Proposition \ref{propcliso}.
Since $\mf$ is totally wildly ramified along $D$ and
$R_{h^{\ast}\chi}=h^{\ast}R_{\chi}$, the sheaf $h^{\ast}\mf$ is totally wildly ramified along $h^{*}D$.
Since $(W,h^{\ast}U,h^{\ast}\chi)$ is clean and $h^{\ast}\mf$ is totally wildly ramified along $h^{\ast}D$,
the canonical morphism $j'_{!}h^{*}\mf \rightarrow Rj'_{*}h^{*}\mf$ is an isomorphism 
by Proposition \ref{propcliso}.
Here $j'\colon h^{*}U\rightarrow W$ is the base change of $j$ by $h$.
Hence the morphism $h$ is $j_{!}\mf$-transversal 
by Lemma \ref{lemcltr}.
Since $T^{*}_{X}X\subset \tau_{D}^{-1}(SS^{\log}(X,U,\chi))$, 
the sheaf $j_{!}\mf$ is micro-supported on $\tau_{D}^{-1}(SS^{\log}(X,U,\chi))$ by Proposition 
\ref{propftr}.

Suppose $\sharp(I_{\mT,\chi})\ge 1$.
Let $r$ be an element of $I_{\mT,\chi}$.
We put $E=\bigcup_{i\in I\setminus \{r\}}D_{i}$,
$V=X-E$, and $V^{\prime}=X-D_{r}$.
We consider the cartesian diagram
\begin{equation}
\label{hvdiag}
\xymatrix{
U\ar[d]_{j_{2}^{\prime}} \ar[r]^-{j'_{1}} & V \ar[d]^{j_{2}} \\
V^{\prime} \ar[r]_{j_{1}} & X,
}  \notag
\end{equation}
where every morphism is the open immersion.
Let $\mf^{\prime}$ be a smooth sheaf of $\Lambda$-modules of rank $1$ on $V$ such that 
$\mf^{\prime}|_{U}=\mf$
and let $\chi^{\prime}\colon \pi^{\ab}_{1}(V)\rightarrow \Lambda^{\times}$ be 
the character corresponding to $\mf^{\prime}$ regarded as an element of 
$H^{1}_{\et}(V,\mathbf{Q}/\mathbf{Z})$
by the inclusion $\psi\colon \Lambda^{\times}\rightarrow \mathbf{Q}/\mathbf{Z}$.
Then $\chi\in H^{1}_{\et}(U,\mathbf{Q}/\mathbf{Z})$ is the image of $\chi'$.
Since $(X,V,\chi')$ is clean by Lemma \ref{lemclndeg} (i),
the sheaf $j_{2!}\mf^{\prime}$ is micro-supported on $\tau_{E}^{-1}(SS^{\log}(X,V,\chi^{\prime}))$ 
by the induction hypothesis. 

We consider the cartesian diagram
\begin{equation}
\xymatrix{
D_{r}\cap V\ar[d] \ar[r]^-{j_{2}^{\prime\prime}} &D_{r}
\ar[d]_{i_{1}} & h^{\ast}D_{r} \ar[l] \ar[d] \\
V \ar[r]_{j_{2}} & X & W. \ar[l]^-{h}
} \notag
\end{equation}
Since $(X,U,\chi)$ is clean, the closed immersion $i_{1}$ is $\log$-$E$-$SS^{\log}(X,V,\chi^{\prime})$-transversal 
and we have $i^{\circ}_{1}SS^{\log}(X,V,\chi^{\prime})=SS^{\log}(D_{r},i_{1}^{*}V, i_{1}^{\ast}\chi')$ by Lemma \ref{stablogtr} (ii).
Since $i_{1}$ is $\log$-$E$-$SS^{\log}(X,V,\chi')$-transversal,
we have $R_{i_{1}^{\ast}\chi^{\prime}}=i_{1}^{*}R_{\chi^{\prime}}$ and
$(D_{r},i_{1}^{*}V, i_{1}^{\ast}\chi')$ is clean by Proposition \ref{cleaneq} (iii).
Hence the sheaf
$j_{2!}^{\prime\prime}i_{1}^{\ast}\mf^{\prime}\simeq i_{1}^{*}j_{2!}\mf'$ is 
micro-supported on $\tau^{-1}_{i_{1}^{*}E}(SS^{\log}(D_{r},i_{1}^{*}V, i_{1}^{\ast}\chi'))$
by the induction hypothesis.
Since $i_{1}$ is a closed immersion, the sheaf $i_{1*}i_{1}^{\ast}j_{2!}\mf^{\prime}$ is
micro-supported on $i_{1\circ}\tau_{i_{1}^{*}E}^{-1}(SS^{\log}(D_{r},i_{1}^{*}V, i_{1}^{\ast}\chi'))$
by \cite[Lemma 2.2 (ii)]{be}.
Hence the sheaf $j_{1!}j^{*}_{1}j_{2!}\mf'\simeq j_{!}\mf$ is micro-supported on 
$C'=\tau_{E}^{-1}(SS^{\log}(X,V,\chi'))\cup i_{1\circ}
\tau_{i_{1}^{*}E}^{-1}(SS^{\log}(D_{r},i_{1}^{*}V, i_{1}^{\ast}\chi'))$ 
by Lemma \ref{ssdisttri}.

We prove $\tau_{D}^{-1}(SS^{\log}(X,U,\chi))=C'$.
Since $(X,U,\chi)$ is assumed to be clean, we have
$\tau_{D}^{-1}(SS^{\log}(X,U,\chi))=\tau_{E}^{-1}(SS^{\log}(X,V,\chi'))\cup 
\tau_{E}^{-1}(i_{1\circ}SS^{\log}(D_{r},i_{1}^{*}V,i_{1}^{*}\chi'))$ by Lemma \ref{stablogtr} (ii).
We consider the commutative diagram
\begin{equation}
\xymatrix{
T^{*}X \ar[d]_-{\tau_{E}} & T^{*}X\times_{X}D_{r} \ar[l]_-{\pr_{1}} \ar[r]^-{di_{1}} 
\ar[d]^-{\tau_{E,D_{r}}}
& T^{*}D_{r} \ar[d]^-{\tau_{i_{1}^{*}E}} 
\\
T^{*}X(\log E) & T^{*}X(\log E)\times_{X}D_{r} \ar[l]^-{\pr_{1}} \ar[r]_-{di_{1}^{E}} 
& T^{*}D_{r}(\log i_{1}^{*}E).
}\notag
\end{equation}
Since the left square is cartesian and the right square is commutative, we have
$\tau_{E}^{-1}(i_{1\circ}SS^{\log}(D_{r},i_{1}^{*}V,i_{1}^{*}\chi'))
=i_{1\circ}\tau_{i_{1}^{*}E}^{-1}(SS^{\log}(D_{r},i_{1}^{*}V, i_{1}^{\ast}\chi'))$.
Hence the assertion holds.
\end{proof}

\begin{conj}
\label{conjcc}
Let $\tau_{D}^{!}: CH_{2}(SS^{\log}(X,U,\chi))\rightarrow CH_{2}(\tau_{D}^{-1}(SS^{\log}(X,U,\chi)))$ 
is the Gysin homomorphism for the l.c.i.\ morphism $\tau_{D} \colon T^{\ast}X\rightarrow T^{\ast}X(\log D)$ 
(\cite[6.6]{ful}). 
If $(X,U,\chi)$ is clean,
then we have 
\begin{equation}
CC(j_{!}\mf)=\tau_{D}^{!}(\Char^{\log}(X,U,\chi)) \notag
\end{equation} 
in $CH_{d}(\tau_{D}^{-1}(SS^{\log}(X,U,\chi)))$.
\end{conj}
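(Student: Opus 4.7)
The plan is to compare both sides cycle by cycle. By Theorem \ref{proplogct}, the sheaf $j_{!}\mf$ is micro-supported on $C := \tau_{D}^{-1}(SS^{\log}(X,U,\chi))$, so $CC(j_{!}\mf)$ is a $d$-cycle supported on $C$; the right hand side $\tau_{D}^{!}(\Char^{\log}(X,U,\chi))$ is also a cycle on $C$ by construction of the Gysin pull-back. Since characteristic cycles are characterized by the Milnor formula, I would try to verify that $\tau_{D}^{!}(\Char^{\log}(X,U,\chi))$ satisfies Theorem \ref{thmsmil} for $j_{!}\mf$ with respect to $C$.

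First I would carry out an explicit computation of $\tau_{D}^{!}(\Char^{\log}(X,U,\chi))$. By Lemma \ref{pullzers} the zero-section term contributes $(-1)^{d}\sum_{I'\subset I}[T^{\ast}_{D_{I'}}X]$. For each $i\in I_{\mW,\chi}$, working étale locally around a point of $D_{i}$ in a coordinate system $(t_{1},\ldots,t_{d})$ with $t_{j}$ a local equation of $D_{j}$, the map $\tau_{D}$ is given in the fiber direction by multiplication by $(t_{1},\ldots,t_{r})$, so an application of the excess intersection formula (\cite[6.6]{ful}) to $L_{i,\chi}\subset T^{\ast}X(\log D)\times_{X}D_{i}$ expresses $\tau_{D}^{!}[L_{i,\chi}]$ in terms of the sub line bundle $L'_{i,\chi}$ defined by the image of $\dvr_{X}(-R_{\chi}')|_{D_{i}}\cdot\cform(\chi)|_{D_{i}^{1/p}}$ together with contributions of the conormal components $[T^{\ast}_{D_{I'}}X]$ coming from the locus where $\tau_{D}$ drops rank. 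The relations among $\rsw(\chi)$ and $\cform(\chi)$ established in Subsection \ref{sscrc} (in particular Lemma \ref{alnlsch} and the local formulas (\ref{relloc})) are designed to translate this computation into the form \eqref{saitoccro}.

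Outside a finite set $\Sigma$ of closed points of $D$ the triple $(X,U,\chi)$ is strongly non-degenerate by Lemma \ref{itsn} combined with a codimension argument, so Saito's strongly non-degenerate computation \cite[Theorem 7.14]{sa4} yields the equality $CC(j_{!}\mf)=\tau_{D}^{!}(\Char^{\log}(X,U,\chi))$ on the open part $T^{\ast}X\setminus\bigcup_{x\in\Sigma}T^{\ast}_{x}X$. It therefore remains to match the multiplicities $u_{x}$ of the isolated fibers $[T^{\ast}_{x}X]$ at the closed points $x\in\Sigma$ where the generic argument fails.

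The main obstacle is pinning down these $u_{x}$. The natural approach is to use the two index formulas: Theorem \ref{sindex} for $CC(j_{!}\mf)$ and Theorem \ref{logindexformula} for $\Char^{\log}(X,U,\chi)$. After reducing étale-locally to the case where $X$ is projective over an algebraically closed field (using that both sides are compatible with étale pull-back), one can relate $(\tau_{D}^{!}\Char^{\log}(X,U,\chi),T^{\ast}_{X}X)_{T^{\ast}X}$ to $(\Char^{\log}(X,U,\chi),T^{\ast}_{X}X(\log D))_{T^{\ast}X(\log D)}$ by the projection formula, and both equal $\chi(X,j_{!}\mf)$. This yields only a single linear relation among the $\{u_{x}\}_{x\in\Sigma}$, which is not sufficient to separate them. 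The expected resolution, mirroring the strategy outlined in the introduction for Theorem \ref{thmintro}, is to invoke the homotopy invariance of the characteristic cycle of a clean rank one sheaf on a smooth surface (Proposition \ref{prophi}) to reduce to $X=\mathbf{P}_{k}^{2}$ and to configurations in which only one $u_{x}$ is unknown, whereupon the single linear relation coming from the index formulas suffices. The homotopy-invariance step is the genuinely hard ingredient; for dimension $d\ge 3$ an analogous reduction is not in hand, which is why the statement is recorded as a conjecture rather than a theorem.
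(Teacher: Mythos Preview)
You correctly recognize that this statement is a conjecture that the paper does not prove in general. The paper establishes only the case $d=2$, as Corollary~\ref{corconj}, and the proof there is two lines: combine Theorem~\ref{totalpullback}~(ii), which shows $\Char^{K}(X,U,\chi)=\tau_{D}^{!}(\Char^{\log}(X,U,\chi))$ in $CH_{2}(\tau_{D}^{-1}(SS^{\log}(X,U,\chi)))$, with Theorem~\ref{thmmain}, which shows $CC(j_{!}\mf)=\Char^{K}(X,U,\chi)$.

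Your sketch of the $d=2$ argument follows the same overall route as the paper --- comparison outside a finite set via \cite[Theorem~7.14]{sa4}, the two index formulas, and the homotopy invariance of Proposition~\ref{prophi} to pin down the remaining fiber multiplicities --- but organized differently. The paper introduces $\Char^{K}(X,U,\chi)$ as a named intermediate cycle, with the explicit multiplicities $t_{x}$ of \eqref{tx}, and splits the conjecture into two independent identities: first the purely intersection-theoretic equality $\Char^{K}=\tau_{D}^{!}\Char^{\log}$ (Theorem~\ref{totalpullback}, proved by direct Chern-class and excess-intersection computations in Lemmas~\ref{pullofli}--\ref{lemoftypeii}, with no appeal to the Milnor formula and no cleanliness hypothesis), and second the equality $CC=\Char^{K}$ (Theorem~\ref{thmmain}, where all the index-formula and homotopy-invariance machinery is deployed). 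This separation has the advantage that the ``soft'' Gysin computation is done once and for all, even at non-clean points, while your outline mixes the two steps and leans on cleanliness throughout.

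One small correction to your outline: the claim that $(X,U,\chi)$ is strongly non-degenerate outside a \emph{finite} set $\Sigma$ relies on the non-degenerate locus having complement of codimension~$\ge 2$ (Lemma~\ref{lemndg}~(ii)), which yields a finite set only when $d=2$. For $d\ge 3$ the failure locus is positive-dimensional, so the fiber-by-fiber index-formula comparison you describe does not get off the ground; this is indeed part of why the general statement remains conjectural, as you correctly observe at the end.
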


We prove Conjecture \ref{conjcc} in the case where $X$ is purely of dimension $2$ in Corollary \ref{corconj} later.

\begin{lem}
\label{lemcharksc}
Assume that $(X,U,\chi)$ is strongly clean.
\begin{enumerate}
\item $\tau_{D}^{-1}(SS^{\log}(X,U,\chi))=\bigcup_{I'\subset I}T^{*}_{D_{I'}}X$. 
\item Let $\tau_{D}^{!}: CH_{2}(SS^{\log}(X,U,\chi))\rightarrow CH_{2}(\tau_{D}^{-1}(SS^{\log}(X,U,\chi)))$ 
is the Gysin homomorphism for the l.c.i.\ morphism $\tau_{D} \colon T^{\ast}X\rightarrow T^{\ast}X(\log D)$ 
(\cite[6.6]{ful}). 
Then $\tau_{D}^{!}(\Char^{\log}(X,U,\chi))$ is defined as a $d$-cycle
and we have 
\begin{equation}
\tau_{D}^{!}(\Char^{\log}(X,U,\chi))=
(-1)^{d}\sum_{I'\subset I}(1+\sum_{i\in I'}\sw(\chi|_{K_{i}}))[T^{\ast}_{D_{I'}}X]. \notag
\end{equation}
\end{enumerate}
\end{lem}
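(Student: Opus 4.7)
The plan is to work \'{e}tale locally at a closed point $x\in D_{I_{x}}$ with $I_{x}\subseteq I$, choosing coordinates $(t_{1},\ldots,t_{d})$ so that $D_{j}=(t_{j}=0)$ for $j\in I_{x}$, and exploiting that strong cleanliness identifies the coefficient $\alpha_{j}$ of $d\log t_{j}$ in the expansion (\ref{rswchix}) of $\rsw(\chi)_{x}$ as a unit in $\dvr_{X,x}$ for every $j\in I_{\mW,\chi,x}$. Let $(\xi_{1},\ldots,\xi_{d})$ denote the fibre coordinates on $T^{\ast}X$ dual to $(dt_{1},\ldots,dt_{d})$, and $(\eta_{1},\ldots,\eta_{d})$ the fibre coordinates on $T^{\ast}X(\log D)$ dual to $(d\log t_{j} : j\in I_{x})\cup(dt_{j} : j\notin I_{x})$. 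Then locally $\tau_{D}$ is given by $\eta_{j}=t_{j}\xi_{j}$ for $j\in I_{x}$ and $\eta_{j}=\xi_{j}$ for $j\notin I_{x}$.

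For (i), the inclusion $C_{D}\subseteq\tau_{D}^{-1}(SS^{\log}(X,U,\chi))$ follows from $T^{\ast}_{X}X(\log D)\subseteq SS^{\log}(X,U,\chi)$ and Lemma \ref{pullzers}; for the reverse inclusion it is enough to show that $\tau_{D}^{-1}(L_{i,\chi})\subseteq C_{D}$ for every $i\in I_{\mW,\chi}$. At a closed point $x\in D_{i}$, the fibre $L_{i,\chi,x}$ is spanned by a vector in $\Omega^{1}_{X}(\log D)|_{x}$ whose $d\log t_{i}$-component is $\alpha_{i}(x)\in k(x)^{\times}$ by strong cleanliness, while the image of $\tau_{D,x}\colon T^{\ast}_{x}X\to T^{\ast}_{x}X(\log D)$ is spanned by $\{dt_{j} : j\notin I_{x}\}$ and so contains no $d\log t_{j}$-direction. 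Hence $L_{i,\chi,x}\cap\Image(\tau_{D,x})=\{0\}$, which forces $\tau_{D}^{-1}(L_{i,\chi})_{x}=\ker(\tau_{D,x})=T^{\ast}_{D_{I_{x}}}X|_{x}\subseteq C_{D}$.

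For (ii), by Lemma \ref{pullzers} and the linearity of $\tau_{D}^{!}$ it suffices to establish, for each $i\in I_{\mW,\chi}$, the formula $\tau_{D}^{!}([L_{i,\chi}])=\sum_{i\in I'\subseteq I}[T^{\ast}_{D_{I'}}X]$, since substituting this into (\ref{charlogform}) and using $\sw(\chi|_{K_{j}})=0$ for $j\in I_{\mT,\chi}$ recovers the claimed coefficient $1+\sum_{i\in I'}\sw(\chi|_{K_{i}})$ along each $[T^{\ast}_{D_{I'}}X]$. To prove the displayed formula locally near $x$, the invertibility of $\alpha_{i}$ lets us cut out $L_{i,\chi}\subseteq T^{\ast}X(\log D)$ by $t_{i}=0$ together with the $d-1$ proportionality equations $\alpha_{i}\eta_{j}=\alpha_{j}\eta_{i}$ for $j\in I_{x}\setminus\{i\}$ and $\alpha_{i}\eta_{j}=\beta_{j}\eta_{i}$ for $j\notin I_{x}$. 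Substituting the formulas for $\tau_{D}$ and using $t_{i}=0$ (so that $\eta_{i}$ pulls back to $0$) yields the ideal
\[
J=\bigl(t_{i},\ \{t_{j}\xi_{j}\}_{j\in I_{x}\setminus\{i\}},\ \{\xi_{j}\}_{j\notin I_{x}}\bigr)
\]
cutting out the scheme-theoretic preimage $\tau_{D}^{-1}(L_{i,\chi})$ in $T^{\ast}X$. Its $d$ generators are monomials in pairwise disjoint sets of variables and so form a regular sequence, hence $V(J)$ is a Cohen--Macaulay subscheme of pure dimension $d$; via the graph factorisation of $\tau_{D}$, this gives $\tau_{D}^{!}([L_{i,\chi}])=[V(J)]$. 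At the generic point of any component $T^{\ast}_{D_{I'}}X$ with $i\in I'\subseteq I_{x}$, each generator $t_{j}\xi_{j}$ becomes $t_{j}$ up to a unit (if $j\in I'$) or $\xi_{j}$ up to a unit (if $j\in I_{x}\setminus I'$), so $J$ localises to the maximal ideal and the multiplicity is~$1$, proving the formula.

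The main obstacle is the scheme-theoretic computation of $\tau_{D}^{!}([L_{i,\chi}])$: strong cleanliness, through the invertibility of $\alpha_{i}$, is precisely what forces the pull-back of the defining equations of $L_{i,\chi}$ to become a regular sequence, so that the Gysin pullback can be read off directly from $V(J)$ with multiplicity $1$ along each component $T^{\ast}_{D_{I'}}X$.
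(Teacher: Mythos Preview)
Your proof is correct and follows essentially the same approach as the paper: both localise at a closed point, use strong cleanliness to make $\alpha_{i}$ a unit, write down the explicit defining ideal of $L_{i,\chi}$ inside $T^{\ast}X(\log D)\times_{X}D_{i}$, and pull it back via $\eta_{j}\mapsto t_{j}\xi_{j}$ to obtain the ideal $(t_{i},\{t_{j}\xi_{j}\}_{j\in I_{x}\setminus\{i\}},\{\xi_{j}\}_{j\notin I_{x}})$, from which both the set $\tau_{D}^{-1}(L_{i,\chi})=\bigcup_{i\in I'\subset I}T^{\ast}_{D_{I'}}X$ and the multiplicities are read off. Your argument is in fact slightly more explicit than the paper's in justifying why $\tau_{D}^{!}$ coincides with the naive pullback here (via the regular-sequence/Cohen--Macaulay observation), a point the paper leaves implicit.
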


\begin{proof}
Let $x$ be a closed point of $X$.
Since the assertions are local, it is sufficient to prove the assertions in a neighborhood of $x$.
We may assume $I=I_{x}$.

If $x\notin Z_{\chi}$, then the assertions hold by Lemma \ref{pullzers}.

Suppose $x\in Z_{\chi}$.
We use the notation as in (\ref{rswchix}) and the proof of Lemma \ref{pullzers}.
Let $\mathcal{I}$ and $\mathcal{J}$ be the defining ideal sheaves of $L_{1,\chi}\subset \cotx(\log D)\times_{X}D_{1}$ and
$\tau_{D}^{-1}(L_{1,\chi})\subset \cotx\times_{X}D_{1}$ respectively.
Since $\alpha_{1}$ is invertible in $\dvr_{D_{1},x}$, we have
\begin{align}
\mathcal{I}_{x}=(\alpha_{2}\partial/(\partial \log  t_{1})&-\alpha_{1}\partial /(\partial\log t_{2}), \ldots ,
\alpha_{r}\partial/(\partial \log t_{1})-\alpha_{1}\partial /(\partial\log t_{r}),\notag \\
&\beta_{r+1}\partial/(\partial\log t_{1})-\alpha_{1}\partial/\partial t_{r+1},\ldots ,\beta_{d}\partial/(\partial\log t_{1})-\alpha_{1}\partial/\partial t_{d}).\notag
\end{align}
Hence we have 
\begin{equation}
\mathcal{J}_{x}=(t_{2}\partial/\partial t_{2},\ldots ,t_{r}\partial/\partial t_{r},\partial /\partial t_{r+1},
\ldots ,\partial /\partial t_{d}), \notag
\end{equation}
which implies $\tau_{D}^{-1}(L_{1,\chi})=\bigcup_{1\in I'\subset I}T^{*}_{D_{I'}}X$ and
$\tau_{D}^{!}([L_{1,\chi}])=\sum_{1\in I'\subset I}[T^{*}_{D_{I'}}X]$.
Similarly as the case where $i=1$, we have $\tau_{D}^{-1}(L_{i,\chi})=\bigcup_{i\in I'\subset I}T^{*}_{D_{I'}}X$ and
$\tau_{D}^{!}([L_{i,\chi}])=\sum_{i\in I'\subset I}[T^{*}_{D_{I'}}X]$ for $i\in I$.
Hence the assertions hold by Lemma \ref{pullzers}.
\end{proof}

%%%%%%%%%%%%%%%%%%%%%%%%%%%%%%%%%%%%%%%%%%%%%%%%%%%
%%%%%%%%%%%%%%%%%%%%%%%%%%%%%%%%%%%%%%%%%%%%%%%%%%%
\section{Canonical lifting of logarithmic characteristic cycle}
\label{scanlift}

Let $X$ be a smooth scheme purely of dimension $d$ over a perfect field $k$ of characteristic $p>0$.
Let $D$ be a divisor on $X$ with simple normal crossings and $\{D_{i}\}_{i\in I}$ the irreducible components of $D$.
Let $\dvr_{K_{i}}=\hat{\dvr}_{X,\mathfrak{p}_{i}}$ be the completion of the local ring at 
the generic point $\mathfrak{p}_{i}$ of $D_{i}$
and $K_{i}=\Frac \dvr_{K_{i}}$ the local field at $\mathfrak{p}_{i}$ for $i\in I$.
We put $U=X-D$ and let $j\colon U\rightarrow X$ be the open immersion.
We note that $j$ is an affine open immersion.
Let $\mf$ be a smooth sheaf of $\Lambda$-modules of rank $1$ on $U$ 
and let $\chi\colon \pi_{1}^{\ab}(U)\rightarrow \Lambda^{\times}$ 
be the character corresponding to $\mf$.
We fix an inclusion $\psi\colon \Lambda^{\times}\rightarrow \mathbf{Q}/\mathbf{Z}$ 
and regard $\chi$ as an element of $H^{1}_{\et}(U,\mathbf{Q}/\mathbf{Z})$ by $\psi$.

For the subsets $I_{\mI,\chi}$ and $I_{\mII,\chi}$ of $I$
(the remark after Lemma \ref{alnlsch}), we put 
$Z_{\mI,\chi}=\bigcup_{i\in I_{\mI,\chi}}D_{i}$ and $Z_{\mII,\chi}=\bigcup_{i\in I_{\mII,\chi}}D_{i}$.
Let $L_{i,\chi}'$, $L_{i,\chi}''$, $[L_{i,\chi}']$, and $r_{i}'$ be as in the end of Subsection \ref{sslram} for $i\in I$.
For a closed point $x$ of $D$, we put $I_{x}=\{1,\ldots, r\}$ and $I_{\mW,\chi,x}=\{1,\ldots,r'\}$
(Definition \ref{defswdiv}), where $r'\le r\le d$.
Let $(t_{1},\ldots,t_{d})$ be a local coordinate system at $x$ such that $t_{i}$
is a local equation of $D_{i}$ for $i\in I_{x}$.

Assume $d=2$.
We construct a canonical lifting $\Char^{K}(X,U,\chi)\in Z_{2}(T^{*}X)$ 
of Kato's logarithmic characteristic cycle using ramification theory
(Theorem \ref{totalpullback}).
For a closed point $x$ of $D$,
we put
\begin{align}
\label{tx}
t_{x}= \sharp (I_{x})-1+s_{x} +\sum_{i\in I_{\mW,\chi,x}}
\sw(\chi&|_{K_{i}})(\ord^{\prime}(\chi;x,D_{i})
-\ord(\chi;x,D_{i}))  \\
&+\sum_{i\in I_{\mII,\chi,x}}(\ord(\chi;x,D_{i})+1-\sharp(I_{x})), \notag
\end{align}
where $\ord(\chi;x,D_{i})$ is as in Definition \ref{defofclean} (ii),
$\ord'(\chi;x,D_{i})$ is as in Definition \ref{defofnondeg} (ii),
and $s_{x}$ is as in (\ref{logcharcycle}).
We define a $2$-cycle $\Char^{K}(X,U,\chi)$ on $T^{*}X$ by
\begin{equation} 
\label{canlift}
\Char^{K}(X,U,\chi)=[\zers] + \sum_{i\in I}r_{i}'[L_{i,\chi}^{\prime}]+\sum_{x \in |D|} t_{x}[\spf],
\end{equation}
where $|D|$ is the set of closed points of $D$.
Since the Swan conductor, the total dimension, and the characteristic form are defined \'{e}tale
locally, we have
\begin{equation}
h^{\ast}\Char^{K}(X,U,\chi)=\Char^{K}(W,h^{*}U,h^{*}\chi) \notag
\end{equation}
for an \'{e}tale morphism $h\colon W\rightarrow X$.
The $2$-cycle $\Char^{K}(X,U,\chi)$ is compatible with the pull-back by the projection
$X_{\bar{k}}\rightarrow X$.
Since the Swan conductor, the total dimension, and the characteristic form are stable under the replacement
of $\chi$ by the $p$-part of $\chi$,
the $2$-cycle $\Char^{K}(X,U,\chi)$ is stable under the replacement of $\chi$ by the $p$-part of $\chi$.
We write $SS^{K}(X,U,\chi)$ for the support of $\Char^{K}(X,U,\chi)$.

\begin{prop}
\label{propndeg}
Assume $d=2$.
Let $x$ be a closed point of $D$ 
and assume that $(X,U,\chi)$ is clean at $x$.
\begin{enumerate}
\item If $(X,U,\chi)$ is non-degenerate at $x$,
then we have $CC(j_{!}\mf)=\Char^{K}(X,U,\chi)$ in a neighborhood of $x$.
\item If $\sharp(I_{\mT,\chi,x})=\sharp(I_{\mW,\chi,x})=1$, then 
we have $CC(j_{!}\mf)=\Char^{K}(X,U,\chi)$ in a neighborhood of $x$.
\end{enumerate}
\end{prop}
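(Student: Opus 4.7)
The equality $CC(j_!\mf) = \Char^K(X,U,\chi)$ holds away from $x$ in a punctured neighborhood, since at closed points $y \neq x$ of $D$ near $x$ the triple $(X,U,\chi)$ is clean (assumed in (i), and automatic away from $x$ in (ii) since $y$ lies on at most one wild divisor) and non-degenerate (hence strongly non-degenerate by Lemma~\ref{lemint}~(i)), so Saito's formula recalled after (\ref{saitoccro}) matches (\ref{canlift}) termwise. Thus the proof reduces to verifying $u_x = t_x$, where $u_x$ and $t_x$ are the coefficients of $[T^*_x X]$ in $CC(j_!\mf)$ and $\Char^K(X,U,\chi)$ respectively.

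\textbf{Part (i).} Cleanness and non-degeneracy at $x$ together imply strong non-degeneracy, by Lemma~\ref{lemint}~(i) and the remark after Definition~\ref{defofnondeg}. The citation recalled after (\ref{saitoccro}) then gives $u_x = 0$ when $x \in Z_\chi$ and $u_x = \sharp(I_x) - 1$ when $x \notin Z_\chi$. To match $t_x$ in (\ref{tx}), use that $s_x = 0$ (cleanness implies trivial multiplicity of fibers in the logarithmic characteristic cycle), $\ord(\chi;x,D_i) = 0$ for $i \in I_{\mW,\chi,x}$ by Lemma~\ref{lemclean}~(i), and $\ord'(\chi;x,D_i) = 0$ for $i \in I_{\mW,\chi,x}$ by Lemma~\ref{lemndg}~(i). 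Formula (\ref{tx}) then collapses to $t_x = \sharp(I_x) - 1 + \sharp(I_{\mII,\chi,x})(1 - \sharp(I_x))$. Combining the case analysis of Lemma~\ref{itsn}~(i) and (ii) with the dimension bound $\sharp(I_x) \leq d = 2$: in case (a) $I_x = I_{\mII,\chi,x}$ one has $\sharp(I_x) = 1$ by Lemma~\ref{itsn}~(i) since $I_{\mI,\chi,x} = \emptyset$, giving $t_x = 0$; in case (b) $I_x = I_{\mW,\chi,x}$ with $\sharp(I_{\mI,\chi,x}) = 1$ a direct substitution gives $t_x = 0$; and the case $x \notin Z_\chi$ makes all sums vanish so $t_x = \sharp(I_x) - 1$. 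In every case $t_x = u_x$.

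\textbf{Part (ii).} Here $(X,U,\chi)$ is not non-degenerate at $x$ by Lemma~\ref{lemint}~(i), so Saito's theorem does not apply. The plan is to apply the Milnor formula (Theorem~\ref{thmsmil}) to a judicious test function. Choose a local coordinate system $(t_1,t_2)$ with $D_i = \{t_i = 0\}$ and let $f = t_1 + t_2$. Since $df_x = dt_1 + dt_2$ lies neither in the conormal line $[T^*_{D_2}X]|_x = L_{2,\chi,x}'$ nor, in the type~$\mI$ case, in $L_{1,\chi,x}' = [T^*_{D_1}X]|_x$ (and a similar direct check covers the type~$\mII$ case using the explicit form of $L_{1,\chi}''$), the only contribution at $x$ to the intersection number on the right-hand side of the Milnor formula comes from $[T^*_x X]$, which contributes $u_x$ transversally. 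Moreover, $x$ is an at most isolated $SS(j_!\mf)$-characteristic point of $f$ because the triple is strongly non-degenerate near every other closed point of $D$ in a punctured neighborhood (Lemma~\ref{lemclndeg}~(ii) applied on $V_2 = X - D_2$). One then computes $-\dimtot\,\phi_x(j_!\mf, f)$: the nearby fiber $\{t_1 + t_2 = s\}$ with $s \neq 0$ is a punctured affine line carrying $\mf$ with a wild singularity at $(0,s) \in D_1$ and a tame singularity at $(s,0) \in D_2$. Applying the Grothendieck-Ogg-Shafarevich formula on this curve, together with the pairing of the refined Swan conductor $\rsw(\chi|_{K_1})$ with the tangent direction of the nearby fiber at $(0,s)$, yields the total dimension of vanishing cycles. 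Identifying this with the formula (\ref{tx}) gives $u_x = t_x$.

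\textbf{Main obstacle.} The nontrivial step is the vanishing cycle computation in part~(ii). The challenge is to extract precisely the combination $\sw(\chi|_{K_1}) \cdot \ord'(\chi; x, D_1)$ plus the type-dependent correction (the $+1$ in the type~$\mI$ case, encoded via the summand $(\ord(\chi;x,D_i) + 1 - \sharp(I_x))$ for $i \in I_{\mII,\chi,x}$ in (\ref{tx})) from a direct analysis of the Artin-Schreier-Witt local description of $\mf$ at the wild point $(0,s)$ specialized as $s \to 0$. This requires a careful tracking of how the characteristic form $\cform(\chi)$ at $x$ interacts with the tangent direction of the sliding fiber, and a comparison between the usual type (where $\cform$ descends to $\Omega^1_X(R_\chi')|_{D_1}$) and the exceptional type (where it only descends after passing to the radicial cover $D_1^{1/p}$).
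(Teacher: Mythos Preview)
Your proof of part (i) is correct and matches the paper's argument.

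For part (ii), your Milnor-formula approach has a genuine gap: the vanishing-cycle computation is only sketched, and you acknowledge in your ``Main obstacle'' paragraph that extracting the precise value of $-\dimtot\phi_x(j_!\mf,f)$ from the Artin--Schreier--Witt data remains to be done. Note also that your concern about a type~$\mII$ case is misplaced: since $\sharp(I_x)=2=d$ and $(X,U,\chi)$ is clean at $x$, Lemma~\ref{itsn}~(i) forces $I_{\mI,\chi,x}\neq\emptyset$, and with $\sharp(I_{\mW,\chi,x})=1$ the wild component is automatically of type~$\mI$.

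The paper avoids any direct vanishing-cycle computation by a d\'evissage. After reducing to $\chi$ of $p$-power order (so $\mf$ is unramified along the tame divisor $D_2$), extend $\mf$ to a smooth sheaf $\mf'$ on $V=X-D_1$ with character $\chi'$. The distinguished triangle for the open--closed decomposition $V'=X-D_2\hookrightarrow X\hookleftarrow D_2$ gives
\[
CC(j_!\mf)=CC(j_{2!}\mf')-CC(i_{1*}i_1^*j_{2!}\mf'),
\]
where $i_1\colon D_2\hookrightarrow X$. The crucial observation is that $(X,V,\chi')$ is non-degenerate at $x$ by Lemma~\ref{lemclndeg}~(ii), so part~(i) already computes $CC(j_{2!}\mf')=\Char^K(X,V,\chi')$ near $x$; in particular $SS(j_{2!}\mf')=C_{D_1}$ there. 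Then $i_1$ is properly $SS(j_{2!}\mf')$-transversal, so Theorem~\ref{thmpull} computes $CC(i_1^*j_{2!}\mf')$, and \cite[Lemma~5.13.2]{sa4} gives $CC(i_{1*}i_1^*j_{2!}\mf')$. Subtracting yields $u_x=\dt(\chi|_{K_1})$, and one checks $t_x=\dt(\chi|_{K_1})$ directly from (\ref{tx}) using $\ord'(\chi;x,D_1)=1$ (Lemma~\ref{lemordp} together with Lemma~\ref{itsn}~(iv)). This bootstrap from part~(i) is both shorter and entirely self-contained within the paper's toolkit.
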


\begin{proof}
Since the first two terms in the right hand sides in (\ref{saitoccro}) and (\ref{canlift}) are equal,
it is sufficient to prove that $u_{x}$ in (\ref{saitoccro}) and $t_{x}$ in (\ref{canlift}) are equal.

(i) Since $(X,U,\chi)$ is assumed to be non-degenerate at $x$,
we have $I_{x}=I_{\mT,\chi,x}$ or $I_{x}=I_{\mW,\chi,x}$ by Lemma \ref{lemint} (i).
Since $(X,U,\chi)$ is assumed to be clean and non-degenerate at $x$,
we have $s_{x}=0$ and $\ord'(\chi; x,D_{i})=\ord (\chi; x,D_{i})=0$ for every $i\in I_{\mW,\chi,x}$
in (\ref{tx}).
Hence, if $I_{x}=I_{\mT,\chi, x}$, then we have $t_{x}=\sharp(I_{x})-1$ by (\ref{tx}).
If $I_{x}=I_{\mW,\chi,x}$, then we have $t_{x}=0$ by Lemma \ref{itsn} (i) and (ii) and (\ref{tx}), since $d=2$.
Hence we have $t_{x}=u_{x}$ by \cite[Theorem 7.14]{sa4}.

(ii) Since $CC(j_{!}\mf)$ is stable under the replacement
of $\chi$ by the $p$-part of $\chi$ by \cite[Theorem 0.1]{sy}
and so is $\Char^{K}(X,U,\chi)$, we may assume
that $\chi$ is of order $p^{s}$ for an integer $s\ge 0$.
Since the assertion is local, we may assume $I=I_{x}$.
We put $I_{\mW,\chi}=\{1\}$ and $I_{\mT,\chi}=\{2\}$.
Then $\mf$ is unramified along $D_{2}$.

We put $V=X-D_{1}$ and $V'=X-D_{2}$. 
We consider the cartesian diagram
\begin{equation}
\xymatrix{
U \ar[r]^-{j_{1}'} \ar[d]_-{j_{2}'} & V \ar[d]_-{j_{2}} & V\cap D_{2} \ar[l]_-{i_{1}'} \ar[d]^-{j_{2}''} \\
V' \ar[r]_-{j_{1}} & X & D_{2} \ar[l]^-{i_{1}}
} \notag
\end{equation}
where $i_{1}$ and $i_{1}'$ are closed immersions and the others are open immersions.
Let $\mf'$ be a smooth sheaf of $\Lambda$-modules of rank $1$ on $V$ such that $\mf'|_{U}=\mf$
and let $\chi'\colon \pi_{1}^{\ab}(V)\rightarrow \Lambda^{\times}$ be the character
corresponding to $\mf'$.
We regard $\chi'$ as an element of $H^{1}_{\et}(V,\mathbf{Q}/\mathbf{Z})$ by $\psi$.
Then we note that $R_{\chi}=R_{\chi'}$ and $R_{\chi}'=R_{\chi'}'$.
Since $(X,U,\chi)$ is assumed to be clean at $x$ and $\sharp(I_{x})=2$, 
we have $I_{\mW,\chi}=I_{\mI,\chi}$ by Lemma \ref{itsn} (i).
Since $R_{\chi}=R_{\chi'}$ and $R_{\chi}'=R_{\chi'}'$, we have $I_{\mW,\chi'}=I_{\mI,\chi'}$.
By \cite[Lemma 5.13.1]{sa4}, we have 
\begin{equation} 
\label{eqccdt}
CC(j_{!}\mf)=CC(j_{1!}j_{1}^{*}j_{2!}\mf')=
CC(j_{2!}\mf')-CC(i_{1*}i_{1}^{*}j_{2!}\mf').
\end{equation}

Since $(X,U,\chi)$ is assumed to be clean at $x$, the triple $(X,V,\chi')$ is clean at $x$ 
by Lemma \ref{lemclndeg} (i) and $(X,V,\chi')$ is non-degenerate at $x$ by Lemma \ref{lemclndeg} (ii).
By (i), we have
\begin{equation}
\label{ccjtf}
CC(j_{2!}\mf')=\Char^{K}(X,V,\chi')=[T^{*}_{X}X]+\dt(\chi'|_{K_{1}})[L_{1,\chi'}'] 
\end{equation}
in a neighborhood of $x$.

Since $(X,U,\chi)$ is assumed to be clean at $x$, the closed immersion $i_{1}$ is 
$\log$-$D_{1}$-$SS^{\log}(X,V,\chi')$-transversal in a neighborhood of $x$ by Lemma \ref{stablogtr} (ii).
Since $I_{\mW,\chi'}=I_{\mI,\chi'}=\{1\}$, we have $L_{1,\chi'}'=T^{*}_{D_{1}}X$ by Lemma \ref{lemint} (ii).
Hence we have
\begin{equation}
\label{ssjtf}
SS^{K}(X,V,\chi')=T^{*}_{X}X\cup T^{*}_{D_{1}}X=C_{D_{1}}
\end{equation}
by (\ref{ccjtf}).
By Proposition \ref{logtrctr},
the closed immersion $i_{1}$ is $SS^{K}(X,V,\chi')$-transversal. 
Since $j$ is an affine open immersion, we have
$SS(j_{2!}\mf')=SS^{K}(X,V,\chi')$ in a neighborhood of $x$ by Lemma \ref{suppCC} and (\ref{ccjtf}).
Since $i_{1}^{*}SS^{K}(X,V,\chi')$ is purely of dimension $1$ in a neighborhood of $x$ by (\ref{ssjtf}), 
the closed immersion $i_{1}$ is 
properly $SS(j_{2!}\mf')$-transversal in a neighborhood of $x$.
Hence we have 
\begin{equation}
\label{eqccps}
CC(i_{1}^{*}j_{2!}\mf')=i_{1}^{!}CC(j_{2!}\mf')=-([T^{*}_{D_{2}}D_{2}]+\dt(\chi'|_{K_{1}})[T^{*}_{x}D_{2}])
\end{equation}
in a neighborhood of $x$ by Theorem \ref{thmpull}.
Since $i_{1}$ is a closed immersion of smooth schemes over $k$,
we have
\begin{equation}
\label{cciijtf}
CC(i_{1*}i_{1}^{*}j_{2!}\mf')=-([T^{*}_{D_{2}}X]+\dt(\chi'|_{K_{1}})[T^{*}_{x}X]) 
\end{equation}
in a neighborhood of $x$ by (\ref{eqccps}) and \cite[Lemma 5.13.2]{sa4}.
Hence we have
\begin{equation}
CC(j_{!}\mf)=[T^{*}_{X}X]+\dt(\chi'|_{K_{1}})[T^{*}_{D_{1}}X]+[T^{*}_{D_{2}}X]+\dt(\chi'|_{K_{1}})[T^{*}_{x}X] \notag
\end{equation}
by (\ref{eqccdt}), (\ref{ccjtf}), and (\ref{cciijtf}).

Since $\dt(\chi|_{K_{1}})=\dt(\chi'|_{K_{1}})$ and $I_{\mW,\chi}=I_{\mI,\chi}=\{1\}$,
it is sufficient to prove $t_{x}=\dt(\chi|_{K_{1}})$ by Lemma \ref{lemint} (ii).
Since $(X,U,\chi)$ is assumed to be clean, we have $s_{x}=0$ and $\ord(\chi; x,D_{1})=0$ in (\ref{tx}).
Since $I_{\mW,\chi}=I_{\mI,\chi}=\{1\}$ and $\sharp(I)=2$,
we have $\ord'(\chi;x,D_{1})=1$ by Lemma \ref{lemordp} and Lemma \ref{itsn} (iv).
Hence we have $t_{x}=1+\sw(\chi|_{K_{1}})=\dt(\chi|_{K_{1}})$ by (\ref{tx}).
\end{proof}

\begin{thm}
\label{totalpullback}
Assume $d=2$.
\begin{enumerate}
\item $SS^{K}(X,U,\chi)\subset \tau_{D}^{-1}(SS^{\log}(X,U,\chi))$.
\item Let $\tau_{D}^{!}: CH_{2}(SS^{\log}(X,U,\chi))\rightarrow CH_{2}(\tau_{D}^{-1}(SS^{\log}(X,U,\chi)))$ 
be the Gysin homomorphism for the l.c.i.\ morphism $\tau_{D} \colon T^{\ast}X\rightarrow T^{\ast}X(\log D)$ 
(\cite[6.6]{ful}).
Then we have 
\begin{equation}
\Char^{K}(X,U,\chi)=\tau_{D}^{!}(\Char^{\log}(X,U,\chi)) \notag
\end{equation}
in $CH_{2}(\tau_{D}^{-1}(SS^{\log}(X,U,\chi)))$.
\end{enumerate}
\end{thm}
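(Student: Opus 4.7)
My plan is to prove (i) and (ii) together by a local calculation at each closed point $x$ of $D$; statement (i) then follows from the explicit decomposition obtained in the course of proving (ii). Both sides are \'etale local and stable under replacing $\chi$ by its $p$-part, so I would reduce to the case $k$ algebraically closed, $\chi$ of $p$-power order, and work in a neighborhood of a fixed $x\in |D|$. To reduce further to the case when $(X,U,\chi)$ is clean at $x$, recall that in the non-clean case $\Char^{\log}(X,U,\chi)$ is defined via a sequence $f\colon X'\to X$ of blow-ups at non-clean points and pushed through the correspondence $T^{\ast}X(\log D)\xleftarrow{\pr_{1}}T^{\ast}X(\log D)\times_{X}X'\xrightarrow{df^{D}}T^{\ast}X'(\log f^{*}D)$. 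I would check that $\tau_{D}^{!}$ is compatible with this construction using the cartesian square from Lemma \ref{lemcart} (ii) applied to a single blow-up (which is $C_{D}$-transversal away from its center), and that $\Char^{K}$ is compatible by an inductive comparison of the multiplicities $t_{x}$ and $s_{x}$ across one blow-up.

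In the clean case $\Char^{\log}(X,U,\chi)=[T^{\ast}_{X}X(\log D)]+\sum_{i\in I_{\mW,\chi}}\sw(\chi|_{K_{i}})[L_{i,\chi}]$, and I would apply $\tau_{D}^{!}$ termwise. Lemma \ref{pullzers} handles the zero section, contributing $\sum_{I'\subset I_{x}}[T^{\ast}_{D_{I'}}X]$ and in particular $\binom{\sharp(I_{x})}{2}$ copies of $[T^{\ast}_{x}X]$; this accounts for the summand $\sharp(I_{x})-1$ in the formula (\ref{tx}) for $t_{x}$. For each $i\in I_{\mW,\chi,x}$, I would write $\rsw(\chi)_{x}$ in the local coordinates of (\ref{rswchix}), obtain a defining ideal for $L_{i,\chi}\subset T^{\ast}X(\log D)\times_{X}D_{i}$, and pull it back by substituting $\eta_{j}=t_{j}\xi_{j}$. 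Primary decomposition of the resulting ideal, combined with Lemma \ref{lemordp} for $i\in I_{\mI,\chi,x}$ and with the characteristic form relations (\ref{relloc}) for $i\in I_{\mII,\chi,x}$, isolates a copy of $[L'_{i,\chi}]$ with coefficient $r'_{i}$ together with a copy of $[T^{\ast}_{x}X]$ whose multiplicity contributes $\sw(\chi|_{K_{i}})(\ord'(\chi;x,D_{i})-\ord(\chi;x,D_{i}))$ for $i\in I_{\mW,\chi,x}$ and the additional $\ord(\chi;x,D_{i})+1-\sharp(I_{x})$ for $i\in I_{\mII,\chi,x}$. Summing these contributions and assembling them with the zero-section contribution reproduces exactly the formula (\ref{canlift}) for $\Char^{K}(X,U,\chi)$.

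The main obstacle will be the type II computation, especially in the exceptional case at residue characteristic $p=2$: the sub line bundle $L_{i,\chi}$ and the characteristic form $\cform(\chi)$ live naturally on the radicial covering $D_{i}^{1/p}$ and involve the square root appearing in (\ref{vphip}), so the pullback by $\tau_{D}$ must be combined with the degree-$p$ pushforward from $D_{i}^{1/p}\to D_{i}$ in order to check that the coefficients collapse to the values $r'_{i}$ prescribed by the definition of $\Char^{K}(X,U,\chi)$. A secondary difficulty is the blow-up invariance of $\Char^{K}$ required in the reduction step, since the multiplicity $s_{x}$ at a non-clean point is defined only iteratively through the successive blow-ups of \cite{ka2}; I would have to verify that this iterative definition is compatible with the corresponding behavior of $t_{x}$ under a blow-up at a non-clean closed point.
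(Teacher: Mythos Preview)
Your local--coordinate strategy works for the zero section and for the type~$\mathrm{I}$ components (this is essentially Lemma~\ref{pullofli}~(i)), but it breaks down for $i\in I_{\mII,\chi}$. In that case the residue $\xi_{i}(\chi)$ vanishes identically, so the defining ideal of $\tau_{D}^{-1}(L_{i,\chi})$ in $T^{\ast}X\times_{X}D_{i}$ is zero: the pullback is the whole rank-$2$ bundle (Lemma~\ref{pullofli}~(ii)). There is no primary decomposition to perform, and $\tau_{D}^{!}([L_{i,\chi}])$ is only a class in $CH_{2}(T^{\ast}X\times_{X}D_{i})$, not a cycle you can read off from a local equation. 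The paper handles this by computing all three of $\tau_{D}^{!}([L_{i,\chi}])$, $[L'_{i,\chi}]$, and $[T^{\ast}_{D_{i}}X]$ as Chern-class expressions on $D_{i}$ via the excess intersection and Whitney sum formulas (Lemmas~\ref{lemoftypeiip} and~\ref{lemoftypeii}), and then checks that the difference~(\ref{cmck}) cancels using the exact sequence relating $\Omega^{1}_{D_{i}}$ to $\Omega^{1}_{X}(\log D)|_{D_{i}}$. Your plan to work on $D_{i}^{1/p}$ and push forward is how $[L'_{i,\chi}]$ itself is defined in the exceptional case, but it does not circumvent the fact that the comparison lives in a Chow group rather than in cycles.

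Your reduction to the clean case is also unnecessary and would be hard to carry out. The general $\Char^{\log}(X,U,\chi)$ already has the form~(\ref{logcharcycle}) with the fiber terms $s_{x}[T^{\ast}_{x}X(\log D)]$ built in, and $\tau_{D}^{!}$ sends these directly to $s_{x}[T^{\ast}_{x}X]$; the remaining terms $[T^{\ast}_{X}X(\log D)]$ and $[L_{i,\chi}]$ are treated exactly as in the clean case. The paper's proof therefore works uniformly without any blow-up argument. The blow-up compatibility you propose for $\Char^{K}$ would amount to showing that the explicit combinatorial formula~(\ref{tx}) for $t_{x}$ transforms correctly under a blow-up at a non-clean point, which is neither easier than nor independent of the theorem you are trying to prove.
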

 
We prepare some lemmas to prove of Theorem \ref{totalpullback}.

\begin{lem}
\label{pullofli}
Assume $d=2$.
Let $i$ be an element of $I_{\mW,\chi}$. 
\begin{enumerate}
\item If $i \in I_{\mI,\chi}$, 
then $\tau_{D}^{-1}(L_{i,\chi})$ is the union of $L_{i,\chi}^{\prime}$ and 
$\spf$ for $x\in Z_{\mII,\chi}\cap D_{i}$ such that 
$\ord'(\chi;x,D_{i})-\ord(\chi;x,D_{i})+1\neq 0$ and for closed points 
$x$ of $D_{i}\setminus(Z_{\mII,\chi}\cap D_{i})$ 
such that $\ord'(\chi;x,D_{i})-\ord(\chi;x,D_{i})\neq 0$.
Further we have 
\begin{equation}
\tau_{D}^{!}([L_{i,\chi}])=[L_{i,\chi}']+
\sum_{x\in |D_{i}|}(\ord'(\chi;x,D_{i})-\ord(\chi;x,D_{i}))
[T^{*}_{x}X]+\sum_{x\in Z_{\mII,\chi}\cap D_{i}}[T^{*}_{x}X], \notag
\end{equation}
where $\tau_{D}^{!}: CH_{2}(L_{i,\chi})\rightarrow CH_{2}(\tau_{D}^{-1}(L_{i,\chi}))$ 
is the Gysin homomorphism for the l.c.i.\ morphism 
$\tau_{D} \colon T^{\ast}X\rightarrow T^{\ast}X(\log D)$ (\cite[6.6]{ful}).
\item If $i\in I_{\mII,\chi}$, then $\tau_{D}^{-1}(L_{i,\chi})=\cotx\times_{X}D_{i}$.
\end{enumerate}
\end{lem}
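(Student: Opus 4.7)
The plan is a local analysis in coordinates at each closed point $x$ of $D_i$, based on the coordinate expressions (\ref{rswchix}) for $\rsw(\chi)$ and the relations (\ref{relloc}) between the coefficients of $\rsw(\chi)$ and $\cform(\chi)$. In a local coordinate system $(t_1,t_2)$ at $x$ with $t_{i'}$ a local equation of $D_{i'}$ for $i' \in I_x$, the morphism $\tau_D\colon T^{\ast}X \to T^{\ast}X(\log D)$ is given, in the obvious dual coordinates $\xi_1,\xi_2$ on $T^{\ast}X$ and $\eta_1,\eta_2$ on $T^{\ast}X(\log D)$, by the substitution $\eta_{i'} = t_{i'}\xi_{i'}$ for $i' \in I_x$ and $\eta_j = \xi_j$ for $j \notin I_x$.

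Part (ii) follows from showing that for $i \in I_{\mII,\chi}$ the sub line bundle $L_{i,\chi}$ coincides over $D_i$ with the rank-one subbundle $\{\eta_i = 0\} \subset T^{\ast}X(\log D)\times_X D_i$. Indeed, by (\ref{relloc}), the coefficient $\alpha_i$ of $d\log t_i$ in $\rsw(\chi)_x \cdot \prod_{i'\in I_{\mW,\chi,x}} t_{i'}^{n_{i'}}$ lies in $t_i\dvr_{Z_\chi,x}$, and hence vanishes on $D_i$; so the $\dvr_{D_i}$-submodule generated by the twisted $\rsw(\chi)|_{D_i}$ lies in the rank-one direct summand $\{\eta_i = 0\}$, and by uniqueness $L_{i,\chi}|_{D_i} = \{\eta_i = 0\}$. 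Since the image of $\tau_D\colon T^{\ast}X|_{D_i} \to T^{\ast}X(\log D)|_{D_i}$ is exactly $\{\eta_i = 0\}$ (as the substitution $\eta_i = t_i\xi_i$ vanishes on $D_i$), we conclude $\tau_D^{-1}(L_{i,\chi}) = T^{\ast}X \times_X D_i$.

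For part (i), fix $i \in I_{\mI,\chi}$ and work at a closed point $x$ of $D_i$. From (\ref{rswchix}), the defining equation of $L_{i,\chi}|_{D_i,x}$ is linear in the $\eta$-coordinates with coefficients expressible in terms of the $\alpha_{i'}, \beta_j$. Substituting the expression for $\tau_D$ and restricting to $D_i$ (i.e.\ setting $t_i = 0$) produces a local defining ideal in $T^{\ast}X\times_X D_i$ of the form $(t_i,\,\alpha_i \cdot g)$, where $g = 0$ is, up to a unit, the defining equation of $L'_{i,\chi}$ derived from $\cform(\chi)$ via (\ref{relloc}). Primary decomposition of this ideal yields $L'_{i,\chi}$ as the generic component, together with an excess supported on fibers $T^{\ast}_xX$ at isolated closed points of $D_i$; the multiplicity of $T^{\ast}_xX$ equals the total order of vanishing along $D_i$ of the extra factors appearing in $\alpha_i g$ beyond those already present in the equation $g = 0$ for $L'_{i,\chi}$.

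Identifying these multiplicities reduces to comparing the formulas in (\ref{relloc}) for $\rsw(\chi)$ and $\cform(\chi)$ and invoking Lemma \ref{lemordp}. For $x \notin Z_{\mII,\chi}$, the extra factors are accounted for by $\alpha_i' = \alpha_i\prod_{i' \in I_x - (\{i\}\cup I_{\mII,\chi,x})} t_{i'}$, yielding multiplicity $\ord'(\chi;x,D_i) - \ord(\chi;x,D_i)$. For $x \in Z_{\mII,\chi}\cap D_i$, a type-II divisor $D_j$ passes through $x$; then by (\ref{relloc}) we have $\alpha_j \in t_j\dvr_{Z_\chi,x}$, so $\alpha_j$ contributes no factor of $t_j$ to $\alpha_i'$ (since $j \in I_{\mII,\chi,x}$), yet the substitution $\eta_j = t_j\xi_j$ still introduces such a factor in the pulled-back equation, producing the additional $+1$ in the multiplicity. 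The main obstacle is the case analysis over the possible sets $I_x$ (of size one or two) and the types of divisors through $x$ (I, II, or T); the strongly-clean sub-case is handled in Lemma \ref{lemcharksc}, which provides the template, and the present lemma requires extending that computation to every configuration and verifying that the resulting multiplicities match the stated formula.
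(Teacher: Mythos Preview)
Your approach is essentially the same as the paper's: both compute the local defining ideal of $\tau_{D}^{-1}(L_{i,\chi})$ at each closed point $x\in D_i$ in coordinates, using the expression (\ref{rswchix}) for $\rsw(\chi)$, the relations (\ref{relloc}), and Lemma~\ref{lemordp} to identify the excess exponent of $t_j$ appearing after the substitution $\eta_{i'}\mapsto t_{i'}\xi_{i'}$. The paper writes out the ideals $\mathcal{I}_x$ and $\mathcal{J}_x$ explicitly and reads off (\ref{defidi}) directly, whereas you phrase the same computation as a primary decomposition; the content is the same, and your identification of the $+1$ in the type~$\mII$ case (from the substitution $\eta_j=t_j\xi_j$ with no compensating $t_j$ in $\alpha_i'$) matches the paper's case split exactly.
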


\begin{proof}
We may assume $i=1$.
Let $x$ be a closed point of $D_{1}$.
Let $\mathcal{I}$ and $\mathcal{J}$ be the defining ideal sheaves of $L_{1,\chi}\subset \cotx(\log D)\times_{X}D_{1}$ and
$\tau_{D}^{-1}(L_{1,\chi})\subset \cotx\times_{X}D_{1}$ respectively.

We use the notation as in (\ref{rswchix}) and the proof of Lemma \ref{pullzers}.
Then we have 
\begin{equation}
\mathcal{I}_{x}=
\begin{cases}
(t_{2}^{-\ord(\chi;x,D_{1})}(\beta_{2}\partial/(\partial \log t_{1})-\alpha_{1}\partial/\partial t_{2})) 
& (\sharp (I_{x})=1), \\
(t_{2}^{-\ord(\chi;x,D_{1})}(\alpha_{2}\partial/(\partial \log t_{1})
-\alpha_{1}\partial/(\partial \log t_{2}))) & (\sharp (I_{x})=2).
\end{cases} \notag
\end{equation}
Therefore we have
\begin{equation}
\mathcal{J}_{x}=
\begin{cases}
(t_{2}^{-\ord(\chi;x,D_{1})}\alpha_{1}\partial/\partial t_{2}) & (\sharp (I_{x})=1), \\
(t_{2}^{-\ord(\chi;x,D_{1})}\alpha_{1}t_{2}\partial/\partial t_{2}) & (\sharp (I_{x})=2).
\end{cases} \notag
\end{equation}

(i) Let $n$ be the normalized valuation of $\alpha_{1}$ in $\dvr_{D_{1},x}$.
Then we have $\ord'(\chi;x,D_{1})=n+\sharp(I_{\mI,\chi,x}\cup I_{\mT,\chi,x})-1$ by Lemma \ref{lemordp}.
Hence we have 
\begin{equation}
\label{defidi}
\mathcal{J}_{x}=
\begin{cases}
(t_{2}^{\ord'(\chi;x,D_{1})-\ord(\chi;x,D_{1})+1}\partial/\partial t_{2})
& (x \in Z_{\mII,\chi}\cap D_{1}), \\
(t_{2}^{\ord'(\chi;x,D_{1})-\ord(\chi;x,D_{1})}\partial/\partial t_{2})
& (\text{otherwise}).
\end{cases} 
\end{equation}
Since $L_{1,\chi}'=T^{\ast}_{D_{1}}X$ by Lemma \ref{lemint} (ii) and the assertions are local, the assertions hold.

(ii) Since $\chi|_{K_{1}}$ is of type $\mII$, we have $\alpha_{1}=0$ in $\dvr_{D_{1},x}$ by (\ref{relloc}).
Hence $\mathcal{J}_{x}=(0)$.
Since the assertion is local, the assertion holds.
\end{proof}

For $i\in I$, let $p_{i}\colon \tau_{D}^{-1}(L_{i,\chi})\rightarrow D_{i}$ be the canonical projection.
We note that $\tau_{D}^{-1}(L_{i,\chi})=T^{*}X\times_{X}D_{i}$ for $i\in I_{\mII,\chi}$ by Lemma \ref{pullofli} (ii).

\begin{lem}
\label{lemoftypeiip}
Assume $d=2$.
Let $i$ be an element of $I_{\mII,\chi}$.
Let $\tau_{D}^{!}\colon CH_{2}(T^{*}X(\log D)\times_{X}D_{i})
\rightarrow CH_{2}(T^{*}X\times_{X}D_{i})$ be 
the Gysin homomorphism for the l.c.i.\ morphism $\tau_{D} \colon T^{\ast}X\rightarrow T^{\ast}X(\log D)$ (\cite[6.6]{ful}).
Then we have
\begin{equation}
\tau_{D}^{!}([L_{i,\chi}])=p^{\ast}_{i}(c_{1}(\cotx(\log D)\times_{X}D_{i})\cap [D_{i}] - c_{1}(\dvr_{X}(-R_{\chi})|_{D_{i}})\cap[D_{i}]-\sum_{x\in |D_{i}|}\ord(\chi;x,D_{i})[\{x\}])\notag
\end{equation} 
in $CH_{2}(\tau_{D}^{-1}(L_{i,\chi}))=CH_{2}(\cotx\times_{X}D_{i})$.
\end{lem}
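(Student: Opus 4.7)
The plan is to compute $\tau_D^!([L_{i,\chi}])$ by Fulton's excess intersection formula, using that by Lemma \ref{pullofli} (ii) the preimage $\tau_D^{-1}(L_{i,\chi})$ equals all of $\cotx\times_X D_i$, giving excess of codimension $1$ over the expected intersection.

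First, the inclusion $L_{i,\chi}\hookrightarrow \cotx(\log D)\times_X D_i$ is a regular closed embedding of codimension $1$ whose normal sheaf is $\pi_L^*\mathcal{Q}_i$, where $\pi_L\colon L_{i,\chi}\to D_i$ is the projection and $\mathcal{Q}_i := \Omega^1_X(\log D)|_{D_i}/\mathcal{L}_i$ is the quotient of $\Omega^1_X(\log D)|_{D_i}$ by the invertible sub-sheaf $\mathcal{L}_i$ defining $L_{i,\chi}$ (both are locally free, as $\mathcal{L}_i$ is a local direct summand). I would factor $\tau_D$ as the composite of its graph $\gamma\colon \cotx\hookrightarrow \cotx\times_X\cotx(\log D)$ (a regular embedding of codimension $2$, as a section of the smooth first projection) and the smooth second projection $\pi_2$, so that $\tau_D^! = \gamma^!\circ\pi_2^*$. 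Then $\pi_2^*[L_{i,\chi}] = [\cotx\times_X L_{i,\chi}]$, and its scheme-theoretic intersection with $\gamma(\cotx)$ is $\cotx\times_X D_i$ (of codimension $1$ instead of the expected $2$). The normal sheaf of $\gamma$ is $\pi^*\Omega^1_X(\log D)$, while the normal sheaf of $\cotx\times_X D_i$ in $\cotx\times_X L_{i,\chi}$ (as a section of the line-bundle first projection) is $p_i^*\mathcal{L}_i$; hence the excess sheaf is $p_i^*\mathcal{Q}_i$. By \cite[Theorem 6.3]{ful},
\[
\tau_D^!([L_{i,\chi}]) \;=\; c_1(p_i^*\mathcal{Q}_i)\cap [\cotx\times_X D_i] \;=\; p_i^*\bigl(c_1(\mathcal{Q}_i)\cap [D_i]\bigr).
\]

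From the short exact sequence $0\to \mathcal{L}_i\to \Omega^1_X(\log D)|_{D_i}\to \mathcal{Q}_i\to 0$ we obtain $c_1(\mathcal{Q}_i) = c_1(\cotx(\log D)\times_X D_i) - c_1(\mathcal{L}_i)$. To compute $c_1(\mathcal{L}_i)$, I would observe that $\rsw(\chi)|_{D_i}$ is a global section of $\mathcal{L}_i(R_\chi)|_{D_i}$ whose order of vanishing at each closed point $x\in |D_i|$ equals $\ord(\chi;x,D_i)$ by Definition \ref{defofclean} (ii); thus multiplication by $\rsw(\chi)|_{D_i}$ gives an injection $\dvr_X(-R_\chi)|_{D_i}\hookrightarrow \mathcal{L}_i$ with cokernel of length $\ord(\chi;x,D_i)$ at each $x$, so $c_1(\mathcal{L}_i) = c_1(\dvr_X(-R_\chi)|_{D_i}) + \sum_x \ord(\chi;x,D_i)[\{x\}]$. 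Substituting into the previous display yields the claim. The main subtlety is to verify that the excess normal sheaf is really $p_i^*\mathcal{Q}_i$ globally, including over the crossing points of $D_i$ with other components where $\tau_{D,D_i}$ fails to be flat; the sheaf-theoretic identification of the normal bundles handles this uniformly.
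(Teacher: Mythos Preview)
Your proof is correct and uses essentially the same ingredients as the paper: the excess intersection formula with excess bundle $\mathcal{Q}_i=\Omega^1_X(\log D)|_{D_i}/\mathcal{L}_i$, the Whitney sum formula, and the identification of $\mathcal{L}_i$ via $\rsw(\chi)|_{D_i}$. The only organizational difference is where the excess formula is applied: the paper first uses it for the zero section of $T^*X(\log D)\times_X D_i$ to write $[L_{i,\chi}]=q_i^*(c_1(\mathcal{Q}_i)\cap[D_i])$ in $CH_2(T^*X(\log D)\times_X D_i)$, after which $\tau_D^!$ is trivial (since $\tau_D^!\circ q_i^*=p_i^*$), whereas you apply the excess formula directly to $\tau_D$ via its graph factorization. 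The paper's route avoids the explicit normal-bundle bookkeeping for $\gamma$, but both arguments are the same computation in slightly different packaging.
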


\begin{proof}
Let $q_{i}\colon T^{\ast}X(\log D)\times_{X}D_{i}\rightarrow D_{i}$ be the canonical projection.
By the excess intersection formula and the Whitney sum formula,
we have
\begin{equation}
[L_{i,\chi}]= q_{i}^{\ast}(c_{1}(T^{\ast}X(\log D)\times_{X}D_{i})\cap [D_{i}]-c_{1}(L_{i,\chi})\cap [D_{i}]) \notag
\end{equation}
in $CH_{2}(T^{*}X(\log D)\times_{X}D_{i})$.
Hence we have
\begin{equation}
\tau_{D}^{!}([L_{i,\chi}])= p_{i}^{\ast}(c_{1}(T^{\ast}X(\log D)\times_{X}D_{i})\cap [D_{i}]-c_{1}(L_{i,\chi})\cap [D_{i}]) \notag
\end{equation} 
in $CH_{2}(T^{*}X\times_{X}D_{i})$.
Since $L_{i,\chi}$ is the sub line bundle of $T^{\ast}X(\log D)\times_{X}D_{i}$ defined by 
the image of the injection   
\begin{equation}
\dvr_{X}(-R_{\chi})\otimes_{\dvr_{X}} \dvr_{D_{i}}(\sum_{x\in |D_{i}|}\ord(\chi;x,D_{i})x|_{D_{i}})
\xrightarrow{\times \rsw(\chi)|_{D_{i}}} \Omega^{1}_{X}(\log D)|_{D_{i}} \notag
\end{equation}
defined by the multiplication by $\rsw(\chi)|_{D_{i}}$,
the assertion holds.
\end{proof}

Let $|D|'$ and $|D_{i}|'$ for $i\in I$ be the sets of closed points $x$ of $D$ and $D_{i}$ such that $\sharp(I_{x})=d=2$
respectively.

\begin{lem}
\label{lemoftypeii}
Assume $d=2$.
Let $i$ be an element of $I_{\mII,\chi}$.
\begin{enumerate}
\item If $\chi|_{K_{i}}$ is of usual type, then we have
\begin{equation}
[L_{i,\chi}^{\prime}]=p_{i}^{\ast}(c_{1}(\cotx\times_{X}D_{i}) \cap [D_{i}] - c_{1}(\dvr_{X}(-R_{\chi}^{\prime})|_{D_{i}})\cap [D_{i}] - \sum_{x\in |D_{i}|} \ord'(\chi;x,D_{i})[\{x\}]) \notag
\end{equation}
in $CH_{2}(\tau_{D}^{-1}(L_{i,\chi}))=CH_{2}(\cotx\times_{X}D_{i})$.
\item If $\chi|_{K_{i}}$ is of exceptional type, then we have
\begin{equation}
[L_{i,\chi}^{\prime}]=2p_{i}^{\ast}(c_{1}(\cotx\times_{X}D_{i}) \cap [D_{i}] - c_{1}(\dvr_{X}(-R_{\chi}^{\prime})|_{D_{i}})\cap [D_{i}] - \sum_{x\in |D_{i}|} \ord'(\chi;x,D_{i})[\{x\}]) \notag 
\end{equation}
in $CH_{2}(\tau_{D}^{-1}(L_{i,\chi}))=CH_{2}(\cotx\times_{X}D_{i})$.
\item We have 
\begin{equation}
[\nori]=p^{\ast}_{i}(c_{1}(\dvr_{X}(-R_{\chi})|_{D_{i}})\cap [D_{i}]+\sum_{x\in |D_{i}|}\ord(\chi;x,D_{i})[\{x\}]-\sum_{x\in |D_{i}|'}
[\{x\}]) \notag
\end{equation}
in $CH_{2}(\tau_{D}^{-1}(L_{i,\chi}))=CH_{2}(\cotx\times_{X}D_{i})$. 
\end{enumerate}
\end{lem}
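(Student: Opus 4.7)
The plan is to deduce all three parts from the excess intersection formula for a sub line bundle $L \subset E$ of a rank-$2$ vector bundle $E \to Y$ corresponding to an inclusion $\ml \hookrightarrow \mathcal{E}$ of sheaves of sections with locally free quotient: one has $[L] = p^{\ast}((c_{1}(\mathcal{E})-c_{1}(\ml)) \cap [Y])$ in $CH_{\ast}(E)$, the same mechanism used in Lemma~\ref{lemoftypeiip} for the logarithmic case. The task in each part is to identify the relevant sub line bundle explicitly.

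For (i) and (ii) I would mirror the logarithmic analysis of Lemma~\ref{lemoftypeiip} in the non-logarithmic setting. In (i), since $\chi|_{K_{i}}$ is of usual type, $\cform(\chi)$ can be regarded as a section of $\Omega^{1}_{X}(R'_{\chi})|_{D_{i}}$, and the direct-summand (saturation) condition identifies $L'_{i,\chi}$ with the image of the injection
\[
\dvr_{X}(-R'_{\chi})|_{D_{i}}\bigl(\textstyle\sum_{x \in |D_{i}|} \ord'(\chi;x,D_{i})[\{x\}]\bigr) \xrightarrow{\,\times\, \cform(\chi)|_{D_{i}}} \Omega^{1}_{X}|_{D_{i}};
\]
the excess intersection formula then gives (i). In (ii), the same analysis on $D_{i}^{1/p}$ identifies $L''_{i,\chi}$ as a sub line bundle there, with vanishing order $n' = 2\ord'(\chi;x,D_{i})$ at $x'$ by the definition of $\ord'$. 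I would then push forward along the degree-$p=2$ radicial cover $\pi \colon T^{\ast}X \times_{X} D_{i}^{1/p} \to T^{\ast}X \times_{X} D_{i}$: combining the projection formula with $\pi^{D}_{\ast}[D_{i}^{1/p}] = 2[D_{i}]$ and $\pi^{D}_{\ast}[\{x'\}] = [\{x\}]$ (the latter because $k(x) = k(x')$ by perfectness of $k$) produces the factor of $2$ in (ii).

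For (iii), the conormal inclusion $\nori \hookrightarrow T^{\ast}X \times_{X} D_{i}$ corresponds on sheaves of sections to $\mcc_{D_{i}/X} \hookrightarrow \Omega^{1}_{X}|_{D_{i}}$ with quotient $\Omega^{1}_{D_{i}}$, so the excess intersection formula yields $[\nori] = p_{i}^{\ast}(c_{1}(\Omega^{1}_{D_{i}}) \cap [D_{i}])$. The substance of (iii) is then to identify $c_{1}(\Omega^{1}_{D_{i}})$ with the cycle in the statement. I would invoke the type-$\mII$ hypothesis via Lemma~\ref{lemint}~(ii): $\xi_{i}(\chi) = 0$, so $\rsw(\chi)|_{D_{i}}$ lies in the kernel of the residue map $\res_{i}$. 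Via the residue exact sequence
\[
0 \to \Omega^{1}_{D_{i}}(\log D'|_{D_{i}})(R_{\chi}|_{D_{i}}) \to \Omega^{1}_{X}(\log D)(R_{\chi})|_{D_{i}} \xrightarrow{\res_{i}} \dvr_{D_{i}}(R_{\chi}|_{D_{i}}) \to 0
\]
with $D' = D - D_{i}$, this realizes $\rsw(\chi)|_{D_{i}}$ as a nonzero section of the line bundle $\Omega^{1}_{D_{i}}(\log D'|_{D_{i}})(R_{\chi}|_{D_{i}})$ and yields an isomorphism $\dvr_{X}(-R_{\chi})|_{D_{i}}\bigl(\textstyle\sum_{x} \ord(\chi;x,D_{i})[\{x\}]\bigr) \xrightarrow{\sim} \Omega^{1}_{D_{i}}(\log D'|_{D_{i}})$. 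Taking first Chern classes and combining with the standard identity $c_{1}(\Omega^{1}_{D_{i}}(\log D'|_{D_{i}})) = c_{1}(\Omega^{1}_{D_{i}}) + \sum_{x \in |D_{i}|'}[\{x\}]$ then completes (iii).

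The main obstacle I expect is the last step of (iii): verifying that the order of vanishing of $\rsw(\chi)|_{D_{i}}$ as a section of the subbundle $\Omega^{1}_{D_{i}}(\log D'|_{D_{i}})(R_{\chi}|_{D_{i}})$ coincides with $\ord(\chi;x,D_{i})$, which by Definition~\ref{defofclean}~(ii) is defined via the ambient bundle $\Omega^{1}_{X}(\log D)(R_{\chi})|_{D_{i}}$. This should follow from the fact that $\ker(\res_{i})$ is locally a direct summand of the ambient logarithmic cotangent bundle restricted to $D_{i}$ (since the quotient is a line bundle on a smooth curve), which can be checked in local coordinates using the explicit Witt-vector representatives of $\rsw(\chi)|_{D_{i}}$ from Subsection~\ref{sscrc}.
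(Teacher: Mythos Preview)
Your proposal is correct and follows essentially the same route as the paper. Parts (i) and (ii) are identical to the paper's argument (excess intersection plus the saturated-image description of $L'_{i,\chi}$, with push-forward along the degree-$2$ radicial cover in the exceptional case via the base change/projection formula). For (iii) the paper likewise obtains $[\nori]=p_i^{*}(c_1(\Omega^1_{D_i})\cap[D_i])$ from the conormal sequence and then uses the same residue exact sequence you wrote down; the order-of-vanishing issue you flag is exactly what the paper checks by invoking the explicit local formula (\ref{relloc}) for $\rsw(\chi)$, confirming that the saturated image in $\ker(\res_i)\cong\Omega^1_{D_i}(\log\bigcup|D_i|')$ is $\dvr_X(-R_\chi)|_{D_i}(\sum_x\ord(\chi;x,D_i)[\{x\}])$.
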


\begin{proof}
(i) By the excess intersection formula and the Whitney sum formula,
we have 
\begin{equation}
[L_{i,\chi}^{\prime}]=p_{i}^{\ast}(c_{1}(T^{\ast}X\times_{X}D_{i})\cap [D_{i}]
-c_{1}(L_{i,\chi}^{\prime})\cap [D_{i}]) \notag 
\end{equation}
in $CH_{2}(T^{*}X\times_{X}D_{i})$.
Since the sub line bundle $L_{i,\chi}^{\prime}$ of $\cotx \times_{X}D_{i}$ 
is defined by the image of the injection 
\begin{equation}
\dvrx(-R_{\chi}^{\prime})\otimes_{\dvr_{X}}
\dvr_{D_{i}}(\sum_{x\in |D_{i}|}\ord'(\chi;x,D_{i})x|_{D_{i}})
\xrightarrow{\times\cform(\chi)|_{D_{i}^{1/p}}} \Omegax|_{D_{i}} \notag
\end{equation}
defined by the multiplication by $\cform(\chi)|_{D_{i}^{1/p}}$,
where $\cform(\chi)|_{D_{i}^{1/p}}$ is regarded as a global section of $\Omega_{X}^{1}(R_{\chi}')|_{D_{i}}$,
the assertion holds.

(ii) We note that if $\chi|_{K_{i}}$ is of exceptional type then we have $p=2$.
Let $p_{i}^{\prime}\colon T^{\ast}X\times_{X}D_{i}^{1/2}\rightarrow D_{i}^{1/2}$ be the canonical projection.
Similarly as in the proof of (i), we have
\begin{equation}
[L_{i,\chi}^{\prime\prime}]={p_{i}^{\prime}}^{\ast}(c_{1}(T^{\ast}X\times_{X}D_{i}^{1/2})\cap [D_{i}^{1/2}]-
c_{1}(L_{i,\chi}^{\prime\prime})\cap [D_{i}^{1/2}]). \notag
\end{equation} 
Since $L_{i,\chi}^{\prime\prime}$ is the sub line bundle of $\cotx\times_{X}D^{1/2}$ defined by the image of the injection
\begin{equation}
\dvrx(-R_{\chi}^{\prime})\otimes_{\dvr_{X}}
\dvr_{D_{i}^{1/2}}(\sum_{x^{\prime}\in |D_{i}^{1/2}|}2\ord'(\chi;x,D_{i})x^{\prime}|_{D_{i}^{1/2}})
\xrightarrow{\times\cform(\chi)|_{D_{i}^{1/2}}} \Omegax\otimes_{\dvr_{X}}\dvr_{D_{i}^{1/2}} \notag
\end{equation}
defined by the multiplication by $\cform(\chi)|_{D_{i}^{1/2}}$,
we have 
\begin{align}
[L_{i,\chi}^{\prime\prime}]&=p_{i}^{\prime \ast}(c_{1}(\cotx\times_{X}D_{i}^{1/2})\cap [D_{i}^{1/2}]\notag \\
&\qquad\qquad -c_{1}(\dvrx(-R_{\chi}^{\prime})\otimes_{\dvr_{X}}
\dvr_{D_{i}^{1/2}})\cap [D_{i}^{1/2}]
-\sum_{x'\in |D_{i}^{1/2}|}2\ord'(\chi;x,D_{i})[\{x'\}]), \notag
\end{align}
where $x$ is the image of $x'$ in $D_{i}$ for $x'\in |D_{i}^{1/2}|$.
We consider the cartesian diagram
\begin{equation}
\xymatrix{
\cotx\times_{X}D_{i}^{1/2}\ar[r]^{F'_{i}} \ar[d]_{p_{i}^{\prime}} & \cotx\times_{X}D_{i} \ar[d]^{p_{i}}\\
D_{i}^{1/2}\ar[r]_{F_{i}} & D_{i}.
} \notag
\end{equation}
Since $[L_{i,\chi}']=F_{i*}'[L_{i,\chi}'']$ and $F_{i*}'p_{i}'^{*}[A]=p_{i}^{*}F_{i*}[A]$ in $CH_{2}(T^{*}X\times_{X}D_{i})$ for $[A]\in CH_{0}(D_{i}^{1/2})$ by the base change formula,
the assertion holds.

(iii) Since the sequence $0\rightarrow T^{*}_{D_{i}}X\rightarrow T^{*}X\times_{X}D_{i}
\rightarrow T^{*}D_{i}\rightarrow 0$ is exact,
we have
\begin{equation}
[\nori]=p_{1}^{*}(c_{1}(T^{*}D_{i})\cap [D_{i}]) \notag
\end{equation}
in $CH_{2}(T^{\ast}X\times_{X}D_{i})$ by the excess intersection formula.
We consider the sequence
\begin{equation}
0\rightarrow \dvr_{X}(-R_{\chi})\otimes_{\dvr_{X}} 
\dvr_{D_{i}}(\sum_{x\in |D_{i}|}\ord(\chi;x,D_{i})x|_{D_{i}})
\xrightarrow{\times\rsw(\chi)|_{D_{i}}}\Omega^{1}_{X}(\log D)|_{D_{i}}\xrightarrow{\res_{i}}\dvr_{D_{i}}\rightarrow 0. \notag
\end{equation}
Since $d=2$ and $i\in I_{\mII,\chi}$, the sequence is exact by (\ref{relloc}).
Since the image of the second entry by the multiplication by $\rsw(\chi)|_{D_{i}}$
is isomorphic to $\Omega_{D_{i}}^{1}(\log\bigcup |D_{i}|')$,
we have
\begin{align}
c_{1}(\dvr_{X}(-R_{\chi})|_{D_{i}})\cap [&D_{i}]+
\sum_{x\in |D_{i}|}\ord(\chi;x,D_{i})[\{x\}]
\notag \\
&=c_{1}(T^{*}D_{i}(\log \bigcup |D_{i}|'))\cap [D_{i}] \notag \\ 
&=c_{1}(T^{*}D_{i})\cap [D_{i}]+c_{1}(\dvr_{D_{i}}(\bigcup |D_{i}|'))\cap [D_{i}] \notag
\end{align}
in $CH_{0}(D_{i})$.
Hence the assertion holds.
\end{proof}

\begin{proof}[Proof of Theorem \ref{totalpullback}]
(i) We note that $SS^{\log}(X,U,\chi)$ is the union of $T^{*}_{X}X(\log D)$, $L_{i,\chi}$ for $i\in I_{\mW,\chi}$, 
and $T^{*}_{x}X(\log D)$ for $x\in |D|$ such that $s_{x}\neq 0$ by (\ref{logcharcycle}).
Since $L_{i,\chi}'=T^{*}_{D_{i}}X$ for $i\in I_{\mT,\chi}$
and $\tau_{D}^{-1}(T^{*}_{X}X(\log D))=C_{D}$ by Lemma \ref{pullzers}, 
it is sufficient to prove 
$T^{*}_{x}X\subset \tau_{D}^{-1}(SS^{\log}(X,U,\chi))$ for $x\in |D|-(|D|'\cup |Z_{\mII,\chi}|)$
such that $s_{x}=0$ and $t_{x}\neq 0$ by Lemma \ref{pullofli}.
Let $x$ be an element of $|D|-(|D|'\cup |Z_{\mII,\chi}|)$.
If $x\in D_{\mT,\chi}$, then $s_{x}=0$ and $t_{x}=0$ by (\ref{tx}), 
since $(X,U,\chi)$ is clean at $x$ and $\sharp(I_{x})=1$.
Hence we may assume $x\in Z_{\mI,\chi}$ and $I_{x}=\{1\}$.
Suppose that $s_{x}=0$.
Then we have $t_{x}=\sw(\chi|_{K_{1}})(\ord'(\chi;x,D_{1})
-\ord(\chi;x,D_{1}))$ by (\ref{tx}).
Since $\sw(\chi|_{K_{1}})>0$, 
we have $T^{*}_{x}X\subset \tau_{D}^{-1}(SS^{\log}(X,U,\chi))$
if $t_{x}\neq 0$ by Lemma \ref{pullofli} (i).
Hence the assertion holds.

(ii) Let $i$ be an element of $I_{\mI,\chi}$.
By Lemma \ref{pullofli} (i), we have
\begin{equation}
\tau_{D}^{!}([L_{i,\chi}])=[L_{i,\chi}']+\sum_{x\in |D_{i}|}(\ord'(\chi;x,D_{i})-\ord(\chi;x,D_{i}))[\spf] 
+p_{i}^{\ast}(c_{1}(\dvr_{X}(Z_{\mII,\chi})|_{D_{i}})\cap [D_{i}]) \notag
\end{equation} 
in $CH_{2}(\tau_{D}^{-1}(L_{i,\chi}))$.

Let $i$ be an element of $I_{\mII,\chi}$. 
Then we have $\tau_{D}^{-1}(L_{i,\chi})=T^{*}X\times_{X}D_{i}$ by Lemma \ref{pullofli} (ii).
By Lemma \ref{lemoftypeiip} and Lemma \ref{lemoftypeii} (i) and (ii), we have
\begin{align}
\tau_{D}^{!}([L_{i,\chi}])&=[L_{i,\chi}^{\prime}]+p_{i}^{\ast}(c_{1}(\dvr_{X}(D)|_{D_{i}})\cap[D_{i}]
\notag\\
&\qquad -c_{1}(\dvr_{X}(D-Z_{\mII,\chi})|_{D_{i}})\cap [D_{i}]+\sum_{x\in |D_{i}|}(\ord'(\chi;x,D_{i})-\ord(\chi;x,D_{i}))[\{x\}]) \notag \\
&=[L_{i,\chi}^{\prime}]+p_{i}^{\ast}(c_{1}(\dvr_{X}(Z_{\mII,\chi})|_{D_{i}})\cap[D_{i}]
+\sum_{x\in |D_{i}|}(\ord'(\chi;x,D_{i})-\ord(\chi;x,D_{i}))[\{x\}]) \notag
\end{align}
in $CH_{2}(\tau_{D}^{-1}(L_{i,\chi}))$ if $\chi|_{K_{i}}$ is of usual type and
\begin{align}
2\tau_{D}^{!}([L_{i,\chi}])&=[L_{i,\chi}^{\prime}]+2p_{i}^{\ast}(c_{1}(\dvr_{X}(D)|_{D_{i}})\cap[D_{i}]
\notag\\
&\qquad -c_{1}(\dvr_{X}(D-Z_{\mII,\chi})|_{D_{i}})\cap [D_{i}]+\sum_{x\in |D_{i}|}(\ord'(\chi;x,D_{i})-\ord(\chi;x,D_{i}))[\{x\}]) \notag \\
&=[L_{i,\chi}^{\prime}]+2p_{i}^{\ast}(c_{1}(\dvr_{X}(Z_{\mII,\chi})|_{D_{i}})\cap[D_{i}]
+\sum_{x\in |D_{i}|}(\ord'(\chi;x,D_{i})-\ord(\chi;x,D_{i}))[\{x\}]) \notag
\end{align}
in $CH_{2}(\tau_{D}^{-1}(L_{i,\chi}))$ if $\chi|_{K_{i}}$ is of exceptional type.

By Lemma \ref{lemint} (ii), Lemma \ref{pullzers}, and
the above computation of $\tau_{D}^{!}([L_{i,\chi}])$ for $i\in I_{\mW,\chi}$, we have
\begin{align}
\label{cmck}
\tau_{D}^{!}(&\Char^{\log}(X,U,\chi))-\Char^{K}(X,U,\chi)
\\
&=\sum_{i\in I_{\mII,\chi}}[T^{\ast}_{D_{i}}X]+\sum_{x\in |D|'}[T^{*}_{x}X]+\sum_{x\in |D|}s_{x}[T^{\ast}_{x}X]
+\sum_{i\in I_{\mW,\chi}}\sw(\chi|_{K_{i}})p_{i}^{\ast}(c_{1}(\dvr_{X}(Z_{\mII,\chi})|_{D_{i}})\cap [D_{i}])
\notag \\
&\qquad \qquad \qquad +\sum_{x\in |D|}\sum_{i\in I_{\mW,\chi,x}}\sw(\chi|_{K_{i}})
(\ord'(\chi ;x,D_{i})-\ord(\chi ;x,D_{i}))[T^{*}_{x}X]
-\sum_{x\in |D|}t_{x}[T^{*}_{x}X]. \notag
\end{align}
Since we have
\begin{equation}
\sum_{i\in I_{\mW,\chi}}\sw(\chi|_{K_{i}})p_{i}^{\ast}(c_{1}(\dvr_{X}(Z_{\mII,\chi})|_{D_{i}})\cap [D_{i}])=\sum_{i\in I_{\mII,\chi}}p_{i}^{\ast}(c_{1}(\dvr_{X}(R_{\chi})|_{D_{i}})\cap[D_{i}])\notag
\end{equation} 
in $CH_{2}(\tau_{D}^{-1}(SS^{\log}(X,U,\chi)))$, the difference (\ref{cmck}) is $0$ 
by (\ref{tx}) and Lemma \ref{lemoftypeii} (iii).
\end{proof}

\begin{cor}[Index formula]
\label{ndegindex}
Assume $d=2$.
If $X$ is proper over an algebraically closed field, then we have
\begin{equation}
\chi(X,j_{!}\mf)=(\mathrm{Char}^{K}(X,U,\chi),T^{\ast}_{X}X)_{T^{\ast}X}. \notag
\end{equation}
\end{cor}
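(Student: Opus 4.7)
The plan is to derive this corollary as a formal consequence of the two main inputs already in place: the logarithmic index formula (Theorem \ref{logindexformula}) and the identification of the canonical lifting as a Gysin pullback of Kato's logarithmic characteristic cycle (Theorem \ref{totalpullback} (ii)). The bridge between the two intersection numbers is the functoriality of Gysin pullbacks along the composition of l.c.i.\ morphisms $X\hookrightarrow T^{*}X\to T^{*}X(\log D)$.

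First, I would record the elementary geometric observation that the zero section inclusions fit into the commutative triangle
\begin{equation}
\xymatrix{
X \ar[r]^-{s} \ar[dr]_-{s^{\log}} & T^{*}X \ar[d]^-{\tau_{D}} \\
 & T^{*}X(\log D),
} \notag
\end{equation}
where $s\colon X\rightarrow T^{*}X$ and $s^{\log}\colon X\rightarrow T^{*}X(\log D)$ are the zero section immersions. Since $d=2$, both $s$ and $s^{\log}$ are regular closed immersions of codimension $2$, and $\tau_{D}$ is l.c.i.\ (as used in Theorem \ref{totalpullback} (ii)). By the functoriality of Gysin pullbacks along composites of l.c.i.\ morphisms (\cite[6.5]{ful}), we have $(s^{\log})^{!}=s^{!}\circ \tau_{D}^{!}$ as operators on cycles supported in $\tau_{D}^{-1}(SS^{\log}(X,U,\chi))$; note that $s(X)=T^{*}_{X}X$ is contained in this set by Lemma \ref{pullzers}.

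Second, applying this identity to $\Char^{\log}(X,U,\chi)$ and invoking Theorem \ref{totalpullback} (ii) gives
\begin{equation}
s^{!}(\Char^{K}(X,U,\chi))
=s^{!}(\tau_{D}^{!}(\Char^{\log}(X,U,\chi)))
=(s^{\log})^{!}(\Char^{\log}(X,U,\chi)) \notag
\end{equation}
as $0$-cycles on $X$ (supported on $SS^{K}(X,U,\chi)\cap T^{*}_{X}X$ and $SS^{\log}(X,U,\chi)\cap T^{*}_{X}X(\log D)$ respectively). Taking degrees, and unwinding the definition of the refined intersection number as the degree of the Gysin pullback along the zero section, this reads
\begin{equation}
(\Char^{K}(X,U,\chi),T^{*}_{X}X)_{T^{*}X}=(\Char^{\log}(X,U,\chi),T^{*}_{X}X(\log D))_{T^{*}X(\log D)}. \notag
\end{equation}

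Finally, Theorem \ref{logindexformula} identifies the right-hand side with $\chi(X,j_{!}\mf)$, yielding the desired formula. There is no serious obstacle: the argument is entirely formal given the two input theorems. The only point requiring care is the compatibility $(s^{\log})^{!}=s^{!}\circ \tau_{D}^{!}$, but this is a standard consequence of the functoriality of refined Gysin maps applied to the factorization $s^{\log}=\tau_{D}\circ s$ of the logarithmic zero section through the non-logarithmic one.
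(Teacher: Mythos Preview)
Your proof is correct and follows the same approach as the paper. The paper's proof is a single sentence citing Theorem \ref{logindexformula} and Theorem \ref{totalpullback} (ii); you have spelled out the implicit step, namely the functoriality $(s^{\log})^{!}=s^{!}\circ\tau_{D}^{!}$ of Gysin maps along the factorization $s^{\log}=\tau_{D}\circ s$, which is exactly what converts the equality of Theorem \ref{totalpullback} (ii) into an equality of intersection numbers with the respective zero sections.
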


\begin{proof}
The assertion follows from Theorem \ref{logindexformula} and Theorem \ref{totalpullback} (ii).
\end{proof}

%%%%%%%%%%%%%%%%%%%%%%%%%%%%%%%%%%%%%%%%%%%%%%%%%%%
%%%%%%%%%%%%%%%%%%%%%%%%%%%%%%%%%%%%%%%%%%%%%%%%%%%
\section{Homotopy invariance of characteristic cycle}
\label{shi}

Let $X$ be a smooth scheme purely of dimension $d$ 
over a perfect field $k$ of characteristic $p>0$.
Let $D$ be a divisor on $X$ with simple normal crossings 
and $\{D_{i}\}_{i\in I}$ the irreducible components of $D$.
Let $K_{i}$ be the local field at the generic point of $D_{i}$ for $i\in I$
and let $|D|$ be the set of closed points of $D$.
We put $U=X-D$ and let $j\colon U\rightarrow X$ be the open immersion.
Let $\mf_{0}$ and $\mf_{1}$ be smooth sheaves of $\Lambda$-modules of rank $1$ on $U$
and let $\chi_{i}\colon \pi_{1}^{\ab}(U)\rightarrow \Lambda^{\times}$ be the character
corresponding to $\mf_{i}$ for $i=0,1$.
We fix an inclusion $\psi\colon \Lambda^{\times}\rightarrow \mathbf{Q}/\mathbf{Z}$ and regard
$\chi_{i}$ as an element of $H^{1}_{\et}(U,\mathbf{Q}/\mathbf{Z})$ by $\psi$ for $i=0,1$.

\begin{prop} 
\label{prophi}
Assume $d=2$ and  
that $(X,U,\chi_{i})$ for $i=0,1$ are clean.
If $\Char^{K}(X,U,\chi_{0})=\Char^{K}(X,U,\chi_{1})$,
then we have 
\begin{equation}
\label{eqcc}
CC(j_{!}\mf_{0})=CC(j_{!}\mf_{1}). 
\end{equation}
\end{prop}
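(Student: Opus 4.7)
The first step is to unpack the cycle equality $\Char^{K}(X,U,\chi_{0})=\Char^{K}(X,U,\chi_{1})$. Comparing the canonical lifting (\ref{canlift}) with Saito's formula (\ref{saitoccro}), both cycles have the shape $[T^{*}_{X}X] + \sum_{i\in I} r'_{i}[L'_{i,\chi}] + \sum_{x\in|D|} (\cdot)[T^{*}_{x}X]$, and each sub line bundle $L'_{i,\chi}$ is an irreducible $2$-dimensional cycle living over the single component $D_{i}$. Reading off components and multiplicities, the cycle equality forces $r'_{i,0}=r'_{i,1}$ and $[L'_{i,\chi_{0}}]=[L'_{i,\chi_{1}}]$ for every $i\in I$, and $t_{x,0}=t_{x,1}$ for every $x\in|D|$. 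Writing $u_{x,i}$ for the multiplicity of $[T^{*}_{x}X]$ in $CC(j_{!}\mf_{i})$, the target equality (\ref{eqcc}) thus reduces to proving $u_{x,0}=u_{x,1}$ at every closed point $x$ of $D$.

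Call a closed point $x\in|D|$ \emph{good} if for both $i=0,1$ the triple $(X,U,\chi_{i})$ is either non-degenerate at $x$ or satisfies $\sharp(I_{\mT,\chi_{i},x})=\sharp(I_{\mW,\chi_{i},x})=1$. By Proposition \ref{propndeg}, at a good point $u_{x,i}=t_{x,i}$ for $i=0,1$, and together with $t_{x,0}=t_{x,1}$ this yields $u_{x,0}=u_{x,1}$. By Lemma \ref{lemndg} (ii) and cleanliness, only finitely many closed points fail to be good; at each such \emph{bad} point Corollary \ref{corintdeg} gives $I_{\mW,\chi_{i},x}=I_{\mI,\chi_{i},x}$ for $i=0,1$.

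To handle a bad point $x_{0}$, the plan is to isolate it by compactification and invoke the index formula. Since $CC$ and $\Char^{K}$ are both étale-local and compatible with $X_{\bar k}\to X$, I work in an étale neighborhood of $x_{0}$ and compactify to a smooth projective surface $\bar X$ over $\bar k$, extending $\chi_{0}$ and $\chi_{1}$ to clean characters along a simple normal crossing divisor $\bar D\supset D$ whose new boundary components introduce only good closed points for both sheaves; the canonical liftings then still coincide on $\bar X$. Under this arrangement the difference $CC(\bar j_{!}\bar\mf_{0})-CC(\bar j_{!}\bar\mf_{1})$ is supported in $T^{*}_{x_{0}}\bar X$ and equals $(u_{x_{0},0}-u_{x_{0},1})[T^{*}_{x_{0}}\bar X]$. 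Intersecting with the zero section $T^{*}_{\bar X}\bar X$ gives, on one side, $u_{x_{0},0}-u_{x_{0},1}$, and on the other, by Theorem \ref{sindex}, $\chi(\bar X,\bar j_{!}\bar\mf_{0})-\chi(\bar X,\bar j_{!}\bar\mf_{1})$; this Euler-characteristic difference vanishes by Corollary \ref{ndegindex} and the equality of canonical liftings, so $u_{x_{0},0}=u_{x_{0},1}$. The main obstacle is the compactification step: producing a smooth projective surface on which both $\chi_{0},\chi_{1}$ extend cleanly with matching canonical liftings and with $x_{0}$ the only bad point may require enlarging the ramification divisor with sufficiently generic tamely ramified components and, if necessary, applying Kato's resolution of non-clean points (\cite[Theorem 4.1]{ka2}) to restore cleanness, while carefully tracking the contribution to $s_{x}$ and $t_{x}$ along the new boundary.
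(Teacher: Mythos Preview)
Your initial reductions are fine, and the observation that Proposition \ref{propndeg} handles every point except finitely many where $I_{\mW,\chi_{i},x}=I_{\mI,\chi_{i},x}$ is correct. But the compactification step, which you yourself flag as ``the main obstacle,'' is a genuine gap, not a technicality.

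The problem is this: $\chi_{0}$ and $\chi_{1}$ are two \emph{different} characters that merely share the same numerical invariants $(R_{\chi},R'_{\chi},[L'_{i,\chi}],t_{x})$ along the given divisor $D$. When you compactify $X$ to $\bar X$, the characters extend uniquely (as elements of $H^{1}_{\et}(U,\mathbf{Q}/\mathbf{Z})$), and their ramification along the new boundary components $\bar D\setminus D$ is completely determined---you have no freedom to adjust it. There is no reason whatsoever that $\sw(\chi_{0}|_{K'})=\sw(\chi_{1}|_{K'})$ or $\dt(\chi_{0}|_{K'})=\dt(\chi_{1}|_{K'})$ for the local field $K'$ at a new component, let alone that the canonical liftings agree there. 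Adding tame components or blowing up non-clean points does not create any mechanism to force $\Char^{K}(\bar X,\bar U,\chi_{0})=\Char^{K}(\bar X,\bar U,\chi_{1})$ on the boundary; that equality on $\bar X$ is at least as hard as what you are trying to prove. Nor can you arrange that all new boundary points are ``good'' for both characters simultaneously, since goodness depends on the (uncontrolled) ramification data there.

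The paper avoids this entirely by a deformation argument: it builds an explicit family $\mg$ on $\tilde U=U\times_{k}\mathbf{A}^{1}_{k}$, defined by a Witt vector $b$ (see (\ref{tildfeq})) interpolating the Witt vectors for $\chi_{0}$ and $\chi_{1}$, so that $h_{i}^{*}\mg=\mf_{i}$ at $T=i$. One then checks (Lemmas \ref{lemhswdt}--\ref{lemhclean}) that $(\tilde X,\tilde U,\varphi)$ is clean near $X_{0}\cup X_{1}$ and that its $\log$ singular support, pulled back via $\tau_{\tilde D}$, is contained in $C\times_{k}\mathbf{A}^{1}_{k}$ for a closed conical $C\subset T^{*}X$ (Lemma \ref{lemsuff}). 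Theorem \ref{proplogct} and Lemma \ref{lemcako} then force $CC(h_{0}^{*}\tilde{j}_{!}\mg)=CC(h_{1}^{*}\tilde{j}_{!}\mg)$. This argument is purely local near $x$ and never requires compactifying $X$ or controlling the two characters at infinity. The compactification-and-index-formula technique you sketch is exactly what the paper uses \emph{later}, in the proof of Theorem \ref{thmmain}, but only \emph{after} Proposition \ref{prophi} has been established and used to replace $\chi$ by an explicit model character that extends by hand to $\mathbf{P}^{2}_{k}$.
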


We note that 
we have $R_{\chi_{0}}=R_{\chi_{1}}$ and $R_{\chi_{0}}'=R_{\chi_{1}}'$
by Lemma \ref{lemint} (ii) and (\ref{saitoccro})
if $\Char^{K}(X,U,\chi_{0})=\Char^{K}(X,U,\chi_{1})$.

The following corollary holds immediately by using Lemma \ref{lemint} (ii), 
(\ref{tx}), (\ref{canlift}), and Proposition \ref{prophi}:
\begin{cor}
\label{corckcc}
Assume $d=2$ and that $(X,U,\chi_{i})$ for $i=0,1$ are clean.
If $I_{\mW,\chi_{0}}=I_{\mI,\chi_{0}}=I_{\mW,\chi_{1}}=I_{\mI,\chi_{1}}$, $R_{\chi_{0}}=R_{\chi_{1}}$,
and if $\ord'(\chi_{0};x,D_{i'})=\ord'(\chi_{1};x,D_{i'})$ for every closed point $x$ of $Z_{\chi_{0}}=Z_{\chi_{1}}$
and $i'\in I_{\mW,\chi_{0},x}=I_{\mW,\chi_{1},x}$,
then we have
\begin{equation}
\Char^{K}(X,U,\chi_{0})=\Char^{K}(X,U,\chi_{1}) \notag
\end{equation}
and
\begin{equation}
CC(j_{!}\mf_{0})=CC(j_{!}\mf_{1}). \notag
\end{equation}
\end{cor}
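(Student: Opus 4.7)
The plan is first to extract the precise content of the hypothesis. Writing both characteristic cycles in the form (\ref{saitoccro}) and both canonical liftings in the form (\ref{canlift}), the equality $\Char^{K}(X,U,\chi_{0})=\Char^{K}(X,U,\chi_{1})$ immediately yields equality of the zero section term, each coefficient $r_i'$, each cycle $[L_{i,\chi_\ell}']$, and each fiber multiplicity $t_{x,0}=t_{x,1}$ of the canonical lifting. Hence (\ref{eqcc}) reduces to proving $u_{x,0}=u_{x,1}$ for every $x\in |D|$, where $u_{x,\ell}$ denotes the fiber multiplicity at $x$ of $CC(j_{!}\mf_{\ell})$. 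Using the stability of both $CC(j_{!}\mf_{\ell})$ (via \cite[Theorem 0.1]{sy} and Lemma \ref{suppCC}) and $\Char^{K}(X,U,\chi_\ell)$ under $p$-part replacement, I may assume $\chi_0$ and $\chi_1$ have $p$-power order.

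Next I would construct an interpolation family over $\ak$. Locally near each closed point of $D$, write the characters via Witt vectors $w_0, w_1\in\Gamma(X,\fillog_{R_{\chi}}j_{\ast}W_{s}(\dvr_{U}))$ for the common divisor $R_{\chi}=R_{\chi_{0}}=R_{\chi_{1}}$. Pulling back to $\ak\tk U$ via the second projection and denoting the coordinate of $\ak$ by $T$, form the Witt vector
\[
\tilde{w}\;=\;[T]\cdot w_{1}\;+\;[1-T]\cdot w_{0}
\]
using the Teichm\"{u}ller lifts $[T],[1-T]$ and the Witt ring operations on $\ak\tk U$. This yields a character $\tilde{\chi}$ on $\pi_{1}^{\ab}(\ak\tk U)$ whose restrictions to the fibers over $T=0$ and $T=1$ are $\chi_0$ and $\chi_1$ respectively. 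Using Lemma \ref{lemwitt} and the equality of canonical liftings, verify that there exists a Zariski open subset $V\subset\ak$ containing $\{0,1\}$ over which the triple $(V\tk X,V\tk U,\tilde{\chi}|_{V\tk U})$ is clean and its ramification data $R_{\tilde{\chi}}$, $R_{\tilde{\chi}}'$, $\rsw(\tilde{\chi})$, $\cform(\tilde{\chi})$ are pulled back from those of $\chi_{0}$ (equivalently $\chi_{1}$) on $X$ via the second projection $p\colon V\tk X\to X$.

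Finally I would apply Theorem \ref{thmpull} to each closed immersion $i_{t}\colon\{t\}\tk X\hookrightarrow V\tk X$ for a closed point $t\in V$. The uniformity from the previous step ensures, via Proposition \ref{cleaneq} and Theorem \ref{proplogct}, that each $i_{t}$ is properly transversal to $SS(\tilde{j}_{!}\tilde{\mf})$, where $\tilde{j}\colon V\tk U\to V\tk X$ is the open immersion and $\tilde{\mf}$ is the smooth sheaf on $V\tk U$ corresponding to $\tilde{\chi}$. This gives
\[
CC(j_{!}\mf_{t})\;=\;i_{t}^{!}\,CC(\tilde{j}_{!}\tilde{\mf})\qquad(t\in V),
\]
and because $CC(\tilde{j}_{!}\tilde{\mf})$ is pulled back from a $2$-cycle on $T^{\ast}X$ (a consequence of the uniformity, together with Lemma \ref{suppCC} and the shape (\ref{saitoccro}) applied relatively to $p$), the right-hand side is independent of $t\in V$. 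Specializing to $t=0,1$ yields the desired equality (\ref{eqcc}).

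The main obstacle is the middle step: establishing cleanliness of the Witt vector interpolation and constancy of its ramification invariants along $V$. The Witt ring operations are nonlinear and the hypothesis $\Char^{K}(X,U,\chi_{0})=\Char^{K}(X,U,\chi_{1})$ imposes only indirect constraints on the individual Witt vector components of $\chi_{0}$ and $\chi_{1}$. Propagating these constraints through the Witt sum $[T]w_{1}+[1-T]w_{0}$ to obtain the global uniformity required in the third step will demand a careful case analysis, organized by the type $\mathrm{I}$, $\mathrm{II}$, and tame partition of $I$, using the explicit description of $\rsw$ and $\cform$ via Witt vectors from Subsection \ref{sscrc} together with Lemma \ref{lemwitt} and Corollary \ref{corwitt}.
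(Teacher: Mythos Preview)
Your proposal has the dependency structure inverted. The corollary has two conclusions, and you treat the first one ($\Char^{K}(X,U,\chi_{0})=\Char^{K}(X,U,\chi_{1})$) as if it were part of the hypothesis, then spend all your effort on an interpolation argument for the second. But the $\Char^{K}$ equality is something you must \emph{derive} from the stated hypotheses on $I_{\mW}$, $I_{\mI}$, $R_{\chi}$, and $\ord'$. This derivation is short: since $I_{\mW,\chi_\ell}=I_{\mI,\chi_\ell}$, Lemma~\ref{lemint}~(ii) gives $[L'_{i,\chi_\ell}]=[T^{*}_{D_i}X]$ for every $i\in I_{\mW,\chi_\ell}$ (and $L'_{i,\chi_\ell}=T^{*}_{D_i}X$ for $i\in I_{\mT,\chi_\ell}$ by definition), while $R_{\chi_0}=R_{\chi_1}$ and the common type~$\mI$ condition force $r'_{i,0}=r'_{i,1}$. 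For the fiber multiplicities, cleanliness gives $s_x=0$ and $\ord(\chi_\ell;x,D_{i'})=0$, and $I_{\mII,\chi_\ell}=\emptyset$ kills the last sum in (\ref{tx}); the remaining terms in (\ref{tx}) depend only on $\sharp(I_x)$, $\sw(\chi_\ell|_{K_{i'}})$, and $\ord'(\chi_\ell;x,D_{i'})$, all of which agree by hypothesis. Assembling these in (\ref{canlift}) gives the first conclusion.

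Once the $\Char^{K}$ equality is in hand, the second conclusion is exactly the statement of Proposition~\ref{prophi}, which is already proved in the paper. Your interpolation over $\mathbf{A}^{1}_{k}$ is essentially a second proof of Proposition~\ref{prophi}; the paper does precisely this (see the Witt vector $b$ in (\ref{tildfeq}) and Lemmas~\ref{lemhswdt}--\ref{lemsuff}), though with the specific choice $a_i(1-T)^{p^{s-i}}$ and $a'_iT^{p^{s-i}}$ rather than Teichm\"uller multiplication; this choice makes $dT$-terms vanish in characteristic $p$ and yields the clean formula $\rsw(\varphi)=(1-T)^{p^{s}}\rsw(\chi_0)+T^{p^{s}}\rsw(\chi_1)$. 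So the ``main obstacle'' you identify is real, but it has already been overcome in the proof of Proposition~\ref{prophi}; for the corollary you should simply invoke that proposition.
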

\vspace{0.2cm}

We consider $\tilde{X}=X\times_{k}\mathbf{A}_{k}^{1}$.  
We put $\tilde{D}=D\times_{k}\mathbf{A}_{k}^{1}\subset \tilde{X}$
and $\tilde{D}_{i}=D_{i}\times_{k}\mathbf{A}_{k}^{1}$ for $i\in I$.
Then $\tilde{D}$ is a divisor on $\tilde{X}$ with simple normal crossings whose irreducible components
are $\{\tilde{D}_{i}\}_{i\in I}$.
Let $\tilde{U}=U\times_{k}\mathbf{A}_{k}^{1}\subset \tilde{X}$ be the complement of $\tilde{D}$ in $\tilde{X}$ and
$\tilde{j}\colon \tilde{U}\rightarrow \tilde{X}$ the open immersion.
Let $X_{i}$ be the closed subscheme $X\times_{k}\{i\}\subset \tilde{X}$ for $i=0,1$
and let $h_{i}\colon X\rightarrow \tilde{X}$ be the composition of the inverse of the projection 
$\pr_{1}\colon X_{i}\rightarrow X$ and the closed immersion $X_{i}\rightarrow \tilde{X}$ for $i=0,1$.
Let $\tilde{p}\colon \tilde{X}\rightarrow X$ be the projection.

\begin{lem}
\label{lemcako}
Let $\tilde{V}$ be an open subscheme of $\tilde{X}$ containing $X_{0}\cup X_{1}$.
Let $\mg$ be a constructible complex of $\Lambda$-modules on $\tilde{V}$
and let $C\subset T^{*}X$ be a closed conical subset purely of dimension $d$.
We regard $T^{*}X\times_{k}\mathbf{A}_{k}^{1}$ as a subbundle of $T^{*}\tilde{X}$
by the injection $d\tilde{p}\colon T^{*}X\times_{k}\mathbf{A}_{k}^{1}\rightarrow T^{*}\tilde{X}$.
Assume that $SS(\mg|_{\tilde{V}})\subset T^{*}\tilde{V}$ is contained in $C\times_{k}\mathbf{A}^{1}_{k}$.
\begin{enumerate}
\item $h_{i}$ is properly $SS(\mg|_{\tilde{V}})$-transversal for $i=0,1$.
\item $CC(h_{0}^{*}\mg)=CC(h_{1}^{*}\mg)$.
\end{enumerate}
\end{lem}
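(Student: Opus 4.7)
The plan is to exploit the product structure $\tilde{X}=X\times_{k}\mathbf{A}_{k}^{1}$ and the hypothesis that $SS(\mg|_{\tilde{V}})$ has vanishing $T^{\ast}\mathbf{A}_{k}^{1}$-component in order to describe each irreducible component of $SS(\mg|_{\tilde{V}})$ as a product $(C_{a}\times_{k}\mathbf{A}_{k}^{1})\cap T^{\ast}\tilde{V}$ for some irreducible component $C_{a}$ of $C$. Once this description is in hand, (i) reduces to a direct dimension and transversality count, and (ii) follows from the pull-back formula (Theorem \ref{thmpull}).

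For (i), since $\tilde{p}\circ h_{i}=\mathrm{id}_{X}$, the natural splitting $h_{i}^{\ast}T^{\ast}\tilde{X}\simeq T^{\ast}X\oplus h_{i}^{\ast}T^{\ast}\mathbf{A}_{k}^{1}$ makes $dh_{i}$ the projection onto the first summand; by hypothesis $h_{i}^{\ast}SS(\mg|_{\tilde{V}})$ already lies in this first summand. Intersecting with $dh_{i}^{-1}(T^{\ast}_{X}X)$ (the second summand) therefore yields only the zero section, establishing $SS(\mg|_{\tilde{V}})$-transversality. For the proper version, Beilinson's purity (the remark after Definition \ref{defofsingsupp}) ensures that $SS(\mg|_{\tilde{V}})$ is purely of dimension $d+1$. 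For an irreducible component $Z\subset(C\times_{k}\mathbf{A}_{k}^{1})\cap T^{\ast}\tilde{V}$, the projection $Z\to \mathbf{A}_{k}^{1}$ must be dominant, since otherwise $Z$ would lie in some fiber $C\times\{j_{0}\}$ of dimension only $d$. Likewise the image of $Z$ in $C$ is irreducible, and the inequality $\dim Z\le \dim(\text{image in }C)+\dim(\text{image in }\mathbf{A}_{k}^{1})\le d+1$ forces this image to be an irreducible component $C_{a}$ of $C$, giving $Z=(C_{a}\times_{k}\mathbf{A}_{k}^{1})\cap T^{\ast}\tilde{V}$. Restricting to $X_{i}$ produces a copy of $C_{a}$, purely of dimension $d$; hence $h_{i}$ is properly $SS(\mg|_{\tilde{V}})$-transversal.

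For (ii), Theorem \ref{thmpull} gives $CC(h_{i}^{\ast}\mg)=h_{i}^{!}CC(\mg|_{\tilde{V}})$ for $i=0,1$. Writing $CC(\mg|_{\tilde{V}})=\sum_{a}m_{a}[(C_{a}\times_{k}\mathbf{A}_{k}^{1})\cap T^{\ast}\tilde{V}]$ by the component description above, each term is computed using Definition \ref{defpb}: the set-theoretic pull-back of $(C_{a}\times_{k}\mathbf{A}_{k}^{1})\cap T^{\ast}\tilde{V}$ by $h_{i}$ on $T^{\ast}\tilde{X}\times_{\tilde{X}}X$ is $C_{a}\times\{i\}$, which $dh_{i}$ maps isomorphically onto $C_{a}\subset T^{\ast}X$, and the sign $(-1)^{(d+1)-d}=-1$ intervenes; hence $h_{i}^{!}[(C_{a}\times_{k}\mathbf{A}_{k}^{1})\cap T^{\ast}\tilde{V}]=-[C_{a}]$, independent of $i$. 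Summing yields $CC(h_{0}^{\ast}\mg)=CC(h_{1}^{\ast}\mg)$. The main delicacy is the identification of components of $SS(\mg|_{\tilde{V}})$ in step (i), which is cleanly handled by Beilinson's purity together with the hypothesis; once this is available, the remainder is formal bookkeeping.
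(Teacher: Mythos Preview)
Your proof is correct and follows essentially the same approach as the paper: both use Beilinson's purity to identify each irreducible component of $SS(\mg|_{\tilde{V}})$ with (the restriction to $T^{\ast}\tilde{V}$ of) some $C_{a}\times_{k}\mathbf{A}_{k}^{1}$, deduce proper transversality from the resulting dimension count, and conclude (ii) via Theorem \ref{thmpull}. The only stylistic difference is that the paper obtains $SS(\mg|_{\tilde{V}})$-transversality of $h_{i}$ by citing \cite[Lemma 3.4.1, Lemma 3.4.3]{sa4} (smooth maps are $C$-transversal, and sections of a $C$-transversal map are $\tilde{p}^{\circ}C$-transversal), whereas you verify it directly from the splitting $h_{i}^{\ast}T^{\ast}\tilde{X}\simeq T^{\ast}X\oplus h_{i}^{\ast}T^{\ast}\mathbf{A}_{k}^{1}$; your argument is more self-contained, the paper's is shorter.
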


\begin{proof}
We note that $SS(\mg|_{\tilde{V}})$ is purely of dimension $d+1$ by \cite[Theorem 1.3]{be}.
Since the projection $\tilde{p}\colon \tilde{X}\rightarrow X$ is smooth, 
the projection $\tilde{p}$ is $C$-transversal and $\tilde{p}^{\circ}C=C\times_{k}\mathbf{A}_{k}^{1}$ 
by \cite[Lemma 3.4.1]{sa4}.
Since the morphism $h_{i}$ is a section of $\tilde{p}$ for $i=0,1$
and the identity morphism $\id_{X}\colon X\rightarrow X$ is $C$-transversal,
the morphism $h_{i}$ is $\tilde{p}^{\circ}C$-transversal for $i=0,1$ by \cite[Lemma 3.4.3]{sa4}.
Let $\{C_{a}\}_{a}$ be the irreducible components of $C$.
Then $\{C_{a}\times_{k}\mathbf{A}_{k}^{1}\}_{a}$ are the irreducible components of 
$\tilde{p}^{\circ}C=C\times_{k}\mathbf{A}_{k}^{1}$.
Since $SS(\mg|_{\tilde{V}})\subset \tilde{p}^{\circ}C=C\times_{k}\mathbf{A}_{k}^{1}$ 
and $SS(\mg|_{\tilde{V}})$ is purely of dimension $d+1$,
the pull-back $h_{i}^{*}SS(\mg|_{\tilde{V}})$ is of dimension $d$ for $i=0,1$.
Hence the assertion (i) holds.

Since $SS(\mg|_{\tilde{V}})$ is a union of irreducible components of $\tilde{p}^{\circ}C=C\times_{k}\mathbf{A}_{k}^{1}$
and $h_{i}^{*}(C_{a}\times_{k}\mathbf{A}_{k}^{1})=C_{a}$ for $a$ and $i=0,1$,
the assertion (ii) holds by (i) and Theorem \ref{thmpull}.
\end{proof}

Let $s\ge 0$ be an integer such that the orders of the $p$-parts of 
$\chi_{0}$ and $\chi_{1}$ are $\le p^{s}$.
After shrinking $X$ if necessary, we take global sections
$a=(a_{s-1},\ldots,a_{0})$ and $a'=(a_{s-1}',\ldots, a_{0}')$ 
of $\fillog_{R_{\chi_{0}}}j_{*}W_{s}(\dvr_{U})$ and $\fillog_{R_{\chi_{1}}}j_{*}W_{s}(\dvr_{U})$
whose images in $H^{1}_{\et}(U,\mathbf{Q}/\mathbf{Z})$ are the $p$-parts of 
$\chi_{0}$ and $\chi_{1}$ respectively 
(see the remark after Definition \ref{deffilshf})
and we assume that $X=\Spec A$ is affine.
We put $\tilde{X}=X\times_{k}\mathbf{A}_{k}^{1}=\Spec A[T]$.
We define a global section $b$ of $\tilde{j}_{*}W_{s}(\dvr_{\tilde{U}})$ by
\begin{equation}
\label{tildfeq}
b=(a_{s-1}(1-T)^{p}, \ldots ,a_{i}(1-T)^{p^{s-i}}, \ldots ,a_{0}(1-T)^{p^{s}})
+(a_{s-1}'T^{p}, \ldots, a_{i}'T^{p^{s-i}}, \ldots ,a_{0}'T^{p^{s}}). 
\end{equation}
Let $\mg$ be the smooth sheaf of $\Lambda$-modules of rank $1$ on $\tilde{U}$ defined by the
Artin-Schreier-Witt equation $(F-1)(t)=b$.
Then we have $h_{i}^{*}\mg=\mf_{i}$ for $i=0,1$.
Let $\varphi\colon \pi_{1}^{\ab}(\tilde{U})\rightarrow \Lambda^{\times}$ be the character 
corresponding to $\mg$ and regard $\varphi$ 
as an element of $H^{1}_{\et}(\tilde{U},\mathbf{Q}/\mathbf{Z})$ by $\psi$.
Let $\tilde{K}_{i}$ be the local field at the generic point of $\tilde{D}_{i}$ for $i\in I$. 

\begin{lem}
\label{lemhswdt}
Let $i$ be an element of $I$
and assume $\sw(\chi_{0}|_{K_{i}})=\sw(\chi_{1}|_{K_{i}})$.
Then we have $\sw(\varphi|_{\tilde{K}_{i}})=\sw(\chi_{0}|_{K_{i}})$.
\end{lem}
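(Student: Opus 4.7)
I would establish the two inequalities $\sw(\varphi|_{\tilde K_i}) \le n$ and $\sw(\varphi|_{\tilde K_i}) \ge n$ separately, where $n := \sw(\chi_0|_{K_i}) = \sw(\chi_1|_{K_i})$ denotes the common value. For the upper bound, I would check directly that $b \in \fillog_n W_s(\tilde K_i)$: since $T$ and $1-T$ are units in $\dvr_{\tilde K_i}$ (they do not vanish at the generic point of $\tilde D_i = D_i \times_k \mathbf A^1_k$), each entry $a_i(1-T)^{p^{s-i}}$ of the first tuple in (\ref{tildfeq}) has $\tilde K_i$-valuation equal to $\ord_{K_i}(a_i) \ge -n/p^i$, and the same bound holds for $a'_i T^{p^{s-i}}$. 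As the filtration $\fillog_n W_s$ is a subgroup under Witt addition, this gives $b \in \fillog_n W_s(\tilde K_i)$, whence $\sw(\varphi|_{\tilde K_i}) \le n$ by definition.

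For the lower bound, by Lemma \ref{lemrsw} it suffices to show $\varphi^{(n)}_s(\bar b) \ne 0$ in $\grlog_n \Omega^1_{\tilde K_i}$. The main computational input is the characteristic-$p$ identity $d((1-T)^{p^{s-i}}) = d(T^{p^{s-i}}) = 0$ for $i < s$, which causes the differentiation of the $(1-T)$- and $T$-factors in (\ref{tildfeq}) to vanish. A direct Leibniz calculation applied termwise to each of the two tuples then yields, after summing,
\begin{equation*}
\varphi^{(n)}_s(\bar b) \;=\; (1-T)^{p^s}\,\iota(\rsw(\chi_0|_{K_i})) \,+\, T^{p^s}\,\iota(\rsw(\chi_1|_{K_i})) \,+\, \mathcal E,
\end{equation*}
where $\iota\colon \grlog_n \Omega^1_{K_i} \to \grlog_n \Omega^1_{\tilde K_i}$ is induced by $K_i \subset \tilde K_i$ and $\mathcal E$ collects the contributions of the Witt-addition correction polynomials. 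Every monomial in $\mathcal E$ mixes at least one factor from the $y$-tuple of (\ref{tildfeq}), which carries a positive power of $T$; since the characteristic-$p$ Frobenius kills the differentials of the relevant $T$- and $(1-T)$-powers, these $T$-factors survive, so $\mathcal E$ vanishes upon specializing the $T$-variable to $0$. Applying the Frobenius identity $(1-T)^{p^s} = 1 - T^{p^s}$ (resp.\ $1 + T^{p^s}$ when $p = 2$), the $T=0$ specialization of $\varphi^{(n)}_s(\bar b)$ equals $\iota(\rsw(\chi_0|_{K_i}))$, which is nonzero by $\sw(\chi_0|_{K_i}) = n$ and the injectivity of $\iota$. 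Hence $\varphi^{(n)}_s(\bar b) \ne 0$, giving $\sw(\varphi|_{\tilde K_i}) = n$.

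The hard part of this plan will be the explicit control on the Witt-correction term $\mathcal E$, namely the verification that every contribution is divisible by $T$ (equivalently, vanishes under $T \mapsto 0$) in the graded piece. This amounts to a careful bookkeeping of the universal Witt-addition polynomials evaluated on the specific tuples of (\ref{tildfeq}): one must track that each correction monomial picks up at least one entry from the second tuple and that the resulting $T$-power is not killed by the characteristic-$p$ Frobenius. Once this bookkeeping is carried out, the non-vanishing of $\varphi^{(n)}_s(\bar b)$, and hence the conclusion of the lemma, follow from the constant-in-$T$ analysis above.
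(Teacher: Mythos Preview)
Your argument is correct, but the route for the lower bound differs from the paper's and your ``hard part'' is a non-issue. The paper's proof of the lower bound is a one-line citation of Kato's specialization inequality \cite[Theorem~(9.1)]{ka1}: since $\chi_{0}=h_{0}^{*}\varphi$ is obtained from $\varphi$ by restricting along the transversal divisor $T=0$, one has $\sw(\chi_{0}|_{K_{i}})\le\sw(\varphi|_{\tilde K_{i}})$. Your approach instead computes $\varphi_{s}^{(n)}(\bar b)$ explicitly and shows it is nonzero; this is essentially the content of the paper's Lemma~\ref{lemhrswcf}, which comes immediately after and which the paper proves only once $R_{\varphi}=R_{\chi_{i}}\times_{k}\mathbf A^{1}_{k}$ is already known. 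So you have merged Lemmas~\ref{lemhswdt} and~\ref{lemhrswcf} into a single direct computation, trading a black-box citation for self-containment.

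The bookkeeping you flag as the hard part evaporates: the map $-F^{s-1}d\colon W_{s}(K)\to\Omega^{1}_{K}$ is already a group homomorphism for Witt addition (not merely after passing to $\grlog_{n}$), so $\mathcal E=0$ identically. One way to see this is to work over $\mathbf Z$ and use the ghost components: formally $\sum_{i}a_{i}^{p^{i}-1}da_{i}=p^{-(s-1)}\,dw_{s-1}(a)$, and $w_{s-1}$ is additive; since both sides are integer-coefficient polynomial differential expressions, the identity descends to characteristic $p$. The paper uses this additivity without comment in the first displayed line of the proof of Lemma~\ref{lemhrswcf}. With $\mathcal E=0$, your formula reads $\varphi_{s}^{(n)}(\bar b)=(1-T)^{p^{s}}\iota(\rsw(\chi_{0}|_{K_{i}}))+T^{p^{s}}\iota(\rsw(\chi_{1}|_{K_{i}}))$, which is visibly nonzero in $\grlog_{n}\Omega^{1}_{K_{i}}\otimes_{F_{K_{i}}}F_{\tilde K_{i}}$: either the two refined Swan conductors are $F_{K_{i}}$-independent (and $(1-T)^{p^{s}}\neq 0$), or one is a scalar multiple $c$ of the other and the coefficient $(1-T)^{p^{s}}+cT^{p^{s}}\in F_{K_{i}}[T]$ has constant term $1$. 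No ``specialize $T\to 0$'' in the completed local ring is needed.
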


\begin{proof}
By \cite[Theorem (9.1)]{ka1}, we have $\sw(\varphi|_{\tilde{K}_{i}})\ge \sw(\chi_{0}|_{K_{i}})$.
Since $b|_{\tilde{K}_{i}}\in \fillog_{n_{i}}W_{s}(\tilde{K}_{i})$, where $n_{i}=\sw(\chi_{0}|_{K_{i}})$,
we have $\sw(\varphi|_{\tilde{K}_{i}})\leq \sw(\chi_{0}|_{K_{i}})$.
Hence the assertion holds.
\end{proof}

\begin{lem}
\label{lemhrswcf}
Assume $\sw(\chi_{0}|_{K_{i'}})=\sw(\chi_{1}|_{K_{i'}})$ for every $i'\in I$
(then $R_{\varphi}=R_{\chi_{i}}\times_{k}\mathbf{A}_{k}^{1}$ for $i=0,1$ by Lemma \ref{lemhswdt}).
Then $\rsw(\varphi)$ is the image of
$-(F^{s-1}da)(1-T)^{p^{s}}-(F^{s-1}da')T^{p^{s}}$ in $\Gamma(Z_{\varphi},\Omega^{1}_{\tilde{X}}(\log \tilde{D})
(R_{\varphi})|_{Z_{\varphi}})$.
Consequently, we have 
$\rsw(\varphi)=\rsw(\chi_{0})(1-T)^{p^{s}}+\rsw(\chi_{1})T^{p^{s}}$ and
$dh_{i}^{\tilde{D}}(h_{i}^{*}\rsw(\varphi))=\rsw(\chi_{i})$ for $i=0,1$,
where $dh_{i}^{\tilde{D}}\colon (h_{i}^{*}\Omega^{1}_{\tilde{X}}(\log \tilde{D})(R_{\varphi}))|_{Z_{\chi_{i}}}
\rightarrow \Omega^{1}_{X}(\log D)(R_{\chi_{i}})|_{Z_{\chi_{i}}}$ is the morphism
yielded by $h_{i}$ for $i=0,1$.
\end{lem}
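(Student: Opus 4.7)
The strategy is to compute $\rsw(\varphi)$ directly from the explicit Witt vector $b$. By the construction of $\rsw$ recalled at the end of Subsection~\ref{sscrc}, and since Lemma~\ref{lemhswdt} together with our hypothesis $\sw(\chi_0|_{K_{i'}}) = \sw(\chi_1|_{K_{i'}})$ gives $R_\varphi = R_{\chi_0}\times_k \mathbf{A}^1_k = R_{\chi_1}\times_k\mathbf{A}^1_k$, the section $\rsw(\varphi)$ is the image of $b$ under
\[
\Gamma(\tilde{X}, \fillog_{R_\varphi} \tilde{j}_*W_s(\dvr_{\tilde{U}})) \xrightarrow{-F^{s-1}d} \Gamma(\tilde{X}, \fillog_{R_\varphi} \tilde{j}_*\Omega^1_{\tilde{U}}) \twoheadrightarrow \Gamma(Z_\varphi, \Omega^1_{\tilde{X}}(\log \tilde{D})(R_\varphi)|_{Z_\varphi}).
\]
This reduces the lemma to the identity in $\grlog_{R_\varphi/(R_\varphi-Z_\varphi)}\tilde{j}_*\Omega^1_{\tilde U}$ that the image of $b$ equals the claimed expression.

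Write $b = b_1 + b_2$ with $b_1 = (a_{s-1}(1-T)^p,\ldots,a_0(1-T)^{p^s})$ and $b_2 = (a_{s-1}'T^p,\ldots,a_0'T^{p^s})$. Although Witt addition is not componentwise, the map $\varphi_s^{(R_\varphi/(R_\varphi-Z_\varphi))}$ induced on the graded piece is additive by construction, so the image of $b$ equals the sum of the images of $b_1$ and $b_2$. I would then compute $-F^{s-1}d(b_1)$ by the Leibniz rule: the exterior derivative $d(a_i(1-T)^{p^{s-i}}) = (1-T)^{p^{s-i}}\,da_i - a_i p^{s-i}(1-T)^{p^{s-i}-1}\,dT$, and the $dT$ term vanishes because $p\mid p^{s-i}$ for $0\le i\le s-1$. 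The surviving factor is $(a_i(1-T)^{p^{s-i}})^{p^i-1}\cdot(1-T)^{p^{s-i}} = a_i^{p^i-1}(1-T)^{p^{s-i}(p^i-1)+p^{s-i}} = a_i^{p^i-1}(1-T)^{p^s}$, which is independent of $i$ in the $(1-T)$-exponent. Summing gives
\[
-F^{s-1}d(b_1) = (1-T)^{p^s}\cdot(-F^{s-1}da),
\]
and an identical computation yields $-F^{s-1}d(b_2) = T^{p^s}\cdot(-F^{s-1}da')$.

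Combining via additivity on the graded piece and identifying $-F^{s-1}da$ and $-F^{s-1}da'$ with the representatives of $\rsw(\chi_0)$ and $\rsw(\chi_1)$ (pulled back to $\tilde X$ along $\tilde p$ and mapped into $\Omega^1_{\tilde X}(\log\tilde D)(R_\varphi)|_{Z_\varphi}$ via the canonical inclusion), one obtains the first claim. For the second, note that $h_i$ is a section of $\tilde p$, so $h_i^*\circ\tilde p^*$ is the identity on the relevant sheaf on $Z_{\chi_i}$, while $h_0^*T = 0$, $h_0^*(1-T)^{p^s}=1$ and $h_1^*T = 1$, $h_1^*(1-T)^{p^s} = 0$, and $h_i^*(dT) = 0$; thus $dh_i^{\tilde D}(h_i^*\rsw(\varphi)) = \rsw(\chi_i)$.

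The only nontrivial step is the use of additivity of the graded map $\varphi_s^{(R_\varphi/(R_\varphi-Z_\varphi))}$, which is already built into the construction recalled in Subsection~\ref{sscrc} (see in particular the definitions of $\fillog_R$ and $\varphi^{(R/R')}$); the rest is a direct Leibniz-rule calculation that exploits the divisibility $p\mid p^{s-i}$ for $i<s$ to kill every term containing $dT$.
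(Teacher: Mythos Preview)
Your proof is correct and follows essentially the same approach as the paper: compute $-F^{s-1}d(b)$ via the Leibniz rule, observe that the $dT$ contributions vanish because $p\mid p^{s-i}$ for $0\le i\le s-1$, and collect the common factor $(1-T)^{p^s}$ (resp.\ $T^{p^s}$). The paper's version is terser---it simply writes the identity in $\grlog_{n_{i'}}\Omega^1_{\tilde K_{i'}}$ without flagging the additivity of $-F^{s-1}d$---but the computation is the same; note that $-F^{s-1}d\colon W_s(K)\to\Omega^1_K$ is already a group homomorphism (it is $F^{s-1}$ composed with the de Rham--Witt differential), so your additivity step holds on the nose and not merely on the graded piece.
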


\begin{proof}
We put $n_{i'}=\sw(\varphi|_{\tilde{K}_{i'}})=\sw(\chi_{0}|_{K_{i'}})=\sw(\chi_{1}|_{K_{i'}})$ for $i'\in I$.
In $\grlog_{n_{i'}}\Omega_{\tilde{K}_{i'}}^{1}$ for $i'\in I_{\mW,\varphi}=I_{\mW,\chi_{0}}=I_{\mW,\chi_{1}}$,
we have
\begin{align}
-F^{s-1}db&=-\sum_{i=0}^{s-1}(a_{i}(1-T)^{p^{s-i}})^{p^{i}-1}d(a_{i}(1-T)^{p^{s-i}})
-\sum_{i=0}^{s-1}(a_{i}'T^{p^{s-i}})^{p^{i}-1}d(a_{i}'T^{p^{s-i}}) \notag \\
&= - (1-T)^{p^{s}}\sum_{i=0}^{s-1}{a_{i}}^{p^{i}-1}da_{i}
-T^{p^{s}}\sum_{i=0}^{s-1}a_{i}'^{p^{i}-1}da'_{i}. \notag 
\end{align}
Hence the assertion holds.
\end{proof}

\begin{lem}
\label{lemhclean}
Assume $\sw(\chi_{0}|_{K_{i'}})=\sw(\chi_{1}|_{K_{i'}})$ for every $i'\in I$
and that $(X,U,\chi_{i})$ for $i=0,1$ are clean.
Then the largest open subsheme of $\tilde{X}$ where
$(\tilde{X},\tilde{U},\varphi)$ is clean contains $X_{0}\cup X_{1}$.
\end{lem}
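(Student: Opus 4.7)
The strategy is to invoke Lemma \ref{lemhrswcf} to compute $\rsw(\varphi)$ explicitly along $X_{0}\cup X_{1}$ and to deduce cleanliness from the known cleanliness of $(X,U,\chi_{0})$ and $(X,U,\chi_{1})$. First, by Lemma \ref{lemhswdt} applied to every $i\in I$, together with the hypothesis $\sw(\chi_{0}|_{K_{i}})=\sw(\chi_{1}|_{K_{i}})$, one has $\sw(\varphi|_{\tilde{K}_{i}})=\sw(\chi_{0}|_{K_{i}})$ for every $i\in I$, so $R_{\varphi}=R_{\chi_{0}}\times_{k}\mathbf{A}_{k}^{1}$ and $Z_{\varphi}=Z_{\chi_{0}}\times_{k}\mathbf{A}_{k}^{1}$.

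The set of points where $(\tilde{X},\tilde{U},\varphi)$ fails to be clean is a closed subset of $\tilde{X}$ of codimension $\geq 2$ by Lemma \ref{lemclean} (ii); any such closed set meeting the divisor $X_{0}\cup X_{1}$ contains a closed point of $X_{0}\cup X_{1}$, so it suffices to verify cleanliness at every closed point of $X_{0}\cup X_{1}$. By the symmetry $T\leftrightarrow 1-T$ I restrict to a closed point $\tilde{x}=(x,0)\in X_{0}$; when $x\notin Z_{\chi_{0}}$ there is nothing to check, so I assume $x\in Z_{\chi_{0}}$.

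By Lemma \ref{lemhrswcf} one has $\rsw(\varphi)_{\tilde{x}}=\rsw(\chi_{0})_{x}(1-T)^{p^{s}}+\rsw(\chi_{1})_{x}T^{p^{s}}$ as a section of $\Omega^{1}_{\tilde{X}}(\log \tilde{D})(R_{\varphi})|_{Z_{\varphi},\tilde{x}}$, where both refined Swan conductors are viewed via pull-back along $\tilde{p}$. The maximal ideal $\mathfrak{m}_{Z_{\varphi},\tilde{x}}$ of $\dvr_{Z_{\varphi},\tilde{x}}$ is generated by the pull-back of $\mathfrak{m}_{Z_{\chi_{0}},x}$ together with $T$, so reducing modulo $\mathfrak{m}_{Z_{\varphi},\tilde{x}}$ the expression above collapses to the image of $\rsw(\chi_{0})_{x}$ in $\Omega^{1}_{\tilde{X}}(\log \tilde{D})(R_{\varphi})_{\tilde{x}}\otimes_{\dvr_{Z_{\varphi},\tilde{x}}}k(\tilde{x})$.

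It remains to observe that this residue-field reduction is injective on the fiber coming from $X$: choosing a local coordinate system $(t_{1},\ldots,t_{d})$ at $x$ adapted to $D$, the system $(t_{1},\ldots,t_{d},T)$ is adapted to $\tilde{D}$ at $\tilde{x}$, and the inclusion of bases $\{d\log t_{i},dt_{j}\}\subset \{d\log t_{i},dt_{j},dT\}$ of the corresponding logarithmic cotangent spaces shows that
\begin{equation*}
\Omega^{1}_{X}(\log D)(R_{\chi_{0}})_{x}\otimes_{\dvr_{Z_{\chi_{0}},x}}k(x)\;\longrightarrow\;\Omega^{1}_{\tilde{X}}(\log \tilde{D})(R_{\varphi})_{\tilde{x}}\otimes_{\dvr_{Z_{\varphi},\tilde{x}}}k(\tilde{x})
\end{equation*}
is injective. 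Since $(X,U,\chi_{0})$ is clean at $x$, the class of $\rsw(\chi_{0})_{x}$ on the left is nonzero, hence nonzero on the right, which is cleanliness of $(\tilde{X},\tilde{U},\varphi)$ at $\tilde{x}$. No step appears delicate; the only real content is the identification of $Z_{\varphi}$ in the first paragraph and the injection of residue-field fibers at the end, both verified by direct local computation.
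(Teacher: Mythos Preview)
Your proof is correct and follows essentially the same approach as the paper: reduce to closed points via Lemma~\ref{lemclean}~(ii), invoke Lemma~\ref{lemhrswcf} to identify $\rsw(\varphi)$ at such points, and conclude from the cleanliness of $(X,U,\chi_i)$. The only cosmetic difference is that the paper phrases the final step via the surjection $dh_i^{\tilde{D}}$ (if $\rsw(\varphi)_{x_i}$ maps to the nonzero $\rsw(\chi_i)_x$, it is nonzero), whereas you use the dual injection coming from $\tilde{p}$; since $\tilde{p}\circ h_i=\id_X$ these are equivalent.
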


\begin{proof}
Since the set of points on $\tilde{X}$ where $(\tilde{X},\tilde{U},\varphi)$ is clean is open in $\tilde{X}$ by Lemma \ref{lemclean} (ii),
it is sufficient to prove that $(\tilde{X},\tilde{U},\varphi)$ is clean at $x_{0}=x\times_{k}\{0\}$ and 
$x_{1}=x\times_{k}\{1\}$
for every closed point $x$ of $Z_{\chi_{0}}=Z_{\chi_{1}}$.
By Lemma \ref{lemhrswcf}, the image of $(h_{i}^{*}\rsw(\varphi))_{x}$ by the morphism
$dh_{i x}^{\tilde{D}}\colon \Omega^{1}_{\tilde{X}}(\log \tilde{D})(R_{\varphi})_{x_{i}}
\otimes_{\dvr_{\tilde{X},x_{i}}}k(x)
\rightarrow \Omega^{1}_{X}(\log D)(R_{\chi_{i}})_{x}\otimes_{\dvr_{X,x}}k(x)$
induced by $h_{i}$ is $\rsw(\chi_{i})_{x}$ for $i=0,1$.
By the assumption that $(X,U,\chi_{i})$ for $i=0,1$ are clean, we have
$\rsw(\chi_{i})_{x}\neq 0$ in $\Omega^{1}_{X}(\log D)(R_{\chi_{i}})_{x}\otimes_{\dvr_{X,x}}k(x)$ for $i=0,1$.
Hence we have $\rsw(\varphi)_{x_{i}}\neq 0$ in $\Omega^{1}_{\tilde{X}}(\log \tilde{D})(R_{\varphi})_{x_{i}}
\otimes_{\dvr_{\tilde{X},x_{i}}}k(x_{i})$ for $i=0,1$,
which implies the cleanliness of $(\tilde{X},\tilde{U},\varphi)$ at $x_{i}$ for $i=0,1$.
\end{proof}

In the rest of this section, we prove Proposition \ref{prophi}.

\begin{lem}
\label{lemsuff}
Assume $d=2$ and let $x$ be a closed point of $X$. 
Let $\mg$ be a smooth sheaf of $\Lambda$-modules
of rank $1$ on $\tilde{U}$ such that $h_{i}^{*}\mg=\mf_{i}$ for $i=0,1$ and
let $\varphi\colon \pi_{1}^{\ab}(\tilde{U})\rightarrow \Lambda^{\times}$ be the character
corresponding to $\mg$. We regard $\varphi$ 
as an element of $H^{1}_{\et}(\tilde{U},\mathbf{Q}/\mathbf{Z})$ by $\psi$.
Then (\ref{eqcc}) holds in a neighborhood of $x$ if $\mg$ has an open
subscheme $\tilde{V}$ of $\tilde{X}$
satisfying the following conditions:
\begin{enumerate}
\item $\tilde{V}$ contains $X_{0}\cup X_{1}$
and $(\tilde{V},\tilde{V}\cap \tilde{U},\varphi|_{\tilde{V}\cap \tilde{U}})$ is clean.
\item $(\tilde{V},\tilde{V}\cap \tilde{U},\varphi|_{\tilde{V}\cap \tilde{U}})$ is strongly clean outside
$\tilde{x}=\{x\}\times_{k}\mathbf{A}_{k}^{1}$.
\item $\tau_{\tilde{D}}^{-1}(SS^{\log}(\tilde{V},\tilde{V}\cap \tilde{U},\varphi|_{\tilde{V}\cap \tilde{U}}))
\cap (T^{*}\tilde{X}\times_{\tilde{X}}\tilde{x})\subset T^{*}_{\tilde{x}}\tilde{X}$.
\end{enumerate}
\end{lem}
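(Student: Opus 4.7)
The plan is to construct an auxiliary constructible complex on $\tilde{V}$ whose pull-backs under $h_{0}$ and $h_{1}$ recover $j_{!}\mf_{0}$ and $j_{!}\mf_{1}$, and then to verify that its singular support has the product form required for Lemma \ref{lemcako}. Let $\tilde{j}'\colon \tilde{V}\cap\tilde{U}\rightarrow \tilde{V}$ denote the open immersion and set $\tilde{\mg}=\tilde{j}'_{!}(\mg|_{\tilde{V}\cap\tilde{U}})$. By condition (i), each $h_{i}$ factors as $X\xrightarrow{\sim}X_{i}\hookrightarrow \tilde{V}$, and the base change identity $h_{i}^{\ast}\tilde{j}'_{!}=j_{!}(h_{i}|_{U})^{\ast}$ for the cartesian square formed by $\tilde{j}'$ and $h_{i}$ yields $h_{i}^{\ast}\tilde{\mg}=j_{!}\mf_{i}$ for $i=0,1$. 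Hence it suffices to prove $CC(h_{0}^{\ast}\tilde{\mg})=CC(h_{1}^{\ast}\tilde{\mg})$, and I will achieve this via Lemma \ref{lemcako}.

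I next bound $SS(\tilde{\mg})$ inside a product $C\times_{k}\mathbf{A}_{k}^{1}$, viewed in $T^{\ast}\tilde{X}$ through the canonical embedding $d\tilde{p}\colon T^{\ast}X\times_{k}\mathbf{A}_{k}^{1}\hookrightarrow T^{\ast}\tilde{X}$, for a suitable closed conical $C\subset T^{\ast}X$ purely of dimension $2$. The cleanliness in (i) together with Theorem \ref{proplogct} imply $SS(\tilde{\mg})\subset \tau_{\tilde{D}}^{-1}(SS^{\log}(\tilde{V},\tilde{V}\cap\tilde{U},\varphi|_{\tilde{V}\cap\tilde{U}}))$. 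On $\tilde{V}\setminus\tilde{x}$, the strong cleanliness in (ii) combined with Lemma \ref{lemcharksc} (i) identifies the inverse image with $\bigcup_{I'\subset I}T^{\ast}_{\tilde{D}_{I'}}\tilde{X}$; since $D$ has simple normal crossings on the surface $X$, each $D_{I'}$ is smooth of codimension $|I'|$ (a finite set of closed points when $|I'|=2$), so $\tilde{D}_{I'}=D_{I'}\times_{k}\mathbf{A}_{k}^{1}$ and each conormal bundle equals $T^{\ast}_{D_{I'}}X\times_{k}\mathbf{A}_{k}^{1}$ inside the image of $d\tilde{p}$. Over the curve $\tilde{x}$, condition (iii) confines the inverse image inside $T^{\ast}_{\tilde{x}}\tilde{X}=T^{\ast}_{x}X\times_{k}\mathbf{A}_{k}^{1}$, again of product form.

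Defining $C\subset T^{\ast}X$ as the union of $T^{\ast}_{X}X$, the conormal bundles $T^{\ast}_{D_{i}}X$ for $i\in I$, the finitely many fibers $T^{\ast}_{y}X$ arising from $D_{I'}$ with $|I'|=2$, and $T^{\ast}_{x}X$, every component has dimension $2$, so $C$ is purely of dimension $2$; the two local descriptions of Paragraph 2 combine into the global inclusion $SS(\tilde{\mg})\subset C\times_{k}\mathbf{A}_{k}^{1}$. Lemma \ref{lemcako} (ii) then gives $CC(h_{0}^{\ast}\tilde{\mg})=CC(h_{1}^{\ast}\tilde{\mg})$, and the identification of Paragraph 1 completes the proof. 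The main technical point is reconciling conditions (ii) and (iii) into a single product description: strong cleanliness away from $\tilde{x}$ places the inverse image inside conormal bundles that are already products with $\mathbf{A}_{k}^{1}$, while condition (iii) forces any extra components meeting the fiber over $\tilde{x}$ to remain in $T^{\ast}_{\tilde{x}}\tilde{X}$, which is itself of product form, so that the two pictures glue into the uniform inclusion required to apply Lemma \ref{lemcako}.
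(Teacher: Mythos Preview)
Your proof is correct and follows essentially the same approach as the paper: bound $SS(\tilde{j}'_{!}\mg)$ via Theorem \ref{proplogct} using cleanliness (i), use Lemma \ref{lemcharksc} (i) with strong cleanliness (ii) outside $\tilde{x}$ and condition (iii) over $\tilde{x}$ to recognize the product structure, and conclude by Lemma \ref{lemcako} (ii). Your exposition is slightly more explicit than the paper's in spelling out the identifications $T^{\ast}_{\tilde{D}_{I'}}\tilde{X}=T^{\ast}_{D_{I'}}X\times_{k}\mathbf{A}_{k}^{1}$ and $T^{\ast}_{\tilde{x}}\tilde{X}=T^{\ast}_{x}X\times_{k}\mathbf{A}_{k}^{1}$ and in writing out $C$, but the argument is the same.
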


\begin{proof}
We replace $\tilde{X}$ by $\tilde{V}$.
By the condition (ii) and Lemma \ref{lemcharksc} (i), we have 
\begin{equation}
\tau_{\tilde{D}}^{-1}(SS^{\log}(\tilde{X},\tilde{U},\varphi))
=\bigcup_{I'\subset I}T^{*}_{\tilde{D}_{I'}}\tilde{X} \notag
\end{equation}
outside $\tilde{x}$, where $\tilde{D}_{I'}=\bigcap_{i'\in I'}\tilde{D}_{i'}$ for $I'\subset I$.
Further by the condition (iii), we have
\begin{equation}
\tau_{\tilde{D}}^{-1}(SS^{\log}(\tilde{X},\tilde{U},\varphi))
\subset \bigcup_{I'\subset I}T^{*}_{\tilde{D}_{I'}}\tilde{X}\cup T^{*}_{\tilde{x}}\tilde{X}. \notag
\end{equation}
Since $(\tilde{X},\tilde{U},\varphi)$ is clean by the condition (i),
we have 
\begin{equation}
SS(\tilde{j}_{!}\mg)\subset \bigcup_{I'\subset I}T^{*}_{\tilde{D}_{I'}}\tilde{X}\cup T^{*}_{\tilde{x}}\tilde{X}
\notag
\end{equation}
by Theorem \ref{proplogct}.
Hence we have $SS(\tilde{j}_{!}\mg)\subset C\times_{k}\mathbf{A}_{k}^{1}$ for a 
closed conical subset $C\subset T^{*}X$ purely of dimension $2$.
Since $h_{i}^{*}\tilde{j}_{!}\mg=j_{!}\mf_{i}$ by the assumption $h_{i}^{*}\mg=\mf_{i}$ for $i=0,1$, 
the assertion holds by Lemma \ref{lemcako} (ii).
\end{proof}

\begin{proof}[Proof of Proposition \ref{prophi}]
As is seen in the remark after the statement of Proposition \ref{prophi},
we have $R_{\chi_{0}}=R_{\chi_{1}}$ and $R_{\chi_{0}}'=R_{\chi_{1}}'$.
Let $x$ be a closed point of $X$.
Since the assertion is local, it is sufficient to prove the equality (\ref{eqcc}) in a neighborhood of $x$.
We may assume $I=I_{x}$. 
We put $n_{i'}=\sw(\chi_{0}|_{K_{i'}})=\sw(\chi_{1}|_{K_{i'}})$ 
and $s_{i'}'=\ord_{p}(n_{i'})$ for $i'\in I_{\mW,\chi_{0}}=I_{\mW,\chi_{1}}$
and we put $I=\{1,\ldots,r\}$ and $I_{\mW,\chi_{0}}=\{1, \ldots, r'\}$, where $r'\le r\le 2$.
We may assume $s_{1}'\le s_{r'}'$.

If $(X,U,\chi_{i})$ for $i=0,1$ are non-degenerate at $x$, then we have
$CC(j_{!}\mf_{i})=\Char^{K}(X,U,\chi_{i})$ for $i=0,1$ by Proposition \ref{propndeg} (i). 
Hence the equality (\ref{eqcc}) holds by the assumption 
$\Char^{K}(X,U,\chi_{0})=\Char^{K}(X,U,\chi_{1})$.
Thus we may assume $(X,U,\chi_{0})$ is not non-degenerate at $x$.
Then we have $I_{\mW,\chi_{0}}=I_{\mI,\chi_{0}}(\neq \emptyset)$ by Corollary \ref{corintdeg}.

Since the assertion is local, we may assume that $X=\Spec A$ is affine.
Since $CC(j_{!}\mf_{i})$ is stable under the replacement
of $\chi_{i}$ by the $p$-part of $\chi_{i}$ for $i=0,1$ by \cite[Theorem 0.1]{sy},
we may assume that the order of $\chi_{i}$ is a power of $p$ for $i=0,1$.
By Lemma \ref{lemsuff}, it is sufficient to prove that the sheaf $\mg$ on $\tilde{U}$ defined by 
$(F-1)(t)=b$ (\ref{tildfeq}) has an open subscheme $\tilde{V}$
of $\tilde{X}$ satisfying the conditions (i)--(iii) in Lemma \ref{lemsuff}.

By Lemma \ref{lemhclean}, we may replace $\tilde{X}$ by the largest open
subscheme of $\tilde{X}$ where $(\tilde{X},\tilde{U},\varphi)$ is clean.
Then, by Lemma \ref{lemhrswcf}, every open subschme $\tilde{V}$ of $\tilde{X}$ 
containing $X_{0}\cup X_{1}$ satisfies the conditions (i) and (iii) in Lemma \ref{lemsuff}.
Hence it is sufficient to construct an open subscheme $\tilde{V}$ of $\tilde{X}$ 
containing $X_{0}\cup X_{1}$ and satisfying the condition (ii) in Lemma \ref{lemsuff}.

Since $(X,U,\chi_{i})$ for $i=0,1$ are assumed to be clean
and $I_{\mW,\chi_{1}}=I_{\mW,\chi_{0}}=I_{\mI,\chi_{0}}=I_{\mI,\chi_{1}}$, 
we have
\begin{equation}
\label{eqsword}
\sum_{i'\in I_{\mI,\chi_{0}}}\sw(\chi_{0}|_{K_{i'}})\ord'(\chi_{0};x,D_{i'})
=\sum_{i'\in I_{\mI,\chi_{1}}}\sw(\chi_{1}|_{K_{i'}})\ord'(\chi_{1};x,D_{i'}) 
\end{equation}
by (\ref{tx}) and the assumption $\Char^{K}(X,U,\chi_{0})=\Char^{K}(X,U,\chi_{1})$.
Since $R_{\chi_{0}}=R_{\chi_{1}}$ and $I_{\mI,\chi_{0}}=I_{\mI,\chi_{1}}$, 
we have $\ord'(\chi_{0};x,D_{1})=\ord'(\chi_{1};x,D_{1})$ if $\sharp(I_{\mI,\chi_{0}})=1$.
If $\sharp(I_{\mI,\chi_{0}})=2$, then
the images of $\xi_{1}(\chi_{i})=\res_{1}\circ \times \res(\chi_{i})\colon \dvr_{X}(R_{\chi_{i}})|_{D_{1}}\rightarrow
\dvr_{D_{1}}$ (\ref{xichi}) for $i=0,1$ in $k(x)$ are not $0$ by Corollary \ref{corwitt}
and hence we have $\ord'(\chi_{0};x,D_{1})=\ord'(\chi_{1};x,D_{1})=1$ 
by Lemma \ref{lemordp}.
Since $R_{\chi_{0}}=R_{\chi_{1}}$, we have 
$\ord'(\chi_{0};x,D_{2})=\ord'(\chi_{1};x,D_{2})$ by (\ref{eqsword}).
Hence we have $\ord'(\chi_{0};x,D_{i'})=\ord'(\chi_{1};x,D_{i'})$ for every $i'\in I_{\mW,\chi_{0}}$
even if $\sharp(I_{\mI,\chi_{0}})=2$.
We put $r_{i'}=\ord'(\chi_{0};x,D_{i'})-(\sharp(I_{\mI,\chi_{0}}\cup I_{\mT,\chi_{0}})-1)$ 
for $i'\in I_{\mW,\chi_{0}}$.
Then the image of $\xi_{i'}(\chi_{i})$ is $\dvr_{D_{i'}}(-r_{i'}\cdot x)$ in a neighborhood of $x$
for $i'\in I_{\mW,\chi_{i}}$ and $i=0,1$ by Lemma \ref{lemordp}.
Hence we may assume that the image of $\xi_{i'}(\chi_{i})$ is $\dvr_{D_{i'}}(-r_{i'}\cdot x)$
for $i'\in I_{\mW,\chi_{i}}$ and $i=0,1$ by shrinking $X$ if necessary.

We consider the image of $\xi_{i'}(\varphi)$ for $i'\in I_{\mW,\varphi}$.
Let $(t_{1},t_{2})$ be a local coordinate system at $x$ such that $t_{i'}$ is a local equation of $D_{i'}$
for $i'\in I_{\mW,\chi_{0}}=I_{\mW,\chi_{1}}$.
By Lemma \ref{lemhswdt}, we have $I_{\mW,\varphi}=I_{\mW,\chi_{0}}=I_{\mW,\chi_{1}}$.
By shrinking $X$ if necessary, we may assume $D_{i'}=(t_{i'}=0)$ and $t_{i'}\in A$ for $i'\in I$.
We put $\{i',j'\}=\{1,2\}$ for $i'\in I_{\mW,\varphi}$.
Since the image of $\xi_{i'}(\chi_{i})$ is $\dvr_{D_{i'}}(-r_{i'}\cdot x)$ for $i'\in I_{\mW,\chi_{i}}$ 
and $i=0,1$, by shrinking $X$ if necessary, we have a divisor $\tilde{E}_{i'}=(t_{j'}^{r_{i'}}\tilde{u}_{i'}=0)$
for some $\tilde{u}_{i'}\in \Gamma(\tilde{D}_{i'},\dvr_{\tilde{D}_{i'}})$ such that
the image of $\xi_{i'}(\varphi)$ is $\dvr_{\tilde{D}_{i'}}(-\tilde{E}_{i'})$ for $i'\in I_{\mW,\varphi}$
and that the pul-back $h_{i,\tilde{D}_{i'}}^{*}\tilde{u}_{i'}$ by the base change 
$h_{i,\tilde{D}_{i'}}\colon D_{i'}\rightarrow \tilde{D}_{i'}$ of $h_{i}$ by $D_{i'}\rightarrow \tilde{X}$ 
is invertible in $\Gamma(D_{i'},\dvr_{D_{i'}})$ for $i'\in I_{\mW,\chi_{i}}$ and $i=0,1$
by Lemma \ref{lemhrswcf}.
We put $\tilde{V}=\bigcap_{i'\in I_{\mW,\varphi}}(\tilde{X}-(\tilde{u}_{i'}=0))$,
where $(\tilde{u}_{i'}=0)\subset D_{i'}$ for $i'\in I_{\mW,\varphi}$.
Then we have $\tilde{V}\supset X_{0}\cup X_{1}$.
Hence the assertion holds.
\end{proof}

%%%%%%%%%%%%%%%%%%%%%%%%%%%%%%%%%%%%%%%%%%%%%%%%%%%
\section{Characteristic cycle and canonical lifting}
\label{scccl}
%%%%%%%%%%%%%%%%%%%%%%%%%%%%%%%%%%%%%%%%%%%%%%%%%%%
\subsection{Main theorem and its corollaries}
\label{ssmthm}
Let $X$ be a smooth scheme purely of dimension $d$
over a perfect field $k$ of characteristic $p>0$.
Let $D$ be a divisor on $X$ with simple normal crossings and 
$\{D_{i}\}_{i\in I}$ the irreducible components of $D$.
We put $U=X-D$ and
let $j\colon U\rightarrow X$ be the open immersion.
We note that $j$ is an affine open immersion.
Let $\mf$ be a smooth sheaf of $\Lambda$-modules of rank $1$ on $U$.
Let $\chi\colon \pi_{1}^{\ab}(U)\rightarrow \Lambda^{\times}$ be the character corresponding to $\mf$
and regard $\chi$ as an element of $H^{1}_{\et}(U,\mathbf{Q}/\mathbf{Z})$
by an inclusion $\psi\colon \Lambda^{\times}\rightarrow \mathbf{Q}/\mathbf{Z}$.

\begin{thm}
\label{thmmain}
Assume $d=2$.
Then we have
\begin{equation}
\label{cceqck}
CC(j_{!}\mathcal{F})=\Char^{K}(X,U,\chi). 
\end{equation}
\end{thm}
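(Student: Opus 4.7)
The plan is to verify the equality (\ref{cceqck}) coefficient by coefficient. By (\ref{saitoccro}) and (\ref{canlift}), the cycles $CC(j_{!}\mathcal{F})$ and $\mathrm{Char}^{K}(X,U,\chi)$ already share the common terms $[T^{*}_{X}X]+\sum_{i\in I}r_{i}'[L_{i,\chi}']$, so only the fiber multiplicities at closed points of $D$ remain. By Lemma \ref{lemclean} (ii) and Lemma \ref{lemndg} (ii), the set of closed points where $(X,U,\chi)$ fails to be clean or non-degenerate is finite, and Proposition \ref{propndeg} already gives $u_{x}=t_{x}$ at every closed point where $(X,U,\chi)$ is clean and non-degenerate, or of ``mixed type'' as in Proposition \ref{propndeg} (ii). Hence the proof reduces to showing $u_{x}=t_{x}$ at each of the remaining finitely many exceptional closed points $x\in|D|$.

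I would then perform a sequence of reductions, following the strategy outlined in the introduction. First, because both cycles are compatible with \'etale morphisms and with the projection $X_{\bar{k}}\rightarrow X$, by choosing a smooth compactification of a small \'etale neighborhood of $x$ one reduces to the case where $X$ is projective and $k=\bar{k}$. Second, using Kato's resolution of non-clean points \cite[Theorem 4.1]{ka2} together with the index formula for $\mathrm{Char}^{K}$ (Corollary \ref{ndegindex}), which was derived from the clean case via exactly this resolution, one reduces to the case where $(X,U,\chi)$ is clean. Third, invoking the homotopy invariance Proposition \ref{prophi} (and its specialization Corollary \ref{corckcc}), one further reduces to the case where $X=\ptk$ and where there exists a finite set $\Sigma\subset |D|$ containing $x$ with $(t_{y},u_{y})=(t_{x},u_{x})$ for every $y\in\Sigma$ and $t_{y}=u_{y}$ for every $y\in|D|\setminus\Sigma$. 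Concretely, this last step produces an auxiliary character on $\ptk$ whose defining Artin--Schreier--Witt data exhibit the required symmetry among the points in $\Sigma$, the homotopy (\ref{tildfeq}) between it and $\chi$ keeping the local $(t,u)$-invariants at $x$ fixed while forcing the remaining bad points to be permuted equivalently.

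Once we are in this symmetric setting, the conclusion is immediate from a comparison of the two index formulas: Theorem \ref{sindex} gives $\chi(X,j_{!}\mathcal{F})=(CC(j_{!}\mathcal{F}),[T^{*}_{X}X])_{T^{*}X}$, and Corollary \ref{ndegindex} gives $\chi(X,j_{!}\mathcal{F})=(\mathrm{Char}^{K}(X,U,\chi),[T^{*}_{X}X])_{T^{*}X}$. Subtracting and using $([T^{*}_{y}X],[T^{*}_{X}X])_{T^{*}X}=1$ for every closed point $y$, one obtains
\begin{equation}
\sum_{y\in|D|}(u_{y}-t_{y})=0.  \notag
\end{equation}
By the reduction step the summands vanish outside $\Sigma$, and for $y\in\Sigma$ they all equal $u_{x}-t_{x}$; hence $\sharp(\Sigma)\cdot(u_{x}-t_{x})=0$, and since $x\in\Sigma$ this forces $u_{x}=t_{x}$.

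The principal obstacle I expect is the homotopy reduction to $\ptk$ with the symmetric set $\Sigma$. The definition (\ref{tildfeq}) of the homotopy is straightforward, but one must verify that it stays clean along the whole family (Lemma \ref{lemhclean} handles this only once a suitable model is chosen) and that one can actually produce, on $\ptk$, a rank one sheaf whose Witt-vector presentation realizes the prescribed values of $\sw(\chi|_{K_{i}})$, $\dt(\chi|_{K_{i}})$, $\ord(\chi;y,D_{i})$, and $\ord'(\chi;y,D_{i})$ at every point $y\in\Sigma$ while being harmlessly non-degenerate elsewhere. This is the place where the Witt-vector machinery of Subsection \ref{sscrc}, and in particular Lemma \ref{lemwitt} and Corollary \ref{corwitt}, will need to be used in an explicit combinatorial construction rather than merely as local criteria.
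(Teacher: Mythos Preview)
Your proposal is correct and follows essentially the same route as the paper: reduce to fiber multiplicities, compactify, use Kato's resolution plus the two index formulas to reduce to the clean case, then use the homotopy invariance (Proposition \ref{prophi}/Corollary \ref{corckcc}) to transport the problem to an explicit sheaf on $\ptk$ where the global index comparison forces $t_{x}=u_{x}$.

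One refinement worth noting: the ``symmetric $\Sigma$'' picture you describe is precisely how the paper handles only the base case (called $(\mathrm{a}_{1})$, where $\sharp(I_{x})=1$ and the relevant vanishing order is $1$). For the remaining five cases the paper does not produce a $\Sigma$ of equivalent points, but instead constructs an auxiliary $\mg$ on $\ptk$ whose other bad points fall into \emph{earlier} cases already established, so that effectively $\Sigma=\{x\}$ and the index comparison yields $t_{x}=u_{x}$ directly. Thus the ``explicit combinatorial construction'' you anticipate is really a six-case induction $(\mathrm{a}_{1})\Rightarrow(\mathrm{a}_{2})\Rightarrow(\mathrm{b}_{1})\Rightarrow(\mathrm{a}_{3})\Rightarrow(\mathrm{b}_{2})\Rightarrow(\mathrm{b}_{3})$, organized by $\sharp(I_{x})$, by the $p$-adic valuations $s_{i}'=\ord_{p}(\sw(\chi|_{K_{i}}))$, and by how the vanishing order of the critical Witt component compares to $n_{i}'$. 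The Witt-vector tools you cite (Lemma \ref{lemwitt}, Lemma \ref{stabord}, Corollary \ref{cortw}) are exactly what is used to verify cleanliness and to match $\ord'$ in each constructed example.
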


We prove Theorem \ref{thmmain} in Subsection \ref{sspfsurf}.
In this section, we give and prove corollaries of Theorem \ref{thmmain}
by admitting the theorem. 

\begin{cor}
\label{corconj}
Conjecture \ref{conjcc} holds in the case where $X$ is purely of dimension $2$.
\end{cor}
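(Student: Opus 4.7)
The plan is to deduce Corollary \ref{corconj} by chaining together the two identifications already established in the paper, treating $CC(j_{!}\mf)$ and $\tau_{D}^{!}(\Char^{\log}(X,U,\chi))$ as being connected via the canonical lifting $\Char^{K}(X,U,\chi)$ as the intermediate object.

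First I would invoke Theorem \ref{thmmain}, which in the surface case yields the equality
\begin{equation}
CC(j_{!}\mf) = \Char^{K}(X,U,\chi) \notag
\end{equation}
as cycles on $T^{\ast}X$. Next I would apply Theorem \ref{totalpullback} (ii), which identifies the canonical lifting with the Gysin pull-back of Kato's logarithmic characteristic cycle:
\begin{equation}
\Char^{K}(X,U,\chi) = \tau_{D}^{!}(\Char^{\log}(X,U,\chi)) \notag
\end{equation}
in $CH_{2}(\tau_{D}^{-1}(SS^{\log}(X,U,\chi)))$. Note that both sides of this equality are supported in $\tau_{D}^{-1}(SS^{\log}(X,U,\chi))$ by Theorem \ref{totalpullback} (i), so the chain of equalities takes place in the required Chow group. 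Composing these two identifications gives exactly the statement of Conjecture \ref{conjcc} when $d=2$.

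The only subtlety to mention is that Theorem \ref{totalpullback} is formulated without any cleanliness hypothesis on $(X,U,\chi)$, whereas Conjecture \ref{conjcc} assumes $(X,U,\chi)$ is clean; this causes no difficulty since the cleanliness hypothesis is a special case of the situation handled by Theorem \ref{thmmain}. There is no genuine obstacle here — the corollary is a formal consequence of the main theorem together with the compatibility already proved in Section \ref{scanlift}. The real content lies entirely in Theorem \ref{thmmain}, whose proof (given in Subsection \ref{sspfsurf}) is the substantive result; Corollary \ref{corconj} itself is a one-line deduction.
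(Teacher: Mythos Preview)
Your proof is correct and matches the paper's own proof, which simply states that the assertion holds by Theorem \ref{totalpullback} (ii) and Theorem \ref{thmmain}. Your additional remarks about the support condition via Theorem \ref{totalpullback} (i) and the cleanliness hypothesis are accurate but not strictly necessary, as the paper treats this as an immediate one-line consequence.
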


\begin{proof}
The assertion holds by Theorem \ref{totalpullback} (ii) and Theorem \ref{thmmain}.
\end{proof}

\begin{cor}
\label{corss}
Assume $d=2$.
Then we have $SS(j_{!}\mf)=SS^{K}(X,U,\chi)$.
\end{cor}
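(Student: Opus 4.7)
The plan is to reduce the corollary to a direct consequence of the main theorem together with Lemma \ref{suppCC}. Since $j\colon U\to X$ is the complement of a divisor, it is an affine open immersion, and $\mf$ is a smooth sheaf on $U$, so Lemma \ref{suppCC} applies in the case $d=2$: the support of $CC(j_{!}\mf)$ coincides with $SS(j_{!}\mf)$, and moreover $CC(j_{!}\mf)\ge 0$ (which is compatible with the form (\ref{saitoccro}), where each coefficient $r_i'$ is a positive integer and each $u_x$ is a nonnegative integer).

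Next, I would invoke Theorem \ref{thmmain}, which gives $CC(j_{!}\mf)=\Char^{K}(X,U,\chi)$ as cycles on $T^{\ast}X$. Taking supports on both sides yields
\begin{equation}
SS(j_{!}\mf)=\Supp CC(j_{!}\mf)=\Supp \Char^{K}(X,U,\chi)=SS^{K}(X,U,\chi), \notag
\end{equation}
where the last equality is the definition of $SS^{K}(X,U,\chi)$ given immediately after (\ref{canlift}).

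There is essentially no obstacle here beyond having Theorem \ref{thmmain} available, since the identification of $SS(j_{!}\mf)$ with the support of $CC(j_{!}\mf)$ is exactly the content of Lemma \ref{suppCC} and the nonnegativity is automatic from the explicit form of $\Char^{K}(X,U,\chi)$: the zero-section term appears with coefficient $1$, the terms $r_i'[L_{i,\chi}']$ for $i\in I$ have positive coefficients $r_i'\ge 1$, and the fiber contributions $t_x[T^*_x X]$ have coefficients $t_x\ge 0$ matching the nonnegativity of the $u_x$ in (\ref{saitoccro}). Hence no cancellation of irreducible components occurs when passing from the cycle to its support, and the corollary follows at once.
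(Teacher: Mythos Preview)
Your proof is correct and follows exactly the same approach as the paper: apply Lemma \ref{suppCC} (using that $j$ is affine) to identify $SS(j_{!}\mf)$ with the support of $CC(j_{!}\mf)$, then invoke Theorem \ref{thmmain}. The additional remarks about nonnegativity and absence of cancellation are not needed, since Lemma \ref{suppCC} already gives the support equality directly.
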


\begin{proof}
Since $j$ is affine, the assertion holds by Lemma \ref{suppCC} and Theorem \ref{thmmain}.
\end{proof}

\begin{cor}[Milnor formula]
\label{cormilnor}
Assume $d=2$.
Let $C\subset T^{*}X$ be a closed conical subset purely of dimension $2$
containing $SS^{K}(X,U,\chi)$.
Let $(h,f)$ be a pair of an \'{e}tale morphism $h\colon W\rightarrow X$ and 
a morphism $f\colon W\rightarrow Y$ to a smooth curve $Y$ over $k$
and let $w\in W$ be at most $C$-isolated characteristic point of $f$. 
With the notation as in Theorem \ref{thmsmil}, we have
\begin{equation}
-\dimtot \phi_{w}(h^{\ast}j_{!}\mf,f)=(h^{\ast}\Char^{K} (X,U,\chi), df)_{T^{\ast}W,w}. \notag
\end{equation}
\end{cor}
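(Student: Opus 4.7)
The plan is that this corollary is an essentially immediate consequence of the main theorem, Theorem \ref{thmmain}, combined with the Milnor formula of Saito for characteristic cycles recalled in Theorem \ref{thmsmil}. No new ramification-theoretic input is needed; the work has already been done in establishing the equality $CC(j_!\mf) = \Char^K(X,U,\chi)$.

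First I would check that the hypotheses of Theorem \ref{thmsmil} are met with the sheaf $j_!\mf$ and the closed conical subset $C$. By Corollary \ref{corss} we have $SS(j_!\mf) = SS^K(X,U,\chi) \subset C$, so $j_!\mf$ is micro-supported on $C$. Since $X$ is purely of dimension $d=2$ and $C$ is by hypothesis purely of dimension $2$, and since $w$ is at most a $C$-isolated characteristic point of $f$, Theorem \ref{thmsmil} applies and yields
\begin{equation}
-\dimtot \phi_{w}(h^{\ast}j_{!}\mf,f)=(h^{\ast}CC(j_{!}\mf), df)_{T^{\ast}W,w}. \notag
\end{equation}

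Finally I would substitute the equality $CC(j_!\mf) = \Char^K(X,U,\chi)$ furnished by Theorem \ref{thmmain}, pulled back by $h$, to obtain the claimed identity. There is no real obstacle here: the entire content lies in the main theorem, and this corollary is the formal translation of that equality into the Milnor-formula statement for the canonical lifting.
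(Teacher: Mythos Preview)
Your proof is correct and follows exactly the same approach as the paper: invoke Corollary \ref{corss} to see that $j_!\mf$ is micro-supported on $C$, apply Theorem \ref{thmsmil}, and then substitute using Theorem \ref{thmmain}.
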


\begin{proof}
By Corollary \ref{corss}, the sheaf $j_{!}\mf$ is micro-supported on $C$.
Hence the assertion holds by Theorem \ref{thmsmil} and Theorem \ref{thmmain}.
\end{proof}

\begin{cor}
\label{corcclass}
Assume $d=2$.
Let $f\colon X\rightarrow \Spec k$ be the structure morphism.
We put $\mk_{X}=Rf^{!}\Lambda$.
Let $C(j_{!}\mf)\in H^{0}(X,\mk_{X})$ be the characteristic class of $j_{!}\mf$ (\cite[Definition 2.1.1]{as3}).
Then we have 
\begin{equation}
C(j_{!}\mf)=(CC(j_{!}\mf),T^{\ast}_{X}X)_{T^{\ast}X} \notag
\end{equation} 
in $H^{4}(X,\Lambda(2))$,
where the right hand side denotes the intersection product of $CC(j_{!}\mf)$ with the zero section $T^{*}_{X}X$
and $C(j_{!}\mf)$ is regarded as an element of $H^{4}(X,\Lambda(2))$ by the isomorphism
$\Lambda(2)[4]\rightarrow \mk_{X}$ defined by the cycle class.
\end{cor}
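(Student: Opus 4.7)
The plan is to derive this as a direct application of Saito's general cohomological identification between the characteristic class and the refined intersection of the characteristic cycle with the zero section, which is the localized (non-degree) form of the index formula of Theorem \ref{sindex}. Namely, for any constructible complex $\mg$ of $\Lambda$-modules on a smooth scheme $X$ purely of dimension $d$ over $k$, Saito establishes in \cite{sa4} that
\begin{equation}
C(\mg)=(CC(\mg),T^{\ast}_{X}X)_{T^{\ast}X} \notag
\end{equation}
as elements of $H^{2d}(X,\Lambda(d))\simeq H^{0}(X,\mk_{X})$, where the right-hand side is defined via the Gysin map for the regular closed immersion $T^{\ast}_{X}X\hookrightarrow T^{\ast}X$. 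The taking of global degrees on a proper $X$ recovers Theorem \ref{sindex}, but the unrestricted cohomological statement is what we need here.

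First, I would verify that the refined intersection product on the right is well-defined as an element of $H^{4}(X,\Lambda(2))$: by Lemma \ref{suppCC} the cycle $CC(j_{!}\mf)$ is supported on $SS(j_{!}\mf)$, which is purely of dimension $d=2$, so its refined intersection with the zero section $T^{\ast}_{X}X$ (of codimension $2$ in $T^{\ast}X$) is supported in dimension zero on $X$ and yields a class in $H^{0}(X,\mk_{X})$ through the isomorphism $\Lambda(2)[4]\xrightarrow{\sim}\mk_{X}$. Next, I would apply Saito's identity to $\mg=j_{!}\mf$ to obtain the asserted equality. Finally, combining with Theorem \ref{thmmain} one may substitute $CC(j_{!}\mf)=\Char^{K}(X,U,\chi)$ on the right-hand side, so the characteristic class is expressed through the ramification-theoretic cycle $\Char^{K}(X,U,\chi)$; this is the actual content that makes the corollary useful, even though the equality itself is formally independent of the explicit form of $CC(j_{!}\mf)$.

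The potentially delicate point is that Saito's cohomological identification of $C(\mg)$ with $(CC(\mg),T^{\ast}_{X}X)_{T^{\ast}X}$ must be invoked in a form that does not require global properness; however, this is precisely the localized version of the index formula established in \cite{sa4} and requires no further work here, so no genuine obstacle arises in the proof.
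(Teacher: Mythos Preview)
Your proposal rests on invoking a general cohomological identity $C(\mg)=(CC(\mg),T^{\ast}_{X}X)_{T^{\ast}X}$ in $H^{2d}(X,\Lambda(d))$ from \cite{sa4}, but that identity is not established there. What \cite[Theorem 7.13]{sa4} proves is the numerical index formula $\chi(X,\mg)=(CC(\mg),T^{\ast}_{X}X)_{T^{\ast}X}$ for $X$ projective over an algebraically closed field; the refinement to a cohomological equality in $H^{2d}(X,\Lambda(d))$ without properness is a genuinely stronger statement that is not supplied in \cite{sa4}. You flag this as ``potentially delicate'' and then assert it requires no work, but that is exactly the gap: the corollary is not a formal consequence of results already cited in the paper.

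The paper's proof takes a completely different route that avoids any such general input. It first proves the equivalent equality $C(j_{!}\mf)=(\Char^{K}(X,U,\chi),T^{\ast}_{X}X)_{T^{\ast}X}$ and then applies Theorem \ref{thmmain}. For the clean case it uses \cite[Theorem 3.7]{saj}, which gives $C(j_{!}\mf)=(\Char^{\log}(X,U,\chi),T^{\ast}_{X}X(\log D))_{T^{\ast}X(\log D)}$ in $H^{4}(X,\Lambda(2))$, and then transports this to the non-logarithmic side via Theorem \ref{totalpullback} (ii). For the general case it passes to a resolution $f\colon X'\to X$ of non-clean points as in (\ref{seqcl}), applies the clean case on $X'$, and pushes forward using the functoriality $f_{\ast}C(f^{\ast}j_{!}\mf)=C(j_{!}\mf)$ from \cite[Proposition 2.1.6]{as3} together with the compatibility of the intersection product under $f_{\ast}$ and $df^{!}$. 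This argument is specific to the rank-one situation and to dimension $2$, which is why the corollary is stated under those hypotheses rather than as an instance of a general fact.
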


\begin{proof}
We prove the equality
\begin{equation}
\label{eqccla}
C(j_{!}\mf)=(\Char^{K}(X,U,\chi),T^{*}_{X}X)_{T^{*}X}
\end{equation}
in $H^{4}(X,\Lambda(2))$.
Then the assertion holds by Theorem \ref{thmmain}.

Suppose that $(X,U,\chi)$ is clean.
By \cite[Theorem 3.7]{saj}, we have 
\begin{equation}
C(j_{!}\mf)=(\Char^{\log}(X,U,\chi),T^{\ast}_{X}X(\log D))_{T^{\ast}X(\log D)} \notag
\end{equation}
in $H^{4}(X,\Lambda(2))$.
By Theorem \ref{totalpullback} (ii), the right hand side is equal to
$(\Char^{K}(X,U,\chi),T^{\ast}_{X}X)_{T^{\ast}X}$
in $H^{4}(X,\Lambda (2))$.
Hence the equality (\ref{eqccla}) holds if $(X,U,\chi)$ is clean.

In general, let $f\colon X'\rightarrow X$ be the composition (\ref{seqcl}) of blow-ups at closed points over $Z_{\chi}$
such that $(X',f^{-1}(U),f^{*}\chi)$ is clean.
Then we have
\begin{align}
(\Char^{K}(X,U,\chi),T^{*}_{X}X)_{T^{*}X}&=
f_{*}(df^{!}\Char^{K}(X',f^{-1}(U),f^{*}\chi),T^{*}_{X}X\times_{X}X')_{T^{*}X\times_{X}X'}  \notag\\
&=f_{*}(\Char^{K}(X',f^{-1}(U),f^{*}\chi),T^{*}_{X'}X')_{T^{*}X'}. \notag
\end{align}
Since $(X',f^{-1}(U),f^{*}\chi)$ is clean, we have
\begin{equation}
C(f^{*}j_{!}\mf)=(\Char^{K}(X',f^{-1}(U),f^{*}\chi),T^{*}_{X'}X')_{T^{*}X'} \notag
\end{equation}
as proved above.
Since $f^{-1}(U)$ is isomorphic to $U$ via $f$,
we have $f_{*}C(f^{*}j_{!}\mf)=C(f_{*}f^{*}j_{!}\mf)=C(j_{!}\mf)$ by \cite[Proposition 2.1.6]{as3}.
Hence the equation (\ref{eqccla}) holds.
\end{proof}

%%%%%%%%%%%%%%%%%%%%%%%%%%%%%%%%%%%%%%%%%%%%%%%%%%
\subsection{Proof of the main theorem}
\label{sspfsurf}

In this subsection, we prove Theorem \ref{thmmain}.
We assume that $X$ is purely of dimension $2$ throughout this subsection.

Let $t_{x}$ and $u_{x}$ be the multiplicities of $[\spf]$ in $\Char^{K}(X,U,\chi)$ and $CC(j_{!}\mf)$
respectively for a closed point $x$ of $D$.
By (\ref{saitoccro}) and (\ref{canlift}), it is sufficient to prove the equality
\begin{equation}
\label{tequalu}
t_{x}=u_{x}
\end{equation}
for every closed point $x$ of $D$ to prove the equality (\ref{cceqck}).

\begin{lem}
\label{sumtxux}
Assume that $X$ is projective over $k$.
Then we have $\sum_{x\in |D|}t_{x}=\sum_{x\in |D|}u_{x}$.
\end{lem}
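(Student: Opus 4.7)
The strategy is to extract the equality from the two index formulas available for $j_{!}\mathcal{F}$ and to cancel the components that are common to $CC(j_{!}\mathcal{F})$ and $\Char^{K}(X,U,\chi)$.

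First I would reduce to the case where $k$ is algebraically closed. Both $CC(j_{!}\mathcal{F})$ and $\Char^{K}(X,U,\chi)$ are compatible with the pull-back by $X_{\bar{k}}\to X$, and the multiplicities $t_{x}$ and $u_{x}$ are defined by formal-local data at $x$ (see (\ref{tx})), so they coincide at every geometric point above $x$. This reduction is needed in order to apply Theorem \ref{sindex} and Corollary \ref{ndegindex}, both of which require an algebraically closed base field.

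With $k$ algebraically closed, Theorem \ref{sindex} applied to $j_{!}\mathcal{F}$ and Corollary \ref{ndegindex} give
\begin{equation}
(CC(j_{!}\mathcal{F}),T^{\ast}_{X}X)_{T^{\ast}X} \;=\; \chi(X,j_{!}\mathcal{F}) \;=\; (\Char^{K}(X,U,\chi),T^{\ast}_{X}X)_{T^{\ast}X}. \notag
\end{equation}
Subtracting the two cycles using (\ref{saitoccro}) and (\ref{canlift}), the shared terms $[T^{\ast}_{X}X]$ and $\sum_{i}r_{i}'[L_{i,\chi}']$ cancel, leaving
\begin{equation}
0 \;=\; \sum_{x\in|D|}(u_{x}-t_{x})\,\bigl([T^{\ast}_{x}X],T^{\ast}_{X}X\bigr)_{T^{\ast}X}. \notag
\end{equation}

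Finally, for every closed point $x\in|D|$, the fibre $T^{\ast}_{x}X\cong\mathbf{A}^{2}_{k}$ and the zero section $T^{\ast}_{X}X$ are smooth two-dimensional closed subvarieties of the smooth fourfold $T^{\ast}X$ meeting only at the origin $0_{x}\in T^{\ast}_{x}X$, and the tangent-space decomposition $T_{0_{x}}(T^{\ast}X)=T_{0_{x}}(T^{\ast}_{x}X)\oplus T_{0_{x}}(T^{\ast}_{X}X)$ shows this intersection is transverse. Hence $([T^{\ast}_{x}X],T^{\ast}_{X}X)_{T^{\ast}X}=1$ for every such $x$, and substitution yields $\sum_{x}(u_{x}-t_{x})=0$, which is the claim. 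The argument is essentially the numerical shadow of the two index formulas together with the easy transversality computation at $0_{x}$, and presents no substantive obstacle.
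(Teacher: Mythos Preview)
Your proof is correct and follows the same approach as the paper: compare the two index formulas (Theorem \ref{sindex} and Corollary \ref{ndegindex}) and cancel the common terms of $CC(j_{!}\mathcal{F})$ and $\Char^{K}(X,U,\chi)$ coming from (\ref{saitoccro}) and (\ref{canlift}). You simply make explicit two points the paper leaves to the reader, namely the reduction to algebraically closed $k$ and the computation $([T^{\ast}_{x}X],T^{\ast}_{X}X)_{T^{\ast}X}=1$.
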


\begin{proof}
Since $\Char^{K}(X,U,\chi)$ and $CC(j_{!}\mf)$ are equal outside the closed points $x$ of $D$
such that $(t_{x},u_{x})\neq (0,0)$ by (\ref{saitoccro}) and (\ref{canlift}),
the assertion holds by Theorem \ref{sindex} and Corollary \ref{ndegindex}.
\end{proof}

Theorem \ref{thmmain} is reduced to the clean case:

\begin{prop}
\label{propmain}
Let $x$ be a closed point of $D$ and assume that $(X,U,\chi)$ is clean. 
Then the equality (\ref{tequalu}) holds for $x$.
\end{prop}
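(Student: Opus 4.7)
The plan is to reduce the equality $t_{x} = u_{x}$ to the clean projective case on $\mathbf{P}_{k}^{2}$ and then exploit the index formulas. First, Proposition \ref{propndeg} already yields $t_{x} = u_{x}$ when $(X,U,\chi)$ is non-degenerate at $x$ or when $\sharp(I_{\mT,\chi,x}) = \sharp(I_{\mW,\chi,x}) = 1$. I may therefore assume that $(X,U,\chi)$ is clean but not non-degenerate at $x$, which by Corollary \ref{corintdeg} forces $I_{\mW,\chi,x} = I_{\mI,\chi,x}$.

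Since both $CC(j_{!}\mf)$ and $\Char^{K}(X,U,\chi)$ are defined \'{e}tale locally and are compatible with the projection $X_{\bar{k}} \to X$, I would next reduce to the case where $k$ is algebraically closed and $X$ is projective, by passing to a smooth compactification of an \'{e}tale neighborhood of $x$ and extending $\mf$. A further reduction to $X = \mathbf{P}_{k}^{2}$ is then made possible by the homotopy invariance of the characteristic cycle for clean rank one sheaves (Proposition \ref{prophi}) combined with the explicit criterion of Corollary \ref{corckcc}: the local numerical data $I_{\mI,\chi,x}$, $R_{\chi}$, and $\{\ord'(\chi;x,D_{i'})\}_{i'\in I_{\mW,\chi,x}}$ determine $\Char^{K}(X,U,\chi)$ near $x$ and, via Corollary \ref{corckcc}, the germ of $CC(j_{!}\mf)$ at $x$, so one may transplant the germ of $(X,U,\chi)$ at $x$ to a clean rank one sheaf on $\mathbf{P}_{k}^{2}$ ramified along a suitable simple normal crossings divisor $D$.

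The main obstacle is the construction, on $\mathbf{P}_{k}^{2}$, of a clean rank one sheaf $\mf'$ with character $\chi'$ and a finite subset $\Sigma \subset |D|$ containing $x$ such that $(t_{y}', u_{y}') = (t_{x}, u_{x})$ for every $y \in \Sigma$, while at every $y \in |D|\setminus \Sigma$ the triple $(\mathbf{P}_{k}^{2}, U', \chi')$ is either non-degenerate at $y$ or falls under the special case of Proposition \ref{propndeg}(ii), so that $t_{y}' = u_{y}'$ holds automatically there. I would build $\chi'$ by prescribing a Witt-vector representative whose leading terms are generic outside $\Sigma$, using the explicit local criteria of Lemma \ref{lemwitt} and Corollary \ref{cortw} to ensure cleanliness everywhere and non-degeneracy away from $\Sigma$, and matching the invariants controlled by Corollary \ref{corckcc} at the points of $\Sigma$. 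The delicate point will be global existence: arranging the prescribed local Witt data to come from a single global character on a fixed affine cover of $\mathbf{P}_{k}^{2}$ while keeping all Swan conductors and total dimensions in the prescribed shape.

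Once $\mf'$ is in hand, Theorem \ref{sindex} and Corollary \ref{ndegindex} apply, and Lemma \ref{sumtxux} for $\mf'$ gives
\begin{equation}
\sharp(\Sigma)\,(t_{x}-u_{x})+\sum_{y \in |D|\setminus \Sigma}(t_{y}'-u_{y}') = 0. \notag
\end{equation}
By the construction of $\mf'$ each summand on the right vanishes, forcing $\sharp(\Sigma)(t_{x}-u_{x}) = 0$ and hence $t_{x} = u_{x}$.
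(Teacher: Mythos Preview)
Your overall architecture matches the paper's: reduce via \'{e}tale localization and compactification to a clean character on $\mathbf{P}_{k}^{2}$, then use the two index formulas (Theorem \ref{sindex} and Corollary \ref{ndegindex}) through Lemma \ref{sumtxux}. The use of Proposition \ref{prophi} and Corollary \ref{corckcc} to transplant the local germ to an explicit Witt-vector model is also exactly how the paper proceeds.

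There is, however, a genuine gap in your global construction step. You require a sheaf $\mf'$ on $\mathbf{P}_{k}^{2}$ and a finite set $\Sigma$ with $(t'_{y},u'_{y})=(t_{x},u_{x})$ for \emph{every} $y\in\Sigma$, and with all other closed points of $D$ already handled by Proposition \ref{propndeg}. This symmetry requirement is exactly what the paper does in its base case ($\sharp(I_{x})=1$ with order $m=1$), but it cannot be achieved in general. For instance, when $\sharp(I_{x})=2$, the point $x$ is the crossing of two branches; any choice of boundary divisor $E\subset(S_{0}S_{1}S_{2}=0)$ on $\mathbf{P}_{k}^{2}$ has at most three pairwise crossings, and one cannot manufacture several crossing points whose local invariants all agree with those at $x$. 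The non-degenerate locus will inevitably fail at additional points lying on a \emph{single} branch, and those points have $\sharp(I_{y})=1\neq\sharp(I_{x})$, so $(t'_{y},u'_{y})\neq(t_{x},u_{x})$ and your final displayed identity does not collapse.

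The paper repairs this by a bootstrapping induction on the complexity of the local configuration (splitting into cases according to $\sharp(I_{x})$, the $p$-adic orders $s'_{i}=\ord_{p}(n_{i})$, and the vanishing order $m$ of the relevant Witt component). Only the simplest case uses your symmetry argument; in each subsequent case the auxiliary sheaf $\mg$ on $\mathbf{P}_{k}^{2}$ is built so that every bad point of $\Sigma$ other than $x$ falls into an \emph{earlier, already-proven} case, hence satisfies $t'_{y}=u'_{y}$ there, while at $x$ one still has $(t'_{x},u'_{x})=(t_{x},u_{x})$. Lemma \ref{sumtxux} then gives $t_{x}=u_{x}$ directly, without the factor $\sharp(\Sigma)$. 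Your plan becomes correct once you replace the uniform condition on $\Sigma$ by this inductive scheme.
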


We prove Theorem \ref{thmmain} by admitting Proposition \ref{propmain}.
Let $x$ be a closed point of $D$ and
let $X\rightarrow \bar{X}$ be a smooth compactification over $k$. 
By blowing up $\bar{X}$ outside $X$ if necessary, 
we may assume that $U$ is the complement in $\bar{X}$ of a divisor on $\bar{X}$ with simple normal crossings. 
Since $t_{x}$ and $u_{x}$ are defined \'{e}tale locally, by replacing $X$ by $\bar{X}$ if necessary, 
we may assume that $X$ is projective over $k$ to prove the equality (\ref{tequalu}).

Let $f\colon X^{\prime} \rightarrow X$ be a composition of finitely many blow-ups
at (non-clean) closed points lying above $D-\{x\}$ such that
$(X^{\prime},f^{-1}(U),f^{\ast}\chi)$ is clean outside $f^{-1}(x)$
(\cite[Theorem 4.1]{ka2}).
Since $t_{x}$ and $u_{x}$ are stable under the replacement of $X$ by $X^{\prime}$,
we may replace $X$ by $X^{\prime}$ and we may assume that $x$ is the unique
point on $X$ where $(X,U,\chi)$ is not clean.
Hence the equality (\ref{tequalu}) holds by Lemma \ref{sumtxux} and Proposition \ref{propmain}.
\vspace{0.2cm}

In the rest of this section, we prove Proposition \ref{propmain}.
Since $CC(j_{!}\mf)$ and $\Char^{K}(X,U,\chi)$ are compatible with the pull-back by
the projection $X_{\bar{k}}\rightarrow X$,
we may assume that $k$ is algebraically closed.
Further, since $CC(j_{!}\mf)$ is stable under the replacement
of $\chi$ by the $p$-part of $\chi$ by \cite[Theorem 0.1]{sy}
and so is $\Char^{K}(X,U,\chi)$, we may assume
that $\chi$ is of order $p^{s}$ for an integer $s\ge 0$.
Since the assertion is local, we may shrink $X$ to a neighborhood of $x$
and we may assume that $X=\Spec A$ is affine and $D_{i}=(t_{i}=0)$ for some $t_{i}\in A$ for $i\in I$.
Further, we may assume $I=I_{x}$.
By Proposition \ref{propndeg}, we may assume that $(X,U,\chi)$ is not non-degenerate at $x$
and $I=I_{\mW,\chi}$.
Then we have $I= I_{\mI,\chi}$ by Corollary \ref{corintdeg}.
We put $n_{i}=\sw(\chi|_{K_{i}})$, 
$s_{i}'=\ord_{p}(n_{i})$, and $n_{i}'=p^{-s_{i}'}n_{i}$ for $i\in I$.

After shrinking $X$ if necessary, we take a global section $a=(a_{s-1},\ldots, a_{0})$ of 
$\fillog_{R_{\chi}}j_{*}W_{s}(\dvr_{U})$ whose image in 
$H^{1}_{\et}(U,\mathbf{Q}/\mathbf{Z})$ is $\chi$.
By Lemma \ref{lemtone}, we have $s_{i}'<s$ for every $i\in I$, since $I=I_{\mI,\chi}$.
We give a proof of the equation (\ref{tequalu}) by dividing it into the following $2$ cases:
\begin{enumerate}
\renewcommand{\labelenumi}{(\alph{enumi})}
\item $\sharp(I)=1$.
\item $\sharp(I)=2$.
\end{enumerate}

In the case (a), we put $I=\{1\}$ and
$a_{s'_{1}}=c/t_{1}^{n_{1}^{\prime}}$ for $c\in \Gamma(X,\dvr_{X})$. 
By Lemma \ref{lemtone} and $\cform(\chi|_{K_{1}})\neq 0$, we have $c\neq 0$ in $\dvr_{D_{1},x}$.
Let $m$ be the valuation of $c$ in $\dvr_{D_{1},x}$.
Since $(X,U,\chi)$ is not non-degenerate at $x$, we have $m>0$
by Lemma \ref{lemwitt} (iv).
We note that if $s'_{1}=0$ then $m=1$ by $m>0$ and Lemma \ref{lemwitt} (iii).
We divide the case (a) into the following $3$ subcases:
\begin{itemize}
\item[($\text{a}_{1}$)] $m=1$.
\item[($\text{a}_{2}$)] $1< m\le n_{1}'$.
\item[($\text{a}_{3}$)] $m> n_{1}'$.
\end{itemize}

In the case (b), we put $I=\{1,2\}$ and we may assume $s_{1}'\le s_{2}'$.
We put $a_{s'_{1}}=u/t_{1}^{n_{1}^{\prime}}t_{2}^{p^{s_{2}'-s_{1}'}n_{2}^{\prime}}$ 
for $u\in \Gamma(X,\dvr_{X})$.
Since $(X,U,\chi)$ is assumed to be clean,
the element $u$ is invertible in $\dvr_{X,x}$ by Lemma \ref{lemwitt} (ii).
If $s_{1}'<s_{2}'$, then
we put $a_{s'_{2}}=c/t_{1}^{m}t_{2}^{n_{2}^{\prime}}$ for $c\in \Gamma(X,\dvr_{X})$ 
and $m\in \mathbf{Z}$ such that $-p^{s'_{2}}m\ge -n_{1}$.
Then, since $\cform(\chi|_{K_{2}})\neq 0$ and $\chi|_{K_{2}}$ is of type $\mI$,
we have $c\neq 0$ in $\dvr_{D_{2},x}$ by Lemma \ref{lemtone}.
Let $m^{\prime}$ be the valuation of $c$ in $\dvr_{D_{2},x}$ and put $M=m'-m$.
We divide the case (b) into the following $3$ subcases:
\begin{itemize}
\item[($\text{b}_{1}$)] $s_{1}'=s_{2}'$.
\item[($\text{b}_{2}$)] $s_{1}'<s_{2}'$ and $M \le n_{2}'$. 
\item[($\text{b}_{3}$)] $s_{1}'<s_{2}'$ and $M > n_{2}'$.
\end{itemize}

We give a format of our proof for the equality (\ref{tequalu}) in the cases 
($\text{a}_{1}$)--($\text{b}_{3}$).
By using Proposition \ref{prophi},
we first reduce the proof to the case where $X=\mathbf{A}_{k}^{2}=\Spec k[T_{1},T_{2}]$, 
$D_{i}=(T_{i}=0)$ for $i\in I$, and where $x$ is the origin.
More precisely, we prove by using Proposition \ref{prophi} that we may replace $a$ by another global section $a'$ of 
$\fillog_{R_{\chi}}j_{*}W_{s}(\dvr_{U})$ which is locally the image of an element
of $W_{s}(k[T_{1},T_{2}, T_{i}^{-1}; i\in I])$ and is concretely constructed.
Then we do the reduction by using the fact that $CC(j_{!}\mf)$ and $\Char^{K}(X,U,\chi)$
are defined \'{e}tale locally.

We canonically regard $X=\mathbf{A}_{k}^{2}$ as the complement 
$\Spec k[S_{1}/S_{0}, S_{2}/S_{0}]$ of the divisor $(S_{0}=0)$ in 
$\mathbf{P}^{2}_{k}=\Proj k[S_{0},S_{1},S_{2}]$ by the correspondence $S_{i}/S_{0}\mapsto T_{i}$
for $i=1,2$.
Then $x=(1:0:0)\in \mathbf{P}_{k}^{2}$.
Let $E\subset (S_{0}S_{1}S_{2}=0)$ be a divisor on $\mathbf{P}_{k}^{2}$ and
put $V=\mathbf{P}_{k}^{2}-E$.
Let $j'\colon V\rightarrow \mathbf{P}_{k}^{2}$ be the open immersion.
Then we construct a smooth sheaf $\mg$ of $\Lambda$-modules of rank $1$ on $V$ 
by concretely constructing a global section $b$ of $j'_{*}W_{s}(\dvr_{V})$
and by defining $\mg$ by $(F-1)(t)=b$.
Let $\varphi$ be the character $\pi_{1}^{\ab}(V)\rightarrow \Lambda^{\times}$ 
corresponding to $\mg$ and regard $\varphi$
as an element of $H^{1}_{\et}(V,\mathbf{Q}/\mathbf{Z})$ by $\psi$.
Let $\Sigma$ be the finite set of closed points of $E$
where $(\mathbf{P}_{k}^{2},V,\varphi)$ is not non-degenerate.
We note that if $(\mathbf{P}_{k}^{2},V,\varphi)$ is clean 
then we have $CC(j'_{!}\mg)=\Char^{K}(\mathbf{P}_{k}^{2},V,\varphi)$ outside
$\Sigma$ by Proposition \ref{propndeg} (i).
Let $t_{y}'$ and $u'_{y}$ be the coefficients of the fiber $[T^{*}_{y}\mathbf{P}_{k}^{2}]$ at $y$
in $\Char^{K}(\mathbf{P}_{k}^{2},V,\varphi)$ and $CC(j'_{!}\mg)$ for a closed point $y$ of $E$.
We prove that $\mg$ satisfies the following conditions:
\begin{enumerate}
\item $(\mathbf{P}_{k}^{2},V,\varphi)$ is clean.
\item In the case ($\text{a}_{1}$), we have 
$x\in \Sigma$ and $(t'_{y},u'_{y})=(t_{x},u_{x})$ for every $y\in \Sigma$. 
In the other cases, we have $CC(j'_{!}\mg)=\Char^{K}(\mathbf{P}_{k}^{2},V,\varphi)$ outside $x$ and
$(t_{x}',u_{x}')=(t_{x},u_{x})$.
\end{enumerate}

In the case ($\text{a}_{1}$), we have
\begin{equation}
\sharp(\Sigma)\cdot t_{x}=\sum_{y\in \Sigma}t'_{y}=\sum_{y\in \Sigma}u'_{y}=\sharp(\Sigma)\cdot u_{x} \notag
\end{equation}
by the condition (ii) and Lemma \ref{sumtxux},
since the equality $CC(j'_{!}\mg)=\Char^{K}(\mathbf{P}_{k}^{2},V,\varphi)$ holds outside $\Sigma$
by the condition (i) and Proposition \ref{propndeg} (i).
Since $(x\in) \Sigma$ is not empty by the condition (ii), we obtain the equality (\ref{tequalu}).

In the other cases, we have $t_{x}=t'_{x}=u'_{x}=u_{x}$ by the condition (ii) and Lemma \ref{sumtxux}.
Hence the equality (\ref{tequalu}) holds.
\vspace{0.2cm}

We give the proofs in the order of ($\text{a}_{1}$), ($\text{a}_{2}$), ($\text{b}_{1}$), ($\text{a}_{3}$), 
($\text{b}_{2}$), and ($\text{b}_{3}$).
By the above format, it is sufficient to reduce each proof to the case where $X=\mathbf{A}_{k}^{2}$, 
$D_{i}=(T_{i}=0)$ for $i\in I$, and where $x$ is the origin,
construct a smooth sheaf $\mg$ of $\Lambda$-modules of rank $1$ on the complement $V$ of a divisor 
$E\subset (S_{0}S_{1}S_{2}=0)$ on $\mathbf{P}^{2}_{k}$, and 
prove that $\mg$ satisfies the conditions (i) and (ii) above.
Let $U_{i}$ be the complement of $(S_{i}=0)$ in $\mathbf{P}_{k}^{2}$ for $i=0,1,2$.

($\text{a}_{1}$)
Let $a'=(a_{s-1}',\ldots,a_{0}')$ be the global section of $j_{*}W_{s}(\dvr_{U})$
defined by $a_{i}'=t_{2}^{m}/t_{1}^{n_{1}'}$ if $i=s'_{1}$, $a_{i}'=t_{2}/t_{1}^{n_{1}}$ if $i=0$, 
and $a_{i}'=0$ if $i\neq 0, s'_{1}$.
Then the Swan conductor $\sw(\chi|_{K_{1}})$ and the total dimension $\dt(\chi|_{K_{1}})$
are stable under the replacement of $a$ by $a'$ by Corollary \ref{cortw}.
Especially $I=I_{\mW,\chi}=I_{\mI,\chi}$ is stable under the replacement of $a$ by $a'$.
The condition that $(X,U,\chi)$ is clean at $x$ is stable
under the replacement of $a$ by $a'$ by Lemma \ref{lemwitt} (iii),
so is $[L_{1,\chi}']$ by Lemma \ref{lemint} (ii),
and so is $\ord'(\chi;x,D_{1})$ by Lemma \ref{stabord}.
Hence $\Char^{K}(X,U,\chi)$ and $CC(j_{!}\mf)$ are stable in a neighborhood of $x$ under the replacement of $a$ by $a'$
by Corollary \ref{corckcc}. 
Thus we may replace $a$ by $a'$.
Since $\Char^{K}(X,U,\chi)$ and $CC(j_{!}\mf)$ are defined \'{e}tale locally,
we may assume $X=\mathbf{A}_{k}^{2}$, $D=(T_{1}=0)$,
and that $x$ is the origin.

Let $c_{1},\ldots ,c_{n_{1}-1}$ be $n_{1}-1$ different elements of $k^{\times}$.
We put $E=(S_{1}=0)$.
Let $\mathcal{G}$ be the smooth sheaf of $\Lambda$-modules of rank $1$ on $V$
defined by $(F-1)(t)=b$ where $b=(b_{s-1},\ldots ,b_{0})$ is the global section of 
$j'_{*}W_{s}(\dvr_{V})$ defined by
\begin{equation}
b_{i}=\begin{cases}
(S_{2}\prod_{i'=1}^{n_{1}^{\prime}-1}(S_{2}-c_{i'}S_{0}))/S_{1}^{n_{1}'} & (i=s'_{1}) \\
(S_{2}\prod_{i'=1}^{n_{1}-1}(S_{2}-c_{i'}S_{0}))/S_{1}^{n_{1}} & (i=0) \\
0 & (i\neq 0,s'_{1}).
\end{cases}
\notag
\end{equation}

We prove that $\mg$ satisfies the condition (i).
It is sufficient to prove that $(\mathbf{P}_{k}^{2},V,\varphi)$ is clean at
every closed point of $E$ by Lemma \ref{lemclean} (iii).
We put $x_{i'}=(1:0:c_{i'})$ for $i'=1, \ldots ,n_{1}^{\prime}-1$ and 
$\Sigma'=\{x,x_{1},\ldots,x_{n_{1}'-1}\}$.
We put $z=(0:0:1)$.
Then $b_{s_{1}'}S_{1}^{n_{1}'}/S_{0}^{n_{1}'}$ is invertible in $\dvr_{\mathbf{P}_{k}^{2},y}$
for every closed point $y$ of $U_{0}\cap E$ outside $\Sigma'$.
Further $(S_{0}=0)\cap E=\{z\}$ and $b_{s_{1}'}S_{1}^{n_{1}'}/S_{2}^{n_{1}'}$ is invertible in 
$\dvr_{\mathbf{P}_{k}^{2},z}$.
Hence $(\mathbf{P}_{k}^{2},V,\varphi)$ is clean outside $\Sigma'$ by Lemma \ref{lemwitt} (ii).
Since $(S_{1}/S_{0}, b_{0}S_{1}^{n_{1}}/S_{0}^{n_{1}})$ is a local coordinate system at 
every $y\in \Sigma'$,
the triple $(\mathbf{P}_{k}^{2},V,\varphi)$ is clean even at every $y\in \Sigma'$ by Lemma \ref{lemwitt} (iii). 
Hence $\mg$ satisfies the condition (i).

We prove that $\mg$ satisfies the condition (ii).
By Corollary \ref{cortw}, we have $\sw(\varphi|_{K_{1}})=n_{1}=\sw(\chi|_{K_{1}})$ 
and $\dt(\varphi|_{K_{1}})=n_{1}+1=\dt(\chi|_{K_{1}})$.
Hence the character $\varphi|_{K_{1}}$ is of type $\mI$.
By Lemma \ref{lemwitt} (iv), we have $\Sigma=\Sigma'$.
Since $\varphi|_{K_{1}}$ is of type $\mI$,
we have $\ord'(\varphi;y,E)=\ord'(\chi;x,D_{1})$ for every $y\in \Sigma=\Sigma'$
by Lemma \ref{stabord}.
Since $(\mathbf{P}_{k}^{2},V,\varphi)$ is clean, 
we have $(t_{x}',u_{x}')=(t_{x},u_{x})$ by Corollary \ref{corckcc}
and we have $t_{y}'=t_{x}$ for every $y\in \Sigma$ by (\ref{tx}).
By Lemma \ref{lemint} (ii) and (\ref{canlift}), 
there exists a family $\{(V_{y},y)\}_{y\in \Sigma}$ of pointed open subschemes 
of $\mathbf{P}_{k}^{2}$ isomorphic to each other such that
$\Char^{K}(V_{y},V_{y}\cap V,\varphi|_{V_{y}\cap V})=
\Char^{K}(V_{x},V_{x}\cap V,\varphi|_{V_{x}\cap V})$ for every $y\in \Sigma$
if we identify $(V_{y},y)$ with $(V_{x},x)$.
Hence we have $u_{y}'=u_{x}'$ for every $y\in \Sigma$ by
Proposition \ref{prophi}.
Thus $\mg$ satisfies the condition (ii).

($\text{a}_{2}$)
As in the proof of the case ($\text{a}_{1}$),
we may assume $a_{i}=t_{2}^{m}/t_{1}^{n_{1}'}$ if $i=s'_{1}$, $a_{i}=t_{2}/t_{1}^{n_{1}}$ if $i=0$, 
and $a_{i}=0$ if $i\neq 0, s'_{1}$.
Further we may assume $X=\mathbf{A}_{k}^{2}$, $D_{1}=(T_{1}=0)$,
and that $x$ is the origin.

Let $c_{1},\ldots ,c_{n_{1}-1}$ be $n_{1}-1$ different elements of $k^{\times}$.
We put $E=(S_{1}=0)$.
Let $\mathcal{G}$ be the smooth sheaf of $\Lambda$-modules of rank $1$ on $V$
defined by $(F-1)(t)=b$ where $b=(b_{s-1},\ldots ,b_{0})$ is the global section of 
$j'_{*}W_{s}(\dvr_{V})$ 
defined by
\begin{equation}
b_{i}=\begin{cases}
(S_{2}^{m}\prod_{i'=1}^{n_{1}^{\prime}-m}(S_{2}-c_{i'}S_{0}))/S_{1}^{n_{1}'} & (i=s'_{1}) \\
(S_{2}\prod_{i'=1}^{n_{1}-1}(S_{2}-c_{i'}S_{0}))/S_{1}^{n_{1}} & (i=0) \\
0 & (i\neq 0,s'_{1}).
\end{cases}
\notag
\end{equation}
Then, similarly as the proof of ($\text{a}_{1}$), the sheaf $\mg$ satisfies the condition (i).

We prove that $\mg$ satisfies the condition (ii).
As in the proof of ($\text{a}_{1}$), 
we have $(\sw(\varphi|_{K_{1}}),\dt(\varphi|_{K_{1}}))=(\sw(\chi|_{K_{1}}),\dt(\chi|_{K_{1}}))$ 
and $\Sigma=\{x,x_{1},\ldots,x_{n_{1}'-m}\}$,
where $x_{i'}=(1:0:c_{i'})$ for $i'=1, \ldots ,n_{1}^{\prime}-m$.
Hence $\varphi|_{K_{1}}$ is of type $\mI$.
By the case ($\text{a}_{1}$), we have $t'_{y}=u'_{y}$ for every $y\in \Sigma-\{x\}$.
Hence we have $CC(j'_{!}\mg)=\Char^{K}(\mathbf{P}_{k}^{2},V,\varphi)$ outside $x$.
Since $\chi|_{K_{1}}$ and $\varphi|_{K_{1}}$ are of type $\mI$, 
we have $\ord'(\varphi;x,D_{1})=\ord'(\chi;x,D_{1})$ by Lemma \ref{stabord}.
Since $I_{\mW,\chi}=I_{\mW,\varphi}=I_{\mI,\chi}=I_{\mI,\varphi}=\{1\}$, 
$\sw(\chi|_{K_{1}})=\sw(\varphi|_{K_{1}})$, and
both $(X,U,\chi)$ and $(\mathbf{P}_{k}^{2},V,\varphi)$ are clean,
we have $\Char^{K}(X,U,\chi)=\Char^{K}(X,U,\varphi|_{U})$ and
$CC(j_{!}\mf)=CC(j'_{!}\mg|_{X})$ in a neighborhood of $x$
by Corollary \ref{corckcc}.
Hence we have $(t'_{x},u'_{x})=(t_{x},u_{x})$.
Therefore $\mg$ satisfies the condition (ii).

($\text{b}_{1}$) 
Let $a'=(a_{s-1}',\ldots,a_{0}')$ be the global section of $j_{*}W_{s}(\dvr_{U})$
defined by $a_{i}'=1/t_{1}^{n_{1}'}t_{2}^{n_{2}'}$ if $i=s'_{1}$ and $a_{i}'=0$ if $i\neq s'_{1}$.
By Corollary \ref{cortw}, the Swan conductor $\sw(\chi|_{K_{i}})$ and the total dimension $\dt(\chi|_{K_{i}})$
are stable under the replacement of $a$ by $a'$ for $i=1,2$.
Especially $I=I_{\mW,\chi}=I_{\mI,\chi}$ is stable under the replacement of $a$ by $a'$.
The condition that $(X,U,\chi)$ is clean at $x$ is stable
under the replacement of $a$ by $a'$ by Lemma \ref{lemwitt} (ii),
so is $[L'_{i,\chi}]$ for $i=1,2$ by Lemma \ref{lemint} (ii),
and so is $\ord'(\chi;x,D_{i})$ for $i=1,2$ by Lemma \ref{stabord}.
Hence $\Char^{K}(X,U,\chi)$ and $CC(j_{!}\mf)$ are stable under the replacement of $a$ by $a'$
by Corollary \ref{corckcc}.
Thus we may replace $a$ by $a'$.
Since $\Char^{K}(X,U,\chi)$ and $CC(j_{!}\mf)$ are defined \'{e}tale locally,
we may assume $X=\mathbf{A}_{k}^{2}$, $D_{i}=(T_{i}=0)$ for $i=1,2$,
and that $x$ is the origin.

Let $c_{1},\ldots ,c_{n_{1}+n_{2}}$ be $n_{1}+n_{2}$ different elements of $k^{\times}$.
We put $E=(S_{1}S_{2}=0)$.
Let $\mathcal{G}$ be the smooth sheaf of $\Lambda$-modules of rank $1$ on $V$
defined by $(F-1)(t)=b$ where $b=(b_{s-1},\ldots ,b_{0})$ is the global section
of $j'_{*}W_{s}(\dvr_{V})$ defined by
\begin{equation}
b_{i}=\begin{cases}
(\prod_{i'=1}^{n_{1}^{\prime}+n_{2}^{\prime}}((S_{1}+S_{2})-c_{i'}S_{0}))/
S_{1}^{n_{1}^{\prime}}S_{2}^{n_{2}^{\prime}} & (i=s_{1}') \\
(\prod_{i'=1}^{n_{1}+n_{2}}((S_{1}+S_{2})-c_{i'}S_{0}))/S_{1}^{n_{1}}S_{2}^{n_{2}} & (i=0) \\
0 & (i\neq 0,s_{1}').
\end{cases}
\notag
\end{equation}

We prove that $\mg$ satisfies the condition (i).
It is sufficient to prove that $(\mathbf{P}_{k}^{2},V,\varphi)$ is clean at
every closed point of $E$ by Lemma \ref{lemclean} (iii).
We put $x_{i'}=(1:0:c_{i'})$ and $y_{i'}=(1:c_{i'}:0)$ for $i'=1, \ldots ,n_{1}^{\prime}+n_{2}'$
and $\Sigma_{1}=\{x_{1},\ldots,x_{n_{1}'+n_{2}'}\}$ and $\Sigma_{2}=\{y_{1},\ldots,y_{n_{1}'+n_{2}'}\}$.
Then $b_{s_{1}'}S_{2}^{n_{2}'}/S_{1}^{n_{2}'}$ is invertible in $\dvr_{\mathbf{P}_{k}^{2},y}$
for every closed point of $U_{1}\cap E$ outside $\Sigma_{2}$
and $b_{s_{1}'}S_{1}^{n_{1}'}/S_{2}^{n_{1}'}$ is invertible in $\dvr_{\mathbf{P}_{k}^{2},y}$
for every closed point of $U_{2}\cap E$ outside $\Sigma_{1}$.
Further $(S_{1}=0)\cap (S_{2}=0)=\{x\}$ and $b_{s_{1}'}S_{1}^{n_{1}'}S_{2}^{n_{2}'}/S_{0}^{n_{1}'+n_{2}'}$ is invertible in $\dvr_{\mathbf{P}_{k}^{2},x}$.
Hence $(\mathbf{P}_{k}^{2},V,\varphi)$ is clean outside $\Sigma_{1}\cup \Sigma_{2}$
by Lemma \ref{lemwitt} (ii).
Since $(S_{1}/S_{0},b_{0}S_{1}^{n_{1}}S_{2}^{n_{2}}/S_{0}^{n_{1}+n_{2}})$ is 
a local coordinate system at every $y\in \Sigma_{1}$
and $(S_{2}/S_{0}, b_{0}S_{1}^{n_{1}}S_{2}^{n_{2}}/S_{0}^{n_{1}+n_{2}})$ is 
a local coordinate systen at every $y\in \Sigma_{2}$,
the triple $(\mathbf{P}_{k}^{2},V,\varphi)$ is clean even at every $y\in \Sigma_{1}\cup \Sigma_{2}$
by Lemma \ref{lemwitt} (iii).
Hence $\mg$ satisfies the condition (i).

We prove that $\mg$ satisfies the condition (ii).
By Corollary \ref{cortw}, we have $\sw(\varphi|_{K_{i'}})=n_{i'}=\sw(\chi|_{K_{i'}})$ 
and $\dt(\varphi|_{K_{i'}})=n_{i'}+1=\dt(\chi|_{K_{i'}})$ for $i'=1,2$.
Hence the character $\varphi|_{K_{i'}}$ is of type $\mI$ for $i'=1,2$.
By Lemma \ref{lemwitt} (iv), we have $\Sigma=\Sigma_{1}\cup\Sigma_{2}\cup\{x\}$.
By the case ($\text{a}_{1}$), we have $t'_{y}=u'_{y}$ for every $y\in \Sigma_{1}\cup\Sigma_{2}$.
Hence we have $CC(j'_{!}\mg)=\Char^{K}(\mathbf{P}_{k}^{2},V,\varphi)$ outside $x$.
Since $\varphi|_{K_{i'}}$ is of type $\mI$ for $i'=1,2$,
we have $\ord'(\varphi;x,D_{i'})=\ord'(\chi;x,D_{i'})$ for $i'=1,2$ by Lemma \ref{stabord}.
Since $I_{\mW,\chi}=I_{\mW,\varphi}=I_{\mI,\chi}=I_{\mI,\varphi}=\{1,2\}$, 
$\sw(\chi|_{K_{i'}})=\sw(\varphi|_{K_{i'}})$ for $i'=1,2$,
and both $(X,U,\chi)$ and $(\mathbf{P}_{k}^{2},V,\varphi)$ are clean, 
we have $\Char^{K}(X,U,\chi)=\Char^{K}(X,U,\varphi|_{U})$ and
$CC(j_{!}\mf)=CC(j'_{!}\mg|_{X})$ in a neighborhood of $x$ by Corollary \ref{corckcc}.
Hence we have $(t'_{x},u'_{x})=(t_{x},u_{x})$.
Therefore $\mg$ satisfies the condition (ii).

($\text{a}_{3}$) 
As in the proof of the case ($\text{a}_{1}$),
we may assume $a_{i}=t_{2}^{m}/t_{1}^{n_{1}'}$ if $i=s'_{1}$, $a_{i}=t_{2}/t_{1}^{n_{1}}$ if $i=0$, 
and $a_{i}=0$ if $i\neq 0, s'_{1}$.
Further we may assume $X=\mathbf{A}_{k}^{2}$, $D_{1}=(T_{1}=0)$,
and that $x$ is the origin.

Let $M'\ge m$ be an integer prime to $p$ and put $M=p^{s_{1}'}M'$.
Let $c_{1},\ldots ,c_{n_{1}+M-1}$ be $n_{1}+M-1$ different elements of $k^{\times}$.
We put $E=(S_{0}S_{1}=0)$.
Let $\mathcal{G}$ be the smooth sheaf of $\Lambda$-modules of rank $1$ on $V$
defined by $(F-1)(t)=b$ where $b=(b_{s-1},\ldots ,b_{0})$ is the global section of 
$j'_{*}W_{s}(\dvr_{V})$ defined by
\begin{equation}
b_{i}=\begin{cases}
(S_{2}^{m}\prod_{i'=1}^{n_{1}^{\prime}+M'-m}((S_{0}+S_{1})-c_{i'}S_{2}))/S_{0}^{M'}S_{1}^{n_{1}'}
& (i=s_{1}') \\
(S_{2}\prod_{i'=1}^{n_{1}+M-1}((S_{0}+S_{1})-c_{i'}S_{2}))/S_{0}^{M}S_{1}^{n_{1}}
& (i=0) \\
0 & (i\neq 0,s_{1}').
\end{cases}
\notag
\end{equation}

We prove that $\mg$ satisfies the condition (i).
It is sufficient to prove that $(\mathbf{P}_{k}^{2},V,\varphi)$ is clean at
every closed point of $E$ by Lemma \ref{lemclean} (iii).
We put $x_{i'}=(1:0:c_{i'}^{-1})$ and $z_{i'}=(0:1:c_{i'}^{-1})$ for $i'=1,\ldots,n_{1}'+M'-m$
and $z=(0:1:0)$ and $w=(0:0:1)$.
We put $\Sigma_{1}=\{x,x_{1},\ldots,x_{n_{1}'+M'-m}\}$ and
$\Sigma_{2}=\{z,z_{1},\ldots,x_{n_{1}'+M'-m}\}$.
Then $b_{s_{1}'}S_{1}^{n_{1}'}/S_{0}^{n_{1}'}$ is invertible in $\dvr_{\mathbf{P}_{k}^{2},y}$ 
for every closed point $y$ of $U_{0}\cap E$ outside $\Sigma_{1}$ 
and $b_{s_{1}'}S_{0}^{M'}/S_{1}^{M'}$ is invertible in $\dvr_{\mathbf{P}_{k}^{2},y}$ 
for every closed point $y$ of $U_{1}\cap E$ outside $\Sigma_{2}$.
Further $(S_{0}=0)\cap (S_{1}=0)=\{w\}$ and $b_{s_{1}'}S_{0}^{M'}S_{1}^{n_{1}'}/S_{2}^{M'+n_{1}'}$
is invertible in $\dvr_{\mathbf{P}_{k}^{2},w}$.
Hence $(\mathbf{P}_{k}^{2},V,\varphi)$ is clean outside $\Sigma_{1}\cup\Sigma_{2}$ by Lemma \ref{lemwitt} (ii).
Since $(S_{1}/S_{0},b_{0}S_{1}^{n_{1}}/S_{0}^{n_{1}})$ is a local coordinate system at 
every $y\in \Sigma_{1}$
and $(S_{0}/S_{1}, b_{0}S_{0}^{M}/S_{1}^{M})$ is a local coordinate systen at
every $y\in \Sigma_{2}$,
the triple $(\mathbf{P}_{k}^{2},V,\varphi)$ is clean even at every $y\in \Sigma_{1}\cup \Sigma_{2}$ 
by Lemma \ref{lemwitt} (iii).
Hence $\mg$ satisfies the condition (i).

We prove that $\mg$ satisfies the condition (ii).
Let $K_{0}$ be the local field at the generic point of $(S_{0}=0)$.
By Corollary \ref{cortw}, we have $\sw(\varphi|_{K_{1}})=n_{1}=\sw(\chi|_{K_{1}})$ 
and $\dt(\varphi|_{K_{1}})=n_{1}+1=\dt(\chi|_{K_{1}})$.
Further, we have $\sw(\varphi|_{K_{0}})=M$ and $\dt(\varphi|_{K_{0}})=M+1$.
Hence the character $\varphi|_{K_{i'}}$ is of type $\mI$ for $i'=0,1$.
By Lemma \ref{lemwitt} (iv), we have $\Sigma=\Sigma_{1}\cup\Sigma_{2}\cup\{w\}$.
By the case ($\text{a}_{1}$), we have $t'_{y}=u'_{y}$ for every $y\in \Sigma_{1}\cup\Sigma_{2}-\{x,z\}$.
We have $t'_{z}=u'_{z}$ by the case ($\text{a}_{2}$) and
we have $t'_{w}=u'_{w}$ by the case ($\text{b}_{1}$).
Hence we have $CC(j'_{!}\mg)=\Char^{K}(\mathbf{P}_{k}^{2},V,\varphi)$ outside $x$.
Since $\varphi|_{K_{1}}$ is of type $\mI$,
we have $\ord'(\varphi;x,D_{1})=\ord'(\chi;x,D_{1})$ by Lemma \ref{stabord}.
Since both $(X,U,\chi)$ and $(\mathbf{P}_{k}^{2},V,\varphi)$ are clean, 
we have $\Char^{K}(X,U,\chi)=\Char^{K}(X,U,\varphi|_{U})$ and
$CC(j_{!}\mf)=CC(j'_{!}\mg|_{X})$ in a neighborhood of $x$ by Corollary \ref{corckcc}.
Hence we have $(t'_{x},u'_{x})=(t_{x},u_{x})$.
Thus $\mg$ satisfies the condition (ii).

($\text{b}_{2}$)
As in the proof of ($\text{b}_{1}$), we may assume 
$a'_{i}=t_{1}^{M}/t_{2}^{n_{2}'}$ if $i=s_{2}'$,
$a_{i}=1/t_{1}^{n_{1}'}t_{2}^{p^{s_{2}'-s_{1}'}n_{2}'}$ if $i=s'_{1}$, 
and $a_{i}'=0$ if $i\neq s'_{1}, s_{2}'$.
Further we may assume $X=\mathbf{A}_{k}^{2}$, $D_{i}=(T_{i}=0)$ for $i\in I$,
and that $x$ is the origin.

Let $c_{1},\ldots,c_{n_{1}+n_{2}}$ be $n_{1}+n_{2}$ different elements of $k^{\times}$.
We put $n_{2}''=p^{-s_{1}'}n_{2}$ and $E=(S_{1}S_{2}=0)$.
Let $\mathcal{G}$ be the smooth sheaf of $\Lambda$-modules of rank $1$ on $V$
defined by $(F-1)(t)=b$ where $b=(b_{s-1},\ldots ,b_{0})$ is the global section of 
$j'_{*}W_{s}(\dvr_{V})$ defined by
\begin{equation}
b_{i}=\begin{cases}
(S_{1}^{M}\prod_{i'=1}^{n_{2}^{\prime}-M}((S_{1}+S_{2})-c_{i'}S_{0}))/S_{2}^{n_{2}^{\prime}} & (i=s_{2}') \\
(\prod_{i'=1}^{n_{1}^{\prime}+n_{2}''}((S_{1}+S_{2})-c_{i'}S_{0}))/S_{1}^{n_{1}^{\prime}}S_{2}^{n_{2}''} & (i=s_{1}') \\
(\prod_{i'=1}^{n_{1}+n_{2}}((S_{1}+S_{2})-c_{i'}S_{0}))/S_{1}^{n_{1}}S_{2}^{n_{2}} & (i=0) \\
0 & (i\neq 0, s_{1}',s_{2}').
\end{cases}
\notag
\end{equation}

We prove that $\mg$ satisfies the condition (i).
It is sufficient to prove that $(\mathbf{P}_{k}^{2},V,\varphi)$ is clean at
every closed point of $E$ by Lemma \ref{lemclean} (iii).
We put $x_{i'}=(1:0:c_{i'})$ for $i'=1,\ldots,n_{1}'+n_{2}''$,
$y_{i'}=(1:c_{i'}:0)$ for $i'=1, \ldots ,n_{2}'-M$,
$z=(0:1:0)$, and $w=(0:0:1)$.
We put $\Sigma_{1}=\{x_{1},\ldots,x_{n_{1}'+n_{2}''}\}$ and 
$\Sigma_{2}=\{y_{1},\ldots,y_{n_{2}'-M}\}$.
Then $b_{s_{2}'}S_{2}^{n_{2}'}/S_{1}^{n_{2}'}$ is
invertible in $\dvr_{\mathbf{P}_{k}^{2},y}$ for every closed point $y$ 
of $U_{1}\cap E$ outside $\Sigma_{2}$.
Further $b_{s_{1}'}S_{1}^{n_{1}'}S_{2}^{n_{2}''}/S_{0}^{n_{1}'+n_{2}''}$ is invertible in $\dvr_{\mathbf{P}_{k}^{2},y}$
for every closed point $y$ of $U_{0}\cap (S_{1}=0)$ outside $\Sigma_{1}$.
Since $(S_{0}=0)\cap (S_{1}=0)=\{w\}$ and 
$b_{s_{1}'}S_{1}^{n_{1}'}/S_{2}^{n_{1}'}$ is invertible in $\dvr_{\mathbf{P}_{k}^{2},w}$,
the triple $(\mathbf{P}_{k}^{2},V,\varphi)$ is clean outside $\Sigma_{1}\cup\Sigma_{2}$
by Lemma \ref{lemwitt} (ii).
Since $(S_{i''}/S_{0},b_{0}S_{1}^{n_{1}}S_{2}^{n_{2}}/S_{0}^{n_{1}+n_{2}})$ is 
a local coordinate system at every $y\in \Sigma_{i''}$ for $i''=1,2$,
the triple $(\mathbf{P}_{k}^{2},V,\varphi)$ is clean even at every $y\in \Sigma_{1}\cup \Sigma_{2}$
 by Lemma \ref{lemwitt} (iii).
Hence $\mg$ satisfies the condition (i).

We prove that $\mg$ satisfies the condition (ii).
By Corollary \ref{cortw}, we have $\sw(\varphi|_{K_{i'}})=n_{i'}=\sw(\chi|_{K_{i'}})$ 
and $\dt(\varphi|_{K_{i'}})=n_{i'}+1=\dt(\chi|_{K_{i'}})$ for $i'=1,2$.
Hence the character $\varphi|_{K_{i'}}$ is of type $\mI$ for $i'=1,2$.
Since we have $\sharp(I_{y})=1$ for every closed point $y$ of $E$ except $x$,
we have $CC(j'_{!}\mg)=\Char^{K}(\mathbf{P}_{k}^{2},V,\varphi)$ outside $x$ by the case (a).
Since $\varphi|_{K_{i'}}$ is of type $\mI$ for $i'=1,2$,
we have $\ord'(\varphi;x,D_{i'})=\ord'(\chi;x,D_{i'})$ for $i'=1,2$ by Lemma \ref{stabord}.
Since $I_{\mW,\chi}=I_{\mW,\varphi}=I_{\mI,\chi}=I_{\mI,\varphi}=\{1,2\}$,
$\sw(\chi|_{K_{i'}})=\sw(\varphi|_{K_{i'}})$ for $i'=1,2$,
and both $(X,U,\chi)$ and $(\mathbf{P}_{k}^{2},V,\varphi)$ are clean, 
we have $\Char^{K}(X,U,\chi)=\Char^{K}(X,U,\varphi|_{U})$ and
$CC(j_{!}\mf)=CC(j'_{!}\mg|_{X})$ in a neighborhood of $x$ by Corollary \ref{corckcc}.
Hence we have $(t'_{x},u'_{x})=(t_{x},u_{x})$.
Therefore $\mg$ satisfies the condition (ii).

($\text{b}_{3}$)
As in the proof of the case ($\text{b}_{2}$), we may assume
$a_{i}=t_{1}^{M}/t_{2}^{n_{2}'}$ if $i=s_{2}'$,
$a_{i}=1/t_{1}^{n_{1}'}t_{2}^{p^{s_{2}'-s_{1}'}n_{2}'}$ if $i=s'_{1}$, 
and $a_{i}=0$ if $i\neq s'_{1}, s_{2}'$.
Further we may assume $X=\mathbf{A}_{k}^{2}$, $D_{i}=(T_{i}=0)$ for $i\in I$,
and that $x$ is the origin.

Let $N^{\prime}\ge M$ be an integer prime to $p$ and put $N=p^{s'_{2}}N^{\prime}$.
Let $c_{1}, \ldots ,c_{n_{1}+n_{2}+N}, d_{1}, \ldots ,d_{n_{1}+n_{2}+N}$ be 
$2(n_{1}+n_{2}+N)$ different elements of $k^{\times}$
such that $c_{1}/d_{1},\ldots,c_{n_{1}+n_{2}+N}/d_{n_{1}+n_{2}+N}$ are different.
We put $n_{2}''=p^{-s_{1}'}n_{2}$, $N''=p^{-s_{1}'}N$, and $E=(S_{0}S_{1}S_{2}=0)$.
Let $\mathcal{G}$ be the smooth sheaf of $\Lambda$-modules of rank $1$ on $V$
defined by $(F-1)(t)=b$ where $b=(b_{s-1},\ldots ,b_{0})$ is the global section of 
$j'_{*}W_{s}(\dvr_{V})$ 
defined by
\begin{equation}
b_{i}=\begin{cases}
(S_{1}^{M}\prod_{i'=1}^{n_{2}^{\prime}+N^{\prime}-M}(S_{1}-c_{i'}S_{2}-d_{i'}S_{0}))/S_{0}^{N^{\prime}}S_{2}^{n_{2}^{\prime}} & (i=s_{2}') \\
(\prod_{i'=1}^{n_{1}^{\prime}+n_{2}''+N''}
(S_{1}-c_{i'}S_{2}-d_{i'}S_{0}))/S_{0}^{N''}
S_{1}^{n_{1}^{\prime}}S_{2}^{n_{2}''}
& (i=s_{1}') \\
(\prod_{i'=1}^{n_{1}+n_{2}+N}(S_{1}-c_{i'}S_{2}-d_{i'}S_{0}))/S_{0}^{N}S_{1}^{n_{1}}S_{2}^{n_{2}} & (i=0) \\
0 & (i\neq 0, s_{1}',s_{2}').
\end{cases}
\notag
\end{equation}

We prove that $\mg$ satisfies the condition (i).
It is sufficient to prove that $(\mathbf{P}_{k}^{2},V,\varphi)$ is clean at
every closed point of $E$ by Lemma \ref{lemclean} (iii).
We put $x_{i'}=(1:0:-d_{i'}/c_{i'})$ for $i'=1,\ldots,n_{1}'+n_{2}''+N''$, 
$y_{i'}=(1:d_{i'}:0)$ and $z_{i'}=(0:1:c_{i'}^{-1})$ for $i'=1, \ldots ,n_{2}'+N'-M$,
$z=(0:1:0)$, and $w=(0:0:1)$.
We put $\Sigma_{1}=\{x_{1},\ldots,x_{n_{1}'+n_{2}''+N''}\}$,
$\Sigma_{2}=\{y_{1},\ldots,y_{n_{2}'+N'-M}\}$, and
$\Sigma_{3}=\{z_{1},\ldots,z_{n_{2}'+N'-M}\}$.
Then $b_{s_{2}'}S_{0}^{N'}S_{2}^{n_{2}'}/S_{1}^{n_{2}'+N'}$ is 
invertible in $\dvr_{\mathbf{P}_{k}^{2},y}$ for every closed point $y$ 
of $U_{1}\cap E$ outside $\Sigma_{2}\cup \Sigma_{3}$.
Further $b_{s_{1}'}S_{1}^{n_{1}'}S_{2}^{n_{2}''}/S_{0}^{n_{1}'+n_{2}''}$ is invertible 
in $\dvr_{\mathbf{P}_{k}^{2},y}$ for every closed point $y$ of $U_{0}\cap (S_{1}=0)$ outside $\Sigma_{1}$.
Since $(S_{0}=0)\cap (S_{1}=0)=\{w\}$ and $b_{s_{1}'}S_{0}^{N''}S_{1}^{n_{1}'}/S_{2}^{N''+n_{1}'}$
is invertible in $\dvr_{\mathbf{P}_{k}^{2},w}$, 
the triple $(\mathbf{P}_{k}^{2},V,\varphi)$ is clean outside $\Sigma_{1}\cup\Sigma_{2}\cup\Sigma_{3}$
by Lemma \ref{lemwitt} (ii).
Since $(S_{i''}/S_{0},b_{0}S_{1}^{n_{1}}S_{2}^{n_{2}}/S_{0}^{n_{1}+n_{2}})$ is 
a local coordinate system at every $y\in \Sigma_{i''}$ for $i''=1,2$ and
$(S_{0}/S_{1}, b_{0}S_{0}^{N}S_{2}^{n_{2}}/S_{1}^{n_{2}+N})$ is a local coordinate system at
every $y\in \Sigma_{3}$,
the triple $(\mathbf{P}_{k}^{2},V,\varphi)$ is clean even at
every $y\in \Sigma_{1}\cup\Sigma_{2}\cup\Sigma_{3}$ by Lemma \ref{lemwitt} (iii).
Hence $\mg$ satisfies the condition (i).

We prove that $\mg$ satisfies the condition (ii).
Let $K_{0}$ be the local field at the generic point of $(S_{0}=0)$.
Since we have $\sw(\varphi|_{K_{i'}})=n_{i'}=\sw(\chi|_{K_{i'}})$ 
and $\dt(\varphi|_{K_{i'}})=n_{i'}+1=\dt(\chi|_{K_{i'}})$ for $i'=1,2$ by Corollary \ref{cortw},
the character $\varphi|_{K_{i'}}$ is of type $\mI$ for $i'=1,2$.
Further, by Corollary \ref{cortw}, we have $\sw(\varphi|_{K_{0}})=N$ and $\dt(\varphi|_{K_{0}})=N+1$.
Hence the character $\varphi|_{K_{0}}$ is of type $\mI$.
Since $\sharp(I_{y})=1$ for every closed point of $E$ except $x,z,w$,
we have $CC(j'_{!}\mg)=\Char^{K}(\mathbf{P}_{k}^{2},V,\varphi)$ outside $\{x,z,w\}$ 
by the case (a).
By the case ($\text{b}_{1}$), we have 
$CC(j'_{!}\mg)=\Char^{K}(\mathbf{P}_{k}^{2},V,\varphi)$ in a neighborhood of $z$.
Further, by the case ($\text{b}_{2}$), we have
$CC(j'_{!}\mg)=\Char^{K}(\mathbf{P}_{k}^{2},V,\varphi)$ in a neighborhood of $w$.
Hence we have $CC(j'_{!}\mg)=\Char^{K}(\mathbf{P}^{2}_{k},V,\varphi)$ outside $x$.
Since $\varphi|_{K_{i'}}$ is of type $\mI$ for $i'=1,2$,
we have $\ord'(\varphi;x,D_{i'})=\ord'(\chi;x,D_{i'})$ for $i'=1,2$ by Lemma \ref{stabord}.
Since both $(X,U,\chi)$ and $(\mathbf{P}_{k}^{2},V,\varphi)$ are clean, 
we have $\Char^{K}(X,U,\chi)=\Char^{K}(X,U,\varphi|_{U})$ and $CC(j_{!}\mf)=CC(j'_{!}\mg|_{X})$ 
in a neighborhood of $x$ Corollary \ref{corckcc}.
Hence we have $(t_{x}',u_{x}')=(t_{x},u_{x})$.
Thus $\mg$ satisfies the condition (ii).

%%%%%%%%%%%%%%%%%%%%%%%%%%%%%%%%%%%%%%%%%%%%%%%%%%

\end{document}